\documentclass[11pt]{amsart}

\usepackage{url}
\usepackage{psfrag}
\usepackage{graphicx}
\usepackage{comment}
\usepackage{array}
\usepackage{amsmath,amsfonts,amssymb,amsxtra,amsthm}
\usepackage{mathrsfs}
\usepackage{lineno}
\usepackage{verbatim}
\usepackage{xy}
\usepackage{times}
\xyoption{all}
\usepackage{stmaryrd}
\usepackage{appendix}
\usepackage{xr}

\DeclareMathOperator{\betti}{b_1}

\DeclareMathOperator{\comp}{Comp}

\DeclareMathOperator{\depth}{depth}

\DeclareMathOperator{\dirlim}{\underrightarrow{\lim}}

\DeclareMathOperator{\Env}{Env}

\DeclareMathOperator{\Hom}{Hom}

\DeclareMathOperator{\img}{Im}
\DeclareMathOperator{\inn}{Inn}

\DeclareMathOperator{\Ker}{Ker} 

\DeclareMathOperator{\Mod}{Mod}

\DeclareMathOperator{\ninj}{NInj}

\DeclareMathOperator{\phia}{\Phi_a}
\DeclareMathOperator{\phias}{\Phi_{as}}
\DeclareMathOperator{\phir}{\Phi_r}

\DeclareMathOperator{\rk}{rk}

\DeclareMathOperator{\scott}{sc}
\DeclareMathOperator{\seq}{Seq}
\DeclareMathOperator{\stabker}{\underrightarrow{\Ker}}
\DeclareMathOperator{\st}{St}


\def\jsj{\textsc{JSJ}}

\def\qcjsj{\textsc{QCJSJ}}

\def\qh{\textsc{QH}}

\def\hnn{\textsc{HNN}}

\def\gad{\textsc{GAD}}


\newcommand{\E}{\mathcal{E}}
\newcommand{\G}{\mathcal{G}}
\newcommand{\R}{\mathcal{R}}

\renewcommand{\H}{\mathcal{H}} 

\newcommand{\LL}{\mathcal{L}}



\def\wt#1{\widetilde{#1}}


\def\sl2c{\ensuremath{{SL}(2,\mathbb{C})}}
\def\t1sl2c{{\mathfrak sl}_2\mathbb{C}}
\def\free{\ensuremath{\mathbb{F}}}
\def\zee{\mathbb{Z}}


\def\define{\raisebox{0.3pt}{\ensuremath{:}}\negthinspace\negthinspace=}


\newcommand{\intoonto}{\DOTSB\lhook\joinrel\twoheadrightarrow}

\def\onto{\twoheadrightarrow}
\def\into{\hookrightarrow}




\def\ncl#1{\mathord{\langle}\mskip -4mu plus 0mu minus 0mu \mathord{\langle}#1\mathord{\rangle}\mskip -4mu plus 0mu minus 0mu \mathord{\rangle}}

\newcommand{\grushko}[3]{\ensuremath{#1_1*\cdots*#1_{#2}*\free_{#3}}}

\def\cent{Z}

\makeatletter
\newcommand{\Doubletwo}[3]{ \left\{ #1 \mid #3\right\} }
\newcommand{\Doubleone}[1]{ \left\{ #1 \right\} }
\newcommand{\Doublethr}[3]{ \left\{ #1 \right\}_{#3} }
\newcommand{\set}[1]{%
\@ifnextchar:{\Doubletwo{#1}}{\@ifnextchar_{\Doublethr{#1}}{\Doubleone{#1}}}%
}
\makeatother

\makeatletter
\newcommand{\grouptwo}[3]{ \langle #1 \mid #3\rangle }
\newcommand{\groupone}[1]{ \langle #1 \rangle }
\newcommand{\group}[1]{%
\@ifnextchar:{\grouptwo{#1}}{\groupone{#1}}%
}
\makeatother


\def\rto{\boxslash}
\def\strict{\ensuremath{\boldsymbol{S}}}
\def\strictr{\ensuremath{\boldsymbol{AS}}}


\newcommand{\term}[1]{{\emph{#1}}}

\newcommand{\latin}[1]{{\it #1}} 
\newcommand{\french}[1]{{\it #1}}

\newcommand{\mobius}{M\"obius}


\newtheorem{theorem}{Theorem}[section]
\newtheorem{lemma}[theorem]{Lemma}
\newtheorem{example}[theorem]{Example}
\newtheorem{corollary}[theorem]{Corollary}

\theoremstyle{definition}
\newtheorem{definition}[theorem]{Definition}

\theoremstyle{remark}
\newtheorem{remark}[theorem]{Remark}

\externaldocument[ROOTS-]{roots}

\newcommand{\mnote}[1]{}

\title[Aligning \jsj\ decompositions]{Krull dimension for limit groups {II}:\\
  aligning \jsj\ decompositions}

\author[Lars Louder]{Larsen Louder}
\address{Department of Mathematics\\
University of Michigan \\
Ann Arbor, MI 48109-1043\\
USA}
\email[Larsen Louder]{llouder@umich.edu, lars@d503.net}


\keywords{limit group, krull dimension, JSJ, fully residually free}

\subjclass[2000]{Primary: 20F65; Secondary: 20E05, 20E06}

\thanks{Most of this research was done while at the University of
  Utah. The author also gratefully acknowledges support from the
  National Science Foundation.}

\begin{document}

\begin{abstract}
  This is the second paper in a sequence on Krull dimension for limit
  groups, answering a question of Z.~Sela. In it we develop the notion
  of a resolution of a sequence of limit groups and show how to derive
  resolutions of low complexity from resolutions of high complexity.
\end{abstract}

\maketitle


\section{Introduction}

\label{sec:introduction}

\thispagestyle{empty}

\par A basic and important fact in algebraic geometry is that
varieties have finite Krull dimension, that is, given a variety $V,$
chains of proper inclusions of irreducible subvarieties have length
bounded above by the dimension of $V.$ Remarkably, solutions to
systems of equations defined over a free group $\free$ can also be
decomposed into irreducible
components~\cite{sela::dgog1,baumslag::ag,km}. Associated to
each irreducible component is a \term{limit group} which plays the
role of the coordinate ring of a variety: the points of the component
are tautologically identified with homomorphisms from the limit group
to the free group. A chain of irreducible subvarieties corresponds to
a sequence of epimorphisms of limit groups and finiteness of Krull
dimension comes out of this analysis as the supremum of lengths of
chains of epimorphisms of limit groups, beginning with a fixed limit
group. Just as (affine) varieties are subsets of the affine space,
varieties over $\free$ are contained in $\free^n$ for some $n.$ Finite
dimension reduces to bounding lengths of chains of epimorphisms of
limit groups requiring the same number of generators.

As in any inductive proof, a complexity is assigned to sequences of
epimorphisms of limit groups. This paper gives a means for deriving,
given a sequence of some complexity, uniformly many sequences of
strictly lower complexity. Moreover, the derived sequences are bound
together in a graph of sequences of groups, a resolution, of the
original sequence, analogous to a graphs of groups decomposition,
modeled on a common \jsj\ decomposition induced by the original
sequence. Using the main results from~\cite{louder::scott}
and~\cite{louder::roots} we lift a dimension bound for the derived
sequences to a bound on the length of the resolution
(Theorem~\ref{thr:stratified}), and from there to a bound on the
length of original sequence, depending only on the complexity of the
original sequence. An important feature is that the derived sequences
are obtained through a special process of iteratively adjoining roots,
passing to limit group quotients, and then passing to further limit
group quotients. These sequences are the motivation for the sequences
of adjunctions of roots discussed in~\cite{louder::roots}

\par Solution sets of systems of equations defined over the free group
have received considerable attention over the last decade,
particularly in the positive resolution of Tarski's question about
elementary equivalence of nonabelian free groups by Sela
\cite{sela::dgog1,sela::dgog2,sela::dgog3,sela::dgog4,sela::dgog5,sela::dgog6}
See also~\cite{km} for an alternative approach by Kharlampovich and
Myasnikov.

The object of this sequence (the present paper
and~\cite{louder::roots,louder::scott,louder::strict}) is to prove
that limit groups have finite \term{Krull dimension}
(Theorem~\ref{thr:krulldimension}). In other words, if
\[
\free_n\onto L_1\onto\dotsb\onto L_k
\] 
is a sequence of proper epimorphisms of limit groups, then $k$ is
bounded by a function of $n.$ The question comes from algebraic
geometry and logic. A system of equations over the free group is a
collection of words $\Sigma=\set{\omega_i}$ in finitely many variables
$\set{x_i}_{i=1..n}.$ The set of solutions to $\Sigma$ in $\free$ is
  identified with a subset of $\free^n$:
\[
V_{\Sigma}=\set{(a_1,\dotsc,a_n)\in\free^n}:{\omega_i(\overline{a})=1\mbox{ for all $\omega_i\in\Sigma$}}
\] 

Associated to any such system of equations ${\Sigma}$ is a finitely
generated group $G_{\Sigma}\define\group{x_i}/\ncl{\Sigma},$ and there
is a tautological one-to-one correspondence between the sets
$\Hom(G_{\Sigma},\free)$ and $V_{\Sigma}.$ Let $g\in\group{x_i}.$ The
set $V_{\set{g}}\cap V_{\Sigma}$ is a Zariski closed subset of
$V_{\Sigma}.$ If $G_{\Sigma}$ is residually free, and if $V_{\Sigma}$
is irreducible, that is, it isn't contained in the union of finitely
many proper closed subsets, then for all finite collections of
elements $\set{g_is}\subset G_{\Sigma}\setminus\set{1},$ $V_{\Sigma}$
properly contains the union
\[
  \bigcup_i\left(V_{\set{g_i}}\cap V_{\Sigma}\right)
\] 
A point in the complement is a homomorphism $G_{\Sigma}\to\free$ which
doesn't kill any element of the finite set, i.e., $G_{\Sigma}$ is
\term{$\omega$--residually free.} If $G_{\Sigma}$ isn't residually
free, we may always pass to the universal residually free quotient
$RF(G_{\Sigma})$ by killing all elements of $G_{\Sigma}$ which die
under every homomorphism to the free group. The quotient map
$G_{\Sigma}\onto RF(G_{\Sigma})$ induces a bijection
$\Hom(RF(G_{\Sigma}),\free)\to\Hom(G_{\Sigma},\free).$

\begin{definition}
  \label{def:limitgroup}
  Let $G$ be a finitely generated group. A sequence
  $f_n\in\Hom(G,\free)$ such that for all $g\in G,$ either $f_n(g)=1$
  for sufficiently large $n$ or $f_n(g)\neq 1$ for sufficiently large
  $n,$ is \term{stable}. The \term{stable kernel} of a stable sequence
  $f_n$ is the normal subgroup of $G$ generated by all elements which
  have trivial image for sufficiently large $n,$ and is denoted by
  $\stabker(f_n).$ A quotient of a finitely generated group by the
  stable kernel of a stable sequence is a \term{limit group}.
\end{definition}

\par Sometimes we say that the sequence $f_n$ \term{converges to
  $G/\stabker(f_n)$}. To justify this terminology, consider that a
homomorphism $f\colon G\to\free$ can be thought of as a point in the
space of marked free groups, and that a convergent sequence of
homomorphisms can be thought of as a sequence of points converging to
a point in the completion of the space of marked free groups
corresponding to the limit group associated to the
sequence. See~\cite{guirardel-champetier} for a discussion of limit
groups from this point of view.

\par Although $\omega$--residually free groups are limit groups, the
converse follows from their finite presentability and is more
difficult to prove.  See~\cite[Theorem~4.6]{sela::dgog1}
or~\cite{guirardel::fp} for proofs of finite presentability, and for a
proof which doesn't use finite presentability,
see~\cite[Lemma~1.10]{bf::lg}. Since a limit group $L$ is
$\omega$--residually free there is always a sequence $f_n\colon
L\to\free$ converging to $L.$

\par A \term{sequence of limit groups} is a finite sequence
\[\LL=(\dotsc,\LL(i_j),\LL(i_{j+1}),\dotsc)\]
of limit groups, denoted by a calligraphic letter, for example
$\LL$ or $\R,$ indexed by a strictly increasing sequence of
natural numbers $(i_j),$ equipped with
homomorphisms \[\varphi_{m,n}\colon\LL(m)\to\LL(n)\]
such that
\[\varphi_{n,l}\circ\varphi_{m,n}=\varphi_{m,l}\] 
for all $m<n<l.$ Sometimes we ignore the fact that a sequence isn't
necessarily indexed by adjacent natural numbers, and simply refer to
the $i$--th element in a sequence $\LL$ as $\LL(i).$ A
sequence is a \term{chain} if all the maps are epimorphisms. We use
this terminology because the dual sequence of
$\Hom$--sets \[\dotsb\supset\Hom(\LL(i),\free)\supset\Hom(\LL(i+1),\free)\supset\dotsb\]
is a chain of varieties.

This explains the terminology ``Krull dimension.'' A limit group $L$
determines a variety $V_L=\Hom(L,\free),$ and a chains of epimorphisms
of limit groups originating from $L$ correspond to chains of
irreducible subsets of $V_L.$


The length of a sequence is denoted by $\Vert\LL\Vert.$ The
\term{proper length} of a chain $\LL,$ denoted by
$\Vert\LL\Vert_{pl},$ is the number of indices $n$ such that
$\LL(n-1)\onto\LL(n)$ is not an isomorphism. The
\term{rank} of a limit group $L$ is the minimal number of elements
needed to generate $L,$ and is denoted by $\rk(L).$ The \term{rank} of
a chain of limit groups is the rank of the first group in the chain:
$\rk(\LL)\define\rk(\LL(1)).$ The first betti number
of a chain is defined in the same manner and is denoted by
$\betti(\LL).$

\begin{theorem}[Krull dimension for limit groups]
  \label{thr:krulldimension}
  For all $N$ there is a constant $D=D(N)$ such that if
  $\LL$ is a chain of rank at most $N$ then
  $\Vert\LL\Vert_{pl}\leq D.$
\end{theorem}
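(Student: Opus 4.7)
The plan is to prove Theorem~\ref{thr:krulldimension} by induction on a carefully chosen complexity assigned to chains of limit groups of rank at most $N$. A natural complexity combines the rank (bounded by $N$), the first Betti number $\betti(\LL)$, and \jsj--theoretic data such as the total Euler characteristic of the \qh\ vertex groups and the ranks of the abelian vertex groups in a common \jsj\ decomposition of the chain. For chains of rank at most $N$, all of these data are bounded by a function of $N$ alone, so the complexity takes finitely many values, and an induction on it has a well-defined base case.

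For the base case I would handle chains whose limit groups admit only trivial or elementary \jsj\ decompositions: free groups and free abelian groups of rank at most $N$. Hopficity of free groups together with the rank bound, and the analogous statement for finitely generated free abelian groups, force the proper length of such chains to be bounded by $N$.

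For the inductive step, the technical content of this paper does the work. First, align the \jsj\ decompositions of the terms $\LL(i)$ so that there is a common combinatorial \jsj\ structure through which every map $\varphi_{m,n}\colon \LL(m)\onto\LL(n)$ factors in a controlled way; this is the alignment promised by the title. Next, from each vertex of this common \jsj, extract a sub-chain of limit groups, and then modify it through the iterative root-adjunction and limit-group-quotient process of \cite{louder::roots} to produce a sub-chain of strictly lower complexity. By the inductive hypothesis, each such derived sub-chain has proper length bounded by a constant depending only on $N$. Theorem~\ref{thr:stratified} then assembles these individual bounds into a bound on the length of the resolution of $\LL$. Finally, the main results of \cite{louder::scott} ensure that every proper epimorphism in $\LL$ is witnessed by a strict change either in the resolution or in one of the derived sub-chains, and this lifts the resolution bound to a bound on $\Vert\LL\Vert_{pl}$ itself.

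The principal obstacle is the inductive step. The delicate points are: (i) aligning the \jsj\ decompositions of all the $\LL(i)$ simultaneously so that a common \jsj\ model genuinely exists; (ii) proving that the sub-chains obtained by restricting to vertex groups, then adjoining roots and passing to limit-group quotients, really do have strictly smaller complexity, since adjoining roots can \emph{a priori} increase some invariants and must be offset by the subsequent quotient steps; and (iii) ensuring that the number of derived sub-chains produced at each inductive step is bounded by a function of $N$ alone, so that the induction closes with a bound of the claimed uniform form $D(N)$.
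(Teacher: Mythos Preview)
Your outline has the right overall shape but omits two load-bearing steps that the paper cannot do without. First, \jsj\ decompositions of the $\LL(i)$ do \emph{not} align under arbitrary epimorphisms; alignment only becomes possible after one passes to a resolution by \emph{degenerate} maps, i.e.\ maps admitting no proper strict factorization. Producing such a resolution of controlled length from a long $\LL$ is the content of Theorem~\ref{thr:nostrict}, and it relies on the bound on strict resolutions (Theorem~\ref{thr:strictlengthbound}) together with a Ramsey argument. Only after this step can the Scott-type complexity of Section~\ref{sec:degenerations} be shown to be monotone and eventually constant, yielding the \qcjsj\ respecting resolution on which everything else is built. Your proposal jumps straight to ``align the \jsj\ decompositions,'' which is not available for the raw chain.

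Second, and more seriously, your step ``extract a sub-chain from each vertex \dots\ of strictly lower complexity'' is exactly where the argument is hardest, and a single pass does not suffice. The inductive complexity in the paper is $(\betti,\depth)$ of a resolution $\H\rto\LL$ (Theorem~\ref{thr:subseqkrull}), not the \jsj\ data you list; the Scott complexity $\scott$ is used only \emph{within} a step to stabilise the \jsj, not as the outer induction. A vertex group of $\qcjsj(\wt\G)$ can have the same first Betti number as $\wt\G$ (Lemma~\ref{lem:qcjsjvertexgroups}), and its image in $\LL$ need not sit inside a vertex group of the cyclic \jsj\ of $\img(\G)$, so the depth need not drop either. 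The paper overcomes this with Theorem~\ref{thr:decreasecomplexity-final}: one must run the entire \qcjsj\ construction \emph{twice}, and use Lemma~\ref{lem:decreasecomplexity-maybethisworkseasily} and Lemma~\ref{lem:neighborhoodlemma} to show that after the second pass the resulting vertex-group sequences genuinely land in vertex groups of $\jsj_C(\img(\G))$, forcing the depth to decrease. Without this double application your inductive claim that the derived sub-chains have strictly smaller complexity is unsupported. (The role you assign to \cite{louder::scott} is also off: it feeds into the root-adjunction bound of \cite{louder::roots} used in Theorem~\ref{thr:stratified}, rather than directly witnessing proper epimorphisms in $\LL$.)
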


This paper and its sibling~\cite{louder::roots} contain a complete
proof of Theorem~\ref{thr:krulldimension}. The sequel contains a proof
of Theorem~\ref{thr:addrootstolimitgroups}, which is used in
Section~\ref{sec:liftdimensionbound} to lift a dimension bound for
sequences of lower complexity to sequences of 1
complexity. \cite{louder::roots} contains the portion of the proof
specific to limit groups, and~\cite{louder::scott} contains the
remainder of the proof. Theorem~\ref{thr:addrootstolimitgroups} is a
``Krull-like'' statement about certain sequences of limit groups, each
obtained from the last by adjoining roots, passing to a (certain)
limit group quotient, and then perhaps passing to a further quotient
limit group.  First, some definitions and notation.

The centralizer of $E<G$ is denoted by $\cent_G(E).$ Let $\Delta$ be a
graph of groups decomposition of $G.$ If $V$ is a vertex group of
$\Delta,$ denote the set of images edge groups incident to $V$ by
$\E(V).$

\begin{definition}[Adjoining roots]
  \label{def:adjunctionofroots}
  \label{def:adjoinroots-alternate}
  Let $G$ be a limit group, $\E$ a collection of abelian
  subgroups of $G.$ Suppose that for each element $E\in\E,$
  we are given a finite index free abelian supergroup $F(E).$ A limit
  group quotient $G'$ of
  \[G*_{E\in\E}F(E)\] such that the restriction of the
  quotient map to $G$ is injective is said to be obtained from $G$ by
  adjoining roots $F(\E)$ to $\E.$ An adjunction
  triple is a tuple $(G,H,G')$ such that $G\into H\onto G',$ $H$
  obtained from $G$ by adjoining roots, such that $G$ embeds in $G'$
  under the quotient map.


  A \term{sequence of adjunctions of roots} is a pair of sequences of
  limit groups and a family $\E$ of collections of subgroups
  $\E_i$ of $\G(i),$ $(\G,\H,\E),$ with \term{base
    sequence} $\G,$ such that
  \begin{itemize}
  \item $(\G(i),\H(i+1),\G(i+1))$ is an adjunction triple; $\H(i+1)$
    is obtained from $\G(i)$ by adjoining roots to $\E_i$
  \item Each $E'\in\E_{i+1}$ in $\G(i+1)$ centralizes, up to
    conjugacy, the image of an element $E$ of $\E_i.$ If
    $E\in\E_i$ is mapped to $E'\in\E_{i+1}$ then the
    image of $\cent_G(E)$ in $\cent_{G'}(E')$ must be finite index.
  \end{itemize}
\end{definition}

The \term{complexity} of $(\G,\H,\E)$ is the triple
\[\comp((\G,\H,\E))\define(\betti(\G),\depth_{\text{pc}}(\H),\Vert\E\Vert).
\]
See~\cite[Definition~\ref{ROOTS-def:depth}]{louder::roots} for the
definition of $\depth_{\text{pc}},$ the depth of the principle cyclic
analysis lattice of a limit group. Complexities are not compared
lexicographically: $(b',d',e')\leq (b,d,e)$ if $b'\leq b,$ $d'\leq d,$
and $e'\leq e+2(d-d')b.$

Let $(\G,\H,\E)$ be a sequence of adjunctions of roots. The
quantity $\ninj((\G,\H,\E))$ is the number of indices $i$
such that $\H(i)\onto\G(i)$ is \emph{not} an isomorphism.

\begin{theorem}[Adjoining roots (\cite{louder::roots})]
  \label{thr:addrootstolimitgroups}
  Let $(\G,\H,\E)$ be a sequence of adjunctions of
  roots. There is a function $\ninj(\comp((\G,\H,\E)))$ such
  that \[\ninj((\G,\H,\E))\leq\ninj(\comp((\G,\H,\E)))\]
\end{theorem}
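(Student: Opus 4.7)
The plan is to induct on the complexity $\comp((\G,\H,\E)) = (\betti(\G),\depthpc(\H),\Vert\E\Vert)$ using the non-lexicographic well-founded ordering defined just above the statement. The base case is $\betti(\G(1)) = 0$: since nontrivial limit groups have positive first Betti number, $\G(1)$ is trivial, and the constraint that $\G(i)\into\G(i+1)$ forces every $\G(i)$ to be trivial, so $\H(i+1)\onto\G(i+1)$ is automatically an isomorphism and $\ninj = 0$.

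For the inductive step I would analyze a single non-iso step $\G(i)\into\H(i+1)\onto\G(i+1)$. By the centralizer-compatibility in Definition~\ref{def:adjunctionofroots}, each $E' \in \E_{i+1}$ centralizes (up to conjugacy) the image of some $E \in \E_i$ with finite-index image of centralizer, so the edge family cannot drift arbitrarily far. Using the principle cyclic analysis lattice developed in~\cite{louder::roots}, each non-iso step either (i) strictly decreases $\depthpc(\H)$ by contracting a level of the lattice (adjoined roots absorb into a higher abelian vertex), or (ii) keeps $\depthpc$ constant, in which case the rigidity of limit group quotients combined with the finite-index centralizer condition permits only a bounded reconfiguration of $\E$ before a drop in either $\depthpc$ or $\betti$ is forced.

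The quantitative input, which is precisely what the non-lexicographic ordering is engineered to absorb, is that whenever $\depthpc$ drops from $d$ to $d'$, the edge family can grow by at most $2(d-d')\betti(\G)$. Each unit lost in the depth of the principle cyclic lattice corresponds to the collapse of a single level of abelian vertex groups, and each such level contributes at most $2b$ new cyclic centralizers to track (morally, two per maximal abelian vertex, with total rank bounded by the first Betti number). Applying the inductive hypothesis to the resulting sub-sequence, whose complexity $(b, d', e')$ satisfies $e' \leq e + 2(d-d')b$ and is therefore strictly smaller in the non-lexicographic order, partitions the original sequence into blocks whose lengths are all bounded by previously-defined values of $\ninj$, yielding the desired function.

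The hard part will be rigorously establishing the $2(d-d')b$ slack: one must verify that after adjoining roots to $\E_i$, passing to a limit group quotient $\G(i+1)$, and selecting a compatible new collection $\E_{i+1}$ of tracked centralizers, the genuinely new edges (those not arising as images of elements of $\E_i$) are exactly the centralizers revealed by the lattice contraction, and that their count per unit of depth drop is at most $2\betti(\G)$. This requires a careful JSJ-theoretic bookkeeping on how the vertex groups of the associated graph-of-groups decompositions interact with root-adjunction, and is precisely the content that~\cite{louder::roots} must supply; once it is in hand, the induction on $(b,d,e)$ terminates and produces the bounding function $\ninj(\comp((\G,\H,\E)))$.
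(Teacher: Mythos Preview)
The paper does not prove this theorem at all: immediately after the statement it says ``This is~\cite[Theorem~\ref{ROOTS-maintheorem}]{louder::roots},'' and the entire argument lives in the companion paper. There is therefore no proof here to compare your proposal against.

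That said, your proposal is not really a proof either---it is a plausible-sounding outline that explicitly defers the hard step (``is precisely the content that~\cite{louder::roots} must supply'') to the same citation. The skeleton you describe (induct on the triple $(b,d,e)$ in the given non-lexicographic order, with the $2(d-d')b$ term absorbing growth of the edge family when the lattice depth drops) is indeed the shape of the argument in~\cite{louder::roots}, but your dichotomy ``each non-iso step either (i) strictly decreases $\depthpc(\H)$, or (ii) keeps $\depthpc$ constant and then rigidity forces a bounded reconfiguration before a drop'' is asserted rather than argued. Establishing that no infinite run can avoid a complexity drop is exactly the substantive content of that paper, and it requires considerably more than what you have sketched: one has to track how adjoining roots interacts with the principal cyclic \jsj\ at every level of the lattice, and this involves the Scott-complexity and \jsj-alignment machinery developed in the present paper applied inside the root-adjunction setting. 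Your sketch correctly identifies where the difficulty lies but does not address it.
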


This is~\cite[Theorem~\ref{ROOTS-maintheorem}]{louder::roots}.

\subsection*{Acknowledgments}

\par I am deeply indebted to my advisor Mladen Bestvina, and extend
many thanks to Matt Clay, Mark Feighn, Chlo\'e Perin, and Zlil Sela
for listening carefully and critically to my musings on Krull.

\section{Preliminaries}

In this section we set up some notation, review theorems from the
theory of limit groups, and do some basic setup for later sections. We
start by giving some basic properties of limit groups and defining
generalized abelian decompositions, or \gad s.

\begin{theorem}[Basic properties of limit
  groups~\cite{bf::lg},~\cite{sela::dgog1}] The following properties
  are shared by all limit groups.
  \label{thr:basic-prop}
  \begin{itemize}
  \item Commutative transitivity; maximal abelian subgroups are
    malnormal; every abelian subgroup is contained in a unique maximal
    abelian subgroup.
  \item Abelian subgroups are finitely generated and free.
  \item Finite presentability and coherence.
  \end{itemize}
\end{theorem}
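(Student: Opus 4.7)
The plan is to reduce each bullet to a property already established for free groups, using the $\omega$--residually free characterization of limit groups (proved separately, e.g.\ in \cite[Theorem~4.6]{sela::dgog1}); that is, for every limit group $L$ I would fix a sequence $f_n\colon L\to\free$ with trivial stable kernel, and transport quantifier-free properties of $\free$ to $L$ by a compactness argument.

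For commutative transitivity, I would note that the sentence ``if $[x,y]=[y,z]=1$ and $y\neq 1$ then $[x,z]=1$'' is a universal implicational formula, and free groups satisfy it. Given $x,y,z\in L$ with $y\neq 1$ and $[x,y]=[y,z]=1$, for large $n$ I have $f_n(y)\neq 1$ and $[f_n(x),f_n(y)]=[f_n(y),f_n(z)]=1$, so $[f_n(x),f_n(z)]=1$; hence $[x,z]\in\stabker(f_n)=1$. Commutative transitivity immediately gives that centralizers of nontrivial elements are abelian, hence equal to the unique maximal abelian subgroup containing the element, which in turn establishes uniqueness of maximal abelian containment. For malnormality (\csa), suppose $gAg^{-1}\cap A\ni b\neq 1$ for $A$ maximal abelian; then $gAg^{-1}$ and $A$ are both the maximal abelian subgroup containing $b$, so $g$ normalizes $A$. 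For any $a\in A$ we have $[g,a]\in A$. In $\free$, any element normalizing a nontrivial abelian subgroup lies in its centralizer; thus $[f_n(g),f_n(a)]=1$ for large $n$ as long as $f_n(a)\neq 1$, forcing $[g,a]\in\stabker(f_n)=1$. So $g\in C_L(A)=A$.

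For the second bullet, torsion-freeness is immediate from $\omega$--residual freeness into a torsion-free target. To see that every abelian subgroup is finitely generated and free abelian, by the malnormality from the first bullet it suffices to treat maximal abelian subgroups, and these are finitely generated once $L$ is finitely presented (use any finite presentation together with the fact that a finitely generated torsion-free abelian group is $\zee^n$). So this item really depends on item three, and there is no circularity because the proofs of finite presentability (Sela, Guirardel) do not invoke this statement.

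The hardest item is finite presentability and coherence. I would simply quote Sela's proof via the shortening argument and descending sequences of Makanin--Razborov quotients \cite{sela::dgog1}, or Guirardel's tree-theoretic proof \cite{guirardel::fp}; the Bestvina--Feighn proof of coherence proceeds by induction on a hierarchy obtained from the cyclic \jsj\ (or the generalized abelian decomposition developed in the next subsection of this paper), whose vertex groups are limit groups of strictly smaller complexity, bottoming out at free and surface groups where coherence is classical. Item three is the main obstacle, and in this paper I would not reprove it but rather take it as the black-box input stated here, since every later argument in the paper needs only its consequences (finite presentability, cyclic/abelian \jsj\ theory, and coherence of vertex groups).
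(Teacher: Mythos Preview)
The paper does not supply its own proof of this theorem; it is stated as background with citations to \cite{bf::lg} and \cite{sela::dgog1}, so there is no in-paper argument to compare against. Your sketch for the first bullet via $\omega$--residual freeness is the standard one and is fine, and your handling of the third bullet---deferring to Sela/Guirardel for finite presentability and to the hierarchy for coherence---matches what the paper intends by citing those references.

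There is, however, a genuine gap in your treatment of the second bullet. You write that maximal abelian subgroups ``are finitely generated once $L$ is finitely presented (use any finite presentation together with the fact that a finitely generated torsion-free abelian group is $\zee^n$).'' Finite presentability of $L$ does not by itself force centralizers (which, by \csa, are exactly the maximal abelian subgroups) to be finitely generated; there is no direct passage from a finite presentation of the ambient group to finite generation of an arbitrary centralizer. The actual proofs in the cited references use the structure theory specific to limit groups: either Sela's analysis lattice / strict resolutions, so that abelian subgroups can be tracked down a finite hierarchy terminating in free groups, or the Kharlampovich--Myasnikov embedding of a limit group into an iterated extension of centralizers of a free group, where abelian subgroups are visibly free abelian of finite rank. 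You should either invoke one of these explicitly or, as the paper does, simply cite the result; but the parenthetical you give is not an argument.
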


\begin{definition}[Generalized abelian decomposition (\cite{bf::lg})]
  \label{def:gad}
  A \term{generalized} \term{abelian} \term{decomposition}, shortened
  to \gad, or \term{abelian decomposition}, of a freely indecomposable
  finitely generated group $G,$ is a graph of groups
  $\Delta(R_i,A_j,Q_k,E_l)$ of $G$ over vertex groups $R_i,$ $A_j,$
  $Q_k,$ and abelian edge groups $E_l$ such that:
  \begin{itemize}
    \item $A_j$ is abelian for all $j.$
    \item $Q_k$ is the fundamental group of a surface $\Sigma_k$ with
      boundary and $\chi(\Sigma_k)\leq-2$ or $\Sigma_k$ is a torus
      with one boundary component. The $Q_k$ are ``quadratically
      hanging.''
    \item Any edge group incident to a quadratically hanging vertex
    group is conjugate into a boundary component.
  \end{itemize}
  The subgroups $R_i$ are the \term{rigid} vertex groups of $\Delta.$
\end{definition}

\par The \term{peripheral subgroup} of an abelian vertex group $A$ of
a \gad\ is the direct summand $P(A)$ of $A$ which is the intersection
of all kernels of maps $A\to\zee$ which kill all images of incident
edge groups. The peripheral subgroup is primitive in $A$: if
$\alpha\in A\setminus P(A)$ then no power of $\alpha$ is in
$P(A).$ This differs slightly from \cite{bf::lg} in that they use this
term to refer to the subgroup generated by incident edge groups.

\par If $\Delta$ is a \gad\ of a finitely generated group $G,$ we say
that a splitting $G=G_1*_EG_2$ or $G=G_1*_E$ is \term{visible} in
$\Delta$ if it is a one edged splitting from an edge of $\Delta,$ a
one edged splitting obtained by cutting a \qh\ vertex group along
an essential simple closed curve, or a one edged splitting inherited
from a one edged splitting of an abelian vertex group in which the
peripheral subgroup is elliptic.

\begin{definition}[The modular group $\Mod$]
  \label{def:modulargroup}
  To a \gad\ $\Delta$ of $G$ we associate the \term{restricted modular
    group} $\Mod(G,\Delta),$ which is the subgroup of the automorphism
  group of $G$ generated by inner automorphisms and all Dehn twists in
  one edged splittings visible in $\Delta.$

  \par Let $G$ be a finitely generated group with a Grushko
  decomposition
  \[G=G_1*\dotsb*G_p*\free_q\] and for each $i$ a \gad\ $\Delta_i$ of
  $G_i.$ We say that an automorphism $\varphi$ of $G$ is
    \term{$\Delta_i$--modular on $G_i$}, or simply \term{modular}, if
    there are automorphisms $i_g\in\inn(G)$ and
    $\varphi'\in\Mod(G_i)<\Mod(G)$ such that the restriction of
    $\varphi$ to $G_i$ agrees with the restriction of
    $i_g\circ\varphi'$ to $G_i.$  The set of automorphisms which are
    $\Delta_i$--modular on $G_i$ for all $i$ forms the \term{modular
      group of $G$ with respect to $\set{\Delta_i}$}, and is denoted
    by $\Mod(G,\set{\Delta_i}).$ The group of automorphisms of $G$
    generated by all such groups, as $\set{\Delta_i}$ varies over all
    collections of \gad's of freely indecomposable free factors of
    $G,$ is denoted by $\Mod(G).$
\end{definition}

\par To a finitely presented freely indecomposable limit group one can
associate the abelian \jsj\ decomposition, a canonical \gad\ such that
every other \gad\ can be obtained from it by folding, sliding,
conjugation, cutting \qh\ vertex groups along simple closed curves,
and collapsing subgraphs. The \jsj\ theory is well developed and we
assume some familiarity with it. See~\cite{dunwoodyjsj}
or~\cite{fuji::jsj} for treatments not specific to limit groups,
or~\cite{sela::jsj} and~\cite{bf::lg} for treatments more specific to
limit groups.  We assume the following normalizations on the abelian
\jsj\ of a limit group:

\begin{itemize}
\item The \jsj\ decomposition is 2-acylindrical: if $T$ is the
  associated Bass-Serre tree, then stabilizers of segments of length
  at least three are trivial. Through folding, sliding, and
  conjugation, ensuring that edges incident to rigid vertex groups
  have nonconjugate centralizers (in the vertex group in question),
  and collapsing, to ensure that no two abelian vertex groups are
  adjacent, we can arrange that if $g$ fixes a segment of length two
  with central vertex $v,$ then the stabilizer of $v$ is abelian.
\item Let $E$ be an edge group of a limit group $L.$ Then there is an
  unique abelian vertex group $A$ such that $A=\cent_L(E).$ This is
  somewhat awkward as the normal impulse upon encountering a
  surjective map \[(\mbox{edge group})\to(\mbox{vertex group})\] is to
  collapse the edge. This hypothesis does, however, make the
  construction of strict homomorphisms in
  subsection~\ref{subsection::freelyindecomposable} formally somewhat
  easier.
\item Edge groups incident to rigid vertex groups are closed under
  taking roots in the ambient group. Edge groups incident to a rigid
  vertex group $R$ are nonconjugate in $R.$ An edge group is closed
  under taking roots in all adjacent non-\qh\ vertex groups.
\item If $E$ is incident to a \qh\ vertex group it is conjugate to a
  boundary component.
\item If $E$ is an edge group and $A$ is a valence one abelian vertex
  group such that $E$ is finite index in $A,$ then $E$ is attached to
  a boundary component $b$ of a \qh\ vertex group and there are no
  other incident edge groups with image conjugate to $b.$ The index of
  $E$ in $A$ must be at least three. (If it is one, then the splitting
  is trivial, and if it is two, then the \qh\ vertex group can be
  enlarged by gluing a \mobius\ band to the boundary component
  representing $E.$)
\end{itemize}

\par The \term{modular group} of a freely indecomposable limit group
$G$ is the group $\Mod(L,\jsj(L)),$ or simply $\Mod(L)$ for
convenience. This definition agrees with the previous definition of
$\Mod(L)$ since $(1)$ $\Mod(L,\Delta)$ is a subgroup of
$\Mod(L,\jsj(L))$ and $(2)$ \jsj\ decompositions exist.

\par Let $\Delta$ be a \gad\ of a limit group $L,$ and let $L_P$ be
the subgroup of $L$ constructed by replacing $A_j$ by $P(A_j)$ for all
$j.$ Let $R$ be a rigid vertex group of $\Delta.$ The \term{envelope}
of $R,$ $\Env(R,\Delta),$ is the subgroup of $L_P$ generated by $R$
and the centralizers of edge groups incident to $R$ in $L_P.$ Let
$H<G$ be a pair of groups. An automorphism of $G$ is \term{internal on
  $H$} if it agrees with the restriction of an inner automorphism. The
envelope of a rigid vertex group of $L$ can also be thought of as the
largest subgroup of $L$ containing $R$ for which the restriction of
every element of $\Mod(L,\Delta)$ is internal.

\par A limit group is \term{elementary} if it is free, free abelian,
or the fundamental group of a closed surface. 


Let $L$ be a freely indecomposable nonelementary limit group. The
cyclic \jsj\ of $L$ is the \jsj\ decomposition associated to the
family of all one-edged cyclic splittings in which all noncyclic
abelian subgroups are elliptic. Since limit groups have cyclic
splittings, the cyclic \jsj\ of a freely indecomposable nonelementary
limit groups is nontrivial.

\begin{definition}[Cyclic analysis lattice; depth]
  \label{def:alatticeheightbound}
  \label{def:depth}
  The cyclic analysis lattice of a limit group $L$ is a tree
  of groups constructed as follows:
  \begin{enumerate}
  \item Level $0$ of the analysis lattice is $L.$
  \item Let $L_1*\dotsb*L_p*\free_q$ be a Grushko factorization of
    $L.$ Level $1$ of the analysis lattice consists of the groups
    $L_i$ and $\free_q.$ There is an edge connecting $L$ to each group
    in level $1.$ The free factor $\free_q$ and any surface or free
    abelian freely indecomposable free factors of $L$ are elementary,
    and we take them to be terminal leaves of the tree.
  \item Let $L_{i,1},\dotsb,L_{i,j_i}$ vertex groups of the cyclic
    \jsj\ of $L.$  Level $3$ of the lattice consists of the groups
    $\set{L_{i,k}}.$  There is an edge connecting $L_{i}$ to $L_{i,k}$
    for all $i$ and $k.$
  \item Inductively, construct the analysis lattice for each group
    $L_{i,j}$ and graft the root of the tree to the vertex labeled
    $L_{i,j}.$
  \item Terminate only when all resulting leaves are terminal.
  \end{enumerate}
  The \term{depth} of a limit group $L$ is the number of levels in its
  cyclic analysis lattice, and is denoted by $\depth(L).$  The depth
  of a sequence of limit groups $\LL$ is the greatest depth of
  a group appearing in
  $\LL$: \[\depth(\LL)\define\max_i\set{\depth(\LL(i))}\]
\end{definition}

Lemma~\ref{lem:depthbound} below follows from the following theorem of
Sela's, but for completeness we provide an alternative proof.

\begin{theorem}[\latin{cf.} \cite{sela::dgog1}, Proposition~4.3]
  \label{thr:rankboundedbybetti}
  There is a function $r$ such that if $L$ is a limit group then
  $\depth(L)\leq r(\betti(L)).$
\end{theorem}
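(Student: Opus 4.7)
Plan: The proof proceeds by induction on $\betti(L)$, with the base case handling elementary limit groups (free, free abelian, or closed surface groups), where $\depth(L)$ is bounded by a small absolute constant because their analysis lattice terminates immediately.

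For the inductive step, first apply Grushko: $L = L_1 \ast \cdots \ast L_p \ast \free_q$. Since the first Betti number is additive over free products, $\betti(L) = \sum_i \betti(L_i) + q$. When this decomposition is nontrivial, each $L_i$ satisfies $\betti(L_i) < \betti(L)$ and the inductive hypothesis applies to each factor. So assume $L$ is freely indecomposable and nonelementary. Apply the cyclic \jsj\ decomposition $\Delta$ of $L$, giving vertex groups of three types: rigid ($R$), abelian ($A$), and \qh\ ($Q$). The abelian vertex groups are free abelian and the \qh\ vertex groups are free (surfaces with nonempty boundary), so both are elementary and contribute only a bounded number of further levels before terminating. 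Only the rigid vertex groups $R$ require further analysis.

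To bound the depth of each rigid vertex group, I would invoke the Mayer--Vietoris long exact sequence for the graph of groups $\Delta$, which gives
\[
\betti(L) = \betti(\Gamma) + \sum_v \betti(V_v) - \rho,
\]
where $\Gamma$ is the underlying graph and $\rho \leq |\edges(\Gamma)|$ is the rank of the image of $\bigoplus_e H_1(E_e) \to \bigoplus_v H_1(V_v)$ (each edge group being infinite cyclic contributes at most one to $\rho$). In particular $\betti(R) \leq \betti(L)$ for every rigid vertex group, and when the inequality is strict the inductive hypothesis applies directly.

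The main obstacle is the degenerate case $\betti(R) = \betti(L)$, which genuinely occurs when $L$ is obtained from $R$ by adjoining roots along cyclic subgroups (for example, $L = R \ast_{c = d^n} \zee$ satisfies $\betti(L) = \betti(R)$). To handle it I would refine the induction to use a lexicographic complexity $(\betti(L), \sigma(L))$, where $\sigma(L)$ records a combinatorial invariant of the cyclic \jsj\ of $L$ such as the number of vertex groups. Bestvina--Feighn-style accessibility for limit groups bounds $\sigma(L)$ in terms of $\betti(L)$, so the refined induction still yields a bound as a function of $\betti(L)$ alone. In the degenerate case $\betti(R) = \betti(L)$, the Mayer--Vietoris calculation above together with the \jsj\ normalizations (2-acylindricity, edge groups closed under roots and nonconjugate in rigid vertex groups, and the constraints on valence-one abelian vertex groups attached to \qh\ boundary components) force the ``other'' vertex groups to be infinite cyclic and $\Gamma$ to have a rigid structure; one then checks that $\sigma(R) < \sigma(L)$, closing the induction. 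This last verification, using the normalizations to rule out an unbounded tower of root adjunctions masking a strict drop in $\betti$, is the delicate step.
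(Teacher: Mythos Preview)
The paper does not actually prove this theorem; it is quoted from Sela (Proposition~4.3 of \cite{sela::dgog1}), and the remark immediately following it explains that the paper deliberately avoids relying on it. What the paper \emph{does} prove is the related Theorem~\ref{lem:depthbound}, $\depth(L)\leq 6\rk(L)$, by a completely different route through strict resolutions: one fixes a strict resolution $L=G_1\onto\cdots\onto G_n$ with $n\leq 3\rk(L)$ (Theorem~\ref{thr:strictlengthbound}), observes that a freely indecomposable free factor $W$ of a vertex group of the cyclic \jsj\ of a free factor of $L$ has its envelopes embedded in $G_3$, so the tail $G_3\onto\cdots$ restricts to a strict resolution of $W$ of length at most $n-2$, and concludes by induction on the length of a shortest strict resolution. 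The secondary invariant that drives that induction is the length of the strict resolution, not anything read off the \jsj.

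Your plan has a genuine gap at exactly the point you flag as delicate. The quantity $\sigma(L)$ you propose---the number of vertex groups of the cyclic \jsj\ of $L$---is computed from $L$'s \jsj, whereas $\sigma(R)$ is computed from $R$'s own \jsj, an entirely different decomposition. There is no mechanism forcing $\sigma(R)<\sigma(L)$: in your own example $L=R*_{c=d^n}\zee$, the cyclic \jsj\ of $L$ has essentially two vertex groups, while $R$ may have a cyclic \jsj\ with arbitrarily many. Accessibility bounds both $\sigma(L)$ and $\sigma(R)$ by the same constant depending on $\betti(L)=\betti(R)$, but that says nothing about which is larger, so the lexicographic induction does not close. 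What one needs is a secondary invariant that provably drops along the tower of rigid vertex groups, and producing one is the entire content of the theorem; the strict-resolution length is such an invariant, and that is what the paper (and, in a different form, Sela) uses.
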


Since the depth of $L$ is bounded by $r,$ and the width of the
analysis lattice is bounded by homological considerations, this
proposition gives a bound on the rank of a limit group in terms of its
first betti number.

\begin{remark}
  The cyclic analysis lattice of a limit group is defined
  in~\cite{sela::dgog1}, where he shows that it is finite, and in fact
  has depth bounded quadratically in the first betti number of the
  given group.

  We don't need the full strength of
  Theorem~\ref{thr:rankboundedbybetti} in this paper, although it
  would make some steps slightly easier.  For this reason we use
  slightly more roundabout logic for our proof of
  Theorem~\ref{thr:krulldimension} than is strictly
  necessary. Specifically, when something can be controlled by
  $\betti$ alone, we may need to use $\betti$ and depth.

  Since all the facts we use about limit groups can be proven
  independently of Sela's Theorem~\ref{thr:rankboundedbybetti}, see
  the treatment in~\cite{bf::lg}, our proof doesn't secretly rely on
  Sela's proposition.
\end{remark}

In order be able to apply the bound on the length of a strict
resolution provided by Theorem~\ref{thr:strictlengthbound} to
sequences constructed throughout the course of the paper, especially
in section~\ref{section::ai}, we need to use the next Lemma to gain
control over ranks.

\begin{lemma}
  \label{lem:rankheightbound}
  There is a function $r(b,d)$ such that if $L$ is a limit group then
  \[\rk(L)\leq r(\betti(L),\depth(L))\]
\end{lemma}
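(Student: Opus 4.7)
The plan is to induct on $d = \depth(L)$, using the two layers of the cyclic analysis lattice---the Grushko factorization and then the cyclic \jsj---to reduce to vertex groups of strictly smaller depth. The base case is $L$ elementary, where $L$ is free, free abelian, or a closed surface group, and $\rk(L)$ is a linear function of $\betti(L)$ directly.

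For the inductive step, Grushko decompose $L = L_1 * \cdots * L_p * \free_q$. By additivity of $\betti$ over free products and the fact that every nontrivial freely indecomposable limit group has $\betti \geq 1$, we have $p + q \leq \betti(L)$, and $\rk(L) = q + \sum_i \rk(L_i)$. For each nonelementary freely indecomposable factor $L_i$, pass to its cyclic \jsj\ decomposition $\Delta_i$ with vertex groups $V_{i,k}$, cyclic edge groups, and underlying graph $\underline{\Delta_i}$, so that
\[
  \rk(L_i) \leq \sum_k \rk(V_{i,k}) + \betti(\underline{\Delta_i}).
\]
A Mayer--Vietoris estimate for graphs of groups with cyclic edge groups yields $\sum_k (\betti(V_{i,k}) - 1) \leq \betti(L_i) - 1$, so $\betti(V_{i,k}) \leq \betti(L)$ for every $k$. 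Since noncyclic abelian and \qh\ vertex groups each contribute at least $1$ to the left side, their total number is at most $\betti(L)$; combined with the \jsj\ normalizations (2-acylindricity, nonconjugate edge groups at rigid vertices, edges closed under roots), this controls the total number of vertices and edges of $\Delta_i$, and hence also $\betti(\underline{\Delta_i})$, by a function of $\betti(L)$.

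It remains to bound $\rk(V_{i,k})$. Abelian vertex groups are free abelian so $\rk = \betti \leq \betti(L)$. \qh\ vertex groups are fundamental groups of surfaces with boundary, hence free, so again $\rk = \betti \leq \betti(L)$. Rigid vertex groups $R$ satisfy $\depth(R) < d$ by construction of the analysis lattice, and $\betti(R) \leq \betti(L)$ by the Mayer--Vietoris estimate above, so the inductive hypothesis gives $\rk(R) \leq r(\betti(L), d-1)$. Summing all contributions yields a recursion of the form $r(b, d) \leq f(b) \cdot r(b, d-1) + g(b)$ for explicit linear polynomials $f$ and $g$, whose solution defines the claimed function. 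The main obstacle is bounding the combinatorial size of the cyclic \jsj---in particular the number of rigid vertex groups, which the Mayer--Vietoris estimate does not constrain directly---and this rests on the normalization conditions established in the preceding pages together with the cyclic-edge Mayer--Vietoris input.
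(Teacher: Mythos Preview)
Your approach is essentially the paper's: induct on depth, pass through the Grushko decomposition and then the cyclic \jsj, bound the number of \jsj\ vertex groups in terms of $\betti$, and recurse on the vertex groups, which sit at lower depth. The recursion and the base case are fine.

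The gap is exactly where you flag it. You assert that the \jsj\ normalizations together with Mayer--Vietoris bound the number of rigid vertex groups, but you never carry this out, and you even say the Mayer--Vietoris estimate ``does not constrain [them] directly.'' In fact it does, once you add one observation you omit: a rigid vertex group of the cyclic \jsj\ of a limit group is a nonabelian limit group, and any nonabelian limit group $R$ has $\betti(R)\geq 2$ (map it to a free group so that two noncommuting elements survive; the image is nonabelian free, hence has $\betti\geq 2$, and $\betti(R)$ dominates the $\betti$ of any quotient). With this in hand your inequality $\sum_k(\betti(V_{i,k})-1)\leq\betti(L_i)-1$ already bounds the number of rigid vertex groups by $\betti(L_i)-1$. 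The paper uses exactly the fact $\betti(R_i)\geq 2$, but wraps it inside an explicit combinatorial count of valence-one, valence-two, and high-valence vertices, analyzing linear chains of the form $\dotsb *_{\zee} R *_{\zee} \zee^2 *_{\zee} R' *_{\zee}\dotsb$ and showing each such chain of length $n$ contributes at least $n-1$ to $\betti(L)$.

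There is a second, smaller gap: cyclic abelian vertex groups have $\betti=1$ and contribute nothing to your Mayer--Vietoris sum, yet each adds $1$ to $\sum_k\rk(V_{i,k})$, so you still need to bound their number. The normalizations you cite do not do this on their own. The paper's valence analysis handles this implicitly; alternatively one can observe that, with the root-closed normalization, an edge incident to a cyclic abelian vertex $A$ has edge group equal to $A$, so such vertices can be collapsed away before estimating rank without enlarging any nonabelian vertex group.
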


\begin{proof}
  The proof is by induction on the pair $(b,d),$ ordered by comparing
  both coordinates. Suppose the theorem holds for groups of complexity
  less than $(b_0,d_0).$ For $(b,d)<(b_0,d_0)$ let $\rk(b,d)$ be a
  function which bounds the rank. Since we assume a bound on the depth
  of $L,$ we only need to show that the number of vertex groups of the
  cyclic \jsj\ of $L$ is controlled by $\betti(L).$ Suppose $B(b)$ is
  such an upper bound. Since the vertex groups have lower depth, their
  ranks are bounded by
  \[R=\max\set{\rk(b,d)}:{b\leq b_0,d<d_0)}\]
  Then 
  \[\rk(b,d)\leq B(b_0)R+b_0\]
  The term $b_0$ is the largest possible contribution to $\betti(L)$
  made by stable letters from the Bass-Serre presentation of $L$ in
  terms of its cyclic \jsj.

  To find $B,$ let
  $\Delta(\mathcal{R},\mathcal{Q},\mathcal{A},\E)$ be a
  cyclic \jsj\ decomposition of $L.$ Modify $\Delta$ by choosing, for
  each $\qh$ vertex group $Q\in\mathcal{Q},$ a pair of pants
  decomposition $P_Q,$ and cut the \qh\ vertex groups along the simple
  closed curves from $P_Q.$ The underlying graph of $\Delta$ has first
  betti number bounded by $b_0.$ Since the first betti number of a
  nonabelian limit group relative to an abelian subgroup is at least
  one, $\Delta$ has at most $b_0$ valence one vertex groups. The
  number of abelian vertex groups $A$ such that $A\neq P(A)$ is
  bounded above by $b_0$ as well. Thus, to bound the complexity of
  $\Delta,$ we only need to bound the number of valence two vertex
  groups. Since the number of abelian vertex groups which aren't equal
  to their peripheral subgroups is bounded by $b_0,$ we only need to
  bound the number and size of sub-graphs of groups of $\Delta$ with
  the following form:
  \[\dotsb*_{\zee}R_1*_{\zee}\zee^2*_{\zee}R_2*_{\zee}\dotsb\]
  There are at most $4b_0$ maximal disjoint subgraphs of this
  form. Since $\betti(R_i)\geq 2,$ the number of valence two
  nonabelian vertex groups such that the two incident edge groups
  don't generate the first homology is bounded above by $b_0,$ we may
  assume that in such a sub-graph of groups, no incident edge group
  has trivial image in homology. Such vertex groups behave, when
  computing homology, like valence one vertex groups. Since the map
  from $L$ to $L^{ab}$ factors through the graph of groups obtained by
  abelianizing all rigid vertex groups, we only need to find $\betti$
  of graphs of groups of the following form
  \[*_{\zee}\zee^{\geq 2}*_{\zee}\zee^{\geq
    2}*_{\zee}\dotsb*_{\zee}\zee^{\geq 2}*_{\zee}\] relative to the
  copies of $\zee$ at the ends. If a graph of groups of this form has
  length $n,$ then it contributes at least $n-1$ to $\betti(L).$
\end{proof}

\mnote{defining strict}

\par It is important to see a limit group in terms of some of its
quotient limit groups.

\begin{definition}[Strict]
  \label{def:strict}
  Let $G$ be a finitely generated group and $L$ a limit group. A
  homomorphism $\rho\colon G\to L$ is \term{$\Mod(G,\Delta)$--strict}
  if, given a sequence of homomorphisms $f_n\colon L\to\free$
  converging to $L,$ there exists a sequence
  $\varphi_n\in\Mod(G,\Delta)$ such that $f_n\circ\rho\circ\varphi_n$
  is stable and converges to $G.$ Since the abelian \jsj\ ``contains''
  all other \gad s, a $\Mod(G,\Delta)$--strict homomorphisms is
  \latin{a fortiori} $\Mod(G)$--strict, or simply \term{strict}.

  A sequence of epimorphisms $L_0\onto L_1\onto\dotsb\onto L_n$ such
  that each map $L_i\onto L_{i+1}$ is strict is a \term{partial strict
    resolution} of $L_0.$ If $L_n$ is free then the sequence is a
  \term{strict resolution} of $L_0.$ The \term{height} of a (partial)
  strict resolution is its proper length. A (partial) strict
  resolution is \term{proper} if all the epimorphisms appearing are
  proper.
\end{definition}

\cite[Proposition~5.10]{sela::dgog1} asserts that limit groups admit
strict resolutions.

\begin{theorem}[\cite{sela::dgog1,bf::lg}]
\label{thr:strictconditions}
\par Fix a finitely generated group $G$ with \gad\ $\Delta$ and a
limit group $L.$ The following list of conditions is sufficient to
ensure that $\pi\colon G\to L$ is $\Mod(G,\Delta)$--strict.

\begin{itemize}
  \item All edge groups inject.
  \item All \qh\ subgroups have nonabelian image.
  \item All envelopes of rigid vertex groups inject.
  \item If $e$ is an edge of $\Delta$ then at least one of the
    inclusions of $G_e$ into a vertex group of the one edged splitting
    of $G$ induced by $e$ is maximal abelian.
\end{itemize}
\end{theorem}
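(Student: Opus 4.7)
The plan is to verify the definition of $\Mod(G,\Delta)$--strictness directly. Given a sequence $f_n\colon L\to\free$ converging to $L$, we build $\varphi_n\in\Mod(G,\Delta)$ so that $f_n\circ\pi\circ\varphi_n$ is stable with trivial stable kernel, i.e.\ converges to $G$ itself. By a standard diagonal argument it is enough, for each finite subset $F\subset G$, to arrange that $f_n\circ\pi\circ\varphi_n$ is injective on $F$ for all sufficiently large $n$. The modular automorphism $\varphi_n$ will be constructed as a composition of large powers of Dehn twists along the edge groups of $\Delta$, of mapping class elements (high powers of pseudo-Anosovs relative to pants decompositions) supported on each \qh\ vertex group, and of automorphisms of each abelian vertex group $A$ fixing the peripheral subgroup $P(A)$.

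The four hypotheses correspond to the four kinds of syllable in a Bass-Serre normal form for an element of $G$. Edge-group injectivity implies that the Dehn twists along edges of $\Delta$ descend to genuine nontrivial twists whose axes in $L$ acquire unbounded translation length under $f_n$ after raising to high powers. Injectivity of $\Env(R,\Delta)$ for each rigid vertex group $R$ guarantees, first, that $f_n\circ\pi$ is already eventually injective on any prescribed ball of $R$, and, second, that the $L$-centralizers of the images of edge groups incident to $R$ come from $G$, so that Dehn twists along adjacent edges are compatible with the behavior on $R$. Nonabelianity of the image of each \qh\ vertex group, together with the standard pseudo-Anosov test-sequence argument (\latin{cf.}~\cite{bf::lg,sela::dgog1}), permits us to ensure, by taking a high enough power of a suitable pseudo-Anosov, that $f_n\circ\pi\circ\varphi_n$ is injective on any chosen ball of the \qh\ vertex group while still fixing its boundary components and hence the incident edge groups. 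Finally, the hypothesis that at least one inclusion $G_e\hookrightarrow G_v$ is maximal abelian prevents the Dehn twist along $e$ from being absorbed into a vertex automorphism on the $v$-side, so the twist genuinely produces unbounded growth when the normal form crosses $e$.

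Once these four ingredients are in place, the proof reduces to a ping-pong / small-cancellation bookkeeping: an element $g\in G$ in reduced normal form is sent, under $f_n\circ\pi\circ\varphi_n$, to a concatenation of syllable images separated by long twisted edge-group segments whose lengths can be made to dominate the bounded syllable contributions as much as we wish. No two distinct normal forms for elements of $F$ can then have the same image, so $f_n\circ\pi\circ\varphi_n$ is injective on $F$ for $n$ large.

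The main obstacle is the simultaneous choice of twisting parameters across all edges and vertices of $\Delta$, and threading them through a diagonal selection of $n$, so that the injectivity achieved on each finite $F$ can be upgraded to convergence of the whole sequence $f_n\circ\pi\circ\varphi_n$ to $G$. This is a genuine test-sequence construction rather than an application of any single hypothesis in isolation. The maximal-abelian edge hypothesis is what makes the diagonalisation robust: without it, a twist along some edge could be rewritten inside $\Mod(G,\Delta)$ as a correction on a larger abelian subgroup on one side and so fail to deliver the unbounded edge contribution needed to separate distinct normal forms.
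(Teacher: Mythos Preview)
The paper does not prove this theorem; it is stated as a citation of results from Sela~\cite{sela::dgog1} and Bestvina--Feighn~\cite{bf::lg}, and is used throughout the paper as a black box. So there is no ``paper's own proof'' to compare against.

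Your sketch is in the spirit of the original proofs in those references: one builds a test sequence by composing $f_n\circ\pi$ with large modular automorphisms and verifies that the stable kernel is trivial via a normal-form / ping-pong argument in the Bass--Serre tree. A couple of points of imprecision are worth flagging. First, your account of the maximal-abelian edge hypothesis is slightly off: the issue is not that the twist might be ``absorbed into a vertex automorphism,'' but rather that if $G_e$ fails to be maximal abelian on \emph{both} sides then, in the image in the CSA group $L$, elements commuting with $\pi(G_e)$ can appear on both sides of a crossing, and the twisted word can collapse in $L$ despite being in reduced normal form in $G$. Maximal abelianity on one side guarantees that any nontrivial syllable on that side fails to commute with the edge image, so the long twisting segment survives. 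Second, your treatment of abelian vertex groups is thin: the hypotheses do not force an abelian vertex group $A$ to inject under $\pi$ (only $P(A)$ injects, via the envelope condition when $A$ is adjacent to a rigid vertex), and one must separately argue that Dehn twists in the visible one-edged splittings coming from $A$ suffice to recover $A$ in the limit. This is where the actual work in the cited references lies, and your sketch elides it.
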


In particular, if $G$ admits a strict map to a limit group then it is
also a limit group.


\begin{theorem}[\cite{louder::strict}]
  \label{thr:strictlengthbound}
  Let $\free_N\onto L_0\onto\dotsb\onto L_k$ be a sequence of
  proper strict epimorphisms of limit groups. Then $k\leq 3N.$

  If $\free_N\onto L_1\onto\dotsb\onto L_k=\free_M$ is a sequence of
  proper strict epimorphisms, then $k\leq 3(N-M).$
\end{theorem}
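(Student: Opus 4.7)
The plan is to exhibit an integer-valued complexity $c(L)$ on limit groups that (i) is bounded above by $3\rk(L)$ for every limit group $L$, (ii) takes the value $3n$ on $\free_n$, and (iii) strictly decreases along every proper strict epimorphism. Granting these three facts, a chain $\free_N\onto L_0\onto\dotsb\onto L_k$ of proper strict epimorphisms forces the strictly decreasing sequence of integers $3N=c(\free_N)>c(L_0)>\dotsb>c(L_k)\geq 0$, which gives $k\leq 3N$. When $L_k=\free_M$, the terminal value is $c(\free_M)=3M$, yielding $k\leq 3(N-M)$.

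I would build $c(L)$ from the cyclic \jsj\ decompositions of the freely indecomposable free factors of $L$ (taking the trivial \gad\ on free rank summands) as a sum of three nonnegative contributions: the first Betti number of the underlying graph of the \jsj, the total surface contribution $\sum_k(-\chi(\Sigma_k))$ of the \qh\ vertex groups (counted via a pants decomposition), and the total non-peripheral rank $\sum_j(\rk(A_j)-\rk(P(A_j)))$ of the abelian vertex groups, weighted so that each piece of the complexity is bounded by $\rk(L)$. On a rigid vertex group the contribution is defined recursively by induction on rank using Grushko, and the bookkeeping yields $c(L)\leq 3\rk(L)$ with equality for free groups under the chosen normalizations.

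The crux is monotonicity: if $\pi\colon L\onto L'$ is a proper strict epimorphism then $c(L)>c(L')$. The definition of strict provides, for any sequence $f_n\colon L'\to\free$ converging to $L'$, modular automorphisms $\varphi_n\in\Mod(L)$ so that $f_n\circ\pi\circ\varphi_n$ converges to $L$; since $\pi$ is proper, some nontrivial $g\in\ker\pi$ must be recovered by the $\varphi_n$. Unpacking what $\Mod(L)$ can do and applying Theorem~\ref{thr:strictconditions} together with the normalization conventions on the \jsj, one shows that $g$ must either (a)~die in a \qh\ vertex group (corresponding to killing an essential simple closed curve, which strictly simplifies the surface complexity), (b)~reduce the non-peripheral rank of an abelian vertex group while leaving its peripheral subgroup intact, or (c)~make an edge-group inclusion non-injective or collapse an edge, dropping the graph contribution. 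In each case, one component of $c$ strictly decreases and the others do not increase.

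The main obstacle is precisely this monotonicity statement. The strict hypothesis is an approximation statement about sequences of homomorphisms, while the conclusion is a structural statement locating $\ker\pi$ inside the flexible pieces of the \jsj; bridging the two requires extracting from the $\varphi_n$ an honest assertion that $\ker\pi$ is generated by \gad-controlled elements, and then checking that the induced simplification of the \jsj\ of $L$ produces a \gad\ of $L'$ whose complexity is strictly smaller. This step is exactly what the companion paper \cite{louder::strict} is devoted to, and everything else in the present proof reduces to routine counting once the monotonicity lemma is in hand.
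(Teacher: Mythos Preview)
The paper does not actually prove this theorem: it is quoted from the companion paper \cite{louder::strict} (and the paragraph following the statement notes that it also follows from Ould-Houcine's bound on the Cantor--Bendixson rank of the space of marked limit groups). So there is no ``paper's own proof'' to compare against; the present paper treats Theorem~\ref{thr:strictlengthbound} as a black box.

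That said, your sketch has a real gap even as a sketch. The complexity $c$ you describe---Betti number of the underlying graph, total $-\chi$ of \qh\ pieces, total nonperipheral abelian rank, plus a recursive contribution on rigid vertices---is essentially the Scott-type complexity that the present paper builds in Section~\ref{sec:degenerations}, and that complexity is designed to be \emph{nondecreasing under degenerate maps}, not strictly decreasing under strict ones. In particular, on $\free_n$ all three contributions you list vanish (trivial \jsj, no \qh\ or abelian vertices), so your claim $c(\free_n)=3n$ cannot come from those pieces; it has to come entirely from the unspecified ``recursive contribution on rigid vertex groups,'' which you never define. Without that definition there is no way to check either the upper bound $c(L)\le 3\rk(L)$ or the monotonicity.

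The second difficulty is the one you yourself flag: deducing from strictness that $\ker\pi$ is generated by elements that visibly simplify the \jsj. Your trichotomy (a)/(b)/(c) is not how strict maps behave---a strict map \emph{embeds} envelopes of rigid vertex groups and edge groups (Theorem~\ref{thr:strictconditions}), so ``collapsing an edge'' or ``making an edge-group inclusion non-injective'' are precisely the things that \emph{cannot} happen. The actual mechanism in \cite{louder::strict} is different and more delicate; you are right that this is where all the content lives, but your description of it is pointing in the wrong direction.
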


This theorem is also implied by~\cite[Theorem~0.4]{houcine-2008},
where Ould-Houcine shows that the the Cantor-Bendixon rank of the
closure of the space free groups free groups marked by $n$ elements,
points of which are the $n$--generated models of the universal theory
of $\free,$ that is, limit groups of rank $n,$ is bounded. The
Cantor-Bendixon rank of this space is exactly the length of a longest
strict resolution of $\free_n.$ Generalizing this fact to hyperbolic
groups is one of the principle difficulties in generalizing
Theorem~\ref{thr:krulldimension} for limit groups over hyperbolic
groups. The aforementioned theorems rely on linearity of $\free$ in an
essential way.

To completely avoid reliance on Theorem~\ref{thr:rankboundedbybetti}
we need to show that the depth of a limit group is controlled by its
rank.

\begin{theorem}
  \label{lem:depthbound}
  The depth of a limit group $L$ is bounded by $6\rk(L).$
\end{theorem}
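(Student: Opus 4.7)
The plan is to induct on $\rk(L) = n$. The base case $n \leq 1$ is immediate: $L$ is then cyclic, hence elementary, and the cyclic analysis lattice terminates at depth at most $2 \leq 6$.

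For the inductive step, assume the bound for limit groups of rank strictly less than $n$. If $L$ itself is elementary, $\depth(L) \leq 2 \leq 6n$. If $L$ admits a nontrivial Grushko decomposition $L = L_1 \ast \cdots \ast L_p \ast \free_q$ with $p+q \geq 2$, then each freely indecomposable factor $L_i$ has rank strictly less than $n$; by the inductive hypothesis $\depth(L_i) \leq 6(n-1)$, and since $\depth(L) \leq 1 + \max_i \depth(L_i)$, we obtain $\depth(L) \leq 6n - 5$.

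The main case is that $L$ is freely indecomposable and nonelementary, so its cyclic JSJ $\Delta$ is nontrivial. The vertex groups of $\Delta$ fall into three classes: \qh\ vertex groups (surface groups with boundary, hence free and elementary), abelian vertex groups (elementary), and rigid vertex groups. The first two contribute only terminal leaves to the lattice, so the recursion continues only through nonelementary rigid vertex groups $R$. The heart of the argument is then the rank inequality $\rk(R) \leq n-1$. Granted this, induction yields $\depth(R) \leq 6(n-1)$, and since the lattice descends from $L$ to any of its JSJ vertex groups within a small bounded number of levels (at most three, according to the numbering in Definition~\ref{def:depth}), we get $\depth(L) \leq 3 + 6(n-1) = 6n - 3 < 6n$.

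The principal obstacle is establishing the rank inequality $\rk(R) < \rk(L)$ for a nonelementary rigid vertex group $R$ of $\Delta$. My intended approach is to exploit the JSJ normalizations listed just before Definition~\ref{def:depth}: each edge group $E$ incident to $R$ is closed under taking roots in $R$ and maximal abelian on at least one side of its one-edged splitting, while the ambient centralizer $\cent_L(E)$ is realized either in an adjacent abelian vertex group strictly larger than $E$, through a boundary of an adjacent \qh\ piece, or via a stable letter from a cycle of the underlying graph of $\Delta$. In each case, any minimal generating set for $L$ must contain at least one generator not coming from $R$ alone; the alternative $\rk(R) = \rk(L)$ would force the JSJ to collapse, contradicting the assumption that $L$ is freely indecomposable nonelementary. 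Making this Bass-Serre count precise, probably with the help of the strictness criteria of Theorem~\ref{thr:strictconditions} to control how subgroups of $L$ sit inside minimal generating sets, is the technical heart of the proof.
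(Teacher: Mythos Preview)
Your induction hinges entirely on the claim $\rk(R)<\rk(L)$ for a nonelementary rigid vertex group $R$ of the cyclic \jsj, and your proposed justification does not establish it. You observe that a minimal generating set for $L$ cannot lie entirely inside the proper subgroup $R$; this is trivially true, but it does not bound $\rk(R)$. Rank is not monotone under passage to subgroups---$\free_2$ contains subgroups of every finite rank---and there is no general inequality $\rk(G_v)\leq\rk(G)$ for vertex groups of graphs of groups. The \jsj\ normalizations you invoke govern how edge groups sit inside vertex groups and say nothing about the size of a generating set for $R$ itself. For limit groups this rank comparison is genuinely delicate and is not available at this point in the paper: Lemma~\ref{lem:rankheightbound} bounds $\rk$ only in terms of $\betti$ \emph{together with} $\depth$, and the Remark after Theorem~\ref{thr:rankboundedbybetti} explains that the paper is deliberately avoiding any such circularity here.

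The paper's proof sidesteps the rank question altogether by inducting instead on the length of a shortest strict resolution of $L$, which is at most $3\rk(L)$ by Theorem~\ref{thr:strictlengthbound}. Given a strict resolution $L=G_1\onto\cdots\onto G_n$, one restricts it to a freely indecomposable free factor $W$ of a vertex group of the cyclic \jsj\ of a freely indecomposable free factor of $L$: because envelopes of rigid vertex groups embed under strict maps (Theorem~\ref{thr:strictconditions}), the tail of the resolution restricted to the image of $W$ is again a strict resolution of $W$, now strictly shorter. Thus each two levels of descent in the analysis lattice cost at least one step of strict resolution, and $\depth(L)\leq 2\cdot 3\rk(L)=6\rk(L)$.
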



\begin{proof}
  Let $L=G_1\onto G_2\onto\dotsc\onto G_{n\leq 3\rk(L)}$ be a strict
  resolution of $L.$ Let $H_i$ be the freely indecomposable free
  factors of $L.$ The restriction $G_i\vert_{\img(H_i)}=K_i$ of the
  strict resolution to the images of $H_i$ is a strict resolution of
  $H_i.$ Let $V$ be a vertex group of the cyclic \jsj\ of $H_i,$ and
  consider a freely indecomposable free factor $W$ of $V.$ Since the
  edge groups of $\jsj(L)$ contained in the induced decomposition of
  $W$ are elliptic in $\jsj(G_2),$ the envelopes of rigid vertex
  groups of $\jsj(W)$ are contained in envelopes of rigid vertex
  groups of $G_2,$ and are embedded in $K_3$ under the map $K_2\to
  K_3.$ Thus the sequence $K_3\onto\dotsc,$ restricted to the image of
  $W$ is a strict resolution of $W.$ By induction on the length of a
  shortest strict resolution, $W$ has depth at most $6\rk(L)-2.$ Since
  $W$ is at level one in the analysis lattice of $L,$ $L$ has depth at
  most $6\rk(L).$

\end{proof}

\section{Complexity classes of sequences}

\label{sec:compl-class-sequ}

\par Our proof of Theorem~\ref{thr:krulldimension} is by induction. To
do this we need to have a notion of sequence which allows us to attach
a suitable complexity. Unfortunately, it doesn't seem possible to
simply induct on the rank, first betti number, or depth of a sequence.

\par To cope with this we work with pairs of sequences of limit groups
with maps between them, rather than just with sequences of limit
groups. Let $\LL$ and $\mathcal{G}$ be sequences of limit
groups. Suppose that $j\mapsto i_j$ is a monotonically increasing
function from the index set for $\mathcal{G}$ to the index set for
$\LL.$ If, for all $j,$ there is a homomorphism
$\psi_j\colon\mathcal{G}(j)\to\LL(i_j),$ such that for all $j$
and $k$
\[\varphi_{i_j,i_k}\circ\psi_j=\psi_k\circ\varphi_{j,k}\]
we say that \term{$\mathcal{G}$ maps to $\LL$} and that $\psi$
is a \term{map of sequences}. To express this relationship we use the
familiar notation $\psi\colon\mathcal{G}\to\LL.$ If
$\psi\colon\mathcal{G}\to\H$ and
$\psi'\colon\H\to\LL$ then there is a map
$\psi'\circ\psi\colon\mathcal{G}\to\LL.$ If
$\psi\colon\mathcal{G}\to\LL$ then the \term{image} of
$\mathcal{G}$ is the sequence $((\img\mathcal{G})(j)),$
$\img\mathcal{G}(j)\define\psi_j(\mathcal{G}(j)),$ whose maps are the
restrictions of the maps from $\LL.$ Any
$\psi\colon\mathcal{G}\to\LL$ factors as
$\mathcal{G}\onto\img\mathcal{G}\into\LL.$ Since the notation
is unambiguous, we write ``$\img\G(j)$'' for ``$(\img\G)(j)$''.

A map $\psi$ of sequences is an \term{embedding} if every map $\psi_j$
is an embedding. In this case $\psi$ is written in the normal fashion.
Let $\psi\colon\mathcal{G}\into\LL$ be an inclusion. For each
$j$ let $\LL_{\psi}(i_j)$ be the lowest node in the cyclic
analysis lattice of $\LL(i_j)$ containing a conjugate of
$\psi_j(\mathcal{G}(j)),$ and set $d_j$ equal to the depth of
$\LL_{\psi}(i_j).$ Now let the \term{depth} of $\psi$ be
\[d(\psi)\define\max_j\set{d_j}\] If $\rho\colon\H\to\G$ then
$d(\rho)=d(\img(\H)\into\G).$

We will be interested in pairs of sequences which have maps in
both directions. There is a self-map of a sequence (where it is
defined) given by
$\varphi_{i,i+1}\colon\mathcal{G}(i)\to\mathcal{G}(i+1)$ for all
$i.$ This map is a ``shift.'' We denote it by $\varphi_+.$ We now
define ``resolutions,'' which are essentially sequences of
epimorphisms, to which we add groups, and regard the
additional groups as an auxiliary sequence. After this, we define
resolutions of sequences of subgroups, which are our main object of
interest.

Note that we have used the word ``resolution'' in two different
contexts. Firstly, there are resolutions of limit groups, which are
simply sequences of epimorphisms, there are strict resolutions, which
are resolutions in which every map is strict, and now there are
resolutions of sequences. A resolution of a group is always in the
first sense, and a resolution of a sequence is always a resolution in
the third sense.


\begin{definition}[Resolution]
  \label{def:resolutionofasequence}
  A \term{resolution} of a chain $\LL$ of limit groups is a
  chain $\H,$ indexed by $j=1..n,$ equipped with maps
  $\pi_j\colon\H(j)\onto\LL(i_j)$ for all $j,$ and
  $\psi_j\colon\LL(i_j)\onto\H(j+1),$ $j+1\leq n,$
  such that the following diagrams commute:

  \centerline{%
    \xymatrix{%
      \H(j)\ar@{->>}[d]^{\pi_j}\ar@{->>}[r]^{\varphi_{j,j+1}} & \H(j+1)\ar@{->>}[d]^{\pi_{j+1}} \\
      \LL(i_j)\ar@{->>}[r]^{\varphi_{i_j,i_{j+1}}}\ar@{->>}[ur]^{\psi_j} & \LL(i_{j+1})
}}
\end{definition}

If $\LL'$ is a subsequence of $\LL$ obtained by
deleting groups and composing maps then we write
$\LL'\subset\LL.$ Since the maps
$\LL'(j)\to\LL(i_j)$ are injective and surjective, we
write this suggestively as $\LL'\intoonto\LL.$ If
$\LL$ is a chain and $\LL'\intoonto\LL$ then
$\LL$ is \term{finer} than $\LL'$ and $\LL'$
is \term{coarser} than $\LL.$


If $\LL'$ is a sequence of subgroups of $\LL,$ $\LL'(n)<\LL(n),$ then this
relation is expressed by the notation $\LL'<\LL.$ If $\H$ is a
resolution of $\LL,$ then this relation is expressed by the notation
$\H\rto\LL.$ If the map $\H\to\LL$ is $\rho,$ then we
indicate it as a subscript on the ``$\rto$'':
$\H\rto_{\rho}\LL.$ The notation is supposed to evoke the
commutative diagram above. We may also write the reverse
$\LL\boxbslash\H$ to indicate that $\H$ is a
resolution of $\LL.$ We leave the maps implicit unless there is risk of
confusion.

\begin{definition}
  \label{def:basesequence}
  If $\H\rto_{\rho}\LL,$ then $\LL$ is the \term{base sequence}
  of the resolution. The \term{depth} of the resolution is the depth of
  $\rho.$
\end{definition}

Let $\mathcal{G}\into\LL$ be an inclusion. If $\H\rto\G$ is a
resolution then $\H$ is a \term{resolution of a subsequence of
  $\LL$}. We denote this relation by $\H\rto\LL,$ even though the maps
$\LL\to\H$ are only defined on subgroups. The depth of $\H\rto\LL$ is
the depth of $\mathcal{G}\into\LL.$ A resolution of a subsequence is
simply a pair of maps $\pi\colon\H\to\LL,$
$\psi\colon\img(\pi)\onto\H$\footnote{Strictly speaking we must drop
  the last element from $\img(\pi).$}, such that
$\psi\circ\pi=\varphi_+$ and $\pi\circ\psi=\varphi_+,$ where the first
$\varphi_+$ is the shift map for $\H$ and the second $\varphi_+$ is
the shift map for $\img(\pi).$ Notice that we cannot compose
resolutions without changing the sequences. If $\H\rto\mathcal{G}$ and
$\mathcal{G}\rto\LL$ then there is no resolution $\H\rto\mathcal{G},$
though there is a resolution $\H'\rto\LL,$ where $\H'$ is the sequence
obtained by omitting every other group of $\H.$

Suppose $\H\rto\LL$ and $R<\H(n).$ The
sequence of images $\R^n$ defined by
\[\R^n(m)\define\varphi_{n,m}(R)<\H(m),\quad m\geq n\] 
is a resolution of a subsequence of $\LL$ with the induced
maps.

Let $\H\rto_{\rho}\LL$ be a resolution of a subsequence. The
complexity of $\H\rto_{\rho}\LL$ is the quantity
\[
\comp(\H\rto_{\rho}\LL)=(\betti(\H),d(\rho))
\]
The length and proper length of $\H\rto\LL$ are the
length and proper length of $\H,$ respectively.

To compare complexities we use the following partial order:
\[
(b,d)\leq(b',d')\mbox{ if $b\leq b'$ and $d\leq d'$}
\]
The inequality is strict if at least one of the coordinate
inequalities is strict. Rather than have one base case for the
induction, we use a collection of base cases. The complexities
$\set{(b,2)}:{b\in\mathbb{N}}$ are \term{minimal}, and form the base
cases for the inductive proof of Theorem~\ref{thr:subseqkrull}. The
groups in such sequences are free products of elementary limit
groups. We finish this section by showing that
Theorem~\ref{thr:krulldimension} holds for sequences of minimal
complexity.

\begin{lemma}[Krull dimension: very low depth]
  \label{lem:krull-lowdepth}
  Sequences of proper epimorphisms of limit groups with depth at most
  $2$ have length controlled by the first betti number.
\end{lemma}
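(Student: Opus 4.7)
The plan is to observe that depth-at-most-$2$ limit groups are exactly the free products of elementary limit groups, and then to reduce to a finite counting argument combined with Hopficity.

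First I would unpack Definition~\ref{def:alatticeheightbound}. A limit group $L$ has depth at most $2$ precisely when every freely indecomposable Grushko factor of $L$ is terminal in the cyclic analysis lattice, i.e.\ is elementary. So each $\LL(n)$ has the form
\[
\LL(n)=S_1*\dotsb*S_{s_n}*A_1*\dotsb*A_{a_n}*\free_{q_n},
\]
where the $S_i$ are closed surface groups, the $A_j$ are free abelian of rank $\geq 2$, and $\free_{q_n}$ is the free part.

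Next I would use that $\betti$ is non-increasing under epimorphisms (since $H_1$ is right exact), so setting $B\define\betti(\LL(1))$ we have $\betti(\LL(n))\leq B$ for all $n$. Each $S_i$ contributes $2g_i\geq 2$ and each $A_j$ contributes its rank $\geq 2$ to $\betti(\LL(n))$, and $q_n$ contributes $q_n$. Hence the constraint $\betti(\LL(n))\leq B$ caps $s_n$, each $g_i$, $a_n$, each $\rk(A_j)$, and $q_n$ simultaneously. It follows that there are only finitely many isomorphism types of free products of elementary limit groups with $\betti\leq B$; let $N(B)$ be their number.

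Finally I would invoke Hopficity of limit groups (a standard consequence of residual freeness together with finite presentability, Theorem~\ref{thr:basic-prop}) to finish. If some pair $\LL(i)$, $\LL(j)$ with $i<j$ were isomorphic, then precomposing the chain $\LL(i)\onto\dotsb\onto\LL(j)$ with such an isomorphism would give a surjective endomorphism of $\LL(i)$ whose kernel contains the kernel of each proper epimorphism appearing in the subchain, hence is nontrivial, contradicting Hopficity. So the groups $\LL(1),\dotsc,\LL(\Vert\LL\Vert)$ represent distinct isomorphism types, yielding $\Vert\LL\Vert\leq N(B)$.

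The main obstacle is really just the bookkeeping to write down an explicit $N(B)$; conceptually the argument rests on the two ingredients that $\betti$ is monotone under quotients and that limit groups are Hopfian, and neither step requires the deeper $\jsj$ technology developed elsewhere in the paper.
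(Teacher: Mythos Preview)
Your proposal is correct and follows essentially the same approach as the paper's own proof, which is simply a terser version of the same argument: depth-$2$ limit groups are free products of elementary factors, only finitely many such isomorphism types occur below a given $\betti$, and Hopficity forces the groups in a proper chain to be pairwise nonisomorphic. Your expansion of the Hopficity step and the explicit accounting of how each factor contributes to $\betti$ add clarity but introduce no new ideas beyond what the paper sketches.
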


\begin{proof}
  A group of depth $2$ is a free product of free abelian, surface, and
  free groups. There are only $n_b<\infty$ such groups with a given betti
  number $b.$ Since limit groups are Hopfian, for a fixed value of
  $b,$ every sequence of proper epimorphisms of limit groups with
  first betti number $b$ has length at most $n_b.$
\end{proof}

\par This is not really necessary since the analysis of sequences of
minimal complexity is a sub-case of our more general analysis of
sequences of resolutions of limit groups and the observation that any
map with nonabelian image from a nonabelian elementary limit group to
another limit group is strict.

\begin{definition}
  $\seq(\LL,b,d)$ is the set of resolutions of subsequences
  \[\H\rto_{\rho}\LL\] 
  such that
  \[\comp(\H\rto_{\rho}\LL)\leq(b,d)\]
\end{definition}

Theorem~\ref{thr:krulldimension} follows formally from the following
theorem, which is what we aim to prove in this paper.

\begin{theorem}[Krull dimension for resolutions of subsequences]
  \label{thr:subseqkrull}
  There is a function $D(b,d),$ independent of $\LL,$ such that if
  ${\H\rto\LL}\in\seq(\LL,b,d)$ then
  $\Vert\H\Vert_{pl}\leq D.$
\end{theorem}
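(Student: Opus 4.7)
The plan is to induct on the complexity $(b,d)$ under the partial order given above. The base cases are the minimal complexities $(b,2)$: when $d(\rho)\leq 2$, the image of each $\H(j)$ in $\LL$ sits inside an analysis-lattice node of depth at most $2$, that is, inside a free product of elementary limit groups, and since $\pi_j$ is surjective $\H(j)$ is itself such a free product. Lemma~\ref{lem:krull-lowdepth} then bounds $\Vert\H\Vert_{pl}$ by a function of $\betti(\H)\leq b$.

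For the inductive step, assume the theorem for all $(b',d')<(b_0,d_0)$, and fix $\H\rto_\rho\LL$ with $\comp(\H\rto\LL)=(b_0,d_0)$. The main maneuver, reflected in the paper's title, is to work on long stretches of $\H$ on which a common cyclic \jsj\ (more generally, a common \gad) becomes available, compatible with the transition maps $\varphi_{j,j+1}$ of $\H$. Using Lemma~\ref{lem:rankheightbound} together with the combinatorial bounds used in the proof of Theorem~\ref{thr:rankboundedbybetti}, only finitely many combinatorial types of normalized \gad\ occur, so after passing to a sub-resolution whose length deficit is controlled by a function of $(b_0,d_0)$ the underlying graph, the vertex-type labels, and the edge-incidence data can be taken to stabilize along the stretch.

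On such a stretch each rigid or \qh\ vertex group of the common \gad\ yields a derived sequence $\mathcal{V}<\H$ whose $\rho$-image sits at the level of cyclic-\jsj\ vertex groups of $\LL_\rho(\H)$, and hence strictly deeper in the analysis lattice of $\LL$ than $\rho(\H)$ itself. Therefore $d(\mathcal{V}\rto\LL)<d_0$, and the inductive hypothesis produces a uniform bound on $\Vert\mathcal{V}\Vert_{pl}$ depending only on $(b_0,d_0-1)$. To pass from these vertex-group bounds back to a bound on $\Vert\H\Vert_{pl}$, I would interpret those transitions $\H(j)\onto\H(j+1)$ that fix the common vertex groups up to isomorphism as adjunctions of roots to centralizers of abelian edge groups, exactly matching Definition~\ref{def:adjunctionofroots}. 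The resulting triple $(\G,\H,\E)$ has $\betti$, $\depthpc$, and $\Vert\E\Vert$ controlled by $(b_0,d_0)$ and by the inductive vertex-group bounds, so Theorem~\ref{thr:addrootstolimitgroups} bounds its $\ninj$, which in turn bounds the number of non-isomorphism steps in the non-vertex portion of $\H$. Assembling the vertex-group bounds with the adjunction bound yields the desired $D(b_0,d_0)$.

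The hard part will be the alignment itself: arranging a common \gad\ for which (i) the derived vertex-group subsequences genuinely drop in $d(\rho)$ rather than merely in the intrinsic depth of $\H(j)$, and (ii) the complementary abelian data really does fit the formal structure of a sequence of adjunctions of roots whose complexity falls within the scope of Theorem~\ref{thr:addrootstolimitgroups}. Once the alignment is in place, the combination of the two inductive mechanisms above closes the induction formally.
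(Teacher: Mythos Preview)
Your overall architecture---induct on $(b,d)$, handle the base case via Lemma~\ref{lem:krull-lowdepth}, align \jsj's along a long subchain, push vertex-group sequences to lower complexity by induction, and close the gap with Theorem~\ref{thr:addrootstolimitgroups}---matches the paper's. But the sentence ``whose $\rho$-image sits at the level of cyclic-\jsj\ vertex groups of $\LL_\rho(\H)$, and hence strictly deeper in the analysis lattice'' is exactly the assertion that carries the whole weight of the argument, and it is not true after a single pass. A vertex group of $\qcjsj(\H(j))$ maps via $\pi_j$ into $\img(\H)(j)$, but there is no reason its image should lie in a single vertex group of $\jsj_C$ of the relevant node of $\LL$; a priori it can spread across that \jsj. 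The paper spends all of Section~\ref{sec:decrease-comp} on this: Lemma~\ref{lem:decreasecomplexity-maybethisworkseasily} shows that after one round of alignment the vertex groups of the derived $\qcjsj$ land only in \emph{neighborhoods} of vertex groups of $\jsj_C(\img(\G))$, and Theorem~\ref{thr:decreasecomplexity-final} shows that a \emph{second} application of the entire maximal-resolution/alignment machinery is required before the images fall into honest vertex groups and the depth actually drops. Your item~(i) names this as ``the hard part'' but treats it as a technicality; it is the main content.

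A second omission is the reduction to degenerate maps. The stabilization of \gad\ types that you invoke (``only finitely many combinatorial types of normalized \gad\ occur'') is not by itself enough: the Scott-complexity monotonicity underlying all of Sections~\ref{sec:constructstrict}--\ref{sec:qcjsjrespecting} requires that every composition in the chain be \emph{degenerate}, which in turn is manufactured by Theorem~\ref{thr:nostrict} using the strict-length bound. Without first passing to a maximal (degenerate) resolution you cannot conclude that the push-forward of a \gad\ under $\H(j)\onto\H(j+1)$ is again a virtual \jsj, and the alignment collapses. In the paper's logic, Corollary~\ref{cor:reduction-to-indecomposable} and Corollary~\ref{cor:alignmentcorollary} are applied \emph{before} one ever looks at vertex-group sequences, and Remark~\ref{rem:thisistheproof} disposes of the freely decomposable, $c_a>0$, and \qh\ cases along the way; only then does Theorem~\ref{thr:stratified} (your use of Theorem~\ref{thr:addrootstolimitgroups}) combine with Theorem~\ref{thr:decreasecomplexity-final} to finish.
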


Theorem~\ref{thr:krulldimension} is an immediate consequence of
Theorem~\ref{thr:subseqkrull} since the trivial resolution
$\LL\rto_{id}\LL$ is an element of $\seq(\rk(\LL),\betti(\LL),\depth(\LL))$


\section{Degenerate maps}
\label{section::ai}

\par This section exploits Theorem~\ref{thr:strictlengthbound} for our
approach to Krull dimension for limit groups.  Given a sequence
$\LL$ of epimorphisms of limit groups, we construct a
resolution of $\LL$ whose homomorphisms all respect
\jsj\ decompositions. This will occupy sections~\ref{section::ai}
through~\ref{sec:qcjsjrespecting}. The first step in the construction
is to find, given $\LL,$ a resolution $\H$ of
$\LL$ such that every map whose range is in $\H$ is
as far from strict as possible.

\begin{definition}[Degenerate]
  \label{def:degenerate}
  \par An epimorphism $\varphi\colon L\onto L'$ of limit groups
  \term{has a strict factorization} if there is a quotient limit group
  $\psi\colon L\onto L_s$ and a strict morphism $\varphi_s\colon
  L_s\onto L'$ such that $\varphi_s\circ\psi=\varphi.$

  An epimorphism $\varphi\colon L\onto L'$ of limit groups is
  \term{degenerate} if it has no proper strict
  factorizations.
\end{definition}

The philosophy here is that a surjection of limit groups can be
decomposed into a strict resolution preceded by a degenerate map:

\begin{figure}[h]
  \centerline{%
    \xymatrix{%
      {} & G_k\ar[r]^{\mathrm{strict}} & \dotsb\ar[r]^{\mathrm{strict}} & G_2 \ar[r]^{\mathrm{strict}} & G_1\ar[d]^{\mathrm{strict}} \\
      L\ar[rrrr]\ar[urrrr]\ar[urrr]\ar[urr]\ar[ur]^{\mathrm{degenerate}}
      & & & & L' }}
\end{figure}

If $L\to L'$ isn't degenerate, then there is a $G_1$ adding
to the diagram $L\to L'$ as in the figure. Likewise, if $L\to G_1$
isn't degenerate, then there is a strict $G_2$ which can be
added to the figure. Proceeding in this way we find a sequence of
strict resolutions \[\G_k=(G_k\to\dotsb\to G_1)\] Since (partial) strict
resolutions have bounded length (Theorem~\ref{thr:strictlengthbound}),
this procedure terminates in finite time and the last map constructed,
$L\to G_k,$ is degenerate.


A resolution of a subsequence $\H\rto\LL$ is \term{maximal}
if, for all $j<j',$ the maps $\H(j)\to\H(j')$ and
$\img(\H)(i_j)\to\H(j')$ are degenerate. A chain is
\term{degenerate} if all compositions of maps in the chain are
degenerate.

\begin{theorem}
  \label{thr:nostrict}
  Let $\LL$ be a sequence of $N$--generated limit groups.
  Then for all $K$ there exist $M=M(N,K)$ such that if
  $\Vert\LL\Vert_{pl}>M$ then there exists a maximal
  surjective $\H\rto\LL$ such that
  $\Vert\H\Vert_{pl}>K.$
\end{theorem}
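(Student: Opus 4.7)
I'd extract from $\LL$ a long subsequence on which every pairwise composition $\LL(i)\onto\LL(j)$ is already degenerate: such a subsequence $\H$, viewed as a resolution of itself via $\pi_j=\mathrm{id}$ and $\psi_j$ the composed $\LL$-maps, is automatically a maximal $\H\rto\LL$, and its proper length equals the number of proper transitions of $\LL$ preserved by the subsequence.

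For each $i$, I'd let $c_i$ denote the length of the proper strict chain in the iterated ``degenerate-plus-strict'' decomposition of $\LL(i)\onto\LL(n)$ described just before Definition~\ref{def:degenerate}, where $n=\Vert\LL\Vert$. Explicitly,
\[
\LL(i)\onto M_i\onto G^{(i)}_{c_i-1}\onto\cdots\onto G^{(i)}_1\onto\LL(n),
\]
with $\LL(i)\onto M_i$ degenerate and the remaining $c_i$ arrows proper and strict. Theorem~\ref{thr:strictlengthbound} bounds $c_i\in\{0,1,\dots,3N\}$, and $c_i$ is non-increasing in $i$: any tower witnessing $c_{i+1}$ remains a strict tower for $\LL(i)\onto\LL(n)$ after absorbing $\LL(i)\onto\LL(i+1)$ into the degenerate prefix.

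A non-increasing sequence in $\{0,1,\dots,3N\}$ partitions the indices into at most $3N+1$ contiguous plateaus of constancy. Setting $M(N,K):=K(3N+1)$, the hypothesis $\Vert\LL\Vert_{pl}>M$ forces one such plateau $[a,b]$ with $c_i\equiv c$ to contain more than $K$ proper transitions of $\LL$. The main claim is that on this plateau every composition $\LL(i)\onto\LL(j)$, $a\leq i<j\leq b$, is degenerate; granted this, $\H:=\LL\vert_{[a,b]}$ with the trivial resolution data is the required maximal $\H\rto\LL$ with $\Vert\H\Vert_{pl}>K$.

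The claim is proved by contradiction: a proper strict factorization $\LL(i)\onto S\onto\LL(j)$ ought to combine with the maximal strict tail $M_j\onto\cdots\onto\LL(n)$ of the tower for $\LL(j)\onto\LL(n)$ to give a strict tower of length $c+1$ for $\LL(i)\onto\LL(n)$, contradicting $c_i=c$. The main obstacle is the degenerate step $\LL(j)\onto M_j$ that sits between the strict $S\onto\LL(j)$ and the strict tail: a naive concatenation of $S\onto\LL(j)\onto M_j\onto\cdots\onto\LL(n)$ does not a priori produce a proper strict chain. Resolving this requires showing that the composite $\LL(i)\onto M_j$ admits a proper strict factorization through a proper quotient of $\LL(i)$ (extracted from $\LL(i)\onto S\onto\LL(j)\onto M_j$ using the strictness of $S\onto\LL(j)$), which can then be spliced with the strict chain $M_j\onto\cdots\onto\LL(n)$ to yield the tower of length $c+1$; verifying that the relevant composite is indeed proper strict, and not merely a juxtaposition of a strict and a degenerate map, is the delicate point where the technical work concentrates.
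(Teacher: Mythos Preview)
Your proposal has a genuine gap, one you flag yourself but do not close. The claim that a plateau $c_i\equiv c$ forces every $\LL(i)\onto\LL(j)$ in the plateau to be degenerate requires you to manufacture, from a proper strict $S\onto\LL(j)$ and the degenerate $\LL(j)\onto M_j$, a proper strict map into $M_j$ through which $\LL(i)\onto M_j$ factors. There is no mechanism for this: strictness of $S\onto\LL(j)$ gives you control over sequences $f_n\colon\LL(j)\to\free$ converging to $\LL(j)$, but the sequences one obtains by pulling back from $M_j$ converge to $M_j$, not to $\LL(j)$, so the strictness hypothesis never engages. A strict map followed by a degenerate map is in general neither strict nor admits a proper strict factorization, and nothing in your setup rules this out. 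Without this step your pigeonhole on the single invariant $c_i$ (tied to the fixed endpoint $\LL(n)$) cannot force pairwise degeneracy on the plateau.

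The paper avoids this obstruction entirely by working with \emph{all} pairs $(i,j)$ rather than anchoring at $\LL(n)$. It introduces sequences of strict resolutions $\mathscr{S}=(\mathcal{S}_i)$, colors the edge $(i,j)$ according to whether $\mathcal{S}_i(1)\onto\mathcal{S}_j(k_j)$ has a proper strict factorization, and applies the Ramsey theorem. A large black clique gives the desired maximal resolution directly; a large white clique lets one prepend a strict map to \emph{each} $\mathcal{S}_j$, raising the height $h(\mathscr{S})$ by one, and one iterates (at most $3N$ times). The point is that the refinement step never needs to push a strict map past a degenerate one: it simply lengthens each partial strict resolution at its top. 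The cost is that the resulting $M(N,K)$ is an iterated Ramsey number rather than your linear $K(3N{+}1)$; that your bound is so much better than the paper's is itself a warning sign that the argument is attempting something the structure does not support.
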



To prove Theorem~\ref{thr:nostrict} we introduce sequences of strict
resolutions as a formal device.

\begin{definition}
  A \term{sequence of strict resolutions} is a sequence
  $\mathscr{S}=(\mathcal{S}_i)_{i=1..n}$ of proper partial strict
  resolutions
  \[\mathcal{S}_i\define(\mathcal{S}_i(k_i)\onto\dotsb\onto\mathcal{S}_i(1))\] 
  equipped with homomorphisms
  \[\psi_i\colon\mathcal{S}_i(1)\onto\mathcal{S}_{i+1}(k_{i+1})\]
  Notice that we chose to index the partial strict resolutions by
  decreasing, rather than increasing, indices.

  The \term{height} of a sequence of strict resolutions is the length
  of the shortest partial strict resolution appearing in the sequence:
  $h(\mathscr{S})=\min\set{k_i}:{k_i=\Vert\mathcal{S}_i\Vert}.$

  A \term{refinement} of a sequence of strict resolutions
  $\mathscr{S}$ is a sequence of strict resolutions $\mathscr{S}'$
  such that if $\mathcal{S}_i$ is the $i$-th strict resolution in
  $\mathscr{S}$ and $\mathcal{S}'_i$ is the $i$-th strict resolution
  in $\mathscr{S}'$ then the resolutions $\mathcal{S}_i$ and
  $(\mathcal{S}'_i(j))_{j\leq k_i}$ coincide and the composition of
  maps $\mathcal{S}'_{i-1}(1)\to\mathcal{S}'_i(k_i)$ agrees with
  $\psi_i.$ A refinement is \term{proper} if $k'_i>k_i$ for some $i.$

  A \term{subsequence of a sequence of strict resolutions}
  $\mathscr{S}$ is a sequence of strict resolutions $\mathscr{S}'$
  such that $\mathscr{S}'$ is obtained from $\mathscr{S}$ by deleting
  some of the strict resolutions appearing in $\mathscr{S}$ and
  composing maps.

  The \term{length of a sequence of strict resolutions} $\mathscr{S}$
  is the number of strict resolutions appearing, and is denoted by
  $\Vert\mathscr{S}\Vert.$
\end{definition}

\par Subsequences and refinements of sequences of strict resolutions
are illustrated in Figure~\ref{fig:seqstrict}, as is the relationship
between sequences of strict resolutions and resolutions of
sequences. If $\mathscr{S}$ is a sequence of strict resolutions such
that $(\mathcal{S}_i(1))$ appears as a subsequence of subgroups of a
sequence $\LL,$ that is, there is an inclusion
$(\mathcal{S}_i(1))\intoonto\LL,$ then the sequence
$(\mathcal{S}_i(k_i))$ is a resolution of $\LL.$

\begin{figure}[ht]
\centerline{%
  \xymatrix{%
                                &         & L_{i,j}\ar[d]                           &        & L_{j,l}\ar[d] \\
 \mathcal{S}_i(k_i)\ar[rr]\ar[d] &        & \mathcal{S}_j(k_j)\ar[rr]\ar[d]         &        & \mathcal{S}_l(k_l)\ar[d] \\
 \vdots\ar[d]                    &         &  \vdots\ar[d]                          &        & \vdots\ar[d] \\
 \mathcal{S}_i(1)\ar[r]\ar[uurr]_{\mbox{not degenerate}}\ar[uuurr] & \dotsb\ar[r] &  \mathcal{S}_j(1)\ar[r]\ar[uuurr]\ar[uurr]_{\mbox{not degenerate}} & \dotsb\ar[r] & \mathcal{S}_l(1)
}}
\caption{Raising the height.}
\label{fig:seqstrict}
\end{figure}

\begin{lemma}
  \label{lem:noproperrefinements}
  Fix $N.$ For all $K$ there exists $M=M(K,N)$ such that if
  $\mathscr{S}=(\mathcal{S}_i)$ is a sequence of strict resolutions
  and $\rk(\mathcal{S}_1(k_1))=N$ then if $\Vert\mathscr{S}\Vert>M$
  then there is a refinement $\mathscr{S}'$ of a subsequence of
  $\mathscr{S}$ such that $\Vert\mathscr{S}'\Vert\geq K$ and no
  subsequence of $\mathscr{S}'$ admits a proper refinement.
\end{lemma}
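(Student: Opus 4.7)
The plan is an iterated Ramsey-plus-refinement argument, uniformly bounded by Theorem~\ref{thr:strictlengthbound}. Every group appearing in $\mathscr{S}$ is a quotient of $\mathcal{S}_1(k_1)$, hence of rank at most $N$, so every partial strict resolution in $\mathscr{S}$ or in any subsequence-refinement of it has length at most $3N$; in particular each $k_i\leq 3N$. I would first unpack the conclusion: the condition that no subsequence of $\mathscr{S}'$ admits a proper refinement is equivalent to the conjunction of (a) each $\mathcal{S}'_i$ is \emph{top-maximal}, i.e.\ $\mathcal{S}'_i(k'_i)$ is not the target of any proper strict epimorphism from a limit group --- this handles length-one subsequences --- and (b) for every pair $i<j$ in $\mathscr{S}'$, the composition $\mathcal{S}'_i(1) \to \mathcal{S}'_j(k'_j)$ through the intervening groups is degenerate --- this handles length-at-least-two subsequences, since a proper refinement of a subsequence exhibits a proper strict factorization of one of its $\psi$-maps.

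To secure (a) I would preprocess $\mathscr{S}$ position-by-position: at each index iteratively extend $\mathcal{S}_i$ at the top by strict maps until no further extension exists. Theorem~\ref{thr:strictlengthbound} forces this to terminate within $3N-k_i$ steps per position, and the preprocessing does not change the length of $\mathscr{S}$.

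To secure (b) I would iterate a Ramsey argument. For each pair $i<j$ of indices in the preprocessed sequence let $\ell(i,j)\in\{0,1,\ldots,3N\}$ be the length of a maximal strict factorization of the composition $\mathcal{S}_i(1)\to\mathcal{S}_j(k_j)$, and colour the pair by $\ell(i,j)$. For $M$ sufficiently large, Ramsey's theorem applied to this $(3N{+}1)$-colouring of pairs produces a subsequence of length $K_0\gg K$ on which $\ell$ is constantly equal to some value $C$. If $C=0$, every pair composition in that subsequence is already degenerate and, after restoring top-maximality, the subsequence is the desired $\mathscr{S}'$. If $C>0$, I would pass to a further subsequence and refine by inserting, above each $\mathcal{S}_j(k_j)$, a single strict step common to the maximal strict factorizations of all compositions targeting $j$ from earlier indices in the subsequence; a second pigeonhole on the isomorphism type of the first strict step at each target produces such a uniform choice. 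The labels drop uniformly to $C-1$, and after at most $3N$ iterations they reach $0$. The resulting $M(K,N)$ is an iterated Ramsey number of depth roughly $3N$, finite but fast-growing.

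The main obstacle, and the step I expect to be most delicate, is this second pigeonhole: securing a single first strict step above each $\mathcal{S}_j(k_j)$ that simultaneously lifts the $\psi$-maps from every earlier source in the subsequence. This rests on finiteness of the isomorphism types of proper strict one-step extensions of a limit group of bounded rank, so that pigeonhole can reduce to a subsequence with uniform choice at each target; once that finiteness is pinned down and the subsequence losses are bookkept against $K_0$, the entire argument goes through.
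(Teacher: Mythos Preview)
Your skeleton is right---iterate Ramsey against the bound $3N$ from Theorem~\ref{thr:strictlengthbound}---but the step you flag as delicate is in fact a genuine gap, and the repair is much simpler than you suggest.

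The finiteness you invoke fails: for any $N\geq 3$ there are infinitely many isomorphism types of rank-$N$ limit groups admitting a proper strict map onto a fixed target (e.g.\ centralizer extensions $\langle a,b,t\mid[t,w]\rangle\onto\free_2$ as $w$ varies). So no pigeonhole on isomorphism type produces your ``uniform first strict step'' at each $j$. More to the point, you do not need one. A refinement of a subsequence only has to be a sequence of strict resolutions, which means the new top $L_j$ above $\mathcal{S}_j(k_j)$ only has to receive the map from the \emph{immediate predecessor} $\mathcal{S}_{j^-}(1)$ in the subsequence, not from every earlier source. So take $L_j$ to be the top of any maximal strict factorization of $\mathcal{S}_{j^-}(1)\to\mathcal{S}_j(k_j)$; this already defines a refinement. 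For an arbitrary earlier $i<j$ in the subsequence the composition $\mathcal{S}_i(1)\to L_j$ exists automatically (factor through $\mathcal{S}_{j^-}(1)$), and $\ell(\mathcal{S}_i(1)\to L_j)\leq C-1$ since otherwise composing with the strict step $L_j\to\mathcal{S}_j(k_j)$ would give $\ell(i,j)>C$. Re-Ramsey and iterate; the whole ``second pigeonhole'' disappears.

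This is exactly what the paper does, in slightly different clothing. The paper uses a two-colour Ramsey (is $\mathcal{S}_i(1)\to\mathcal{S}_j(k_j)$ degenerate or not?)\ rather than your $(3N{+}1)$-colouring by $\ell$; in the non-degenerate monochromatic case it prepends one strict step to each $\mathcal{S}_j$ using only the consecutive-pair factorization, and observes that the \emph{height} $h(\mathscr{S})=\min_i k_i$ strictly increases. Since $h\leq 3N$, at most $3N$ rounds are needed. Your $\ell$ going down and the paper's $h$ going up are dual bookkeeping for the same induction; neither needs any finiteness of limit groups beyond the length bound on strict resolutions.

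One minor point: your preprocessing for condition~(a) is ill-posed as stated. Extending $\mathcal{S}_i$ at the top by an arbitrary proper strict map need not be compatible with the incoming $\psi_{i-1}$, so the result may no longer be a sequence of strict resolutions; and if you extend only compatibly with $\psi_{i-1}$, you have not achieved the absolute top-maximality you claim. The paper's proof is also loose about length-one subsequences, but the application in Theorem~\ref{thr:nostrict} only uses degeneracy of pair compositions, so this is not a substantive issue.
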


\par We now prove Theorem~\ref{thr:nostrict}, assuming
Lemma~\ref{lem:noproperrefinements}.

\begin{proof}[Proof of Theorem~\ref{thr:nostrict}]
  Choose $K>0,$ and let $\LL$ be a sequence of length $M(K+1,N).$  Let
  $\mathcal{S}_i$ be the trivial partial strict resolution $(\LL(i))$
  consisting of a single group. Set $\mathscr{S}=(\mathcal{S}_i).$  By
  Lemma~\ref{lem:noproperrefinements} there is a refinement
  $\mathscr{S}'$ of a subsequence of $\mathscr{S}$ with
  $\Vert\mathscr{S}'\Vert\geq K+1$ with the property that no
  subsequence of $\mathscr{S}'$ admits a proper refinement. In
  particular, if $\mathscr{S}'=\set{\mathcal{S}'_i}_{i=1..K+1}$ then
  every map $\mathcal{S}'_i(1)\to\mathcal{S}_j(k'_j)$ is degenerate.
  If $i>1$ then $\mathcal{S}'_i(k'_i)\to\mathcal{S}_j(k'_j)$ is
  degenerate, otherwise a subsequence of $\mathscr{S}'$ admits a
  proper refinement since
  $\mathcal{S}'_{i'}(1)\to\mathcal{S}'_j(k'_j)$ factors through
  $\mathcal{S}'_i(k'_i)\to\mathcal{S}'_j(k'_j)$ for $i'<i<j.$ By
  removing the first element of $\mathscr{S}'$ we obtain the desired
  sequence: set $\H(i)=\mathcal{S}'_i(k'_i).$
\end{proof}

\par We now prove Lemma~\ref{lem:noproperrefinements}. The proof
relies on the Ramsey theorem.\footnote{What is actually needed lies
  somewhere between the Ramsey theorem and the pigeonhole principle.}

\begin{theorem}[Ramsey Theorem (see~\cite{graham90})]
  Let $K_n$ be the complete graph on $n$ vertices, and let $M>0.$ Then
  there exists $R(M)$ so that if $n>R(M)$ and the edges of $K_n$ are
  bicolored, then there exists a complete monochromatic subgraph
  $K_M\subset K_n.$
\end{theorem}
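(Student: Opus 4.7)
The plan is to iterate a Ramsey-type argument, exploiting that the heights $k_i$ are uniformly bounded. Each group $\mathcal{S}_i(k_i)$ is a quotient of $\mathcal{S}_1(k_1)$, which has rank $N$; hence $\rk(\mathcal{S}_i(k_i))\leq N$, and Theorem~\ref{thr:strictlengthbound} gives $k_i\leq 3N$ for every $i$. Moreover, if a pair $(i,j)$ admits a proper refinement, the new groups inserted on top of $\mathcal{S}_j(k_j)$ are quotients of $\mathcal{S}_i(1)$ (itself of rank $\leq N$), so this bound persists through refinements.

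I will color each pair $(i,j)$ with $i<j$ by a finite tag recording, for the composition $\mathcal{S}_i(1)\to\mathcal{S}_j(k_j)$: the heights $k_i,k_j\in\{1,\dotsc,3N\}$, and the maximum number of strict steps by which the pair $(\mathcal{S}_i,\mathcal{S}_j)$ can be properly refined at position $j$ (together with enough data about the resulting strict factorization to ensure that monochromaticity produces a consistent simultaneous refinement). Since all these data live in a bounded set, there are finitely many colors. By Ramsey, if $\|\mathscr{S}\|$ is large enough I extract a long monochromatic subsequence. In such a subsequence either (a) no pair admits a proper refinement, or (b) every pair admits a proper refinement with a common refinement datum at the second coordinate.

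In case (a), I claim the extracted subsequence witnesses the conclusion. Any proper refinement of a sub-subsequence would insert at least one strict step on top of some $\mathcal{S}_j(k_j)$, which would factor the incoming map from an earlier $\mathcal{S}_{i}(1)$ through a nontrivial strict quotient — contradicting the degeneracy of that composition. In case (b), I refine the monochromatic subsequence simultaneously — the uniformity of the coloring guarantees that the refinements at each position $j$ coming from different predecessors agree — producing a new sequence of strict resolutions of the same length with strictly larger $\sum_i k_i$. This iteration terminates: $\sum k_i$ is bounded above by $3N$ times the current length, and every refinement step strictly increases it, so after at most $3NK$ rounds one is forced into case (a). Defining $M(K,N)$ to be the Ramsey function iterated $3NK$ times starting at $K$ then delivers the required bound.

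The main obstacle is case (b): a priori, the strict factorization of $\mathcal{S}_i(1)\to\mathcal{S}_j(k_j)$ witnessing refinability depends on $i$, and factorizations witnessed by different $i<j$ need not assemble into a single consistent refinement of $\mathcal{S}_j$. The fix is to build the relevant factorization data into the Ramsey coloring — recording not merely that a strict factorization exists, but enough of its structure at the top of $\mathcal{S}_j$ that monochromaticity forces all incoming maps to factor through the same strict extension. Once this encoding is in place the simultaneous refinement is well-defined and the induction on $\sum k_i$ runs cleanly.
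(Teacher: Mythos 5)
The statement you were asked to prove is the classical Ramsey theorem: any bicoloring of the edges of a sufficiently large complete graph contains a large monochromatic complete subgraph. Your proposal does not address this statement at all --- it is instead an attempted proof of Lemma~\ref{lem:noproperrefinements}, the lemma about sequences of strict resolutions in which the paper \emph{applies} Ramsey's theorem. The paper quotes the Ramsey theorem from the literature (Graham--Rothschild--Spencer) without proof, as is standard; a self-contained proof would go by the usual induction $R(s,t)\leq R(s-1,t)+R(s,t-1)$ via the pigeonhole principle applied to the neighborhood of a single vertex. As written, your text establishes nothing about bicolorings of $K_n$, so it cannot stand as a proof of the stated theorem.

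As a secondary remark, even read as a proof of Lemma~\ref{lem:noproperrefinements} your argument is more complicated than necessary and has a soft spot. The consistency problem you flag in case (b) --- that strict factorizations witnessed by different predecessors $i<j$ might not assemble into one refinement of $\mathcal{S}_j$ --- does not arise in the paper's argument, because each $\mathcal{S}_j$ in the monochromatic subsequence is refined using only the factorization coming from its \emph{immediate} predecessor in the monochromatic vertex set; no simultaneous agreement is needed, so a plain two-coloring (``factors through a proper strict map'' versus ``does not'') suffices. Your termination measure $\sum_i k_i$ is also not well-founded across rounds, since each Ramsey extraction discards terms and can decrease the sum; the paper instead tracks the minimum height $h(\mathscr{S})=\min_i k_i$, which is monotone under passing to subsequences, strictly increases under the white-case refinement, and is bounded by $3N$ via Theorem~\ref{thr:strictlengthbound}, giving termination in at most $3N$ rounds rather than $3NK$.
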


\begin{proof}[Proof of Lemma~\ref{lem:noproperrefinements}]
  The proof is by induction on the height of a sequence of strict
  resolutions. If $\mathscr{S}$ is a sequence of strict resolutions of
  $N$--generated limit groups then $h(\mathscr{S})\leq 3N.$ Let
  $G(\mathscr{S})$ be the complete graph whose vertex set is the index
  set for $\mathscr{S}.$ Color an edge $(i,j),$ $i<j$ white if
  $\mathcal{S}_i(1)\to\mathcal{S}_j(k_j)$ factors through a proper
  strict homomorphism $L_{i,j}\onto\mathcal{S}_j(k_j),$ and black
  otherwise. If $\Vert\mathcal{S}\Vert>R(K+1)$ then there exists a
  complete monochromatic subgraph $G_m$ of $G(\mathscr{S})$ with $K+1$
  vertices. If $G_m$ is colored black, then set $\mathscr{S}'$ to be
  the subsequence of $\mathscr{S}$ indexed by the vertices of $G_m$
  sans first vertex. Then $\mathscr{S}'$ has length $K$ and admits no
  proper refinement.

  If $G_m$ is colored white, then let $\mathscr{S}''$ be the
  refinement of the subsequence of $\mathscr{S}$ indexed by the
  vertices of $G_m,$ and whose strict resolutions are constructed as
  follows: If $i,j\in G_m^{(0)},$ $i<j,$ such that if $l\in G_m^{(0)}$
  then $l\leq i$ or $l\geq j,$ then prepend $L_{i,j}$ to
  $\mathcal{S}_j$ to build a proper strict
  resolution \[L_{i,j}\onto\mathcal{S}_j(k_j)\onto\dotsb\onto\mathcal{S}_j(1)\]

  Remove the first strict resolution from $\mathscr{S}''$ and call the
  resulting sequence of strict resolutions $\mathscr{S}'.$ Since
  $L_{i,j}\onto\mathcal{S}_j(k_j)$ is strict and proper for all
  $j>1,$ $h(\mathscr{S}')>h(\mathscr{S}).$

  If $h(\mathscr{S})=3N$ then $\mathscr{S}$ satisfies the theorem. Set
  $R_1(K)=R(K+1).$ Then if $\mathscr{S}$ has length at least
  $M(N,K)=(R_1)^{3\cdot N}(K)$ it has a subsequence of length
  \mnote{do that thing with the curly brace underneath maybe}
  $(R_1)^{3\cdot N-1}(K)$ admitting a refinement of height at least
  $h(\mathscr{S})+1.$ Inducting on the height (which takes at most $3
  N$ steps by the bound on the length of a strict resolution) we see
  that $\mathscr{S}$ has a subsequence of length $K$ which has a
  refinement which admits no proper refinements.
\end{proof}


\section{Constructing strict homomorphisms}
\label{sec:constructstrict}

\par So far we have only used strictness in a purely formal way,
ignoring its geometric content. In this section we put
Theorem~\ref{thr:strictconditions} to work, and show explicitly how to
factor a homomorphism $G\to H$ of limit groups through a strict
$\Phi_s(G)\to H.$ In section~\ref{sec:degenerations} we construct a
complexity ``$\scott,$'' modeled on the Scott complexity, which is
nondecreasing under degenerate maps and takes boundedly many values
for limit groups with a given first betti number. We then prove a
theorem which says that if equality of $\scott$ holds under a
degenerate map then the \jsj\ decompositions of the groups in question
strongly resemble one another, or are ``aligned''. Combining this with
Theorem~\ref{thr:nostrict} we have a method for aligning \jsj\
decompositions. Before we begin, we give an example which should serve
to motivate the constructions of
subsections~\ref{subsec:freelydecomposable}
and~\ref{subsection::freelyindecomposable}.

\begin{example}
  Let $\varphi\colon G\to H$ be a homomorphism from a finitely
  generated group with a one edged abelian splitting $G=G_1*_EG_2$ to
  a limit group. Let $e\in E$ and let $\tau$ be the generalized Dehn
  twist in $E$ by $e$: $\tau(g)=g$ if $g\in G_1$ and
  $\tau(g)=ege^{-1}$ if $g\in G_2.$ Consider a sequence of
  homomorphisms $f_n\colon H\to\free$ which converges to $H.$ Let
  $g_n$ be the sequence $f_n\circ\varphi\circ\tau^{m(n)}.$ We choose
  $m(n)$ later.

  Pass to a convergent subsequence of $g_n,$ let $\Phi_sG$ be the
  quotient of $G$ by the stable kernel, and let $\eta\colon G\onto
  \Phi_sG$ be the quotient map. Suppose $E$ has trivial image in
  $H.$ Then we define $\Phi_sG$ differently, and declare it to be
  $\varphi(G_1/E)*\varphi(G_2/E).$ Note also that $\tau$ pushes
  forward to an automorphism $\tau'$ of $\Phi_sG.$ If one of $G_1$ or
  $G_2$ has abelian image in $H$ then $\tau$ also pushes forward to
  $\Phi_sG.$

  Suppose that $\tau$ pushes forward to an automorphism $\tau'$ of
  $\Phi_sG$ and that neither $G_1$ nor $G_2$ has abelian image in
  $H.$ Then $\varphi\circ f_n=g_n\circ(\tau')^{-m(n)}\circ\eta.$  Then
  $\ker(\eta)<\ker(\varphi)$ and there is an induced strict
  homomorphism $\Phi_s\varphi \colon \Phi_sG\to H$ whose composition
  with $\eta$ is $\varphi.$ If $m(n)$ is sufficiently large the
  limiting action of $\Phi_sG$ on the $\mathbb{R}$--tree for the
  sequence $g_n$ is simplicial, induces a graph of groups
  decomposition $\Delta$ of $\Phi_sG$ with one edge, and $G_1$ and
  $G_2$ both have ``elliptic'' images in $\Delta$: the graph of groups
  decomposition has the form $\overline{G}_1*_{E'}\overline{G}_2,$
  $G_1$ maps to the envelope of $\overline{G}_1,$ $G_2$ maps to the
  envelope of $\overline{G}_2,$ $E$ maps to the centralizer of $E'$
  and $\eta$ respects incidence and conjugacy data of graphs of
  groups, that is, $\tau$ pushes forward to $\tau'.$
\end{example}

This is a motivating example for our alignment of \jsj\ decompositions
approach to Theorem~\ref{thr:krulldimension}: If the homomorphism from
the example above is degenerate, a condition which can be created
given sufficiently long sequences of epimorphisms of limit groups by
Theorem~\ref{thr:nostrict}, we see roughly that one of three things
can happen to a one-edged splitting of the domain: either the
homomorphism factors through a free product seen by the edge, a vertex
group has abelian image, or the target group splits over the
centralizer of the image of the edge group. The above example uses
limiting actions to suggest ways in which the range of a degenerate
map inherits splittings from the domain. Rather than take this
approach throughout, the kind of information which must be recorded
requires that we manually construct the group $\Phi_sG$ (hence $H$)
from the data $G$ and $\varphi.$

\subsection{Freely decomposable groups} 
\label{subsec:freelydecomposable}

In this subsection we assign to a homomorphism $\varphi\colon G\onto
H$ a strict homomorphism $\Phi_s\varphi\colon \Phi_sG\onto H$ and a
complexity $\scott$ which is nondecreasing for such $G\onto \Phi_sG.$

\begin{definition}[\cite{scottcoherent}]
  \label{def:scottcomplexity}
  Let $\grushko{G}{p}{q}$ be a Grushko factorization of a finitely
  generated group $G.$ The \term{Scott Complexity} of $G$ is the
  lexicographically ordered pair $\scott(G)=(p+q,q).$

  Let $\varphi\colon G\onto H$ be a homomorphism of finitely generated
  groups $G$ and
  $H.$ Then 
  \[\scott(\varphi)\define\max\set{\scott(L/K)}:{\varphi\mbox{ factors through }L/K}\]
\end{definition}

\par Let $G=\grushko{G}{p_{G}}{q_{G}}$ and
$H=\grushko{H}{p_{H}}{q_{H}}.$ Given a homomorphism $\varphi\colon
G\onto H$ we define a quotient $\eta\colon G\onto \Phi_sG,$ and an
induced map $\Phi_s\varphi\colon \Phi_sG\onto H$ such that
$\Phi_s\varphi\circ\eta=\varphi.$ For each freely indecomposable free
factor $G_i$ of $G,$ let $L^i$ be a group with highest Scott
complexity that $\varphi\vert_{G_i}\colon G_i\onto\varphi(G_i)$
factors through.  Set \[\Phi'_sG\define L^1*\dotsb* L^{p_G}*\free_{q_G}\]
Each $L^i$ has a Grushko decomposition $\grushko{L^i}{p_i}{q_i}$;
replace each $L^i_j$ by its image in $H$ and call the resulting group
$\Phi_sG.$ There is an induced map $\Phi_s\varphi\colon \Phi_sG\onto H.$

\par Before we begin show that the Scott complexity behaves well under
degenerate maps, we need a lemma to show that the homomorphism
constructed above is strict.

\begin{lemma}
  \label{lem:strict::freelydecomposable}
  Let $\pi\colon G\to H$ have nonabelian image, $H$ a limit
  group. Suppose $\pi$ is injective on freely indecomposable free
  factors of $G.$ Then $\pi$ is strict. In particular, $G$ is a limit
  group.
\end{lemma}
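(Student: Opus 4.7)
The plan is to verify the hypotheses of Theorem~\ref{thr:strictconditions} applied to the Grushko decomposition of $G$. Writing $G = G_1 * \cdots * G_p * \free_q$, this is a graph of groups $\Delta$ whose vertex groups are the freely indecomposable factors $G_i$ (along with the free summand) and whose edge groups are all trivial. With this structure, edge groups inject vacuously, there are no $\qh$ vertex groups, the envelope of each rigid vertex group $G_i$ is simply $G_i$ itself (trivial edge groups contribute no centralizer enlargement), and the ``at least one maximal abelian incidence'' condition is vacuous. By hypothesis each $\pi\vert_{G_i}$ is injective, so all envelopes inject. Although Theorem~\ref{thr:strictconditions} is formally stated for a \gad\ of a freely indecomposable group, its proof is local at each vertex and edge and adapts verbatim to the Grushko decomposition; alternatively one applies the theorem factor-by-factor to the trivial \gad\ of each $G_i$ to get strictness of each $\pi\vert_{G_i}$ and then combines.

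Concretely, given $f_n\colon H\to\free$ converging to $H$, for each $i$ the restriction $f_n\circ\pi\vert_{G_i}$ converges to $G_i$, because $\pi\vert_{G_i}$ embeds $G_i$ into the limit group $H$, so the stable kernel of $f_n\circ\pi\vert_{G_i}$ in $G_i$ is trivial. To promote these factor-wise convergences to a convergence on all of $G$, I would exploit that $\Mod(G)$ is large on the freely decomposable part: its definition imposes modularity only on the $G_i$, so arbitrary automorphisms of $\free_q$ and Nielsen-type transformations mixing $\free_q$ with the $G_i$ all lie in $\Mod(G)$. Applying suitable $\varphi_n\in\Mod(G)$ of Nielsen type pulls apart the images of the distinct Grushko factors in the limiting $\free$-action, producing a stable sequence with trivial stable kernel on $G$.

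The main obstacle is ruling out collapses across Grushko factors: injectivity on each $G_i$ does not \latin{a priori} guarantee injectivity of the free product under $\pi$. The nonabelian image hypothesis is what does the work here. By commutative transitivity of $H$ (Theorem~\ref{thr:basic-prop}), if any two images $\pi(G_i)$, $\pi(G_j)$ commuted elementwise then $\pi(G)$ would lie in a single maximal abelian subgroup of $H$, contradicting nonabelianness. Thus the $\pi(G_i)$ are arranged non-degenerately in $H$ and the Nielsen-separation argument above provides the required $\varphi_n$. Finally, ``$G$ is a limit group'' follows immediately: the sequence $f_n\circ\pi\circ\varphi_n\colon G\to\free$ has trivial stable kernel, witnessing $G$ as a limit group per Definition~\ref{def:limitgroup}.
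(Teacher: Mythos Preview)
Your invocation of Theorem~\ref{thr:strictconditions} does not go through. A \gad\ is, by Definition~\ref{def:gad}, a decomposition of a \emph{freely indecomposable} group, so the Grushko decomposition is outside its scope; and even waiving that, the fourth bullet of Theorem~\ref{thr:strictconditions} is not vacuous for trivial edge groups --- it \emph{fails}. The trivial subgroup is never maximal abelian in a nontrivial vertex group, so the hypothesis ``at least one inclusion of $G_e$ is maximal abelian'' is violated at every Grushko edge. Applying the theorem factor-by-factor to the trivial \gad\ of each $G_i$ only tells you that each injection $\pi\vert_{G_i}$ is strict, which is immediate and does not assemble into strictness of $\pi$ on $G$.

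Your second paragraph is headed in the right direction --- this is in fact exactly the paper's approach --- but it is missing the content of the argument. Two concrete gaps: first, the hypothesis only gives injectivity on the freely indecomposable $G_i$, so basis elements of the free part $\free_q$ may have trivial image under $\pi$; the paper fixes this by a preliminary Nielsen automorphism $x\mapsto xg$ with $g$ in a nonabelian factor. Second, ``applying suitable $\varphi_n$ of Nielsen type pulls apart the images'' needs to be made precise. The paper's $\varphi_n$ conjugate each factor $G_i$ by widely separated powers $c_n^{m_i}$ of a single element $c_n$ chosen (using nonabelian image) not to commute with any element of the relevant balls; the folklore fact about words of the form $g_0 t^{n_1} g_1 \cdots t^{n_m} g_m$ then forces injectivity on larger and larger finite sets. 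Your commutative transitivity remark does not substitute for this: noncommuting images alone do not prevent cancellation in $\free$, and you still need the explicit separation mechanism.
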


The proof of Lemma~\ref{lem:strict::freelydecomposable} relies on the
following bit of folklore: Let $g_i,$ $i=0,\dotsc,m,$ and $t$ be
nontrivial words in a free group. If $\left[g_i,t\right]\neq 1$ for
all $i,$ then for sufficiently large $\min_i\set{\vert n_i\vert},$ the
word
\[g_0t^{n_1}g_1\dotsb g_{m-1}t^{n_m}g_m\] 
is not trivial.

\begin{proof}[Proof of Lemma~\ref{lem:strict::freelydecomposable}]
  The strategy is to find automorphisms $\phi_n$ of $G$ and a sequence
  of homomorphisms $f_n\colon H\to\free$ converging to $H$ such that
  $\stabker(f_n\circ\pi\circ\phi_n)=\set{1}.$ Express $G$ as a most
  refined free product
  \[G_1*\dotsb*G_{k-1}*G_k*\dotsb*G_{l-1}*G_l*\dotsb*G_{p+q}\] with
  $G_i$ nonabelian for $i\leq k-1,$ noncyclic free abelian for
  $l-1\geq i\geq k,$ and $G_i\cong\zee$ for $i\geq l.$ Suppose a basis
  element $x$ generating some $G_i$ $i\geq l$ has trivial image in
  $H.$ Let $\set{g_{i,j}}$ be a generating set for $G_i.$ Precompose
  $\pi$ by the automorphism which maps $x$ to $xg$ for some $g\in G_1$
  and is the identity on the rest of $G.$ In this way, arrange that no
  element of a fixed basis for the free part of a Grushko
  decomposition of $G$ has trivial image in $H.$ Under the new map
  $\pi,$ every free factor in some fixed Grushko free factorization
  embeds in $H.$

  Let $B_{i,n}$ be the ball of radius $n$ in the Cayley graph of $G_i$
  with respect to $\set{g_{i,j}}.$ For each $n,$ choose a homomorphism
  $f_n\colon H\to\free$ such that $f_n$ has nonabelian image and
  embeds $\pi(B_{i,n})$ for each $i.$

  Suppose that $G_1$ is freely indecomposable nonabelian and that
  $f_n$ has nonabelian image when restricted to $\pi(G_1).$ Since
  $f_n(\pi(G_1))$ is nonabelian there is an element $c_n$ in the $G_1$
  such that $\left[f_n(\pi(c_n)),f_n(\pi(g))\right]\neq 1$ for all
  $g\in\sqcup B_{i,n}.$ Fix $m$ and choose integers $m_i,$ $i>1,$ such
  that $\vert m_i-m_j\vert>m$ for $i\neq j.$ Let $h=h_1\dotsb h_t$ be
  a word in $\sqcup_i B_{i,n}$ such that for all $l,$ $h_l$ and
  $h_{l+1}$ are contained in distinct $B_{i,n}$'s, let $\Omega_{n,s}$
  be the collection of all such words with length at most $s,$ and let
  $i(h_j)$ be the index $i$ such that $h_j\in B_{i(h_j),n}.$ Let
  $\varphi_m$ be the automorphism of $G$ which is the identity on
  $G_1$ and which maps $G_i$ to $c_n^{m_i}G_ic_n^{-m_i}.$ The image of
  $h$ in $\free$ is
  \[f_n\pi(c_n)^{m_{i(h_1)}}f_n\pi(h_1)f_n\pi(c_n)^{-m_{i(h_1)}}f_n\pi(c_n)^{m_{i(h_2)}}f_n\pi(h_2)\dotsb\] Since $h_i$ and $h_{i+1}$ are contained in distinct 
  factors of $G,$ the terms
  \[f_n\pi(c_n)^{-m_{i(h_j)}}f_n\pi(c_n)^{m_{i(h_{j+1})}}\] are at
  least $m$--th powers of the image of $c_n.$ By the folklore
  mentioned prior to this proof, for sufficiently large $m,$
  $f_n\circ\pi\circ\varphi_m(h)\neq 1.$ Since $\Omega_{n,s}$ has
  finitely many elements, choose $m$ large enough so that
  $f_n\circ\pi\circ\varphi_m$ embeds $\Omega_{n,s}.$  Thus we may
  treat $m$ as a function of $n$ and $s.$ The family
  $\Omega_{n,s}$ exhausts $G,$ and since
  $f_n\circ\pi\circ\varphi_{m(n,s)}$ embeds $\Omega_{n,s},$
  \[\Ker_{n,s\to\infty}(f_n\circ\pi\circ\varphi_{m(n,s)})=\set{1}\]
  Therefore $\pi$ is strict by definition.

  Now suppose all indecomposable factors of $G$ are free abelian. Then
  the image of, without loss of generality, $G_1*G_2$ is
  nonabelian. Simply repeat the argument using the factor $G_1*G_2$
  rather than $G_1.$
\end{proof}

\begin{theorem}[Scott complexity is monotone (\cite{swarup::scott})]
  \label{thr:scott::freelydecomposable}
  If $G\onto H$ is a degenerate map of nonabelian limit groups, then
  $\rk(G)\geq\rk(H).$ If $\rk(G)=\rk(H)$ then
  $\scott(G)\leq\scott(H).$ If $\scott(G)=\scott(H)$ then $p_i=1,$
  $q_i=0$ (Definition~\ref{def:scottcomplexity}), and no map
  $G_i\onto\img(G_i)$ is trivial or factors through a free product.
\end{theorem}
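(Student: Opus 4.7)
The plan is to apply the construction of $\Phi_sG$ from the preceding paragraphs and exploit degeneracy. By construction $\varphi = \Phi_s\varphi\circ\eta$ with $\eta\colon G\onto\Phi_sG$ the natural quotient and $\Phi_s\varphi\colon\Phi_sG\to H$ strict by Lemma~\ref{lem:strict::freelydecomposable}: the nonabelianness of $H$ together with surjectivity of $\varphi$ supply the nonabelian-image hypothesis, and injectivity on freely indecomposable free factors is built into the construction. Since $\varphi$ is degenerate, the strict epimorphism $\Phi_s\varphi$ cannot be proper, hence is an isomorphism, so $H\cong\Phi_sG$.

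Part~(1) is immediate from the surjectivity of $\eta$.

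For parts~(2) and (3), I would compute $\scott(H)=\scott(\Phi_sG)$ directly from the construction. The group $\Phi_sG$ is a free product of the subgroups $\img(L^i_j)\leq H$ (over $i=1,\dots,p_G$ and $j=1,\dots,p_i$) together with a free group of rank $q:=q_G+\sum_i q_i$ (contributed by $\free_{q_G}$ and the free parts of the various $L^i$). By additivity of Scott complexity under free products,
\[
\scott(H) \;=\; \sum_{i,j}\scott(\img(L^i_j))\;+\;(q,q).
\]
Maximality in the choice of $L^i$ ensures $\scott(L^i)\geq\scott(G_i)=(1,0)$, hence $p_i+q_i\geq 1$, and ensures no $\img(L^i_j)$ is trivial, hence $p_{i,j}+q_{i,j}\geq 1$ for the Grushko data $(p_{i,j},q_{i,j})$ of $\img(L^i_j)$. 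A direct accounting gives
\[
\sum_{i,j}(p_{i,j}+q_{i,j})+\sum_i q_i \;\geq\; \sum_i(p_i+q_i) \;\geq\; p_G,
\]
so both coordinates of $\scott(H)$ dominate those of $\scott(G)=(p_G+q_G,q_G)$; this yields~(2).

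For~(3), the equality $\scott(G)=\scott(H)$ converts these inequalities into equalities, forcing $q_i=q_{i,j}=0$ and $p_i=p_{i,j}=1$ for all $i,j$. Thus $L^i=L^i_1$ and each $\img(L^i_1)$ are freely indecomposable and nonfree, and by Hopfianity of limit groups (Theorem~\ref{thr:basic-prop}) the surjection $G_i\onto L^i\onto\img(L^i_1)$ is an isomorphism, so $G_i$ embeds in $H$. If $G_i\to\img(G_i)$ were trivial, one could take $L^i=\free_{\rk(G_i)}$ with $\scott(L^i)=(\rk(G_i),\rk(G_i))>(1,0)$; if it factored through a nontrivial free product $A*B$, one could take $L^i=A*B$ with $\scott(L^i)\geq(2,0)>(1,0)$: both alternatives violate $p_i=1,q_i=0$.

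The main obstacle is establishing that a Scott-maximal $L^i$ exists for which every Grushko-factor image $\img(L^i_j)$ is nontrivial; this is what secures the inequality $p_{i,j}+q_{i,j}\geq 1$ used throughout the accounting above, without which the chain of inequalities comparing $\scott(H)$ and $\scott(G)$ could fail.
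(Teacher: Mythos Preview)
Your approach is essentially the paper's, and the obstacle you flag at the end is exactly where you diverge from it. The hypothesis $\rk(G)=\rk(H)$ in part~(2) is there precisely to close that gap, and you never invoke it. If some $\img(L^i_j)$ were trivial then, since $L^i_j$ is a non-cyclic freely indecomposable Grushko factor (so $\rk(L^i_j)\geq 2$), one has $\rk(\Phi_sG)<\rk(G)$, contradicting $\rk(H)=\rk(\Phi_sG)=\rk(G)$. The paper also uses rank equality to rule out $p_i=0$: if $L^i\cong\free_{q_i}$ then Hopfianity of free groups forces $q_i<\rk(G_i)$, again dropping rank. Once nontriviality is secured this way, maximality of $\scott(L^i)$ in fact forces each $\img(L^i_j)$ to be freely indecomposable and non-cyclic (else replace $L^i_j$ by a further free decomposition of its image to raise $\scott(L^i)$), so your auxiliary data collapse to $(p_{i,j},q_{i,j})=(1,0)$ and the accounting simplifies to the paper's $p_H=\sum_ip_i$, $q_H=q_G+\sum_iq_i$.

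Your Hopfianity sentence in part~(3) is both incorrect and unnecessary. Hopfianity says a surjective \emph{endomorphism} is injective; the map $G_i\onto\img(L^i_1)$ is not known to be an endomorphism, and the theorem does not assert that $G_i$ embeds in $H$. What the theorem actually claims---that $G_i\to\img(G_i)$ is nontrivial and does not factor through a free product---is exactly what your final two sentences prove, so simply delete the Hopfianity remark.
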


\begin{proof}
  Clearly $\rk(G)\geq\rk(H).$ If their ranks are equal then no $p_i$
  is zero. Let $r_i=\rk(G^i).$ If $p_i=0$ then the induced map
  $\free_{r_i}\onto\free_{q_i}$ cannot be an isomorphism since free
  groups are Hopfian. Thus if some $p_i=0$ then $\rk(G)>\rk(H).$ If
  some $L^i_j$ has trivial image in $H$ then $\rk(G)>\rk(H),$ thus all
  $L^i_j$ have nontrivial image, and \latin{a fortiori}, all $G_i.$
  Since $\scott(L^i)$ is maximal out of all groups
  $\varphi\vert_{G_i}$ factors through, every $L^i_j$ has freely
  indecomposable image in $H.$

  Since $\Phi_sG\to H$ is injective on freely indecomposable free
  factors and has nonabelian image, by
  Lemma~\ref{lem:strict::freelydecomposable}, it is strict, and since
  $\varphi$ is degenerate, $\Phi_s\varphi$ is an isomorphism. Thus
  $p_{H}=\sum_ip_i$ and $q_{H}=q_{G}+\sum_iq_i.$

  Computing that the Scott complexity of $H$ is at most that of $G$ is
  now easy: \[p_{H}+q_{H}=\sum_ip_i+q_{G}+\sum_iq_i\] Since each
  $p_i\geq 1,$ $p_{H}\geq p_{G}$ and $q_{H}\geq q_{G}.$  Thus
  $p_{H}+q_{H}\geq p_{G}+q_{G}.$ If there is equality in the first
  coordinate then $p_i$ must be $1$ for all $i,$ and if there is
  equality in the second coordinate then $q_i=0$ for all $i.$ Thus
  $\scott(G)\leq\scott(H),$ with equality only if
  $\scott(\varphi\vert_{G_i})=(1,0),$ that is, no restriction
  $G_i\onto\img(G_i)$ factors through a nontrivial free product.
\end{proof}

\par In light of this, we set
$c_{fd}(G)=(\rk(G),\betti(G),-\scott(G)).$ If $G\onto H$ is degenerate
and $c_{fd}(G)=c_{fd}(H)$ then for each $i$ there is a unique $j(i)$
such that $G_{j(i)}\onto H_i$ (up to conjugacy), and
$H\cong*_i\img(G_i)*\free_q.$  By
Theorem~\ref{thr:scott::freelydecomposable}, $c_{fd}$ is a
nonincreasing function under degenerate maps.  If $G_j$ is abelian and
$\varphi(G_j)=H_i,$ then $H_i$ is abelian. If $c_{fd}(G)=c_{fd}(H)$
then for all abelian freely indecomposable free factors $G_{j(i)},$
$\varphi\vert_{G_{j(i)}}\colon G_{j(i)}\onto H_i$ is an isomorphism, and for
nonabelian freely indecomposable free factors, is degenerate.

A chain $\LL$ of limit groups is \term{indecomposable} if no map
$\LL(i)\to\LL(j)$ factors through a free product. Not only are the
groups in an indecomposable sequence indecomposable, all compositions
of maps are too. By Theorem~\ref{thr:nostrict} and the fact that for
limit groups of a fixed rank there are only finitely many values the
Scott complexity can take, we have the following ``alignment
theorem.''

\begin{theorem}[Reduction to indecomposable sequences]
  \label{thr:reduction-to-indecomposable}
  Let $\LL$ be a rank $N$ sequence of epimorphisms of limit groups. For
  all $K$ there exists $M=M(K,N)$ such that if $\Vert\LL\Vert_{pl}\geq
  M$ then there exists a maximal resolution $\wt{\LL}\rto\LL$ such that
  $\Vert\wt{\LL}\Vert_{pl}>K$ and $c_{fd}$ is constant along $\wt{\LL}.$

  In particular, $\wt{\LL}$ splits as a graded free product of
  sequences
  \[\widetilde{\LL}=\widetilde{\LL}_1*\dotsb*\widetilde{\LL}_p*\mathcal{F}\]
  where $\mathcal{F}$ is the constant sequence $(\free_q)$ for some
  $q.$ The sequences $\wt{\LL}_i\rto\LL$ are indecomposable maximal
  resolutions of their images. \hfill \qedsymbol
\end{theorem}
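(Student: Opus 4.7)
The plan is to combine Theorem~\ref{thr:nostrict} with a pigeonhole argument driven by the monotonicity of $c_{fd}$ under degenerate maps. First I observe that $c_{fd}$ takes only finitely many values on limit groups of rank at most $N$: the rank and first Betti number are bounded by $N,$ and the Scott complexity $(p+q, q)$ satisfies $p+q\leq\rk\leq N.$ Let $C=C(N)$ bound the number of possible values of $c_{fd}.$

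Set $M(K,N)$ equal to the constant produced by Theorem~\ref{thr:nostrict} when applied with parameter $CK.$ If $\|\LL\|_{pl}\geq M(K,N),$ that theorem produces a maximal surjective resolution $\H\rto\LL$ with $\|\H\|_{pl}>CK.$ Maximality means that every composition of maps in $\H,$ as well as every map from $\img(\H)$ into a later $\H(j'),$ is degenerate, so Theorem~\ref{thr:scott::freelydecomposable} gives that $c_{fd}$ is nonincreasing along $\H.$ The pigeonhole principle then produces a subsequence $\wt{\LL}$ of $\H$ of proper length at least $K$ on which $c_{fd}$ takes a single value. The maps of $\wt{\LL}$ are compositions of maps of $\H$ with the maps of $\LL,$ and such compositions are degenerate by maximality, so $\wt{\LL}$ is itself a maximal resolution of $\LL.$

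For the free product splitting: whenever $c_{fd}$ is constant across a degenerate map, Theorem~\ref{thr:scott::freelydecomposable} gives that each freely indecomposable free factor $G_i$ of the source maps, up to conjugacy, onto a unique freely indecomposable free factor $H_{j(i)}$ of the target, with the restriction being an isomorphism when $G_i$ is abelian and a degenerate map that does not factor through a free product when $G_i$ is nonabelian; moreover the free parts have the same rank. Applying this at every edge of $\wt{\LL},$ and using that the back-maps $\LL(i_j)\onto\H(j+1)$ of the maximal resolution are also degenerate and have constant $c_{fd},$ one obtains a coherent identification of freely indecomposable free factors along the entire sequence, yielding the factorization $\wt{\LL}=\wt{\LL}_1*\cdots*\wt{\LL}_p*\mathcal{F}$ with $\mathcal{F}$ the constant sequence $(\free_q).$ Each nonabelian factor sequence $\wt{\LL}_i$ is indecomposable: its maps are degenerate with Scott complexity constantly equal to $(1,0),$ and Theorem~\ref{thr:scott::freelydecomposable} then forbids any such map, including any composition, from factoring through a nontrivial free product.

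The main obstacle is the bookkeeping required to propagate the identification of freely indecomposable free factors coherently across the whole sequence, in particular verifying that each $\wt{\LL}_i\rto\LL$ really is a resolution of a subsequence of the corresponding free factor of $\LL,$ with the back-maps as well as the forward maps respecting the Grushko structure. Everything else is extracted formally from Theorem~\ref{thr:nostrict} and Theorem~\ref{thr:scott::freelydecomposable}.
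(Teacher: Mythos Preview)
Your proposal is correct and follows exactly the approach the paper intends. The paper itself supplies no separate proof for this theorem: it is stated as an immediate consequence of Theorem~\ref{thr:nostrict} together with the finiteness of values of $c_{fd}$ (equivalently, Scott complexity) on rank-$N$ limit groups, and the \qedsymbol\ appears right after the statement. You have simply spelled out the pigeonhole step and the bookkeeping for the free product splitting that the paper leaves implicit in the paragraph following Theorem~\ref{thr:scott::freelydecomposable}.
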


\subsection{Freely indecomposable groups} 
\label{subsection::freelyindecomposable}

\par Our goal is to understand degenerate maps of limit groups. In the
previous subsection we saw that for freely decomposable groups, a
degenerate map either decreases the rank, decreases the first betti
number, raises the Scott complexity or respects Grushko
factorizations. Scott complexity is blind to the behavior of
restrictions of degenerate maps to the freely indecomposable factors
in the Grushko decompositions of limit groups. In this subsection we
construct a natural generalization of Scott complexity to
\jsj\ decompositions. In this subsection $\varphi\colon G\onto H$
always has Scott complexity $(1,0),$ that is, $G$ is freely
indecomposable and $\varphi$ doesn't factor through a free product.
Such a map is \term{indecomposable}.

\par In this subsection we mimic the construction of $\Phi_sG$ for
indecomposable homomorphisms: given an indecomposable $\varphi\colon
G\onto H,$ we build a quotient group $\Phi_sG$ of $G$ and a strict
homomorphism $\Phi_sG\onto H$ which $\varphi$ factors through. In
particular, if $\varphi$ is degenerate, then $\Phi_sG\onto H$ is an
isomorphism. The construction of $\Phi_s$ is very explicit, and will
be used in the next section to show that the \jsj\ decomposition of
$\Phi_sG$ strongly resembles the \jsj\ decomposition of $G.$

\par Our main tool is a variation on the Bestvina-Feighn folding
machinery~\cite{bf::bounding}, which is roughly as follows. If $T$ and
$S$ are faithful simplicial $G$-trees and $\phi\colon T\to S$ is a
$G$-equivariant morphism then $\phi$ can be realized as a composition
of elementary folds \french{\`{a} la} Stallings. As in the case of a
free group, one may ignore the equivariance by studying the situation
in the quotient graphs $T/G$ and $S/G.$ The catch for non-free actions
is that vertex and edge stabilizers are not trivial and we must
consider a wider variety of morphisms of graphs of groups. The
elementary folds one needs to consider are listed in in
Figure~\ref{basicfolds}.

\begin{figure}[ht]
  \psfrag{e1e2}{$\group{E_1,E_2}$}
  \psfrag{X}{$X$}
  \psfrag{Y}{$Y$}
  \psfrag{e1}{$E_1$}
  \psfrag{e2}{$E_2$}
  \psfrag{V}{$V$}
  \psfrag{Eg}{$\group{E,g}$}
  \psfrag{Xg}{$\group{X,g}$}
  \psfrag{e}{$E$}
  \psfrag{eg}{$\group{E,g}$}
  \psfrag{pulling}{pulling}
  \psfrag{folding}{folding}
  \psfrag{subgraph}{subgraph}
  \psfrag{type1}{Type I}
  \psfrag{type2}{Type II}
  \psfrag{type3}{Type III}
  \psfrag{type4}{Type IV}
  \psfrag{xye1e2}{$\group{X,Y,E_1,E_2,g}$}
  \psfrag{XY}{$\group{X,Y}$}
  \psfrag{leadsto}{$\rightsquigarrow$}
  \centerline{
    \includegraphics[scale=0.8]{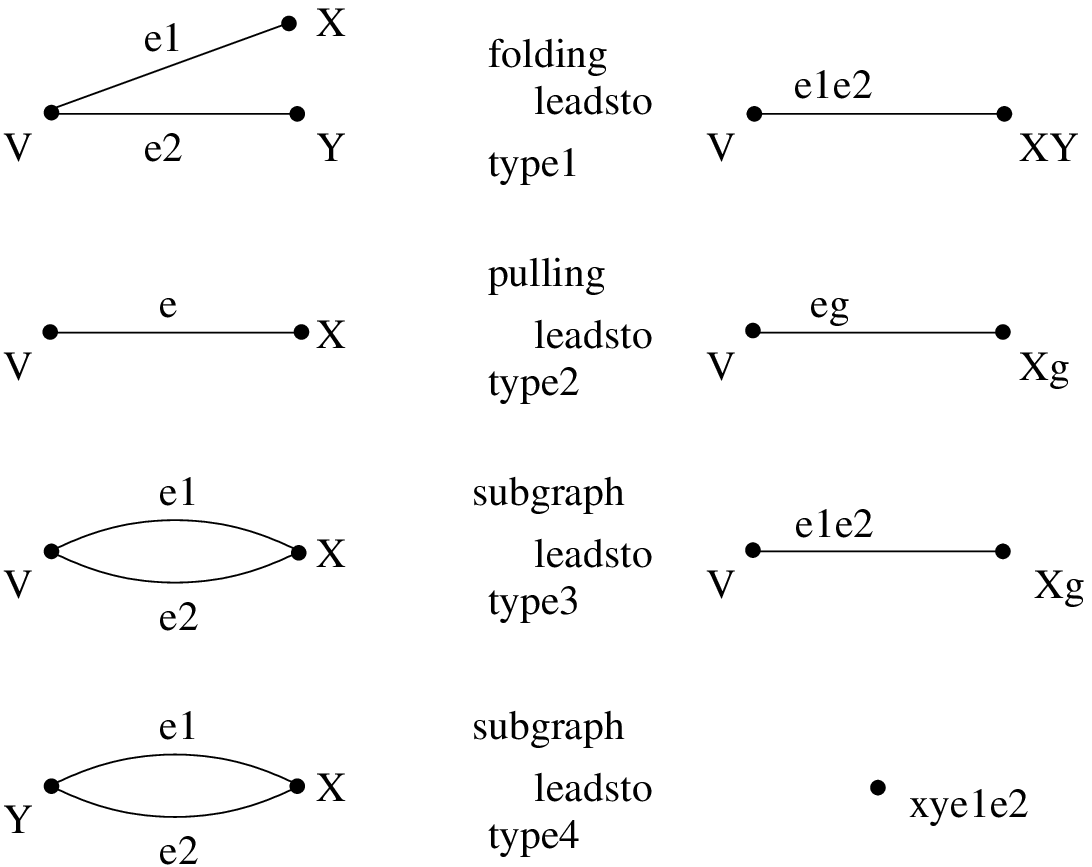}
  }
  \caption{The basic folds. In type III and IV, $g$ is the stable
    letter of the \hnn\ extension. (Lifted from~\cite{bf::bounding}, by
    way of~\cite{dunwoody::folding})}
  \label{basicfolds}
\end{figure}

\par A type I fold is the decorated version of a Stallings fold. Type
II folds have no effect on the quotient graph $T/G,$ but if
$\widetilde{v}$ is a lift of $v$ then the set of lifts of $e$ (as an
oriented edge) adjacent to $v$ are in one to one correspondence with
the cosets of $E$ in $V.$ If $g$ is pulled across the edge then the
cosets of $E$ fall into cosets of $\group{ E,g},$ and $T$ is
folded accordingly. A type III fold is a composition of subdivision
followed by type I and type IV folds.

\par Slightly more general than a type IV fold is the following move,
which we call a type IVB fold: Let $\Delta$ be a graph of groups
decomposition of $G$ and $\Gamma$ a connected subgraph of $\Delta,$
and let $G_{\Gamma}$ be the subgroup of $G$ carried by
$\Gamma.$ Collapse the graph $\Delta$ to form a graph $\Delta/\Gamma$
with distinguished vertex $\gamma$ which is the image of
$\Gamma.$ Assign the group $G_{\Gamma}$ to $\gamma,$ retaining the
labels on all edges and vertices not contained in $\Gamma,$ to form a
graph of groups decomposition $\Delta/\Gamma$ of $G.$ Then
$\Delta\to\Delta/\Gamma$ is a composition of collapses and type IV
folds. If $\Gamma$ is disconnected, it is understood that the move is
carried out component by component.

\par For general $T\to S,$ Dunwoody adds \term{vertex morphisms} to
this list \cite{dunwoody::folding}, wherein one is allowed to pass to
a quotient of a vertex group. We will confine ourselves to a
particular type of vertex morphism, the \term{strict vertex
  morphisms}, tailor-made for producing desirable quotients of limit
groups.


\subsubsection{Almost-strict homomorphisms}%

\par In this sub-subsection we show that if $\varphi\colon G\onto H$ is
indecomposable and $G$ is equipped with a \gad\ $\Delta,$ then
$\varphi$ factors through an \term{almost-strict}
(Definition~\ref{def:almoststrict}) homomorphism $\phias G\onto H$ such
that $\phias G$ has a \gad\ which resembles a degenerated and blown
up $\Delta.$ To begin we show that if an indecomposable homomorphism
of limit groups doesn't factor through a free product then edge groups
don't have trivial image.

\begin{lemma}
  \label{edgeslive}
  Suppose $G$ and $H$ are limit groups, $\varphi\colon G\onto H$
  indecomposable. If $G$ splits nontrivially over an abelian subgroup
  $E$ then $\varphi(E)\neq\set{1}.$
\end{lemma}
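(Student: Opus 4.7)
The plan is to argue by contradiction. Supposing $\varphi(E)=\set{1}$, the map $\varphi$ factors through the quotient $\pi\colon G\onto G/\ncl{E}$, so $\scott(\varphi)\geq\scott(G/\ncl{E})$. My aim is to show $\scott(G/\ncl{E})>(1,0)$, contradicting the hypothesis that $\varphi$ is indecomposable.

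I split into two cases based on the type of the splitting of $G$ over $E$. In the HNN case $G=G_1*_E$ with stable letter $t$, killing $\ncl{E}$ in $G$ also kills both monomorphic images $\phi(E),\psi(E)\subset G_1$ (they are conjugate in $G$ via $t$) and trivializes the defining HNN relations $t\phi(e)t^{-1}=\psi(e)$, yielding
\[
G/\ncl{E}\;\cong\;\bigl(G_1/\ncl{\phi(E)\cup\psi(E)}\bigr)*\group{t}.
\]
Since $t$ generates an infinite cyclic free factor, $\scott(G/\ncl{E})\geq(1,1)>(1,0)$, and the HNN case is done.

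In the amalgamated case $G=G_1*_EG_2$ the analogous calculation gives $G/\ncl{E}\cong G_1/\ncl{E}*G_2/\ncl{E}$. The desired inequality $\scott(G/\ncl{E})\geq(2,0)>(1,0)$ will follow once I show both factors are nontrivial. Nontriviality of the splitting together with torsion-freeness of the limit group $G_i$ forces $E\subsetneq G_i$. If $G_i$ is abelian (hence free abelian, since abelian subgroups of limit groups are free abelian), then $G_i/E$ is nontrivial because $E$ is a proper subgroup of a torsion-free group. If $G_i$ is nonabelian, I will invoke the fact used in the proof of Lemma~\ref{lem:rankheightbound} that the first Betti number of a nonabelian limit group relative to an abelian subgroup is at least one; this forces $G_i^{\mathrm{ab}}/\img(E)$ to have positive rank, and in particular $G_i/\ncl{E}$ to be nontrivial.

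The only subtle step is nontriviality of each factor $G_i/\ncl{E}$ in the amalgamated case, which is handled by the relative first Betti number fact cited above; the remainder of the argument is a presentation-level identification of $G/\ncl{E}$ with a manifestly nontrivial free product, contradicting indecomposability of $\varphi$.
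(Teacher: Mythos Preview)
Your argument is correct and follows essentially the same route as the paper's proof: split into the HNN and amalgamated cases, observe that $\varphi$ factors through the evident free product $G/\ncl{E}$, and use the fact that a nonabelian limit group has positive first Betti number relative to any abelian subgroup to rule out triviality of the factors. The paper phrases the amalgamated case contrapositively (assume one factor is trivial and derive a contradiction), whereas you show directly that both $G_i/\ncl{E}$ are nontrivial, but the content is the same.
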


\begin{proof}
  If the edge associated to $E$ is nonseparating then $\varphi$
  factors through a group of the form $G'*\zee,$ and if the edge is
  separating, then one of two possibilities occurs: Let $G_1$ and
  $G_2$ be the vertex groups of the one edged splitting. If $E$ has
  trivial image then at least one $G_i$ (say $1$) has trivial image in
  $H.$ If this is the case then $\varphi\vert_{G_1}$ factors through
  abelianization of $G_1.$ If $G_1$ is nonabelian, then since limit
  groups have nontrivial homology relative to abelian subgroups,
  $\varphi$ factors through a nontrivial free product $G_1^{ab}/E*G_2/
  E.$ If $G_1$ is abelian, then $E$ must have index $1$ in $G_1,$
  otherwise $G_1/E$ is nontrivial, again contradicting
  indecomposability of $\varphi.$
\end{proof}


\par Before we begin the construction of $\Phi_s$ in earnest, we give
a construction which vaguely resembles factoring through a group with
maximal Scott complexity
(Subsection~\ref{subsec:freelydecomposable}, particularly the
construction in Definition~\ref{def:scottcomplexity}.).


\begin{definition}
  \label{def:vertexscott}
  Let $G$ be a finitely generated group with a graph of groups
  decomposition $\Delta,$ $G\cong G'*_{A_i}B_i,$ $B_i$ abelian. The
  complexity $c_a(G,\Delta)$ is the sum $\sum_i\rk(B_i/A_i).$ Let
  $\E$ be a collection of abelian subgroups of
  $G.$ Then 
  \[c_a(G;\E) \define 
    \max_{\Delta}\set{c_a(G,\Delta)}:{E\in\E\Rightarrow
      \mbox{ $E$ conjugate into some $B_i$ of $\Delta$}}
    \] 
  If $\varphi\colon G\to H,$ $\E$ a family of abelian
  subgroups of $G,$ let
  $\varphi(\E)=\set{\varphi(E)}:{E\in\E}.$ Then
  set 
  \[
    c_a(\varphi,\E) \define 
    \max_{G'}\set{c_a(G',\varphi'(\E))}:{\varphi\mbox{
        factors through a map $\varphi'\colon G\onto G'$}}
    \] 
  If $\varphi\colon G\to H$ then we say that \term{$G'$ realizes
    $c_a(\varphi)$} if $\varphi$ factors through $\varphi'\colon
  G\onto G'$ and
  $c_a(G',\varphi'(\E))=c_a(\varphi,\E)$
\end{definition}

\par As an intermediate step between $G$ and $\Phi_sG$ we construct an
intermediate quotient $\phias G$ and a homomorphism $\phias G\onto H$
which is \term{almost-strict}.

\begin{definition}[Almost-strict]
  \label{def:almoststrict}
  Let $\varphi\colon G\onto H$ be indecomposable, $H$ a limit group,
  and $\Delta$ a \gad\ of $G.$ If $\varphi$ satisfies the following
  axioms it is \term{almost-strict} (with respect to $\Delta$).
  \begin{itemize}
  \item[AS1:] $\varphi$ embeds rigid vertex groups of $\Delta$ in $H.$
  \item[AS2:] $\varphi$ embeds edge groups of $\Delta$ in $H.$
  \item[AS3:] $\varphi(\mbox{\qh\ subgroups})$ are nonabelian.
  \item[AS4:] No rigid vertex group of $\Delta$ has a nontrivial splitting in
    which all incident edge groups are elliptic.
  \item[AS5:] Every edge of $\Delta$ is incident to an abelian vertex
    group.
  \end{itemize}
  To represent an almost-strict homomorphism we use the notation
  $AS(\varphi,G,\Delta,H).$
\end{definition}

\par The reader should note that this list is missing the third and
fourth conditions from Theorem~\ref{thr:strictconditions}. The last
axiom is for technical reasons which appear in the proofs of
Theorem~\ref{ginfinityisalimitgroup},
Lemma~\ref{lem:admissablerefinement}, and in
Definition~\ref{def:jsjrespecting}.

\par We say that a map of a pair $(G,\E)$ to a group $H$ is
\term{indecomposable relative to the collection $\E$}, or
\term{$\E$--indecomposable}, if all images of elements of
$\E$ are nontrivial and it doesn't factor through a free
product in which all images of elements of $\E$ are elliptic.

We now define two types of \term{strict relative quotients} of groups.

\begin{definition}[Type I strict relative quotient]
  \par Let $(R,\E)$ be finitely generated group with a finite
  collection of abelian subgroups $\E,$ and suppose that
  $\varphi\colon R\to H$ is $\E$--indecomposable and that $H$
  is a limit group. Let $\mathscr{R}$ be the collection of quotients
  of $R$ such that for all $R'\in\mathscr{R},$ $\pi\colon R\onto R',$
  $\varphi$ factors through a $\Mod(R',\pi(\E))$--strict
  $\varphi'\colon R'\to H.$

  Define a partial order on $\mathscr{R}$ as follows. If $R_1$ and
  $R_2\in\mathscr{R}$ and the map $R_1\to R$ factors through $R_1\onto
  R_2$ then $R_1\geq R_2,$ with equality if and only if $R_1\to R_2$
  is an isomorphism. By Theorem~\ref{thr:strictlengthbound} $\mathscr{R}$
  has maximal elements. Choose one such maximal element and call it
  $\strict R.$ The quotient group $\strict R$ is a \term{type I strict
    relative quotient} of $R.$
\end{definition}

\par Let $\varphi\colon G\to H$ be an indecomposable homomorphism of
limit groups, $R$ a vertex group of a \gad\ $\Delta$ of $G.$ The
restrictions $\varphi\vert_{R}$ are $\E(R)$--indecomposable.

\begin{definition}[Type II strict relative quotient]
  Let $R$ and $H$ be limit groups and suppose
  $\varphi\colon(R,\E)\to H$ is
  $\E$--indecomposable Choose a group
  $\overline{R}=S*_{A_i}B_i$ realizing $c_a(\varphi,\E(R)).$
  Without loss, we may assume that for all $E\in\E(R),$ by
  introducing a new edge and valence one vertex, both with the group
  $E$ attached, that there is some $B_i$ into which the image of $E$
  is conjugate. Conjugate the collection $\E$ so that each
  element has image contained in some $B_i,$ rather than just being
  conjugate into one. This operation has no effect on $c_a.$ For each
  $E_i\in\E$ let $F_i$ be the kernel of the map $\img(E_i)\to
  H.$ Suppose that the image of $E_i$ is contained in $B_{j(i)}.$ Let
  $F_i'=F_i\cap A_{j(i)}$ and let $I(j)$ be the collection of indices
  $i$ such that $E_i<B_j.$ Now build the quotient groups
  \[\overline{R}''=S'*_{A'_j}B''_j,\quad S'\define S/\group{
  F'_i},\quad A'_j\define A_j/\group{ F'_i}_{i\in
    I(j)},\quad B''_j\define B_j/\group{ F_i}_{i\in I(j)}\] and
  \[\overline{R}'=S'*_{A_j'}B'_j,\quad B'_j\define B_j/\group{ F'_i}_{i\in I(j)}\] The map $\overline{R}\to H$ factors through the obvious maps 
  $\overline{R}\onto\overline{R}'\onto\overline{R}''\to H.$ Now if
  $c_a(S',\set{A_j'})>0$ then
  $c_a(\overline{R}',\img(\E))>c_a(\overline{R},\img(\E)).$ By
  construction, if $E\in\E,$ then the image of $E$ in
  $\overline{R}''$ embeds in $H.$

  Now pass to the type I strict relative quotient of $S'$ with respect
  to the collection $\mathcal{A}'_j.$ Rather than call the quotient
  $\strict S',$ as above, since $S'$ depends only on $R$ and the map
  to $H,$ call the quotient $\strict_{II}(R).$ Call the image of
  $A'_j$ in $\strict_{II}(R)$ $A''_j,$ and let
  $F''_j\define\ker(A'_j\to H).$ Now build the following quotient of
  $R$: \[(\strictr R=)\strictr (R,\E)\define
  \strict_{II}(R)*_{A''_j}(B''_j/F''_j)\] The quotient $\strictr R$ is
  a \term{type II strict relative quotient} of $R.$ By the same
  reasoning as above, $c_a(\strict S,\set{A''_j})=0.$ The groups $A''_j$
  embed in $H.$ The reader must be warned that in general $\strictr R$
  is \emph{not} a limit group. 
\end{definition}

\begin{remark}
  If $c_a(\varphi,\E(R))=0$ then $A_i$ is finite index in
  $B_i$ for all $i$ in the decomposition $\overline{R}=S*_{A_i}B_i.$
  If this is the case then $\strict(R,\E)$ and
  $\strictr(R,\E)$ agree. In light of this, in the event that
  $c_a=0,$ we freely ignore the distinction between type I and type II
  strict relative quotients whenever convenient.
\end{remark}

\par Let $(L,\E)$ be a limit group and let $\jsj(L,\E)$
be its relative abelian \jsj. We make the following normalizations on
$\jsj(L,\E)$:
\begin{itemize}
\item If $E,E'\in\E$ and $\cent_L(E)$ is conjugate to $\cent_L(E')$
  then $\left[E,E'\right]=\set{1}.$ 
  \item The centralizer of $E$ is always an abelian vertex group of
    $\jsj(L,\E).$ This can be accomplished by subdivision or
    introduction of valence one vertex groups, as necessary.
\end{itemize}

\par We are now ready to construct, given an indecomposable map $G\to
H,$ $\Delta$ a \gad\ of $G,$ a quotient $\phias G$ of $G,$ equipped
with a \gad\ $\phias\Delta$ induced by $\Delta,$ and a
$\phias\Delta$--almost strict $\phias G\to H,$ such that the
following diagram commutes.

\centerline{
  \xymatrix{
    & \phias G\ar[d] \\
G\ar[r]\ar@{->>}[ur] & H
}}

Let $G$ be a group with an abelian decomposition $\Delta$ and an
indecomposable map $\pi\colon G\to H,$ $H$ a limit group. Build a
group $\phias G$ with a splitting $\phias\Delta$ by taking the
following quotients of vertex groups of $G.$ For each edge group of
$\Delta,$ pass to the image in $H.$ For each rigid vertex group of
$\Delta$ pass to $\strictr(R,\E(R))$ if $R$ has nonabelian
image in $H,$ otherwise pass to $\strict(R,\E(R)).$  For all
other cases pass to $\strict(V,\E(V)).$ The gluing data from
$\Delta$ descends to gluing data on this new collection of groups.
Call the resulting group $\phias G.$ For each rigid vertex group $R_i$
such that $\varphi(R_i)$ is nonabelian, $\strictr
(R_i,\E(R_i))$ has the form $S_i*_{A_{i,j}}B_{i,j},$ and for
each element $E\in\E(R_i),$ the image of $E$ maps to some
$B_{i,j}.$ Replace $\strictr R_i$ in $\Delta$ by
$S_i*_{A_{i,j}}B_{i,j},$ and for each $i$ refine the splitting on
$S_i$ with the normalized relative \jsj\ decomposition
$\jsj(S_i,\set{A_{i,j}}).$ The resulting decomposition of $\phias G$ is
called $\phias\Delta.$ Note that every nonabelian vertex group
of $\phias\Delta$ is either a nonabelian vertex group from some
$\jsj(S_i,\set{A_{i,j}})$ or is a \qh\ vertex group inherited from
$\Delta.$ Call the induced homomorphism of $\phias G\to H$
$\phias\pi.$

\begin{definition}[Almost-strict factorization]
  \label{strictvertexmorphism}
  The group $\phias G$ with \gad\ $\phias\Delta$ and homomorphism
  $\phias\pi$ from above is an \term{almost-strict factorization of
    $\pi$}.
\end{definition}

\par The induced homomorphism $\phias\pi\colon \phias G\to H,$ may not
be strict, but it is reasonably close: Since the automorphisms in
$\Mod(\strict S,\E(S))$ fix the incident edge groups, they
extend to automorphisms of $\phias G.$  Hence, for every sequence
$f_n\colon H\to\free$ converging to $H,$ there exists a sequence of
automorphisms $\phi_n\in\Mod(\strict R,\E(\strict R)),$
fixing vertex groups up to conjugacy, such that
$f_n\circ\phias\pi\circ\phi_n$ is stably trivial on every vertex group
of $\phias\Delta.$ Thus if $\phias{\pi}$ isn't strict, the failure
must lie elsewhere in the bullets of Theorem~\ref{thr:strictconditions}.

\par The vertices of $\Delta$ came with labels Q, R, and A,
corresponding to quadratically hanging, rigid, and abelian vertex
groups. In the group $\phias G$ we label the vertices of
$\phias\Delta$ as follows.


\begin{itemize}
  \item Label abelian vertex groups `A'.
  \item Label \qh\ vertex groups `Q'.
  \item Label rigid vertex groups coming from the relative
    decompositions $\jsj(\strict S)$ `R'.
\end{itemize}

That $\phias{\pi}\colon\phias G\to H$ is almost-strict follows
immediately from the definitions. The \jsj\ decomposition of $G,$ in
the event that $G$ has one and $\Delta=\jsj(G),$ gives some information
about the \jsj\ decomposition of $H.$ 

\begin{definition}
  If $G$ is a group then $G^{ab}$ is the abelianization of $G$ modulo
  torsion.  If $E_i$ is a collection of subgroups of $G$ then the
  smallest subgroup of $G^{ab}$ which is closed under taking roots and
  contains the images of $E_i$ is the \term{peripheral} subgroup of
  $G^{ab}$ and is denoted by $P(G^{ab}).$ If $A$ is an abelian vertex
  group of a \gad\ then this definition of the peripheral subgroup
  agrees with the original definition.
\end{definition}

\begin{lemma}
  \label{lem:abelianranks}
  Let $\pi\colon G\to H$ be indecomposable, $G$ a limit group,
  $\Delta$ a \gad\ of $G,$ and $G\onto\phias G,$ a
  $\phias\Delta$--almost strict factorization of $\pi$ as above.

  If $V$ is a vertex of $\Delta$ which has abelian image then
  $\strict{V}\cong V^{ab}/\Ker(P(V^{ab})\to H).$ By indecomposability
  of $\pi,$ if $V$ is a \qh\ subgroup then $V$ is a punctured sphere
  or projective plane.

  If $V$ is a \qh\ vertex group which has nonabelian image in $H$
  then $\strict(V,\partial V)\cong (V,\partial V).$

  Suppose that $V$ has a graph of groups decomposition
  $V\cong\Gamma(A_i,F_j)$ over torsion free abelian vertex groups
  $A_i$ and \term{nontrivial} edge groups $F_j.$  Suppose further that
  each edge group of $\Delta$ incident to $V$ is elliptic in $\Gamma.$
  Let $P(A_i)$ be the subgroup of $A_i$ generated by incident edge
  groups (in $\Gamma$) and those elements of $\E(V)$
  conjugate into $A_i.$ Call the underlying graph of $\Gamma(A_i,F_j)$
  $\Gamma$ as well. Then
  \[\sum_i\rk(A_i/P(A_i))+\betti(\Gamma)\leq\rk(\strict V/\group{\img(\E(V))})\]
\end{lemma}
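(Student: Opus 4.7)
The lemma comprises three distinct claims, which I would address in sequence.

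For the first claim (abelian image): since $\pi|_V$ has abelian image, it factors through $V^{ab}$, and $\strict V$ is necessarily a quotient of $V^{ab}$. In the abelian setting, the strictness criterion from Theorem~\ref{thr:strictconditions} applied to the natural splitting of $V^{ab}$ relative to its peripheral subgroup reduces to requiring $P(V^{ab})$ to embed in $H$; maximality of the type~I strict relative quotient then forces $\strict V \cong V^{ab}/\Ker(P(V^{ab}) \to H)$. For the \qh\ subclaim: a surface $\Sigma_V$ of positive genus or with sufficient complexity admits essential simple closed curves disjoint from $\partial\Sigma_V$, and cutting along such a curve yields a factorization of $\pi|_V$ through a free product (separating case) or an \hnn\ extension (nonseparating case) with $\partial V$ elliptic, contradicting indecomposability. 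The only surfaces admitting no such curves are punctured spheres and punctured projective planes.

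For the second claim (\qh\ with nonabelian image): equip $V$ with its trivial \gad, consisting of a single \qh\ vertex with boundary components as edges to valence-one cyclic abelian vertices. Every bullet of Theorem~\ref{thr:strictconditions} is satisfied: boundary components embed in both the surface group and the cyclic valence-one vertices, the \qh\ subgroup has nonabelian image by hypothesis, there are no rigid vertex groups, and boundary inclusions into cyclic valence-one vertex groups are maximal abelian. Thus $V$ itself lies in the collection $\mathscr{R}$ used to define $\strict(V,\partial V)$, and since every element of $\mathscr{R}$ is a quotient of $V$, maximality yields $\strict(V,\partial V) \cong V$.

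For the third claim, my plan is to produce a surjection of $\strict V/\group{\img(\E(V))}$ onto an abelian group of rank $N := \sum_i \rk(A_i/P(A_i)) + \betti(\Gamma)$. At the level of $V$, Mayer--Vietoris for the graph of groups $\Gamma(A_i,F_j)$ yields a surjection
\[
V \onto V^{ab} \onto \bigoplus_i A_i/P(A_i) \oplus \zee^{\betti(\Gamma)},
\]
and by definition of $P(A_i)$ (which contains $\E(V)\cap A_i$), this surjection kills $\E(V)$. The crux is to show the composition factors through $V \onto \strict V$, equivalently, that the kernel of $V \onto \strict V$ lies in the kernel of the surjection above.

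To establish this, I would argue that each of the $N$ distinguished characters $V \to \zee$ extracts information preserved by the strict factorization $\strict V \to H$. Since $\strict V$ is maximal among quotients of $V$ admitting $\Mod$-strict maps to $H$, it suffices to exhibit, for each such character, a quotient of $V$ through which the character factors and which admits a strict map to $H$. For a basis element of $A_i/P(A_i)$ one uses that a nonperipheral direction in an abelian vertex group is a Dehn-twist direction, and thus cannot be forced to collapse by the modular automorphisms appearing in the definition of strictness; for the $\betti(\Gamma)$ loop directions one uses the stable letters of $\Gamma$. The main obstacle is making this formal: it requires tracking the type~I/II strict relative quotient construction to confirm that neither the nonperipheral parts of the $A_i$ nor the stable letters from loops in $\Gamma$ can be quotiented out while retaining both strictness and the limit group property, invoking Theorem~\ref{thr:strictlengthbound} to ensure the construction terminates without sacrificing these $N$ independent characters.
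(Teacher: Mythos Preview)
Your treatment of the first two claims is adequate and in places more explicit than the paper's, which simply cites \cite[Lemma~5.13]{sela::dgog1} for the \qh\ nonabelian case and leaves the abelian-image description of $\strict V$ largely implicit.

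For the third claim your setup via Mayer--Vietoris is correct, but the argument that the surjection $V\onto\zee^N$ factors through $\strict V$ is not completed. Your sketch with ``Dehn-twist directions'' and separate witnessing quotients for each character is vague, and in any case exhibiting a separate quotient of $V$ realizing each individual character does not produce a single quotient realizing all $N$ characters simultaneously, which is what you need.

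The missing observation is that under the hypotheses of the third claim, $V$ automatically has \emph{abelian} image in $H$. Since $\pi$ is indecomposable and the edge groups of $\Delta$ incident to $V$ are elliptic in $\Gamma$, each $F_j$ induces a splitting of $G$, so $\pi(F_j)\neq 1$ by Lemma~\ref{edgeslive}; commutative transitivity in $H$ then forces the images of adjacent $A_i$'s to commute, and connectedness of $\Gamma$ finishes it. Once you know this, your own first claim gives $\strict V\cong V^{ab}/\Ker(P(V^{ab})\to H)$, and since your Mayer--Vietoris surjection already kills $P(V^{ab})$, it kills $\Ker(P(V^{ab})\to H)\subset P(V^{ab})$ and hence factors through $\strict V$ for free. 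No appeal to modular automorphisms or Theorem~\ref{thr:strictlengthbound} is needed.

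The paper reaches the same endpoint by a constructive route rather than a homological one: it picks a maximal tree $T\subset\Gamma$, asserts the inequality for $V_T$, and then uses malnormality of abelian subgroups and commutative transitivity to show that each stable letter $t_j$ from a loop of $\Gamma$ may be adjoined as a commuting generator (since $t_j$ conjugates one nontrivial abelian subgroup to another inside an abelian image). This builds an explicit abelian $V'$ with the required rank and a strict map to $H$, and then invokes maximality to identify $V'$ with $\strict V$. Both arguments ultimately hinge on the same fact you overlooked: the image of $V$ is abelian.
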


\begin{proof}
  If $V$ is \qh\ and has any genus then any simple closed curve
  cutting off a handle has trivial image. Thus $\pi$ factors through a
  free product $\overline{G}*\zee^2.$ Contradiction.

  The statement that \qh\ subgroups with nonabelian image are
  isomorphic to their type I strict relative quotients
  is~\cite[Lemma~5.13]{sela::dgog1}.

  Suppose $V$ is as in the last paragraph of the lemma. By
  indecomposability of $\pi$ and Lemma~\ref{edgeslive}, no $F_j$ has
  trivial image in $H.$ Construct an abelian quotient $\overline V$ of
  $V$ as follows: Let $T$ be a maximal tree in $\Gamma$ and let
  $F_{i_1},\dotsb,F_{i_m}$ be the edge groups of $\Gamma$ not carried
  by edges in $T.$ Let $V_T$ be the subgraph of groups of $V$ obtained
  by restriction to $T,$ and let $\E(V_T)$ be the collection
  $\E(V)$ along with the $F_{i_j}.$ The inequality
  \[\sum_i\rk(A_i/P(A_i))\leq\rk(\strict
  V_T/\group{\img(\E(V)\cup\set{F_{i_j}})})\] holds. Let
  $t_j$ be the stable letter associated to $F_{i_j}.$ Then the image
  of $t_j$ in $H$ conjugates the image of $F_{i_j}$ to another
  subgroup of the image of $(P\strict V_T).$ Since abelian subgroups
  of limit groups are malnormal, the two inclusions $F_{i_j}\to
  \strict V_T$ must agree with one another. Since limit groups are
  commutative transitive, the map $V_T*_{F_{i_j}}\to H$ factors
  through $V_T\oplus\group{ t_j},$ a group which satisfies the
  inequality of the lemma. Repeating over all edge groups $F_{i_j}$ we
  find an abelian quotient $V'$ of $V,$ a
  $\Mod(V',\img(\E(V)))$--strict $V'\to H$ satisfying the
  lemma. Since $V'\to H$ factors through $\strict V\to H,$ $V'\cong
  \strict V$ by maximality.
\end{proof}

\subsubsection{From almost-strict to strict}%

\par Fix $AS(\varphi,G,\Delta,H)$ for the remainder of this section.
We build an infinite sequence of groups $G\onto G_1\onto\dotsb
G_i\dotsb\onto H$ such that each induced map $G_i\onto H$ is almost
strict, closer to strict than the previous homomorphism, and the
direct limit homomorphism $\dirlim G_i\onto H$ is strict, and that for
all but finitely many $i,$ $G_i\onto G_{i+1}$ is an isomorphism.

\par We now define two maps which takes as input almost strict
$AS(\varphi,G,\Delta,H)$ and output almost strict
$AS(\Phi_*\varphi,\Phi_*G,\Phi_*\Delta,H)$ such that the induced
homomorphisms $\Phi_*\varphi\colon\Phi_*G\to H$ are closer to
satisfying the bullets from Theorem~\ref{thr:strictconditions}.

\smallskip
\noindent{\bf Defining $\boldsymbol{\phia}$}

\par First, we take an almost strict $AS(\varphi,G,\Delta,H)$ and
adjust the \gad\ $\Delta.$ Let $\sim_a$ be the equivalence relation
generated by adjacency of abelian vertex groups. Let $\left[A\right]$
be a $\sim_a$ equivalence class and let $\Gamma_{\left[A\right]}$ be
the subgraph of $\Delta$ with vertices from $\left[A\right]$ and edges
connecting members of $\left[A\right].$ Now perform a sequence of type
IVB folds to collapse the subgraphs $\Gamma_{\left[A\right]},$ as
$\left[A\right]$ varies over all $\sim_a$ equivalence classes. Call
the vertex associated to $\left[A\right]$ $v_{\left[A\right]}.$

%

\par Let $\phia G$ be the group obtained from $G$ by passing from
$G_{v_{\left[A\right]}}$ to $\strict G_{v_{\left[A\right]}}$ for each
$\sim_a$ equivalence class $\mathcal{A}.$ The abelian splitting $\phia
\Delta$ of $\phia G$ is the one it inherits from the collapsed
$\Delta.$  The induced map $\phia G\to H$ is denoted by $\phia\varphi.$

\par The induced map of the quotient group $\phia G$ with the \gad\ it
inherits from $G$ is almost-strict.

\smallskip
\noindent{\bf Defining $\boldsymbol{\phir}$}

\par We now define a third homomorphism $\phir,$ in addition to
$\phias$ and $\phia,$ which brings us closer to satisfying the third
bullet of Theorem~\ref{thr:strictconditions}.

\begin{definition}
Let $R_1$ be a limit group, $\E$ a family of nonconjugate abelian
subgroups of $R_1$ such that the relative \jsj\ decomposition
$\jsj(R_1;\E)$ is trivial. An \term{envelope} of $R_1$ is a limit
group $R_2$ such that
\begin{itemize}
\item There is a collection $\set{P_i}_{i\in I}$ of free abelian
  groups such that $R_2$ is a limit quotient of $R_1*_{E_i}P_i,$
\item There is a map from $R_1*_{E_i}P_i$ to a fixed limit group $H$
  called the \term{target}.
\item $R_1\to R_2$ is injective and the map $R_1*_{E_i}P_i\to H$
  factors through the map to $R_2.$
\item The map from $R_2$ to $H$ is $\Mod(R_2;\set{\img(P_i)})$--strict.
\end{itemize}
We call the group $R_1*_{E_i}P_i$ a \term{pre-envelope}. We call $R_1$
the \term{core} of the pre-envelope. The complexity of a pre-envelope
is the ordered triple
\[\left(\sum_i\rk(P_i/E_i),\vert I\vert,m\right)\] Where $m$ is the number of
indices $i$ such that $E_i$ is not maximal abelian in $R_1.$
\end{definition}

\par Our main lemma regarding envelopes is that they can be written
nicely as the output of the process of iteratively adjoining roots
(Definition~\ref{def:iteratedadjunction}) and extension of
centralizers.

\begin{definition}[Iteratively adjoining roots]
  \label{def:iteratedadjunction}
  Let $L$ be a limit group. Then $L'$ is \term{obtained from $L$ by
    iteratively adjoining roots} if there is a finite sequence of
  limit groups $L_i,$ $L_0=L,$ $L_n=L'$ such that
  \begin{itemize}
  \item $L_{i+1}$ is obtained from $L_i$ by adjoining roots to the
    collection $\E_i.$
  \item If $E\in\E_i$ then there
    is an $F\in\E_{i-1}$ such that $E=\cent_{L_i}(\img(F)).$
  \end{itemize}
\end{definition}

\begin{lemma}
  \label{lem:relativerigid}
  Let $R_2$ be an envelope of $R_1,$ with $\Mod(R_2;\set{\img(P_i)})$
  strict homomorphism $\varphi\colon R_2\to H$ which embeds $R_1.$
  Then $R_2$ can be decomposed as an iterated adjunction of roots and
  an extension of centralizers, and can be realized as a quotient
  pre-envelope of the pre-envelope $R_1*_{E_i}P_i$ of lower complexity.

  Let $\set{\mathcal{F}_j}$ be the set of equivalence classes of $E_i$
  in $R_2$ such that $E_i,E_{i'}\in\mathcal{F}_j$ if and only if $E_i$
  and $E_{i'}$ have conjugate centralizers. For each class
  $\mathcal{F}_j$ choose a single representative element $F_j,$ and
  for $j$ let $I_j$ be the set of indices $i$ such that
  $E_i\in\mathcal{F}_j.$ Then there are direct sum decompositions
  $P_i\cong P'_i\oplus C_i$ such that $E_i<C_i,$ quotients $D_j$ of
  $\oplus_{i\in I_j}P'_i,$ and $R_2$ can be written as
  \[
  \overline{R}_1\define R_1\left[\sqrt E_i\right],\quad
  R_2\cong\overline{R}_1*_{\cent_{\overline{R}_1}(F_j)}(\cent_{\overline{R}_1}(F_j)\oplus
  D_j)
  \]
  Moreover, the group $\overline{R}_1$ is a quotient of
  $R_1*_{E_i}C_i,$ has trivial \jsj\ relative to the centralizers of
  the images of the $E_i,$ and $\varphi$ is an embedding.
  \mnote{clumsy also}
\end{lemma}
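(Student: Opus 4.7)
The plan is to separate each $P_i$ into a portion that merely adjoins roots to an element of $R_1$ and a portion that genuinely extends the centralizer of some $E_i$, then reassemble. Since $R_2$ is a limit group and $\img P_i$ lies inside $M_i \define \cent_{R_2}(\img E_i)$, commutative transitivity makes conjugacy of the maximal abelians $M_i$ a well-defined equivalence relation on the index set, and its classes are exactly the $\mathcal{F}_j$; after conjugating the $P_i$ inside $R_2$ we may assume that for every $i \in I_j$ the image of $P_i$ already sits inside a fixed maximal abelian $M_{F_j}$.

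First I would define the splitting $P_i \cong P'_i \oplus C_i$. Inside $M_{F_j}$ let $\overline{E}_i$ denote the closure of $\img E_i$ under taking roots, which is a direct summand of $M_{F_j}$ because maximal abelians in limit groups are finitely generated free abelian. Let $C_i$ be the preimage under $P_i \to M_{F_j}$ of $\overline{E}_i \cap \img P_i$, and choose any direct summand $P'_i$ complementary to $C_i$. This guarantees that $E_i < C_i$, that $\img C_i$ lies in the root closure of $\img E_i$ (which is what allows $C_i$ to be absorbed into an iterated adjunction of roots), and that any abelian data in $P_i$ beyond root closures is recorded entirely in $P'_i$.

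Next I would construct $\overline{R}_1$ by processing the classes $\mathcal{F}_j$ in some order, at stage $j$ amalgamating the current group with all $C_i$, $i \in I_j$, over the current centralizer of $F_j$ and passing to the limit-group quotient through which the map to $H$ factors. Each such step is an adjunction of roots in the sense of Definition~\ref{def:adjunctionofroots}, and because each newly introduced abelian subgroup is a centralizer of the image of one introduced at a previous stage, the whole process is an iterated adjunction of roots in the sense of Definition~\ref{def:iteratedadjunction}. Injectivity of $\overline{R}_1 \hookrightarrow R_2$, and hence the claim that $\varphi$ embeds $\overline{R}_1$, follows from the $M_{F_j}$-splitting together with $\Mod(R_2;\{\img P_i\})$-strictness of the map to $H$ and Theorem~\ref{thr:strictconditions}. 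The envelope $R_2$ is then recovered as $\overline{R}_1 *_{\cent_{\overline{R}_1}(F_j)} \bigl(\cent_{\overline{R}_1}(F_j) \oplus D_j\bigr)$, where $D_j$ is the quotient of $\bigoplus_{i \in I_j} P'_i$ imposed by the gluing inside $M_{F_j}$; writing $R_2$ this way is only a regrouping of the generators and relations of the pre-envelope $R_1 *_{E_i} P_i$.

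The main obstacle will be showing that $\overline{R}_1$ has trivial JSJ relative to the family of centralizers of the images of the $E_i$. If it admitted a nontrivial relative splitting, then because $R_1 \hookrightarrow \overline{R}_1$ and every $E_i$ would be elliptic in such a splitting, restriction would produce a nontrivial splitting of $R_1$ in which the whole family $\E$ is elliptic, contradicting the hypothesis $\jsj(R_1;\E) = \set{1}$; the delicate point is ensuring that the induced splitting of $R_1$ is nontrivial, which is where the choice of root closure in the definition of $C_i$ is essential, since it prevents any cross-class identifications from collapsing the restriction. Once this is secured, the new pre-envelope $\overline{R}_1 *_{\cent_{\overline{R}_1}(F_j)}\bigl(\cent_{\overline{R}_1}(F_j)\oplus D_j\bigr)$ of $\overline{R}_1$ has strictly smaller complexity than $R_1 *_{E_i} P_i$: the number of attaching edges drops from $\vert I \vert$ to the number of classes, and because every new attaching subgroup $\cent_{\overline{R}_1}(F_j)$ is maximal abelian, the third coordinate of the complexity drops to zero.
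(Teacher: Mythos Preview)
Your approach differs from the paper's. The paper argues by induction on the complexity of the pre-envelope, defining three moves: (i) if some $E_i$ is not maximal abelian in $R_1$, replace it by its centralizer in $R_1$; (ii) if two $E_i$'s have conjugate centralizers, merge them; (iii) only when neither (i) nor (ii) applies, adjoin roots by taking the image of $R_1*_{E_i}\overline{C}_i$ in $R_2$ as the new core, where $\overline{C}_i$ is the image of the largest summand of $P_i$ over which $E_i$ has finite index. Each move strictly lowers one coordinate of the complexity, and when the third move fails to lower anything the paper invokes Lemma~\ref{lem:simpleenvelope} to conclude $R_1*_{E_i}P_i\hookrightarrow H$, whence $R_2=R_1*_{E_i}P_i$ already has the desired form. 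You instead attempt the whole construction in one pass.

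Two gaps. First, your claim of \emph{strictly} smaller complexity fails in the base case: if every $E_i$ is already maximal abelian in $R_1$, the centralizers are pairwise non-conjugate, and each $\img E_i$ is already root-closed in $R_2$, then $C_i=E_i$, $\overline{R}_1=R_1$, and none of your stated coordinates drops. You need to isolate and dispatch this case, as the paper does via Lemma~\ref{lem:simpleenvelope}. Second, and more seriously, your argument that $\overline{R}_1$ has trivial relative \jsj\ does not go through as written. A nontrivial relative splitting of $\overline{R}_1$ need not restrict to a nontrivial splitting of $R_1$: nothing you have said rules out $R_1$ being elliptic while some $C_i$ is elliptic at a different vertex, and the phrase about ``cross-class identifications'' does not address this. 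The paper's inductive organization is what makes this step tractable: move (iii) is only applied after (i) and (ii) have arranged that the $E_i$ are maximal abelian and pairwise non-conjugate in the current core, and then the finite-index condition $E_i\hookrightarrow\overline{C}_i$ is precisely what lets one transfer ellipticity of $\overline{C}_i$ back to ellipticity of $E_i$ and argue one step at a time. Your one-shot construction would need a genuine replacement for this inductive control.
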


First we need a basic lemma.

\begin{lemma}
  \label{lem:simpleenvelope}
  Let $H$ be a limit group, $L$ a pre-envelope $G*_{A_i}B_i,$ $A_i$
  maximal abelian in $G,$ closed under taking roots in $B_i,$ and
  $A_i$ not conjugate to $A_j$ for $i\neq j$ in $G.$ Let $\phi\colon
  L\to H$ be a homomorphism such that all restrictions
  $\phi\vert_{B_i}$ and $\phi\vert_G$ are injective. Then $\phi$ is
  injective.
\end{lemma}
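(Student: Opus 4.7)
The plan is to deduce injectivity of $\phi$ by identifying $\phi(L)$ as an iterated extension of centralizers inside $H$, exploiting commutative transitivity of $H$ and malnormality of maximal abelian subgroups in limit groups.

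First, I would prove the intersection identity $\phi(G)\cap\phi(B_i)=\phi(A_i)$ in $H$. Given $h=\phi(g)=\phi(b)$ with $g\in G$ and $b\in B_i$, since $\phi(B_i)$ is an abelian subgroup of $H$ containing $\phi(A_i)$, the element $h$ centralizes $\phi(A_i)$. Thus $[\phi(g),\phi(a)]=1$ for every $a\in A_i$, and injectivity of $\phi|_G$ forces $[g,a]=1$; maximal abelianness of $A_i$ in $G$ gives $g\in A_i$, so $h\in\phi(A_i)$. Similarly, injectivity of $\phi|_G$ immediately shows that $\phi(A_i)$ and $\phi(A_j)$ remain non-conjugate in $\phi(G)$ for $i\neq j$.

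Next, set $C_i=\cent_H(\phi(A_i))$, which is abelian by commutative transitivity of $H$ and contains $\phi(B_i)$. Since $A_i$ is a pure subgroup of the free abelian group $B_i$, the inclusion $\phi(B_i)\hookrightarrow C_i$ is compatible with the inclusion $\phi(A_i)\hookrightarrow C_i$, so by Bass-Serre theory the amalgamation $L=G*_{A_i}B_i$ embeds into the amalgamation $L^\sharp$ obtained by replacing each $B_i$ by $C_i$. Injectivity of $\phi$ on $L$ therefore reduces to showing that the natural surjection $L^\sharp\onto\langle\phi(G),C_1,\dotsc,C_n\rangle\subset H$ is an isomorphism, i.e., that $L^\sharp$ is realized as an iterated extension of centralizers inside $H$.

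I would prove this last claim by induction on $n$, building $L^\sharp$ one centralizer at a time. At each step one verifies that the newly adjoined $\phi(A_{k+1})$ remains maximal abelian in the partial amalgamation $L^\sharp_k$ (a Bass-Serre argument using the non-conjugacy of the $A_i$'s), and that the single-edge amalgamation $\langle L^\sharp_k,C_{k+1}\rangle\cong L^\sharp_k*_{\phi(A_{k+1})}C_{k+1}$ embeds into $H$. The latter is a normal-form argument: a reduced word in the amalgamation of positive translation length on the associated Bass-Serre tree cannot collapse in $H$, because commutative transitivity combined with malnormality of $\phi(A_{k+1})$ in $L^\sharp_k$ precludes the coincidences that would permit a Britton-type reduction. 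The main obstacle is this single-edge embedding step: what makes it delicate, rather than formal, is that the subgroup $\langle L^\sharp_k,C_{k+1}\rangle$ of $H$ might \emph{a priori} satisfy extra relations not present in the abstract amalgamation, and ruling this out is precisely the content of the extension-of-centralizers construction for limit groups.
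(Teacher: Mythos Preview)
Your overall strategy is reasonable and your first two paragraphs are correct, but the argument has a genuine gap at exactly the point you flag as ``the main obstacle,'' and your induction on $n$ does not close it.

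The paper's sketch is different from yours: it invokes ``normal forms, induction on the height of the analysis lattice, and the fact that if two elements of a group are nonconjugate then they remain nonconjugate after extension of centralizers.'' The key structural input is an induction on the depth of $H$: one pulls back a splitting of $H$ (free or cyclic \jsj) to a splitting of $L$ via $\phi$, observes that vertex and edge groups of $L$ land in vertex groups of $H$ of strictly smaller depth, and applies the inductive hypothesis there. Your approach instead tries to prove the single-edge embedding $L^\sharp_k *_{\phi(A_{k+1})} C_{k+1}\hookrightarrow H$ directly from \textsc{CSA}-type properties, with no induction on $H$.

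The problem is that this single-edge step is exactly the content of the lemma, and your induction on $n$ does not reduce it: at stage $k+1$ you face the same problem with $L^\sharp_k$ in the role of $G$. Your justification---``commutative transitivity combined with malnormality of $\phi(A_{k+1})$ in $L^\sharp_k$ precludes the coincidences that would permit a Britton-type reduction''---is not a proof. Malnormality of $\phi(A_{k+1})$ in the subgroup $L^\sharp_k$, together with malnormality of $C_{k+1}$ in $H$ and the intersection condition $L^\sharp_k\cap C_{k+1}=\phi(A_{k+1})$, does not by itself force $\langle L^\sharp_k,C_{k+1}\rangle$ to be the abstract amalgam: a reduced alternating word $g_1c_1g_2c_2\cdots$ could in principle satisfy a relation in $H$ without any individual syllable lying in the wrong factor, and ruling this out requires information about how $L^\sharp_k$ sits inside $H$, not just the pairwise intersections. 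The induction on the analysis lattice supplies precisely this missing information by giving $H$ a tree on which to play the normal-form argument. Passing from $B_i$ to the full centralizer $C_i$ does not help either, since $L^\sharp$ has the same shape as $L$.
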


The argument is standard, and follows from normal forms, induction on
the height of the analysis lattice, and the fact that if two elements
of a group are nonconjugate then they remain nonconjugate after
extension of centralizers.

\begin{proof}[Proof of Lemma~\ref{lem:relativerigid}]
  We define three homomorphisms of pre-envelopes. Each homomorphism
  yields a pre-envelope of lower complexity, or the pre-envelope
  embeds in the target.

  Suppose $E_i$ is not maximal abelian in $R_1.$ Let $C_i$ be the
  smallest direct summand of $P_i$ containing $E_i,$ and let $D_i$ be
  a complimentary direct summand. By commutative transitivity the map
  $R_1*_{E_i}P_i\to R_2$ factors through
  \[P_{j\neq
    i}*_{E_j}R_1*_{\cent_{R_1}(E_i)}((\cent_{R_1}(E_i)*_{E_i}C_i)^{ab}\oplus D_i)
  \] The last coordinate of the complexity decreases.

  If, say, $E_1$ and $E_2$ have conjugate centralizers, then the map
  $R_1*_{E_i}P_i\to H$ factors through
  \[R'_1\define P_{j>2}*_{E_{j>2}}R_1*_{\cent_{R_1}(E_1)}(\cent_{R_1}(E_1)\oplus
  D_1\oplus D_2)\] In this case the pre-envelope $R'_1$ has lower
  complexity than $R_1,$ as the second coordinate of the complexity
  strictly decreases.

  We now define the third quotient of a pre-envelope. This procedure
  is only to be applied if the previous two cannot.

  Let $C_i$ be the largest direct summand of $P_i$ such that the image
  of $E_i$ has finite index image in the image of $C_i,$ let $D_i$ be
  a complimentary direct summand, and let $\overline{C}_i$ be the
  image of $C_i$ in $R_2.$ Let $R'_1$ be the image of
  $R_1*_{E_i}*\overline{C}_i$ in $R_2.$ Since $E_i$ is finite index in
  $\overline{C}_i,$ the relative \jsj\ decomposition of $R'_1$ is also
  trivial and $R'_1$ must embed in $R_2$ and the target $H.$

  Then $R_2$ is a limit quotient of a new pre-envelope
  \[R''_1\define R'_1*_{\overline{C}_i}(\overline{C}_i\oplus D_i)\]
  Since $C_i$ contains $E_i,$ the complexity of $R''_1$ is at most
  that of $R'_1*_{E_i}P_i,$ and the core $R'_1$ is obtained from $R_1$
  by adjoining roots. If the complexity doesn't decrease, then each
  $P_i$ must embed in $R_2,$ each $E_i$ must be maximal abelian in
  $R_1,$ and by Lemma~\ref{lem:simpleenvelope}, $R_1*_{E_i}P_i$ embeds in
  the target $H,$ $R_2=R_1*_{E_i}P_i,$ and $R_2$ trivially has the
  structure from the lemma.
\end{proof}

\par Let $AS(\varphi,G,\Delta,H)$ be almost strict. The homomorphism
$\varphi$ may not be strict because it may not embed envelopes of
rigid vertex groups. We first adjust $\Delta.$ Let $e$ be an edge
incident to a rigid vertex group $R,$ $\tau(e)=r,$ and let $A$ be the
abelian vertex group attached to $\iota(e)$ and provided by axiom AS5.
Subdivide $e$ and pull $P(A)$ across the newly introduced edge
adjacent to $A.$ Repeat for all edges incident to rigid vertex groups,
and call the new abelian decomposition of $G$ $\Delta'.$ Let $A(E)$ be
the abelian vertex group adjacent to and centralizing $E.$ For each
rigid vertex group of $\Delta',$ the star of $R$ has the following
form
\[
\st(R)\cong R*_{E\in\E(R)}P(A(E))
\]
Now replace $\st(R)$ by its type~I strict relative quotient
$\strict(\st R,\set{P(A_i)}).$ By Lemma~\ref{lem:relativerigid}
\[
\strict(\st R,\set{P(A_i)})\cong\overline{R}*_{\cent_(F_j)}(C(F_j)\oplus D_j)
\] and each $P(A_i)$ has image contained in some $C(F_j)\oplus D_j.$
Since $P(A_i)$ embeds in $H$ it embeds in $C(F_j)\oplus D_j.$ Repeat
this process for all rigid vertex groups of $\Delta'$ and call the
quotient group $\phir G,$ the induced decomposition $\phir\Delta,$ and
the induced homomorphism $\phir\varphi.$  That the tuple
$(\phir\varphi,\phir G,\phir\Delta,H)$ is almost strict follows
immediately from the definitions.

\par We can now define the direct limit promised at the beginning of
this sub-subsection. Let $G\to H$ be a homomorphism of limit groups
which doesn't factor through a free product. Let $G_i$ be the
group \[ G_0=\phias G,\quad G_{2n+1}=\phia G_{2n},\quad G_{2n}=\phir
G_{2n-1}\] Similarly define $\Delta_{2n+1}$ and $\Delta_{2n+2}.$ Let
$G_{\infty}$ be the direct limit of the sequence $(G_n).$ The
homomorphism $G\to H$ factors through $G_{\infty}.$

\begin{theorem}
  \label{ginfinityisalimitgroup}
  The direct limit $G_{\infty}$ is a limit group, $G_n\onto G_{n+1}$
  is an isomorphism for all but finitely many $n,$ and the rigid
  vertex groups in the decomposition of $G_{\infty}$ induced by the
  \jsj\ decomposition of $G$ are obtained from the vertex groups of
  $\phias G$ by iteratively adjoining roots.
\end{theorem}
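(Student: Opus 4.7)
The plan is to prove termination of the sequence $(G_n)$ via a bounded well-founded complexity, identify the stable group as a limit group by checking the hypotheses of Theorem~\ref{thr:strictconditions}, and extract the structural claim on rigid vertex groups by iterating Lemma~\ref{lem:relativerigid}.

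For termination, each vertex-level replacement carried out by $\phia$ or $\phir$ is by construction a type~I or type~II strict relative quotient, hence an honest step in a proper strict resolution of that vertex group relative to its peripheral data. I would assign to the almost-strict datum $AS(\varphi_n,G_n,\Delta_n,H)$ a complexity that sums, over the vertex groups of $\Delta_n$, the length of a longest proper strict resolution from that vertex group (with its incident edge groups as peripheral data) to its image in $H$, together with the number of pairs of adjacent abelian vertex groups of $\Delta_n$. Since every $G_n$ is a quotient of $G$, its rank is at most $\rk(G)$, so Theorem~\ref{thr:strictlengthbound} bounds each per-vertex contribution by $3\rk(G)$, and the number of vertex groups is bounded in terms of $\betti(G)$. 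Each nontrivial application of $\phia$ or $\phir$ either collapses a pair of adjacent abelian vertex groups or passes to a proper strict relative quotient at some vertex, strictly decreasing this complexity; well-foundedness therefore forces $G_n\onto G_{n+1}$ to be an isomorphism for all large $n$, and $G_\infty$ coincides with that stable term.

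Once termination is in hand, the stable datum inherits AS1--AS5. That $\phir$ has reached a fixed point means, via Lemma~\ref{lem:relativerigid}, that the envelope of every rigid vertex group injects into $H$; that $\phia$ has reached a fixed point means the peripheral subgroups of abelian vertex groups inject as well; and the \jsj\ normalizations ensure that each edge group is maximal abelian on at least one side. These are precisely the bullets of Theorem~\ref{thr:strictconditions}, so $G_\infty\to H$ is strict and $G_\infty$ is therefore a limit group. The rigid vertex group claim then follows by induction on the number of applications of $\phir$: each such application replaces $\st(R)$ by $\strict(\st R,\set{P(A_i)})$, which by Lemma~\ref{lem:relativerigid} has the form $\overline{R}*_{\cent_{\overline{R}}(F_j)}(\cent_{\overline{R}}(F_j)\oplus D_j)$ with $\overline{R}$ obtained from $R$ by adjoining roots at edge groups, and the centralizers used for the outer amalgamation are centralizers of images of the roots adjoined in the previous round; this is exactly Definition~\ref{def:iteratedadjunction}, and the successive rounds compose into a single iterated adjunction of roots from the rigid vertex groups of $\phias G$ to those of $G_\infty$.

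The main obstacle will be verifying that the proposed complexity strictly decreases under $\phir$: that move refines the graph of groups by introducing new edges and abelian vertex groups coming from the envelope structure, so one must argue that the new vertex groups' strict-resolution contributions do not offset the decrease produced at the rigid vertex being modified. Careful bookkeeping using the three-coordinate pre-envelope complexity of Lemma~\ref{lem:relativerigid} should resolve this, showing that each nontrivial round of $\phir$ either shortens a strict resolution at the rigid vertex or reduces the pre-envelope complexity there; a parallel but easier check is needed for $\phia$, where the adjacent-abelian counter combined with the strict quotients at the merged abelian vertex groups does the job.
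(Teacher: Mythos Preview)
Your proposed termination argument has a genuine gap. The complexity you define --- summing, over vertex groups of $\Delta_n$, the length of a longest proper strict resolution to the image in $H$ --- is identically zero: axioms AS1 and AS2, together with the construction of $\phia$ and $\phir$, guarantee that every vertex group of $\Delta_n$ already \emph{embeds} in $H$, so there is no proper strict resolution to shorten. More fundamentally, $\phir$ does not pass a rigid vertex group to a quotient; it replaces $R$ by $\overline{R}$, which is obtained from $R$ by \emph{adjoining roots} and hence contains $R$. So the rigid vertex groups are getting larger along the sequence, and no complexity based on ``distance from the vertex group to its image in $H$'' can decrease. Your fallback suggestion, the three-coordinate pre-envelope complexity of Lemma~\ref{lem:relativerigid}, is a local invariant governing one pass of that lemma on a fixed pre-envelope; it does not furnish a global quantity that decreases under the iteration $\phia\circ\phir$, because after each round the peripheral data $P(A_i)$ feeding into the next pre-envelope have themselves grown.

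The mechanism the paper uses is different and is the missing idea: termination comes from Noetherianity inside $H$, not from any descending complexity on $G_n$. Concretely, one first stabilises the discrete data ($c_a(G_n,\Delta_n)$ is bounded and nondecreasing by Lemma~\ref{lem:abelianranks}; valences of rigid vertices are nonincreasing), and then observes that the peripheral subgroups $P(A^{2n+1}_k)$ and the edge-centralisers $\cent_{R^{2n+2}_i}(E^{2n+1}_l)$ form \emph{ascending} chains of subgroups inside fixed finitely generated free abelian subgroups of $H$. Such chains stabilise. Once they do, $\phir$ and $\phia$ are isomorphisms, and one checks the bullets of Theorem~\ref{thr:strictconditions} directly. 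Your treatment of the last two claims --- strictness of the stable map and the iterated-root-adjunction description of rigid vertex groups via Lemma~\ref{lem:relativerigid} --- is essentially correct; it is only the termination step that needs to be replaced by this ascending-chain argument.
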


\begin{proof}[Proof of Theorem~\ref{ginfinityisalimitgroup}]
  By Lemma~\ref{lem:abelianranks} $c_a(G_n,\Delta_n)$
  (Definition~\ref{def:vertexscott}) is nondecreasing. Since
  $\betti(G_n)\leq\betti(G)$ we may drop finitely many terms from the
  beginning of the sequence and assume that the sequence
  $c_a(G_n,\Delta_n)$ is constant. If this is the case then the
  underlying graphs of $\Delta_i$ have constant betti number. Let
  $\set{R^i_j}$ be the rigid vertex groups of $\Delta_i.$  By
  Lemma~\ref{lem:relativerigid} $R^{i+1}_j$ is obtained from $R^i_j$ by
  iteratively adjoining roots. Let $v^i_j$ be the number of edges
  incident to $R^i_j.$ The sequence $(v^i_j)_{i=1..\infty}$ is
  nonincreasing. Again, by dropping finitely many terms from the
  beginning of the sequence, we may assume that all the sequences
  $v^i_j$ are constant. Once this is the case, the sequence of numbers
  of conjugacy classes of abelian vertex groups of $G_{2n+1}$ is
  constant.

  Let $A^{2n+1}_k$ be the collection of sequences of abelian vertex
  groups, $A^{2n+1}_k<G_{2n+1}$ such that $A^{2n+1}_k\to A^{2n+3}_k$
  for all $n.$ Inside $\Delta_{2n+1}$ there are subgraphs of groups of
  the form 


\centerline{%
  \xymatrix{%
    R^{2n+1}_{i(l)}\ar@{-}[rr]_{E^{2n+1}_l} & & A^{2n+1}_{k(l)}
}
}

\noindent In passing to $G_{2n+2},$ by subdividing and pulling,
subgraphs of this form are transformed to subgraphs of groups of the
form


\centerline{%
  \xymatrix{%
    R^{2n+1}_{i(l)}\ar@{-}[rr]_{E^{2n+1}_l} & & P(A^{2n+1}_{k(l)})\ar@{-}[rr]_{P(A^{2n+1}_{k(l)})} & & A^{2n+1}_{k(l)}
}}

\noindent  Which, after an application of Lemma~\ref{lem:relativerigid}, become the
  subgroups


\centerline{%
  \xymatrix{%
    R^{2n+2}_{i(l)}\ar@{-}[rr]_{\mathcal{Z}(E^{2n+1}_l)} &  & \mathcal{Z}(E^{2n+1}_l)\oplus D^{2n+2}_{l}\ar@{-}[rr]_{P(A^{2n+1}_{k(l)})} & &  A^{2n+1}_{k(l)}
}}

\noindent contained in $G_{2n+2}.$ The centralizer
$\mathcal{Z}(E^{2n+1}_l)$ should be taken in ${R^{2n+2}_{i(l)}}.$ Now
to pass to $G_{2n+3},$ the edges labeled $P(A^{2n+1}_k)$ are crushed
when collapsing the subtrees (they are trees since $c_a$ is constant)
$\Gamma_{\left[A\right]}\subset \Delta_{2n+1}.$ Since $E^{2n+1}_l\into
P(A^{2n+1}_{k(l)})$ and $P(A^{2n+1}_{k(l)})$ is generated by incident
edge groups, and since
$E^{2n+1}_l<{\mathcal{Z}_{{R^{2n+2}_{i(l)}}}(E^{2n+1}_l)}\cong
E^{2n+2}_l,$ $P(A^{2n+1}_{k(l)})<P(A^{2n+3}_{k(l)}).$ Since
$P(A^{2n+1}_{k(l)})$ embeds in $H$ by construction, the sequences
$P(A^{2n+1}_{k(l)})<P(A^{2n+3}_{k(l)})$ map to sequences of subgroups
of a finitely generated free abelian subgroup of $H.$ Thus, for
sufficiently large $n,$ $P(A^{2n+1}_k)=P(A^{2n+3}_k).$ Likewise, the
sequences $\mathcal{Z}_{R^{2n+2}_{i(l)}}(E^{2n+1}_l)$ are stable for
sufficiently large $n,$ hence $R^{2n+2}_i\to R^{2n+4}_i$ is an
isomorphism for all $i$ and sufficiently large $n.$

\par Let $n$ be large enough to satisfy the above. If $E$ is adjacent
to $R$ and $E$ doesn't have maximal abelian image in $R$ then
$\phia\circ\phir$ strictly increases the rank of some peripheral
subgroup, contradicting the stability of ranks of peripheral subgroups
(Recall the normalization that every edge group be adjacent to a
maximal abelian vertex group.). If the envelope of a rigid vertex
group doesn't embed, then the either the rank of an edge group must
increase under $\phir$ or a peripheral subgroup must fail to embed,
neither of which is possible.  By Lemma~\ref{lem:simpleenvelope} the
envelopes must embed. By Theorem~\ref{thr:strictconditions}, $G_n\to H$ is
strict for sufficiently large $n.$

\par That the rigid vertex groups are obtained by iteratively
adjoining roots follows immediately from the construction of $\phir$
and Lemma~\ref{lem:relativerigid}. That $G_n\to G_{n+1}$ is an isomorphism
for sufficiently large $n$ follows from the fact that stability of
edge and peripheral subgroups implies that the quotients $\phir$ and
$\phia$ are isomorphisms.
\end{proof}

\par Set $\Phi_{s}G\define G_{\infty},$ likewise for $\Phi_s\Delta,$
$\Phi_s\varphi.$ We say that $\Phi_s\Delta$ is the \term{push-forward}
of $\Delta.$ 


\section{Degenerations of \jsj\ decompositions}
\label{sec:degenerations}

\par We saw in the previous section that given an indecomposable
homomorphism $\pi\colon G\to H,$ one can construct a quotient
$\Phi_s(G)$ of $G$ and a strict homomorphism $\Phi_s(G)\to H$ such
that the composition of the quotient map and the strict homomorphism
is $\pi.$ Our approach to Krull dimension for limit groups is to use
Theorems~\ref{thr:scott::freelydecomposable} and~\ref{thr:nostrict} to
reduce the problem of existence of arbitrarily long chains of
epimorphisms of limit groups to an analysis of degenerate chains. Let
$\LL$ be a chain of freely indecomposable limit groups such that all
maps $\LL(i)\onto\LL(j),$ $i<j,$ are indecomposable.  Such a chain is
\emph{indecomposable}. The next step in our analysis is to control the
degenerations of the sequence of \jsj\ decompositions $\jsj(\LL(i)).$

\par The strict homomorphism $\Phi_s(G)\onto H$ is a (finite) direct
limit of quotients of $G$ obtained by iterating the constructions
$\Phi_a$ and $\Phi_r.$  These homomorphisms were designed to ensure
that in the direct limit the bullets from
Theorem~\ref{thr:strictconditions} were satisfied.  The possible
degenerations of the abelian \jsj\ of $G$ under $\Phi_s$ were not
completely characterized since we were only interested in constructing
a strict factorization.

\par The chains of freely indecomposable free factors produced by
Theorem~\ref{thr:scott::freelydecomposable} are necessarily
degenerate.

\par In this section we construct a complexity, computed from the
\jsj\ decomposition and modeled on the Scott complexity, which is
nondecreasing on degenerate indecomposable chains and which takes
boundedly many values for limit groups with a given first betti
number.  It will be constructed out of three kinds of data from the
\jsj\ decomposition: A complexity which manages $\betti,$ the
complexity of \qh\ vertex groups not reflected in the first betti
number, and some combinatorics of the underlying graphs of
\jsj\ decompositions.

\par We start by measuring as much of the first betti number of a
limit group which is immediately detectable in its \jsj\
decomposition.  Define the following quantities:
\begin{itemize}
\item The sum of relative ranks of abelian vertex groups: \[
  c_a(G)\define\sum_A\betti(A/P(A)) \]
\item The complexity of surface vertex groups: If $\Sigma$ is a
  surface with boundary then
  \[c_g(G)\define\sum_{\Sigma(Q)}\betti(\Sigma,\partial\Sigma)\] with
  the sum over all surfaces $\Sigma(Q)$ representing \qh\ vertex
  groups $Q$ of $G.$ If the computation is done in a \gad\ $\Delta$ we
  write this as $c_g(G,\Delta),$ otherwise the abelian \jsj\ is
  implied.
\item Let $\Gamma$ be the underlying graph of the \jsj\ decomposition.
  The betti number of the underlying graph of the abelian \jsj\
  decomposition: \[c_b(G)\define\betti(\Gamma)\]
\end{itemize}

Our first approximation to the Scott complexity of a limit group is
the quantity \[\scott_1(G)\define(c_a(G),c_g(G),c_b(G))\] We have
already shown, in Lemma~\ref{lem:abelianranks}, that the first
coordinate of $\scott_1$ is nondecreasing under indecomposable
degenerate maps of limit groups.  Lemma~\ref{lem:bettitogenus} is the
analog of Lemma~\ref{lem:abelianranks} for surfaces with boundary.
First, a simple lemma.

\begin{lemma}
  \label{lem:surfacegenus}
%
  Let $\Sigma$ be a surface with boundary, written as a graph of
  surfaces with boundary $\Gamma(\Sigma_i).$ Then
  \[\betti(\Sigma,\partial\Sigma)\geq\betti(\Gamma)+\sum_i\betti(\Sigma_i,\partial\Sigma_i)\]
\end{lemma}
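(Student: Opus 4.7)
The plan is to reduce the stated inequality to a direct Euler-characteristic computation. For a connected compact orientable surface $S$ with nonempty boundary, one has $\betti(S,\partial S) = 2g(S) + n(S) - 1$, where $g(S)$ is the genus and $n(S)$ is the number of boundary circles, with an analogous formula (using non-orientable genus) in the non-orientable case. This identification, routine from Lefschetz duality, converts the lemma into a numerical inequality about genera and boundary counts.

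The first key input is additivity of Euler characteristic under cutting along circles (each of which has $\chi = 0$): $\chi(\Sigma) = \sum_i \chi(\Sigma_i)$. Combined with $\betti(\Gamma) = \#E(\Gamma) - \#V(\Gamma) + 1$ for connected $\Gamma$, this yields the genus-additivity relation $g(\Sigma) = \sum_i g(\Sigma_i) + \betti(\Gamma)$. The second input is that the boundary circles of $\Sigma$ are partitioned among the pieces as their inherited (free) boundary, so $n(\Sigma) = \sum_i n_i$, where $n_i$ is the number of boundary circles of $\Sigma_i$ that come from $\partial \Sigma$ rather than from cut curves.

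Substituting these into $\betti(\Sigma,\partial\Sigma) = 2g(\Sigma) + n(\Sigma) - 1$ and into $\sum_i \betti(\Sigma_i, \partial\Sigma_i) = \sum_i (2g_i + n_i - 1) + k$, where $k$ is the number of pieces with $n_i = 0$ (closed after capping the cut boundaries, contributing an extra $1$ per piece), the difference $\betti(\Sigma,\partial\Sigma) - \betti(\Gamma) - \sum_i \betti(\Sigma_i,\partial\Sigma_i)$ collapses to $\betti(\Gamma) + \#V(\Gamma) - 1 - k$. Since $\Sigma$ has at least one boundary circle, at least one piece inherits free boundary, so $k \leq \#V(\Gamma) - 1$; together with $\betti(\Gamma) \geq 0$ this gives nonnegativity, yielding the inequality.

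The main obstacle is simply careful bookkeeping: distinguishing original boundary circles of $\Sigma$ from cut boundary circles that become internal under the gluing, and handling pieces with no inherited boundary (which behave as closed surfaces in the $\betti$-count). The non-orientable case follows from the same formulas with the appropriate notion of genus, and requires no new ideas beyond the accounting.
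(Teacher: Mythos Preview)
Your computation contains a bookkeeping error that hides a real problem. You write $\sum_i \betti(\Sigma_i,\partial\Sigma_i)=\sum_i(2g_i+n_i-1)+k$, using $n_i$, the number of boundary circles of $\Sigma_i$ inherited from $\partial\Sigma$. But $\partial\Sigma_i$ is the \emph{full} boundary of the piece, including the gluing circles, so under your own formula $\betti(S,\partial S)=2g(S)+n(S)-1$ one must use $b_i$, the total number of boundary components of $\Sigma_i$. With the correct $b_i$ the difference collapses to $-\vert E(\Gamma)\vert\leq 0$, the wrong sign. Concretely: take $\Sigma$ a once-punctured torus decomposed as a single pair of pants $P$ with two cuffs glued (one vertex, one loop in $\Gamma$). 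Then $\betti(\Sigma,\partial\Sigma)=2$ while $\betti(\Gamma)+\betti(P,\partial P)=1+2=3$, so the inequality fails. In other words, with the standard reading of $\betti(\Sigma,\partial\Sigma)=\dim H_1(\Sigma,\partial\Sigma)$ the lemma is simply false, and no Euler-characteristic juggling can rescue it.

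The paper is using $\betti(\Sigma,\partial\Sigma)$ to mean the rank of $H_1(\Sigma)$ modulo the subgroup generated by the boundary classes (equal to $2g$ in the orientable case, to the crosscap number in the non-orientable case); this is what the subscript in $c_g$ and the surrounding discussion of ``genus'' indicate. With that reading the paper's one-line proof suffices: the collapse map $\Sigma\to\Gamma$ gives a surjection $H_1(\Sigma)\twoheadrightarrow\mathbb{Z}^{\betti(\Gamma)}$ killing $\partial\Sigma$ and every $\Sigma_i$, and the kernel of the induced map on $H_1(\Sigma)/\langle\partial\Sigma\rangle$ is generated by the images of the $H_1(\Sigma_i)/\langle\partial\Sigma_i\rangle$, giving the bound. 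Equivalently, your genus-additivity formula $g(\Sigma)=\sum_i g(\Sigma_i)+\betti(\Gamma)$ already proves it directly once you use $\betti(\Sigma,\partial\Sigma)=2g(\Sigma)$ and $\betti(\Sigma_i,\partial\Sigma_i)=2g(\Sigma_i)$: the difference is exactly $\betti(\Gamma)\geq 0$.
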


\begin{proof}
  There is a map $\Sigma\onto\zee^{\betti(\Gamma)}$ which kills
  $\partial\Sigma$ and all $\Sigma_i.$
\end{proof}


\begin{lemma}
  \label{lem:bettitogenus}
  Suppose $c_a(G)=c_a(H).$  Then $c_g(G)\leq c_g(H).$  If equality
  holds then $c_b(G)\leq c_b(H).$
\end{lemma}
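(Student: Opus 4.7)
The plan is to reduce to the case of an indecomposable degenerate map $G\onto H$ (which is the context in which the complexity $\scott_1$ is meant to be monotone), so that by Theorem~\ref{ginfinityisalimitgroup} we may identify $H$ with $\Phi_sG$, and then track the surface and graph data through the iterative construction $\Phi_{as}, \Phi_a, \Phi_r$. The main tool throughout is Lemma~\ref{lem:abelianranks}: a \qh\ vertex group $Q$ of $G$ whose image in $H$ is nonabelian satisfies $\strict(Q,\partial Q)\cong(Q,\partial Q)$ and is preserved through every stage of the construction, while a \qh\ $Q$ whose image is abelian is forced to be a punctured sphere or projective plane and is replaced in $\Phi_{as}G$ by the abelian group $Q^{ab}/\Ker.$

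The first claim would be that under the hypothesis $c_a(G)=c_a(H)$, no \qh\ vertex group of $G$ can have abelian image. Indeed, if an $n$-punctured sphere $Q$ (with $n\geq 4$ by the $\chi\leq-2$ convention) were to collapse to an abelian vertex group, then under $\Phi_a$ it would be absorbed into the $\sim_a$-equivalence class of abelian vertex groups adjacent to it; the abelianization of $Q$ imposes only the single relation $\gamma_1\cdots\gamma_n=1$ among the $n$ boundary classes, so passing to the strict relative quotient of the merged abelian subgraph strictly increases the relative rank over the peripheral subgroup, giving $c_a(\Phi_sG)>c_a(G)$, a contradiction. With this ruled out, each \qh\ vertex group $Q$ of $G$ is conjugate into a maximal \qh\ vertex group $Q^H$ of $\jsj(H)$ as an essential subsurface, so $\Sigma(Q^H)$ may be viewed as a graph of subsurfaces one of whose pieces is $\Sigma(Q).$ Lemma~\ref{lem:surfacegenus}, summed over the maximal \qh\ vertex groups of $H$ and over the \qh\ vertex groups of $G$ mapping into each, then gives
\[
c_g(G)\;=\;\sum_{Q}\betti(\Sigma(Q),\partial\Sigma(Q))\;\leq\;\sum_{Q^H}\betti(\Sigma(Q^H),\partial\Sigma(Q^H))\;=\;c_g(H).
\]

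For the equality statement, equality in Lemma~\ref{lem:surfacegenus} term-by-term forces every piece of each $\Sigma(Q^H_i)$ outside of the subsurfaces coming from $G$ to be planar with vanishing $\betti(\cdot,\partial)$, and the connecting graph of subsurfaces inside each $Q^H_i$ to be a tree. The remaining contribution to $c_b(H)$ comes from cycles in the underlying graph of $\jsj(H)$ not hidden inside \qh\ vertex groups; since every edge of $\jsj(G)$ survives as either an edge of the pushed-forward decomposition of $H$ or as a separating curve in a \qh\ of $H$ (which already contributed to $c_g$), every cycle in the underlying graph of $\jsj(G)$ gives rise to a cycle in the underlying graph of $\jsj(H)$, yielding $c_b(G)\leq c_b(H).$ The main obstacle will be the first claim—ruling out \qh\ with abelian image purely from $c_a(G)=c_a(H)$—since it requires carefully bookkeeping the interaction between the boundary relations of a punctured sphere, the peripheral subgroups of the adjacent abelian vertex groups, and the strict relative quotient taken by $\Phi_a$.
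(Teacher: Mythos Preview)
Your first claim---that $c_a(G)=c_a(H)$ forces every \qh\ vertex group of $G$ to have nonabelian image---is false, and this is the load-bearing step of your argument. The paper itself, in the discussion immediately following this lemma, explicitly analyses \qh\ vertex groups $Q$ with abelian image under the standing assumption $\scott_1(G)=\scott_1(H)$: such a $Q$ is a punctured sphere or projective plane, and the resulting abelian vertex group in $\Phi_a\Phi_{as}G$ can have valence $\geq 2$ without forcing $c_a$ to go up. Your heuristic that ``the abelianization of $Q$ imposes only the single relation $\gamma_1\cdots\gamma_n=1$, so merging increases the rank over the peripheral'' fails because the images of the boundary classes already lie in the peripheral subgroups of the adjacent abelian vertex groups; after the $\Phi_a$--merge the new peripheral subgroup (generated by the outgoing edges) can still equal the whole merged abelian group. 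The further invariants $\kappa_N$ and $c_q$ are introduced precisely because $c_a$ alone does not rule this out.

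The paper's proof takes a different route that avoids this issue entirely. It works with the push-forward $\Delta=\Phi_s(\jsj(G))$ as a \gad\ of $H\cong\Phi_sG$, and uses the \jsj\ theory to observe that every rigid vertex group of $\Delta$ is either rigid in $\jsj(H)$ or a small non-\qh\ subsurface (thrice-punctured sphere or twice-punctured projective plane) of some \qh\ vertex group of $H$. One then defines an adjacency relation $\sim_q$ on the surviving \qh\ pieces, the subsurface-type rigid pieces, and low-valence cyclic abelian pieces, and recovers the \qh\ vertex groups of $\jsj(H)$ by gluing each $\sim_q$--class. Lemma~\ref{lem:surfacegenus} applied to these gluings gives $c_g(G)+c_b(G)\leq c_g(H)+c_b(H)$, with the stated refinement when $c_g$ is constant. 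Note that a \qh\ vertex group of $G$ with abelian image contributes $0$ to $c_g(G)$ anyway, so it never needs to be ``matched'' to a \qh\ of $H$; the content of the argument is in tracking the combinatorics of the $\sim_q$--collapses, not in ruling out abelian images. Your treatment of the equality case for $c_b$ is also too sketchy: edges of $\jsj(G)$ adjacent to a \qh\ with abelian image, or to a rigid vertex group that becomes a subsurface, do not simply ``survive as edges or separating curves''---they are absorbed into the $\sim_a$ or $\sim_q$ collapses, and one must check that this cannot create betti number.
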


\begin{proof}
  Let $\Delta=\Phi_s(\jsj(G))$ be the push forward of the abelian
  \jsj\ of $G$ under $\Phi_s.$  Since $G\onto H$ is degenerate, the
  map $\Phi_s(G)\to H$ is an isomorphism.  Thus, we only need to
  deduce how the \jsj\ of $H$ is obtained from $\Delta.$

  The main difficulty is that there may be splittings of $H$ which are
  hyperbolic-hyperbolic with respect to one-edged splittings
  corresponding to edge groups incident to rigid vertex groups of
  $\Delta.$  We claim the following: If $R$ is a rigid vertex group of
  $\Delta$ then either $R$ is a rigid vertex group in the \jsj\ of $H$
  or $R$ is represented by a non-\qh\ subsurface of a \qh\ vertex
  group of $H.$ Such a subsurface group must be either a twice
  punctured projective plane or a thrice punctured sphere.

  It is a well known fact from the \jsj\ theory that if $\Delta$ is a
  \gad\ of $H$ and $H$ has an abelian \jsj\ decomposition, then
  $\Delta$ can be obtained (essentially) from the \jsj\ by choosing a
  family of simple closed curves $c_i$ on \qh\ vertex groups and
  regarding the resulting subsurfaces as vertex groups and the curves
  $c_i$ as edge groups.  After slicing the surface vertex groups like
  this, a sequence of folds produces $\Delta.$

  Thus every rigid vertex group $R$ of $\Delta$ is generated by rigid,
  abelian, subsurface groups and stable letters of the \jsj\ of $H.$
  If it consists of more than two, then $R$ has a splitting relative
  to its incident edge groups, contrary to rigidity of $R.$  Thus $R$
  is either a subsurface or a rigid vertex group from the \jsj\ of
  $H.$  If $R$ is a subsurface group then, since $R$ isn't a \qh\
  vertex group of the relative abelian \jsj\ of $R,$ it must be one of
  the two non-\qh\ surface groups above.  We recover the abelian \jsj\
  of $H$ by gluing the \qh\ vertex groups of $\Delta$ and the rigid
  subsurface groups along their boundary components.

  Define, in analogy with $\sim_a,$ an equivalence relation $\sim_q$
  on subsurface groups $R,$ \qh\ vertex groups $Q,$ and cyclic abelian
  vertex groups $Z$: If $V$ is a cyclic abelian vertex group of
  valence two, regard it as the fundamental group of an annulus, and
  of valence one, of a \mobius\ band.  If two such groups $V_1$ and
  $V_2$ are adjacent, being the endpoints of an edge $e,$ then
  $V_1\sim_qV_2$ if the inclusions $E\into V_i$ are isomorphisms with
  boundary components.  As for $\sim_a$ equivalence classes
  $\left[A\right],$ let $\Gamma_{\mathcal{Q}}$ be the subgraph spanned
  by edges connecting vertices of a $\sim_q$ equivalence class
  $\mathcal{Q}.$

  If a cycle $C$ of such vertex groups appears in $\Gamma_{Q},$ then
  by Lemma~\ref{lem:surfacegenus}
  \[
  c_g(H,\Delta/C)+c_b(H,\Delta/C)\geq c_g(H,\Delta)+c_b(H,\Delta)
  \]
  Collapsing all such cycles through a sequence of collapses and type
  IV folds, we see that $c_g(G)+c_b(G)\leq
  c_g(H,\Delta)+c_b(H,\Delta)\leq c_g(H)+c_b(H).$  If equality holds
  then no cycles are collapsed, and if a cycle is collapsed then the
  inequality is strict.
\end{proof}

With this the proposition that $\scott_1$ is nondecreasing under
degenerate maps of freely indecomposable limit groups is established.
We now turn to the problem of adding terms to the complexity
$\scott_1$ which handle the possibility that nonabelian vertex groups
may have abelian image under degenerate maps.  For the remainder of
the section, $G\onto H$ is a degenerate map such that
$\scott_1(G)=\scott_1(H).$  Note that if $R$ is a valence one vertex
group then, since the first betti number relative to an abelian
subgroup is greater than $0,$ $R$ cannot have abelian image in $H,$
otherwise $c_a(H)>c_a(G).$  Likewise, the underlying graph of the
\jsj\ of any $\strict R,$ for a rigid vertex group $R$ of $G,$ cannot
contain a loop (which makes a contribution to $c_b$) or, through
abelian vertex groups in their relative \jsj's, make any contributions
to $c_a,$ in other words, they must be trees with abelian subgroups
equal to their peripheral subgroups (modulo the images of incident
edge groups).  Let $\Gamma_{\mathcal{A}}$ be a subgraph of the
underlying graph of the \jsj\ of $G$ such that $\mathcal{A}$ has
abelian image in $H.$  If $c_a(G)=c_a(H)$ then $\Gamma_{\mathcal{A}}$
must be a tree.  If $\mathcal{A}*_{E_i}G'$ is the splitting of $G$
obtained by collapsing all edges of $G$ except those adjacent to
$\Gamma_{\mathcal{A}}$ but not contained in $\Gamma_{\mathcal{A}},$
then no two images $\img(E_i),\img(E_j)$ can have conjugate
centralizers in $\img(G'),$ otherwise $c_a(H)>c_a(G).$

\begin{lemma}
  Suppose $G\onto H$ is degenerate and $\scott_1(G)=\scott_1(H).$
  Suppose $R$ is a nonabelian vertex group of $G$ (either rigid or
  \qh) with abelian image in $H.$  Let $e_1,\dotsc,e_n$ be the edges
  of $G$ such that $e_i$ doesn't connect $r,$ the vertex carrying $R,$
  to a valence one abelian vertex group of $G.$  Then the collection
  of images of $E_i,$ $\group{E_i}<R,$ generate
  $\mathrm{H}_1(R),$ and if an incident edge is separating, then
  neither vertex group in the corresponding one-edged splitting has
  abelian image in $H.$ In particular, if $R$ is \qh, then at most one
  edge incident to $R$ connects it to a valence one abelian vertex
  group.  In either case, the abelian vertex group of $H$ containing
  the image of $R$ has valence at least two.
\end{lemma}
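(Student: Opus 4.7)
The plan is to prove each of the four assertions by contradiction, showing in each case that failure of the conclusion forces a strict increase of one of $c_a, c_g, c_b$ from $G$ to $H$ and contradicts $\scott_1(G)=\scott_1(H)$. The unifying tool is the pushforward $\Delta=\Phi_s(\jsj G)$, a \gad\ of $H$ (since $\Phi_s G\to H$ is an isomorphism because $\varphi$ is degenerate), compared to the actual $\jsj$ of $H$ via the subsurface analysis from the proof of Lemma~\ref{lem:bettitogenus}.

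The first step is to locate $\img R$. By commutative transitivity $\img R$ lies in a unique maximal abelian subgroup $A_H$ of $H$; by Lemma~\ref{edgeslive} every edge $E$ incident to $R$ has $\img E\neq 1$, so commutative transitivity forces every valence-one abelian vertex group $A'_k$ adjacent to $R$ to have image inside $A_H$. In the pushforward, the subgraph consisting of $R$ together with its adjacent valence-one abelians collapses to the single abelian vertex $A_H$, while the edges $e_1,\dotsc,e_n$ become incident edges of $A_H$ in $\Delta$.

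For (1), the only vertices of $\jsj(G)$ contributing to $c_a(G)$ among those involved are the $A'_k$, contributing $\sum_k\betti(A'_k/\img E_k)$. After collapsing into $A_H$, preservation of $c_a$ forces this sum to equal $\betti(A_H/P(A_H))$. But $\betti(A_H/P(A_H))$ is bounded below by $\sum_k\betti(A'_k/\img E_k)$ plus the corank of $\langle\img E_i\rangle$ inside $\img(R^{ab})$ modulo the $\img E_k$; equality therefore forces the $\img E_i$ to generate $H_1(R)$. For (2), if $e$ is separating with both sides of abelian image, both vertex groups land in $A_H$, and the induced folding produces either an additional abelian vertex not accounted for by contributions from $G$ (raising $c_a$) or a new loop in the underlying graph (raising $c_b$), a contradiction. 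For (3), a \qh\ vertex group $R$ with two boundary components attached to valence-one abelians identifies those boundary components inside $A_H$, equivalent to gluing an annulus between them; in the $\jsj$ of $H$ this either raises the genus of the ambient surface (raising $c_g$) or creates a new underlying cycle (raising $c_b$). Claim (4) then follows from (1): preservation of $c_a$ forces $A_H$ to be generated by $P(A_H)$ up to taking roots, so if $A_H$ were valence one in $\jsj(H)$ then either the splitting is trivial and $A_H$ would be collapsed away, or $A_H\neq P(A_H)$ contributes positively to $c_a(H)$; both possibilities are excluded.

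The main obstacle is the bookkeeping for claim (1): correctly identifying which edges of $\jsj(G)$ are collapsed into $A_H$ (the edges to adjacent valence-one abelians, since both endpoints map into $A_H$) versus which survive as edges of $A_H$ (the $e_i$), and tracking rank contributions through $R^{ab}$. A subsidiary concern is handling potential coincidences among the $\img E_i$ (distinct edges mapping to conjugate subgroups of $A_H$), which the normalizations on $\jsj(H)$ combined with commutative transitivity ensure are either folded together harmlessly or ruled out.
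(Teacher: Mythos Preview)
Your overall strategy---deriving a contradiction with $\scott_1(G)=\scott_1(H)$ via the pushforward $\Phi_s(\jsj G)$---matches the paper's. But two of your four arguments have genuine gaps.

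For claim~(3), your annulus-gluing picture is wrong. You argue that identifying two boundary components of the \qh\ vertex $R$ inside $A_H$ raises $c_g$ or $c_b$. But $R$ has \emph{abelian image} by hypothesis, so in $\Phi_s G$ the vertex $R$ is replaced by an abelian group, not a surface; there is no surface left onto which to glue an annulus, and no contribution to $c_g$ is in play. The paper's argument is different: it isolates a pair of pants $P\subset R$ with legs $l_1,l_2$ attached to the two valence-one abelians $A_1,A_2$ and waist $w$, and computes directly that $\betti(\strict(A_1*_{l_1}P*_{l_2}A_2),w)>\betti(A_1,l_1)+\betti(A_2,l_2)$. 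This shows that $c_a$, not $c_g$ or $c_b$, strictly increases.

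For claim~(2), your argument that ``both vertex groups land in $A_H$, producing an additional abelian vertex or a new loop'' is too coarse. The side $G_2$ of the separating edge is not a single vertex of $\jsj(G)$; it is a subgraph-of-groups, possibly containing many rigid and \qh\ vertices. Collapsing it into $A_H$ does not automatically create a new abelian vertex or a loop. The paper instead analyzes the structure of $G_2$: since $G_2$ has abelian image and $c_b$ is preserved, the underlying graph of $\jsj(G_2,E_i)$ is a tree; every nonabelian vertex of $G_2$ is then a cut-point with as many complementary components as incident edges; so some extremal nonabelian vertex has all but one incident edge going to a valence-one abelian, which forces $c_a(H)>c_a(G)$.

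Your arguments for (1) and (4) are roughly along the right lines, though the paper's are crisper: for (1) it simply observes that failure gives a surjection $R\onto\mathbb Z$ killing every $E_i$, directly increasing $c_a$; for (4) it notes that since $\betti(R,E)\geq 1$ for any abelian $E$ in a nonabelian limit group $R$, claim~(1) forces $n\geq 2$.
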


\begin{proof}
  The statement about incident edges generating $R$ homologically
  follows from the observation that if not, then there is a map
  $R\onto\mathbb{Z}$ which kills every incident edge.  Let $A_1$ and
  $A_2$ be valence one abelian vertex groups adjacent to $Q,$ a
  \qh\ vertex group of $G.$  Let $P$ be a pair of pants with boundary
  components $l_1,$ $l_2$ and $w,$ such that $l_i$ is the leg of $P$
  attached to $A_i.$  Then the group
  $\mathcal{A}=A_1*_{l_1}P*_{l_2}A_2$ satisfies
  $\betti(\strict\mathcal{A},w)>\betti(A_1,l_1)+\betti(A_2,l_2),$
  leading to a strict increase in $c_a.$

  Since the first betti number of a nonabelian limit group relative to
  an abelian subgroup is at least one, $n$ is at least two.  Suppose
  that some edge $e_i$ is separating, $G=G_1*_{E_i}G_2,$ and $R$ (or
  $Q$) is contained in $G_1.$  If $G_2$ has abelian image in $H$ then
  the relative \jsj\ decomposition $\jsj(G_2,E_i)$ is a tree.  If
  $G_2$ contains a rigid vertex group $R'$ of $G$ then $R'$ isn't a
  valence one vertex group, is a cut-point in the underlying graph of
  $\jsj(G),$ and has as many complimentary components as it does
  incident edge groups.  Likewise, if $Q'$ is a \qh\ subgroup of $G$
  contained in $G_2,$ then the vertex associated to $Q'$ must be a
  cut-point in the underlying graph of the \jsj\ and in fact must have
  as many complimentary components as it does boundary components.

  Thus, if $G_2$ contains a nonabelian vertex group of $G,$ then it
  must contain one for which all incident edges but one are attached
  to valence one abelian vertex groups of $G.$  This is of course an
  impossibility, as $c_a(H)$ is then strictly larger than $c_a(G).$
\end{proof}

If $Q$ is a \qh\ vertex group of $G$ which has abelian image in $H$
then, since $G\to H$ doesn't factor through a free product, $Q$ can
have no genus and is either a punctured sphere or projective plane.
Since $Q$ isn't a punctured torus, $\chi(Q)\leq -2$ and $Q$ has at
least three punctures.  Let $\Delta$ be the splitting obtained by
collapsing all edges of $\jsj(G)$ not adjacent to $Q.$  If $c$ and $d$
are boundary components of $Q$ which are attached to the same vertex
group in $\Delta,$ then their images have nonconjugate centralizers,
otherwise $c_a$ strictly increases.

The vertex group in $\Phi_a\Phi_{as}G$ which is the image of $Q$ is
abelian, and, since $c_a(G)=c_a(H),$ the remark about one edged
splittings and the fact that no $\Gamma_{\mathcal{A}}$ can contain a
loop the vertex group containing $\img(Q)$ has valence at least three
if $Q$ is not adjacent to a valence one abelian vertex group, has
valence at least two if $Q$ is adjacent to a single valence one
abelian vertex group, and if $Q$ is adjacent to two valence one
abelian vertex groups $A_1$ and $A_2,$ $c_a(H)>c_a(G).$  The last part
can be seen by finding a pair of pants $P$ in $Q$ with one cuff
attached to each abelian vertex group.  \mnote{the ``cuff'' thing is
  redundant. in the previous proposition, i think} Let $w$ be the
waist of the pair of pants.  Passing to the abelianization of
$\mathcal{A}=A_1*_{\zee}P*_{\zee}A_2,$ an elementary computation shows
that the relative betti number satisfies
$\betti(\strict\mathcal{A},w)>\betti(A_1,l_1)+\betti(A_2,l_2),$
leading to an increase of $c_a.$

\par The next step in our analysis is to show that a certain quantity
can be appended to $\scott_1$ such that under degenerate maps which
don't raise $\scott_1,$ the quantity appended is nondecreasing, and if
equality holds in the new coordinates, then only valence two
nonabelian rigid vertex groups can have abelian image (and ruling out
the possibility that \qh\ subgroups have abelian image).  There is
also the possibility that \qh\ subgroups can grow, in the sense that
their Euler characteristics can decrease, and that rigid vertex groups
can engender multiple child rigid vertex groups.  We also control
these phenomena.

\par For the remainder of the section, $G\onto H$ is degenerate and
$\scott_1(G)=\scott_1(H).$

\begin{definition}
  If $\Gamma$ is a finite graph, $v$ a vertex of $\Gamma,$ then
  $\kappa(v)$ is \[1-\frac{1}{2}\mathrm{valence}(v)\] by definition
  $\chi(\Gamma)=\sum_{v\in\Gamma^{0}}\kappa(v).$  Let $\Gamma$ be the
  underlying graph of an abelian decomposition $\Delta$ of a limit
  group $G.$  Then define
  \[\kappa_N(\Delta)\define\sum_{G_v\mbox{ nonabelian}}\kappa(v)\] and
  \[\kappa_A(\Delta)\define\sum_{G_v\mbox{ abelian}}\kappa(v)\]
  By definition, $\kappa_A(\Delta)+\kappa_N(\Delta)=\chi(\Gamma).$
\end{definition}

Suppose $v$ is a valence one vertex of $\Delta.$  Then, regardless of
whether or not $G_v$ is abelian, $G_v$ contributes at least $1$ to
$\betti(G).$  Let $\kappa_{N|A}^+$ be the total contribution of
nonabelian$\vert$abelian valence one vertex groups to $\kappa_{N|A},$
and let $\kappa_{N|A}^-$ be the total contribution of
nonabelian$\vert$abelian vertex groups with valence at least three.
All other vertex groups have valence two and make no contribution to
either $\kappa_N$ or $\kappa_A.$  We now show that $\kappa_N(\Delta)$
takes only boundedly many values for abelian decompositions of limit
groups with a given first betti number.

\begin{lemma}[Bounding $\kappa_N(\Delta)$]
  \label{lem:boundingkappan}
  Let $G$ be a limit group.  Then \[ \frac{1}{2}\betti(G)\geq
  \kappa_N(\Delta)\geq 1-\frac{3}{2}\betti(G)\] for all abelian
  decompositions $\Delta$ of $G.$
\end{lemma}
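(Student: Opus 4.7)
The plan is to bound the positive and negative contributions to $\kappa_N$ separately. Split
\[
\kappa_N(\Delta) = \kappa_N^+ + \kappa_N^-,
\]
where $\kappa_N^+$ collects the $+\tfrac{1}{2}$ contributions from valence-one nonabelian vertex groups and $\kappa_N^-$ collects the nonpositive contributions from valence-$\geq 3$ nonabelian vertex groups; valence-two vertices contribute zero. Both bounds reduce to estimates of the numbers $V_1^N$ and $V_1^A$ of valence-one nonabelian and abelian vertex groups respectively.

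For the upper bound $\kappa_N \leq \tfrac{1}{2}\betti(G)$, observe that $\kappa_N^- \leq 0$, so $\kappa_N \leq \kappa_N^+ = \tfrac{1}{2}V_1^N$. Each valence-one nonabelian vertex group $V$ with incident edge group $E$ is itself a nonabelian limit group (as a finitely generated subgroup of $G$), so $\betti(V,E) \geq 1$ by the fact used repeatedly earlier in the paper that the first betti number of a nonabelian limit group relative to any abelian subgroup is at least one. Via Mayer--Vietoris applied to the graph of groups presentation of $G$, these relative cycles contribute independently to $H_1(G)$, whence $V_1^N \leq \betti(G)$ and the upper bound follows.

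For the lower bound $\kappa_N \geq 1 - \tfrac{3}{2}\betti(G)$, I would combine the Euler characteristic identity
\[
\chi(\Gamma) = \kappa_N(\Delta) + \kappa_A(\Delta) = 1 - \betti(\Gamma)
\]
with the inequality $\betti(\Gamma) \leq \betti(G)$ (since $G$ surjects onto the free group $\pi_1(\Gamma)$, so the loops of $\Gamma$ give independent stable letters in $G^{\mathrm{ab}}$). Rearranging gives $\kappa_N \geq 1 - \betti(G) - \kappa_A$, so it is enough to prove the analogous bound $\kappa_A \leq \tfrac{1}{2}\betti(G)$. Mirroring the upper-bound argument, $\kappa_A \leq \kappa_A^+ = \tfrac{1}{2}V_1^A$, and each valence-one abelian vertex group $A$ with incident edge group $E$ of infinite index contributes $\rk(A/E) \geq 1$ independently to $\betti(G)$.

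The main obstacle is the case of a valence-one abelian vertex group whose incident edge group has \emph{finite} index, since such a vertex contributes $\tfrac{1}{2}$ to $\kappa_A^+$ but nothing to $\betti(G)$ through $\rk(A/E)$. I expect this to be handled by a normalization step: using commutative transitivity of limit groups and malnormality of maximal abelian subgroups, such a configuration can be absorbed into the adjacent vertex group (whose inclusion of $E$ already detects the finite extension, because $A$ coincides with $\cent_G(E)$) without changing $\kappa_N$. After this normalization only valence-one abelian vertices with infinite-index edges remain, giving $V_1^A \leq \betti(G)$ and completing the lower bound.
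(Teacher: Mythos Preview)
Your overall architecture is exactly the paper's: split $\kappa_N=\kappa_N^++\kappa_N^-$, bound $\kappa_N^+$ by counting valence-one nonabelian vertices, and for the lower bound use $\kappa_N+\kappa_A=\chi(\Gamma)\geq 1-\betti(G)$ together with the symmetric bound $\kappa_A\leq\tfrac{1}{2}\betti(G)$. The paper's proof is the same two lines; it simply asserts that ``each valence one vertex group of $\Delta$ contributes at least $1$ to $\betti(G)$'' and does not separately discuss the finite-index abelian case you worry about.

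Your proposed fix for that case, however, does not work as stated. If $A$ is a valence-one abelian vertex with incident edge $E$ of finite index and the adjacent vertex $V$ is nonabelian, then collapsing the edge replaces $V$ by $V*_EA$ (still nonabelian) with valence decreased by one, so $\kappa_N$ \emph{increases} by $\tfrac{1}{2}$ while $\kappa_A$ decreases by $\tfrac{1}{2}$; it is not true that $\kappa_N$ is unchanged. Consequently, proving the lower bound for the normalized decomposition $\Delta'$ only gives $\kappa_N(\Delta)=\kappa_N(\Delta')-\tfrac{1}{2}\geq 1-\tfrac{3}{2}\betti(G)-\tfrac{1}{2}$, and each such absorption costs another $\tfrac{1}{2}$. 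Equivalently, your route via $\kappa_A$ loses the same amount: after absorption $\kappa_A(\Delta')\leq\tfrac{1}{2}\betti(G)$ only yields $\kappa_A(\Delta)\leq\tfrac{1}{2}\betti(G)+\tfrac{1}{2}$. So the normalization step, as you have described it, does not close the gap. (When $V$ is abelian the absorption is harmless, but that is not the generic case.) You have correctly located a point the paper's proof passes over quickly; what is missing from your proposal is a replacement argument that actually controls the number of such finite-index valence-one abelian vertices by $\betti(G)$, or an observation that the lemma is only ever applied to decompositions in which the JSJ normalizations already rule these out.
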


\begin{proof}
  Since each valence one vertex group of $\Delta$ contributes at least
  $1$ to $\betti(G),$ we have that $\kappa_{N|A}\leq
  \frac{1}{2}\betti(G)$ since $\kappa_{N|A}\leq\kappa_{N|A}^+.$  Since
  $\kappa_N+\kappa_A=1-\betti(\Delta)\geq 1-\betti(G)$ we have
  that \[\kappa_N+\frac{1}{2}\betti(G)\geq 1-\betti(G)\] Thus
  $\kappa_N\geq 1-\frac{3}{2}\betti(G).$  The same also holds for
  $\kappa_A.$
\end{proof}

\begin{definition}[Very weakly \jsj\ respecting]
  Suppose $G\onto H$ is degenerate and $\scott_1(G)=\scott_1(H).$
  Suppose that for all rigid vertex groups $R$ of $G,$ $R$ has
  nonabelian image in $H$ unless it has valence two, the relative
  \jsj\ decompositions $\Delta_{\strictr R}$ are trees, edge groups
  incident to $R$ have images in valence one abelian vertex groups of
  $\Delta_{\strictr R},$ and no two incident edge groups have
  conjugate centralizer in $\strictr R.$  Moreover, in the process of
  taking the direct limit $\dirlim G_i\onto H,$ at no point do edges
  incident to a rigid vertex group of $G_i$ have conjugate
  centralizers, after application of Lemma~\ref{lem:relativerigid}, in
  the associated rigid vertex group of $G_{i+1}.$  Such a homomorphism
  is \emph{very weakly \jsj\ respecting}.
\end{definition}

\begin{lemma}
  \label{lem:kappasequal}
  Suppose $\varphi\colon G\onto H$ is degenerate and
  $\scott_1(G)=\scott_1(H).$  Then $\kappa_N(G)\leq\kappa_N(H),$ with
  equality only if $\varphi$ is very weakly \jsj\ respecting.
\end{lemma}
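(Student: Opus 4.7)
The plan is to reduce $\kappa_N$ to a combinatorial identity and track its evolution under the pushforward $\Phi_s$ construction from Section~\ref{sec:constructstrict}.

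By the \jsj\ normalizations (no two adjacent abelian vertex groups, and axiom AS5 in the stabilized \jsj, so every edge is incident to an abelian vertex group), each edge of $\jsj(G)$ and of $\jsj(H)$ has exactly one abelian and one nonabelian endpoint, whence $\kappa_N(\Delta) = V_N(\Delta) - \tfrac12 E(\Delta)$ where $V_N$ counts nonabelian vertex groups. Since $\varphi$ is degenerate, Theorem~\ref{ginfinityisalimitgroup} identifies $\Phi_sG$ with $H,$ and $\scott_1(G) = \scott_1(H)$ combined with Lemmas~\ref{lem:abelianranks} and~\ref{lem:bettitogenus} forces, for each rigid vertex group $R$ of $G$ with nonabelian image, $\Delta_{\strictr R}$ to be a bipartite tree of rigid and abelian vertex groups, with no \qh\ vertex group (else $c_g$ would strictly grow) and every abelian vertex equal to its peripheral subgroup (else $c_a$ would strictly grow).

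I compute the change in $\kappa_N$ vertex-by-vertex. For a rigid $R$ of $G$ with nonabelian image and valence $n,$ if $\Delta_{\strictr R}$ has $k$ nonabelian and $a$ abelian vertex groups, its contribution to $\kappa_N(H)$ is $k - (k + a - 1)/2 = (k - a + 1)/2,$ using that the $k + a - 1$ tree-edges each have exactly one nonabelian endpoint. A valence count in this bipartite tree shows that at most $k - 1$ abelian vertex groups can have tree-valence at least two, while each tree-valence-one abelian vertex must host at least one external edge (otherwise the \jsj\ normalization for valence-one abelian vertex groups would be violated, since no \qh\ boundaries are available inside $\Delta_{\strictr R}$). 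Hence $a \leq n + k - 1$ and the change $(k + n - a - 1)/2$ is nonnegative. For a rigid $R$ with abelian image, the preceding homological argument forces valence two and the $0$-contribution is preserved. For \qh\ vertex groups with nonabelian image, the allowed single-boundary subsurface-absorptions from the proof of Lemma~\ref{lem:bettitogenus} preserve $\kappa_N$ (the contribution of an absorbed thrice-punctured sphere exactly compensates the gain in the enlarged \qh\ vertex group's valence). For \qh\ with abelian image (valence $n \geq 2$), the contribution $1 - n/2 \leq 0$ is replaced by $0.$ Summing gives $\kappa_N(G) \leq \kappa_N(H).$

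For the equality case each inequality above must saturate. Abelian-image \qh\ vertex groups, and abelian-image rigid vertex groups of valence at least three, each yield a strict gain and are forbidden. For each rigid $R$ with nonabelian image, $a = n + k - 1$ forces every external edge to attach to a distinct tree-valence-one abelian vertex group of $\Delta_{\strictr R},$ and forbids two incident edge groups from having conjugate centralizers in $\strictr R.$ Applying the same analysis at each step $G_i \onto G_{i+1}$ in the direct limit construction of $\Phi_s G$ gives the final condition on non-conjugate centralizers at every intermediate rigid vertex group. These conditions together are precisely the definition of very weakly \jsj\ respecting. The main obstacle is the valence-counting bound $a \leq n + k - 1,$ which requires careful analysis of the \jsj\ normalizations for valence-one abelian vertex groups in a relative \jsj\ to rule out ``dangling'' abelian vertices with no peripheral justification.
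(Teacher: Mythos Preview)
Your approach closely parallels the paper's: both do a vertex-by-vertex accounting of how replacing a rigid $R\in\jsj(G)$ by the tree $\Delta_{\strictr R}$ affects $\kappa_N$, and your use of the bipartite identity $\kappa_N=V_N-\tfrac12 E$ is a clean way to organize that count. The paper instead splits $\Delta_{\strictr R}$ into the subtree $T$ spanned by its abelian vertices and complementary pieces $T_j$, checks that $T$ reproduces $\kappa(R)$ under the ``pretend everything is nonabelian'' convention, and shows each $T_j$ adds at least $\tfrac12$; it then closes with the $\sim_q$ crushing of \qh\ and subsurface trees.

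There is a genuine gap in your assertion that $\Delta_{\strictr R}$ contains ``no \qh\ vertex group (else $c_g$ would strictly grow).'' A \emph{planar} \qh\ vertex group---a multiply-punctured sphere or projective plane---has $\betti(\Sigma,\partial\Sigma)=0$ and contributes nothing to $c_g$, so the hypothesis $\scott_1(G)=\scott_1(H)$ does not exclude it from the relative \jsj\ of $\strictr R$. If such a planar \qh\ vertex $Q'$ appears and one of its boundary components $b$ has a proper root in $\strictr R$ of index at least three, the \jsj\ normalization produces a valence-one cyclic abelian vertex at $b$ that hosts no element of $\E(R)$; each such vertex inflates $a$ without touching $n$ or $k$, breaking your bound $a\le n+k-1$ and with it the local inequality. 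Your one-line remark on ``single-boundary subsurface-absorptions'' does not cover this case, since $\sim_q$ only absorbs index-$2$ (M\"obius) valence-one abelians. You need either an argument that such configurations cannot actually occur in $\Delta_{\strictr R}$ under $\Phi_s$ when $\scott_1$ is constant, or a direct verification that they do not spoil the inequality; the paper's proof leans on the former via its claim that every valence-one abelian vertex of $\Phi_{as}(G)$ contributed by $\strict R$ contains the image of some incident edge group.
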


\begin{proof}
  Let $R$ be a rigid vertex.  By the assumption on $\scott_1,$ if $R$
  has valence one then $R$ doesn't have abelian image.  Thus we only
  need to consider rigid vertices with valence at least three: those
  with valence two are free to have abelian image without disturbing
  $\kappa_N.$  Let $R$ have valence at least three.  If $R$ has
  abelian image in $H$ then $\kappa_N$ must increase by at least
  $\frac{1}{2}.$  We only need analyze rigid vertex groups with
  nonabelian image.  Let $\Delta_R$ be the relative \jsj\
  decomposition of $\strict R.$

  By our standing assumption that \jsj\ decompositions be bipartite,
  with one class the abelian vertex groups, the other the nonabelian
  vertex groups, and if two edge groups are incident to a rigid vertex
  group $R,$ then they have nonconjugate centralizer, we may assume
  that $\Delta_R$ has a form such that all images of edge groups
  incident to $R$ map to abelian vertex groups of $\Delta_R$ (If an
  incident edge group has image in a rigid vertex group and its
  centralizer isn't conjugate to the centralizer of an incident edge
  group in $\Delta_R,$ simply introduce a new edge and pull the
  centralizer).  Moreover, in $\Phi_{as}(G),$ since $c_a(G)=c_a(H),$
  all abelian vertex groups of $\Phi_{as}(G)$ contributed by $\strict
  R$ are equal to their peripheral subgroups.  In particular, there
  are no valence one abelian vertex groups which don't contain the
  image of some edge group incident to $R.$

  Let $v_1,\dotsc,v_n$ be the vertices of $\Delta_R$ corresponding to
  abelian vertex groups, and let $T$ be the subtree of $\Delta_R$
  spanned by the collection $\set{v_i},$ and let $T_j$ be the
  collection of closures of complimentary components of $T$ in
  $\Delta_F.$  Note that if $T_j\cap T_{j'}\neq\emptyset$ then the
  intersection is a single point and coincides with the intersections
  $T\cap T_j$ and $T\cap T_{j'}.$

  We now compute the contribution of $T$ to $\kappa_N.$  First, we
  claim that if two images of edge groups incident to $R$ have
  conjugate centralizing elements in $\strictr R,$ or if some incident
  edge group maps to a non valence one vertex of $T$ then
  \[\sum_{v\in T\mbox{, $\strictr R_v$
      nonabelian}}\kappa(v)>\kappa_N(R)\] Equality holds if we
  pretend that every vertex group from $T$ is nonabelian and that the
  centralizers of images of incident edge groups are nonconjugate.  If
  any vertex of valence greater than two is abelian, the inequality
  above must hold.  If all vertices of valence at least three are
  nonabelian, then no two images of incident edge groups can have
  conjugate centralizers, since these correspond to new abelian vertex
  groups (of $\Phi_{as}\Delta$) with valence at least three.  If an
  incident edge group has image in a non-valence one abelian vertex
  group of $T$ then the corresponding vertex of $\Phi_{as}$ has
  valence at least three, and the corresponding contribution to the
  computation of $\kappa_N$ is strictly positive.

  Now we compute the contribution of the trees $T_j$ to $\kappa_N.$
  As above, if we pretend that each tree consists of only nonabelian
  vertex groups, then the contribution each tree makes to $\kappa_N$
  over that of $R$ is at least $\frac{1}{2}.$  If any vertex groups
  with valence at least three are abelian, then the contribution of
  $T_j$ to $\kappa_N$ can only increase since no valence one vertex
  groups not centralizing images of edge groups incident to $R$ are
  abelian, by the hypothesis that $\scott_1(G)=\scott_1(H).$
  
  To verify that $\kappa_N(G)\leq\kappa_N(H)$ we only need to check
  that iterated application of $\Phi_a\circ\Phi_r$ to $\Phi_{as}(G)$
  cannot decrease $\kappa_N.$  This follows immediately from the fact
  that no rigid vertex group of $\Phi_{as}(G)$ splits after an
  application of $\Phi_r$ (lemma~\ref{lem:relativerigid}) and the
  observation that folding edges of nonabelian vertex groups with
  conjugate centralizers can only increase $\kappa_N.$  If equality of
  $\kappa_N$ holds then no such folding can ever occur.

  The last step is to observe that when crushing trees of \qh\ and
  subsurface groups of the abelian decomposition $H$ inherits from
  $\Phi_{as}(G)$ to build the \qh\ subgroups of $H,$ as in the proof
  of Lemma~\ref{lem:bettitogenus}, $\kappa_N$ doesn't change.
\end{proof}

\par The final step in this coarse analysis of the degenerations of
\jsj\ decompositions is the following observation.  The quantity
\[c_q(G)\define \sum_{Q\in\jsj(G)}\vert\chi(Q)\vert\] is the total
complexity of \qh\ subgroups.

\begin{lemma}
  If $\scott_1(G)=\scott_1(H)$ and $\kappa_N(G)=\kappa_N(H)$
  then \[c_q(G)\leq c_q(H)\] If equality holds then no vertex groups
  of $\strict R$ correspond to sub-surface groups of \qh\ vertex
  groups of $H.$
\end{lemma}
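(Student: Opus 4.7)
The plan is to trace what happens to the $\qh$ vertex groups of $G$ under the push-forward to $\Phi_s(G)\cong H$ and to account for every new contribution to $c_q(H)$ as a subsurface group of some $\strict R$ absorbed into a $\qh$ vertex group of $H$. Set $\Delta\define\Phi_s(\jsj(G))$. As in the proof of Lemma~\ref{lem:bettitogenus}, $\jsj(H)$ arises from $\Delta$ by a $\sim_q$ collapse: inside each $\sim_q$ class one glues the $\qh$ vertex groups of $\Delta$ to the non-$\qh$ rigid subsurface groups (each a thrice-punctured sphere or twice-punctured projective plane) arising in some $\strict R$ along their common boundary circles.

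First I would check that the hypotheses $\scott_1(G)=\scott_1(H)$ and $\kappa_N(G)=\kappa_N(H)$ together preclude two phenomena: (a) the collapse of any $\sim_q$ cycle, which by the proof of Lemma~\ref{lem:bettitogenus} would strictly increase $c_g+c_b$; and (b) any $\qh$ vertex group of $G$ acquiring abelian image in $H$, which by the discussion preceding Lemma~\ref{lem:kappasequal} forces either $c_a$ to strictly increase (since such an image must land in an abelian vertex of valence at least three) or $\kappa_N$ to strictly decrease. With both phenomena ruled out, the $\qh$ vertex groups of $\Delta$ coincide with those of $\jsj(G)$, so summing $|\chi(Q)|$ over $\qh$ vertex groups of $\Delta$ reproduces $c_q(G)$, and each $\qh$ vertex group of $\jsj(H)$ is assembled along disjoint boundary circles from a family $\{Q_i\}\cup\{\Sigma_j\}$, where each $Q_i$ is a $\qh$ vertex group of $\Delta$ and each $\Sigma_j$ is a subsurface vertex group of some $\strict R$.

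Because $\chi(S^1)=0$ and every surface involved has nonpositive Euler characteristic, $|\chi|$ is additive under these boundary-circle gluings, so summing over $\qh$ vertex groups of $H$ yields
\[
c_q(H)=c_q(G)+\sum_{\Sigma}|\chi(\Sigma)|,
\]
the sum being over all subsurface vertex groups of any $\strict R$ absorbed into $\jsj(H)$. Since each such $\Sigma$ has $|\chi(\Sigma)|\geq 1$, this gives $c_q(G)\leq c_q(H)$, with equality exactly when no such $\Sigma$ occurs, which is the stated rigidity conclusion. The main obstacle will be making the two suppression claims (a) and (b) precise: both reduce to Euler-characteristic and relative-rank bookkeeping of the same flavor already carried out in this section, but one must enumerate all routes by which a $\qh$ vertex group can be absorbed into an abelian vertex and verify that each is blocked by the combined equality of $\scott_1$ and $\kappa_N$.
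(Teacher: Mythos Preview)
Your approach is essentially the same as the paper's, and the key mechanism---additivity of $|\chi|$ under boundary-circle gluings, with each non-\qh\ subsurface piece from some $\strict R$ contributing at least $1$---is exactly what the paper uses in its two-sentence proof. Your version is simply more explicit about the preliminary reductions.

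Two small slips in part (b) are worth flagging. First, the direction: if a nonabelian vertex of valence $v\geq 3$ acquires abelian image, its contribution $\kappa=1-v/2\leq -\tfrac12$ is \emph{removed} from $\kappa_N$, so $\kappa_N$ strictly \emph{increases}, not decreases; this is exactly what Lemma~\ref{lem:kappasequal} establishes, and it is what contradicts $\kappa_N(G)=\kappa_N(H)$. Second, the parenthetical ``since such an image must land in an abelian vertex of valence at least three'' is not the reason $c_a$ might increase; $c_a$ concerns ranks $A/P(A)$, not valences. The correct statement is that a \qh\ vertex group with abelian image is a punctured sphere or projective plane with $\chi\leq -2$, hence has at least three boundary components, so the vertex carrying $Q$ in $\jsj(G)$ already has valence $\geq 3$, and Lemma~\ref{lem:kappasequal} then forces $\kappa_N$ up. Your point (a) about $\sim_q$ cycles is correct but not actually needed here, since $|\chi|$ is additive under circle gluings whether or not cycles form; cycles affect $c_g+c_b$, not $c_q$.
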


\begin{proof}
  Since $\kappa_N(G)=\kappa_N(H),$ no nonabelian vertex group with
  valence one or at least three has abelian image.  If $\strict R$
  contributes a subsurface group to a \qh\ subgroup of $H$ then the
  total Euler characteristic must decrease since the subsurface group
  must be either a multiply punctured sphere or projective plane with
  Euler characteristic at most $-1.$
\end{proof}

\begin{definition}[Essential, Vulnerable]
  \label{def:essential-vulnerable}
  We say that a vertex of the abelian \jsj\ decomposition of a limit
  group is \emph{essential} if it satisfies at least one of the
  following:
  \begin{itemize}
  \item It is \qh.
  \item It isn't valence two.
  \item It is abelian and isn't equal to its peripheral subgroup.
  \end{itemize}
  The number of essential vertices of $G$ is denoted by
  \[v_e(G)\] Let $\beta(G)$ be the set of essential vertices in
  $\jsj(G),$ and let $p(G)$ be the set of unoriented reduced edge
  paths $\alpha\colon\left[0,1\right]\to\jsj(G)$ such that
  $\alpha^{-1}(\beta(G))=\set{0,1}.$  The elements of $p(G)$ fall into
  six classes, corresponding to the types of endpoints: for example,
  $AA(G)$ is the set of elements of $p(G)$ whose endpoints are both
  essential abelian vertices.  A path in $AA(G),$ $AQ(G),$ or $QQ(G),$
  is \emph{vulnerable} if it crosses a rigid vertex group.  Let
  $c_v(G)$ be the number of vulnerable paths in $AQ(G)\sqcup QQ(G).$

  Let $c_{\#r}(G)$ be the number of essential rigid vertex groups of
  $G.$
\end{definition}

\par We can now define the Scott complexity.

\begin{definition}[Scott complexity]
  The \emph{Scott complexity} of a freely indecomposable limit group
  is the following lexicographically ordered quantity:
  \[
  \scott(G)\define
  (c_a(G),c_g(G),c_b(G),\kappa_N(G),c_q(G),c_{\#r}(G),-v_e(G),-c_v(G))
  \]
\end{definition}

\par Note that $\scott$ takes only boundedly many values for a given
first betti number.  That the last four take only boundedly many
values follows from the part of the proof of
Lemma~\ref{lem:rankheightbound} bounding the complexity of the
\jsj\ decomposition in terms of $\betti$ ($c_q$ and $c_{\#r}$) and
Lemma~\ref{lem:boundingkappan} ($\kappa_N$).

\par Fix a degenerate $\varphi\colon G\onto H$ such that $\scott_1$
and $\kappa_N$ are constant.  By Lemma~\ref{lem:kappasequal},
$\varphi$ is very weakly \jsj\ respecting.  If, additionally,
$c_q(G)=c_q(H),$ then for all rigid vertex groups $R$ of $G,$
$\strictr R= \strict S *_{F_j}B_j,$ no vertex group of $\jsj(\strict
S,\E(\strict S))$ is a subsurface group of a \qh\ subgroup of
$H,$ and we conclude by Lemma~\ref{lem:relativerigid} that all rigid
vertex groups of $H$ are obtained by iteratively adding roots to rigid
vertex groups from the collection $\set{\strict_{II} R_i}:{R_i\mbox{ rigid
in }G}.$  If the relative \jsj\ of some $\strict_{II} R$ has more than
one non-valence two vertex then $c_{\#r}(G)<c_{\#r}(H).$  Thus, if
$\scott(G)=\scott(H)$ then, for all rigid vertex groups $R_i,$ the
underlying graph of $\Delta_{\strict_{II} R_i}$ is a tree, has only
one vertex $v$ of valence at least three, the valence of $v$ is
precisely the valence of $R,$ and every valence one vertex contains
the image of exactly one incident edge group.

\par Let
\[\scott_2(G)=(c_a(G),c_g(G),c_b(G),\kappa_N(G),c_q(G),c_{\#r}(G))\]
and suppose that $\scott_2(G)=\scott_2(H)$ and $-v_e(G)<-v_e(H).$
There must be a path $p$ in $AA(G)$ or $QQ(G)$ such that every rigid
vertex group crossed has abelian image in $H.$ (No path in $AQ(G)$
contains an essential vertex, so we need not consider this case.)
Abelianizing these groups leaves $\kappa_N$ and $c_q$ unchanged.  If
$p\in AA(G)$ then the number of essential abelian vertices of $H$ is
less than that of $G.$ Now we consider the quantity $c_v,$ under the
assumption that $-v_e$ is constant.  The number of vulnerable paths
cannot increase, as the topology and labeling of essential vertices of
the underlying graphs of \jsj's is unchanged under $G\onto H.$

\begin{lemma}
  \label{lem:scottnovulnerable}
  If $\scott(G)=\scott(H)$ then at least one rigid vertex group of $G$
  crossed by a vulnerable path in $AQ(G)\sqcup QQ(G)\sqcup AA(G)$ has
  nonabelian image in $H.$
\end{lemma}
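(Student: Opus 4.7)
The plan is to argue by contradiction. Assume $\scott(G)=\scott(H)$, so by the cumulative analysis of the preceding lemmas every coordinate of $\scott$, including $-v_e$ and $-c_v$, is preserved by the map $G\onto H$; in particular the map is very weakly \jsj--respecting, $c_q$ is preserved, and each rigid vertex group with nonabelian image gives rise to a unique rigid vertex group of $H$ by iterated root adjunction (Lemma~\ref{lem:relativerigid}). Suppose, for contradiction, that $p\in AA(G)\sqcup AQ(G)\sqcup QQ(G)$ is a vulnerable path every one of whose crossed rigid vertex groups has abelian image in $H$.

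First I would observe that by definition of $p(G)$, every vertex in the interior of $p$ is non-essential. Under the standing normalizations on $\jsj(G)$ together with preservation of $c_{\#r}$ and $\kappa_N$, these interior vertex groups alternate between valence-two rigid and valence-two abelian vertex groups. By our assumption every interior rigid vertex group abelianizes in $\phias G$, so the entire interior of $p$, together with any abelian endpoint of $p$, forms a connected subgraph of abelian vertex groups. As in the construction of $\phia$, this subgraph is collapsed by a sequence of type IVB folds into a single abelian vertex of the abelian decomposition of $H$.

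The next step splits into cases. If $p\in AA(G)$, both essential abelian endpoints of $p$ are collapsed into the same abelian vertex in $\jsj(H)$, so $v_e(H)\le v_e(G)-1$, contradicting $v_e(G)=v_e(H)$. If $p\in QQ(G)$ or $p\in AQ(G)$, then the \qh\ endpoint(s) of $p$ have nonabelian image (since equality of $c_q$ together with Lemma~\ref{lem:abelianranks} rules out any \qh\ vertex going abelian), so they remain essential vertices of $\jsj(H)$ of the same type. Hence the image of $p$ in $\jsj(H)$ is a reduced edge path between essential vertices of the same endpoint types as $p$ but with no rigid vertex group in its interior, and thus is not vulnerable. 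One then concludes $c_v(H)<c_v(G)$, contradicting $c_v(G)=c_v(H)$.

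The main obstacle will be making the last step rigorous: one has to verify that the very weakly \jsj--respecting factorization $G\onto\phias G\onto H$, followed by the iterated applications of $\phia$ and $\phir$, sets up a correspondence between vulnerable paths of $\jsj(G)$ and vulnerable paths of $\jsj(H)$ under which the loss of vulnerability of $p$ is not compensated by creation of a new vulnerable path. This amounts to checking that, under all our equality hypotheses, (i) the essential vertex structure of $\jsj(H)$ is determined by that of $\jsj(G)$ up to the merging forced by $\sim_a$-collapses, and (ii) the iterated root adjunctions of Lemma~\ref{lem:relativerigid} enlarge rigid vertex groups internally but do not insert new rigid vertices into paths between essential vertex groups of $\jsj(H)$. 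Together these force an injective correspondence $p(H)\hookrightarrow p(G)$ which sends vulnerable paths to vulnerable paths, yielding the desired strict decrease in $c_v$.
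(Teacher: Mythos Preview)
Your proposal is correct and follows essentially the same approach as the paper. The paper's entire proof is the single sentence ``The lemma follows immediately from the definitions,'' which refers back to the discussion immediately preceding the lemma: there it is already argued that if a vulnerable $AA$ path has every crossed rigid vertex abelianized then $v_e$ strictly drops, and that under $\scott_2(G)=\scott_2(H)$ and $v_e(G)=v_e(H)$ the number of vulnerable paths in $AQ\sqcup QQ$ cannot increase (``the topology and labeling of essential vertices of the underlying graphs of \jsj's is unchanged''), so if a vulnerable $AQ$ or $QQ$ path fully abelianizes then $c_v$ strictly drops. Your writeup simply unpacks this. The ``main obstacle'' paragraph you flag is exactly what the paper dispatches with that one-line observation about the essential-vertex structure being preserved; you need not work harder than that.
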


The lemma follows immediately from the definitions.

\begin{definition}[Weakly \jsj\ respecting]
  If $\varphi\colon G\onto H$ is degenerate and $\scott(G)=\scott(H),$
  then $\varphi$ is \emph{weakly \jsj\ respecting}.
\end{definition}

\par In conclusion, we have the following.  Let $C_n$ be the number of
values $\scott$ takes for limit groups with first betti number $n.$

\begin{lemma}
  Let $\LL$ be a degenerate chain of limit groups.  If
  $\Vert\LL\Vert>K\cdot C_n$ and $\betti(\LL)=n,$ then there is a weakly
  \jsj\ respecting subchain of $\LL$ of length $K.$
\end{lemma}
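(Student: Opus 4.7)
The plan is a pigeonhole argument on the level sets of $\scott$. First I would assemble the monotonicity results accumulated in this section---Lemmas~\ref{lem:abelianranks}, \ref{lem:bettitogenus}, \ref{lem:kappasequal}, \ref{lem:scottnovulnerable}, together with the intermediate estimates on $c_q$, $c_{\#r}$, $v_e$, and $c_v$---to conclude that $\scott$ is nondecreasing under every degenerate (indecomposable) map between freely indecomposable limit groups in the ambient setting of the section, and that equality $\scott(G)=\scott(H)$ across a single map is by definition the statement that the map is weakly \jsj\ respecting.

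Second I would check that $\scott$ takes at most $C_n$ values on the groups of $\LL$. Since the maps in $\LL$ are epimorphisms, $\betti(\LL(i))\leq\betti(\LL)=n$ for all $i$; each coordinate of $\scott$ is bounded by a function of $\betti$ (using Lemma~\ref{lem:boundingkappan} for $\kappa_N$ and the graph-theoretic bounds from the proof of Lemma~\ref{lem:rankheightbound} for the remaining coordinates), so by definition of $C_n$ the set of $\scott$ values realized along $\LL$ has cardinality at most $C_n$.

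Since $\scott$ is nondecreasing along $\LL$, its level sets form contiguous blocks of indices, and there are at most $C_n$ nonempty blocks. If $\Vert\LL\Vert>K\cdot C_n$ then some block must contain more than $K$ indices $i_1<i_2<\cdots<i_{K+1}$. Restricting $\LL$ to this subsequence and composing the original maps produces a subchain $\LL'$ of length at least $K$ on which $\scott$ is constant. Each map $\LL'(j)\onto\LL'(j+1)$ is a composition of maps of $\LL$ and is therefore degenerate by the definition of degenerate chain; combined with constancy of $\scott$ across that map, it is weakly \jsj\ respecting.

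All the mathematical content of the lemma sits in the monotonicity analysis already carried out, so there is no genuine obstacle. The only things to record are the joint monotonicity of $\scott$ and the fact that restricting to a subsequence of a degenerate chain preserves degeneracy of every composition, both of which are immediate from the definitions.
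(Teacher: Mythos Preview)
Your proposal is correct and is exactly the argument the paper has in mind: the lemma is stated without proof, introduced by ``In conclusion, we have the following,'' precisely because it is the pigeonhole consequence of the monotonicity of $\scott$ established in the preceding lemmas together with the boundedness of the value set (the definition of $C_n$). Your write-up just makes explicit the two ingredients the paper leaves implicit---lexicographic monotonicity of $\scott$ along a degenerate indecomposable chain, and that compositions in a degenerate chain remain degenerate---so there is nothing to add.
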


We can now define the \gad's which are respected under maps of
constant Scott complexity.

\begin{definition}[$\Delta$--admissible; $\Delta$--stable;
  almost-\jsj\ respecting]
  \label{def:jsjrespecting}
  A \emph{virtual \jsj\ decomposition} of a limit group $L$ is an
  abelian decomposition $\Delta$ such that the Scott complexity of $L$
  measured with respect to $\Delta$ is the same as the Scott
  complexity of $L$ measured with respect to $\jsj(L).$  \mnote{what
    about normalizations}

  Let $\varphi\colon G\onto H$ be degenerate.  If $\Delta$ is a
  virtual \jsj\ decomposition of $G$ then $\varphi$ is
  \emph{$\Delta$--admissible} if $\scott(G)=\scott(H)$ and every
  nonabelian rigid vertex group of $\Delta$ has nonabelian image in
  $H.$  If $\varphi$ is $\Delta$--admissible then there is a well
  defined push forward of $\Delta$ to $H$ which is also a virtual
  \jsj\ decomposition.

  Let $\LL$ be a weakly \jsj\ respecting chain, and let $\Delta$ be a
  virtual \jsj\ decomposition of $\LL(1).$ We say that $\LL$ is
  \emph{$\Delta$--admissible} if and all $\varphi_{1,j}$ are
  $\Delta$--admissible.

  Suppose $\varphi\colon G\onto H$ is $\Delta$--admissible.  Then
  $\varphi$ is \emph{$\Delta$--stable} if, for every rigid vertex
  group $R$ of $\Delta,$ $c_a(\varphi\vert_{R},\E(R))=0$ (See
  Definition~\ref{def:vertexscott}).

  Let $\LL$ be a weakly \jsj\ respecting chain such that
  $\varphi_{i,j}$ is $\jsj(\LL(i))$--stable for all $i$ and $j.$ If
  $\LL(i)$ and $\LL(j)$ have the same number of rigid vertex groups for
  all $i$ and $j,$ and the betti numbers of rigid vertex groups of
  \jsj\ decompositions are constant then $\LL$ is \emph{almost-\jsj\
    respecting}.

\end{definition}



The following lemma follows easily from the definitions and
conventions on \jsj\ decompositions.

\begin{lemma}
  \label{lem:formofvirtual}
  A virtual \jsj\ decomposition is obtained by collapsing a forest $F$
  in $\jsj(G)$ of the following form, and all such forest collapses
  yield virtual \jsj\ decompositions:
  \begin{itemize}
  \item All valence one vertices of $F$ are rigid, all valence two
    vertices are rigid or abelian.
  \item Each component of $F$ contains at most one essential vertex.
    If $F$ contains an essential vertex then that vertex is rigid.  In
    fact, every component of $F$ is star-shaped, and all
    non-valence-two vertices are correspond to rigid vertex groups of
    the \jsj\ of $G.$
  \end{itemize}
\end{lemma}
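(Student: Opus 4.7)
The plan is to use the fact (standard in \jsj\ theory) that any \gad\ of $G$ arises from $\jsj(G)$ by a sequence of slides, conjugations, folds, cuts of \qh\ vertex groups along essential simple closed curves, and edge collapses. Slides and conjugations do not alter $\scott,$ so I can focus on the remaining moves. I would rule out folds and \qh\ cuts for virtual \jsj\ decompositions by checking that, given the standing normalizations on the \jsj, each such move strictly decreases one of $c_a, c_g, c_b, c_q$ or raises $-v_e$: a fold reduces the rank of some vertex group and the compensating effect on other coordinates cannot balance; cutting a \qh\ vertex group along an essential simple closed curve either drops $c_b$ (nonseparating case) or creates additional essential vertices that cannot be absorbed elsewhere. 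Hence a virtual \jsj\ $\Delta$ is obtained by collapsing some subgraph $F \subseteq \jsj(G),$ and $F$ must be a forest, since otherwise $c_b$ strictly decreases.

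The next step is to determine which vertices $F$ may contain. The elementary identity
\[
\left(1-\tfrac{1}{2}\valence(u)\right)+\left(1-\tfrac{1}{2}\valence(v)\right)=1-\tfrac{1}{2}\bigl(\valence(u)+\valence(v)-2\bigr)
\]
shows that an edge collapse preserves $\kappa_N$ provided both endpoints and the merged vertex are uniformly nonabelian, and similarly preserves $\kappa_A$ for collapses between abelian vertices. Any edge of $F$ incident to a \qh\ vertex would merge it with a non-\qh\ vertex group, disturbing $c_q$ or $\kappa_N,$ so no \qh\ vertex of $\jsj(G)$ lies in $F.$ Collapsing an edge incident to an essential abelian vertex $A$ with $A\neq P(A)$ would merge the non-peripheral rank of $A$ into a rigid vertex group where it is no longer counted by $c_a,$ so no essential abelian vertex lies in $F.$ Thus every vertex of $F$ is either a rigid vertex of $\jsj(G)$ or a non-essential (peripheral) abelian vertex.

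The principal obstacle is establishing the at-most-one-essential-vertex condition and the star shape, which I expect to follow by tracking $c_{\#r}$ and $v_e.$ If a component of $F$ contained two distinct essential rigid vertices of $\jsj(G),$ then the collapse would strictly decrease $c_{\#r},$ contradicting $\scott(G)=\scott(\Delta);$ hence each component of $F$ contains at most one essential vertex, which must be rigid by the previous paragraph. Since an essential rigid vertex has $\jsj$-valence not equal to two, such a center has either $\jsj$-valence one (and the component degenerates to a path ending at the center) or $\jsj$-valence at least three (and the component is a genuine star with valence-two arms emanating from the center), forcing leaves of $F$ to be rigid and matching the stated form. Conversely, I would verify by direct term-by-term bookkeeping that any forest collapse of the stated shape preserves each of the eight coordinates of $\scott$: $c_a, c_g, c_b, c_q$ are untouched because no essential abelian, no \qh\ vertex, and no cycle is collapsed; $\kappa_N$ is preserved by the identity above; and the uniqueness of the essential vertex in each component ensures $c_{\#r}, v_e,$ and $c_v$ are preserved under the star-collapse.
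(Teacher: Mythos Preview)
The paper does not prove this lemma; it only remarks that it ``follows easily from the definitions and conventions on \jsj\ decompositions.'' Your coordinate-by-coordinate plan is the intended one, but two of your specific claims are wrong and one step is missing.

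First, the analysis of the non-collapse moves is off. Cutting a \qh\ surface along a nonseparating simple closed curve \emph{adds} a loop to the underlying graph, so $c_b$ goes up, not down; what actually rules the cut out is Lemma~\ref{lem:surfacegenus}, which shows $c_g$ strictly drops. Your assertion that ``a fold reduces the rank of some vertex group'' is also false for the elementary folds of Figure~\ref{basicfolds}, which \emph{enlarge} edge and vertex groups. More seriously, ruling out each elementary move in isolation does not suffice: a priori a virtual \jsj\ could arise from a composite of moves whose individual effects on $\scott$ cancel. The cleaner route uses the universal property of the abelian \jsj\ directly: the \qh\ vertex groups of any \gad\ $\Delta$ are, up to conjugacy, subsurface groups of the \qh\ vertex groups of $\jsj(G),$ so equality of $c_q$ and $c_g$ forces these to be the full \qh\ pieces; similarly equality of $c_a$ pins down the essential abelian pieces. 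What remains is then a genuine collapse of a subgraph of $\jsj(G)$ disjoint from the \qh\ and essential abelian vertices, and equality of $c_b$ forces that subgraph to be a forest.

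Second, your justification of the first bullet is incomplete. You exclude abelian vertices with $A\neq P(A)$ via $c_a,$ but an abelian vertex is also essential whenever its \jsj-valence differs from two; those are excluded from $F$ because collapsing an incident edge changes $\kappa_N$ by $1-\tfrac{1}{2}\valence(A)\neq 0,$ which you do not mention. And you never actually argue why a leaf of $F$ cannot be a \emph{non}-essential abelian vertex (\jsj-valence two, $A=P(A)$): your own $\kappa_N$ identity gives zero change in that case, and the sentence ``forcing leaves of $F$ to be rigid'' is an assertion, not an argument. This is precisely where the paper's standing conventions on the \jsj\ (bipartiteness, every edge adjacent to the abelian vertex group centralising it) must be invoked, and you should make that step explicit.
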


\par If $G\onto H$ is degenerate, indecomposable, and
$\scott(G)=\scott(H)$ then the \jsj\ decompositions of $G$ and $H$
resemble one another quite strongly.  The next step is to show that
under weakly \jsj\ respecting chains, one can choose, uniformly in
$\betti,$ subchains such that no nonabelian rigid vertex groups
have abelian image and such that rigid vertex groups are obtained from
images of previously occurring rigid vertex groups by iteratively
adjoining roots and passing to limit quotients.

\begin{lemma}
  \label{lem:admissablerefinement}
  Fix $n.$  For all $K$ there exists $M=M(K,n)$ such that if $\LL$ is
  $\Delta$--admissible degenerate chain with
  $\betti(\LL)=n,$ $\Vert\LL\Vert_{pl}\geq M,$ then there is a
  subchain $\LL'$ of $\LL$ such that
  \begin{itemize}
  \item $\Vert\LL'\Vert_{pl}\geq K$
  \item If $\Delta$ is the push-forward of $\Delta$ to $\LL'(1)$ then
    $\LL'$ is $\Delta$--stable.
  \item Let $k$ be an integer and let $\LL'_H$ (head?) and $\LL'_T$
    (tail?) be the two subchains of $\LL$ obtained by restricting to
    the first $k$ and last $\Vert\LL'\Vert-k$ indices, respectively,
    ($k$ may be $0$ or $\Vert\LL'\Vert$) such that $k$ is the last
    index for which the push forward of $\Delta$ to $\LL(k)$ is
    $\LL(k)$'s abelian \jsj\ decomposition.  Then there is a virtual
    \jsj\ decomposition $\Delta'$ of $\LL(k+1)$ for which $\LL'_T$ is
    $\Delta'$--admissible and the number of vertex groups of $\Delta'$
    is strictly greater than the number of vertex groups of $\Delta.$
  \end{itemize}
\end{lemma}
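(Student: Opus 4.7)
The plan is to induct on the defect $D := V_n - V(\Delta),$ where $V_n$ is a uniform bound on the number of vertex groups in the abelian \jsj\ of a limit group with first Betti number at most $n,$ supplied by the combinatorial analysis in the proof of Lemma~\ref{lem:rankheightbound}. The defect is nonnegative because Lemma~\ref{lem:formofvirtual} exhibits any virtual \jsj\ of $\LL(1)$ as a coarsening of $\jsj(\LL(1))$.

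The main workhorse will be a pigeonhole argument on $c_a.$ For each rigid vertex group $R$ of $\Delta$ I will observe that the sequence $j \mapsto c_a(\varphi_{1,j}|_R, \E(R))$ is nondecreasing (any $\E(R)$-respecting quotient that factors an earlier map continues to factor every later one) and bounded above by $\betti(R) \leq n.$ Applying pigeonhole across the at most $V(\Delta) \leq V_n$ rigid vertex groups, I obtain a subchain of length at least $\Vert\LL\Vert_{pl}/(n+1)^{V_n}$ on which each $c_a$ stabilizes at some value $c_R.$ I then shift the starting index past the final jump; on the resulting subchain every $c_a$ computed from the new first index vanishes --- a quotient witnessing positive $c_a$ would contribute to $c_R$ before the shift, contradicting stabilization --- so bullet~2 is satisfied and $\LL'$ is $\Delta$-stable.

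Next I will determine $k$ as in bullet~3. If $k = \Vert\LL'\Vert$ the bullet is vacuous. Otherwise the push-forward of $\Delta$ to $\LL'(k+1)$ is properly coarser than $\jsj(\LL'(k+1)),$ so by Lemma~\ref{lem:formofvirtual} restoring any of the collapsed splittings produces a virtual \jsj\ $\Delta'$ of $\LL'(k+1)$ with $V(\Delta') > V(\Delta).$ Admissibility of $\LL'_T$ relative to $\Delta'$ will follow from constancy of Scott complexity in the $\Delta$-admissible chain together with Lemma~\ref{lem:kappasequal}, which prevents the new nonabelian rigid vertex groups from acquiring abelian images; should this fail directly, I apply the inductive hypothesis with defect $D-1$ to the pair $(\LL'_T, \Delta'),$ obtaining a longer well-behaved tail. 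The base case $D = 0$ leaves no room for further refinement, so the pigeonhole subchain is automatically stable and $k = \Vert\LL'\Vert$ is forced.

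The hardest step will be the delicate transition in the pigeonhole reduction where stabilization of $c_a$ (measured via quotients of $R$) is leveraged to yield $\Delta$-stability of the shifted subchain (measured via quotients of the image $R'$ at the new first index). The subtlety is that a quotient of $R'$ is a quotient of $R$ whose kernel contains $\ker(R \to R'),$ while a quotient realizing the maximum in $c_a(\varphi_{1,j}|_R, \E(R))$ need not have this property; making the transition rigorous requires a careful analysis using malnormality of abelian subgroups in limit groups together with the standing normalizations on the \jsj\ that edge groups incident to a common rigid vertex group be nonconjugate, so that any genuinely new abelian splitting visible at $R'$ would in fact have already registered as a jump in the $c_R$ sequence.
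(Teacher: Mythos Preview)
Your pigeonhole on $c_a(\varphi_{1,j}|_R,\E(R))$ has a real gap at exactly the point you flag as ``the hardest step.''  Stabilization of this quantity at some value $c_R$ does \emph{not} imply $\Delta$--stability of the shifted subchain.  Concretely: let $R'$ be the push-forward of $R$ to the new first index $i_0$, and suppose some quotient $G'$ of $R'$ witnesses $c_a(\varphi_{i_0,j}|_{R'},\E(R'))>0$.  Precomposing with $R\to R'$ does give a quotient of $R$ through which $\varphi_{1,j}|_R$ factors, but this only shows $c_a(\varphi_{1,j}|_R)\geq c_a(G')>0$, not that it exceeds $c_R$.  There is no additivity that would let you combine this new splitting with the splitting already realized in $\strictr(R)$ to force $c_a(\varphi_{1,j}|_R)>c_R$; the two decompositions need have nothing to do with one another.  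Malnormality and nonconjugacy of incident edges do not supply the missing step.

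The paper avoids this entirely by pigeonholing on a different quantity: the tuple of first Betti numbers of the push-forwards $\varphi_{1,i,*}(R)$ themselves.  The key observation (which your argument never invokes) is that $\betti(\varphi_{i,i+1,*}(R))\leq\betti(R)$, with strict inequality precisely when $c_a(\varphi_{i,i+1}|_R,\E(R))>0$: passing to $\strict_{II}(R)$ drops $\betti$, and the subsequent iterated adjunction of roots cannot raise it.  This tuple is componentwise nonincreasing and bounded, so a subchain on which it is constant has $c_a(\varphi_{i,j}|_{R_i})=0$ for all $i<j$ in the subchain directly --- no shift, no transition.  For bullet~3 the paper then uses this $\Delta$--stability: if $R=R'*_EA*_{E'}R''$ and either factor had abelian image in some $\LL'_T(k)$, the map $R\to\LL'_T(k)$ would factor through a group with positive $c_a$ relative to $\E(R)$, contradicting stability.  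Your invocation of Lemma~\ref{lem:kappasequal} does not cover this case, since the vertices produced by such a refinement are valence two and hence invisible to $\kappa_N$.
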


\begin{proof}
  Let $\Delta$ be the virtual \jsj\ decomposition of $\LL(1).$  We
  construct virtual abelian decompositions $\Delta_i$ for $\LL(i)$
  inductively.  Rather than constructing $\LL(i+1)$ from $\LL(i),$ first
  by applying strict vertex morphisms to rigid vertex groups from the
  \jsj\ of $\LL(i),$ we apply them only to the vertex groups of the
  virtual decomposition $\Delta_i.$  Let $R$ be a rigid vertex group
  of $\Delta_i,$ and construct
  $AS(R,\E(R))\cong\strict_{II}(R)*_{F_j}B_j$ as the
  construction of $\Phi_{as}(\LL(i))$ with respect to $\Delta$ demands.
  If $c_a(\varphi_{i,i+1}\vert_{R},\E)>0,$ i.e., if
  $\varphi_{i,i+1}$ is not $\Delta$--stable,
  $\betti(\strict_{II}(R))<\betti(R).$

  Now consider the relative \jsj\ decomposition of $\strict_{II}R.$  We
  claim that since $\Delta$ is a virtual \jsj\ decomposition of
  $\LL(i),$ the relative \jsj\ of $\strict_{II} R$ has at most one essential
  vertex and that vertex has the same valence as $R.$  If $\strict_{II} R$
  contains more than one essential vertex then, since
  $\scott(\LL(i))=\scott(\LL(i+1)),$ all but one of them is a valence
  one abelian vertex.

  Consider the direct limit $\dirlim \LL(i)_n=\LL(i+1).$ Repeated
  application of Lemma~\ref{lem:relativerigid} shows that the rigid
  vertex groups of $\LL(i+1)$ are obtained by iteratively adjoining
  roots to the vertex groups of $\strict_{II} R,$ as $R$ varies over
  all vertex groups of $\Delta_i.$ Since the Scott complexity doesn't
  increase, for all vertex groups of $\LL(i),$ nonconjugacy of
  incident edge groups is maintained and no inessential vertex group
  of $\LL(i)$ can give rise to a essential vertex of $\LL(i+1).$ For
  each rigid vertex group of $\Delta_i,$ declare the subtree of groups
  of the \jsj\ decomposition of $\jsj(\LL(i+1))$ induced by
  $\strict_{II} R$ a vertex group of an abelian decomposition
  $\Delta_{i+1}$ of $\LL(i+1).$ Since no new essential vertices are
  created, the decomposition $\Delta_{i+1}$ is a virtual
  \jsj\ decomposition.  Call the vertex of $\Delta_{i+1}$ associated
  to $R,$ the push forward of $R,$ $\varphi_{i,i+1,*}(R).$ Suppose
  $c_a(R,\E(R))>0,$ then $\betti(\strict R)<\betti(R).$ Since
  $\varphi_{i,i+1,*}(R)$ is obtained by iteratively adjoining roots to
  $\strict R,$ the betti number cannot increase and
  $\betti(\varphi_{i,i+1,*}(R))<\betti(R).$

  Fix $n,$ let $b_n$ be the largest number such that a vertex group
  $R$ in a virtual \jsj\ decomposition of a limit group $L$ with
  $\betti(L)=n$ can have $\betti(R)=b_n,$ and let $r_n$ be the maximum
  number of nonabelian rigid vertex groups in a virtual
  \jsj\ decomposition of a limit group $L$ with $\betti(L)=n.$  Let
  $M=K\cdot b_n^{r_n}.$  If $\LL$ has length $M$ then it has a
  subchain $\LL',$ of length $K,$ such that for every rigid vertex
  group $R_{i,j}$ of every decomposition $\Delta_i$
  $c_a(\varphi_{i,i+1}\vert_{R_{i,j}})=0$ (incident edge groups are
  implied).  Thus every homomorphism $\LL'(i)\to\LL'(j)$ is
  $\Delta$--stable, for $\Delta$ inherited from $\LL(1).$

  Now examine $\LL'_T.$  Let $R$ be a vertex group of
  $\LL'_T(1),$ and suppose that the relative \jsj\ of $R$ is
  nontrivial.  Then $R$ must have a splitting $R=R'*_EA*_{E'}R'',$
  $R'$ and $R''$ nonabelian, $A=\group{E,E'}$ and maximal abelian
  (recall that $A$ is the centralizer of its incident edge groups by
  our choice of normalization of the \jsj), and with all edge groups
  incident to $R$ elliptic and not centralized by $A.$  Then neither
  $R'$ nor $R''$ has abelian image in any $\LL'_T(k)$: if this
  is the case then, since limit groups have nontrivial homology
  relative to any abelian subgroup, the map $R\to\LL'_T(k)$
  would factor through (say) $R'*_EA*_{E'}(R'')^{ab},$ which has
  nonzero $c_a$ since the edges incident to $R$ are elliptic.  Since
  $\Delta_{i+1}$ is the outcome of a forest collapse, removal of the
  edges labeled $E$ and $E''$ from the forest gives a new forest which
  yields a new virtual \jsj\ decomposition $\Delta'$ of
  $\LL'_T(1).$  The new decomposition has more vertices than
  $\Delta,$ and, since neither $R'\to\LL'_T(k)$ nor
  $R''\to\LL'_T(k)$ has abelian image for any $k,$ the chain
  $\LL'_T$ is $\Delta'$--admissible.
\end{proof}

\par We use Lemma~\ref{lem:admissablerefinement} to control those
degenerations of \jsj\ decompositions which are invisible to the Scott
complexity.  Repeated application of the lemma uniformly many times in
the first betti number gives us the following theorem.

\begin{theorem}
  \label{thr:almostjsjpreservation}
  Let $\LL$ be a degenerate chain of limit groups with
  $\scott(\LL(i))=\scott(\LL(i+1))$ for all $i.$  Then for all $K$ there
  exists $M=M(K,\betti(\LL))$ such that if $\Vert\LL\Vert>M$ then there
  is an almost-\jsj\ respecting subchain $\LL'$ of $\LL$ with length at
  least $K.$
\end{theorem}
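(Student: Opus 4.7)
The plan is to iterate Lemma~\ref{lem:admissablerefinement} and exploit the fact that the combinatorial complexity of a virtual \jsj\ decomposition (the number of vertex groups) is bounded by a function $B=B(\betti(\LL))$. The bound on $B$ follows from the same considerations used in Lemma~\ref{lem:rankheightbound}: $c_q$, $c_{\#r}$, and the first betti number of the underlying graph are all controlled by $\betti$, and each essential vertex contributes to at least one of these.

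First I would observe that, since $\LL$ is degenerate with constant Scott complexity, Lemma~\ref{lem:kappasequal} shows that every map $\LL(i)\onto\LL(j)$ is very weakly (hence weakly) \jsj\ respecting, and in particular $\LL$ is $\jsj(\LL(1))$--admissible. Set $\Delta_0\define\jsj(\LL(1))$ and run the following induction. Given a $\Delta_i$--admissible subchain $\LL^{(i)}$ with virtual \jsj\ decomposition $\Delta_i$ of $\LL^{(i)}(1)$, apply Lemma~\ref{lem:admissablerefinement} to extract a $\Delta_i$--stable subchain $\LL'^{(i)}$ of any prescribed length. The lemma then splits $\LL'^{(i)}$ at some index $k$ into a head $\LL'^{(i)}_H$ and a tail $\LL'^{(i)}_T$: on the head the push forward of $\Delta_i$ agrees with the full abelian \jsj, and the tail (if non-empty) is admissible with respect to a \emph{strictly larger} virtual \jsj\ decomposition $\Delta_{i+1}$ of $\LL'^{(i)}(k+1)$.

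If the head has length at least $K$, then it is almost-\jsj\ respecting: stability with respect to $\Delta_i=\jsj$ gives $\jsj$--stability, constancy of $c_{\#r}$ (already forced by constant Scott) gives constancy of the number of rigid vertex groups, and stability plus the fact that rigid vertex groups in successive limit groups are obtained from $\strict_{II}$ quotients by iteratively adjoining roots (Theorem~\ref{ginfinityisalimitgroup} and Lemma~\ref{lem:relativerigid}) forces constancy of the first betti numbers of rigid vertex groups, since iterated adjunction of roots preserves $\betti$ and $c_a=0$ forces $\betti(\strict_{II}R)=\betti(R)$. Otherwise, restart the induction on the tail with the finer decomposition $\Delta_{i+1}$. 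Since $\vert\Delta_{i+1}\vert>\vert\Delta_i\vert$ and every virtual \jsj\ has at most $B(\betti(\LL))$ vertex groups, this procedure must terminate in at most $B$ steps.

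Composing: at each of the at most $B$ stages I demand length $2K_i$ so that the head-versus-tail pigeonhole forces one of the two outcomes with length at least $K_i$; unrolling through Lemma~\ref{lem:admissablerefinement}'s length function gives the required $M=M(K,\betti(\LL))$ as its $B$-fold iterate applied to $2K$. The only genuine obstacle is the verification that the head is actually almost-\jsj\ respecting---all three conditions of Definition~\ref{def:jsjrespecting} must be read off simultaneously from $\Delta_i$--stability together with the constancy of the full Scott complexity; the rest of the argument is an ``inductive bookkeeping'' exercise with the bound $B$ providing termination.
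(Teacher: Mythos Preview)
Your iteration scheme via Lemma~\ref{lem:admissablerefinement} and the termination argument via the bound $B(\betti(\LL))$ on the number of vertex groups are essentially the paper's argument. However, there is a genuine gap at the very first step: you cannot take $\Delta_0=\jsj(\LL(1))$ and claim that $\LL$ is $\Delta_0$--admissible.

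Recall that $\Delta$--admissibility requires that \emph{every} nonabelian rigid vertex group of $\Delta$ have nonabelian image. Lemma~\ref{lem:kappasequal} (very weak \jsj\ respecting) only guarantees this for rigid vertex groups of valence $\neq 2$; a valence-two rigid vertex group sitting on a path in $p(G)$ may well have abelian image in some $\LL(j)$ without disturbing $\scott$. Lemma~\ref{lem:scottnovulnerable} says merely that \emph{at least one} rigid vertex group on each vulnerable path survives, not all of them. So $\LL$ is in general \emph{not} $\jsj(\LL(1))$--admissible, and Lemma~\ref{lem:admissablerefinement} does not apply to your $\Delta_0$.

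The paper fixes this by first constructing an initial virtual \jsj\ decomposition $\Delta$ of $\LL(1)$ via a forest collapse: for each vulnerable path $p$ one takes the subpath $p'$ ending at rigid vertices and collapses, and Lemma~\ref{lem:formofvirtual} shows the result is a virtual \jsj. By Lemma~\ref{lem:scottnovulnerable} the rigid vertex groups of this coarser $\Delta$ (which absorb the possibly-abelianizing valence-two vertices) \emph{do} have nonabelian image throughout $\LL$, so $\LL$ is $\Delta$--admissible and the iteration can begin. Once you supply this initial step, your head/tail pigeonhole and termination argument goes through as you describe, matching the paper's proof.
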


\begin{proof}
  We only need to show that such chains are $\Delta$--admissible
  for some $\Delta$ and that Lemma~\ref{lem:admissablerefinement} only
  needs to be used boundedly many times, depending only on $n.$

  By Lemma~\ref{lem:scottnovulnerable} no rigid vertex groups crossed
  by vulnerable paths in $AQ(\LL(1))$ or $QQ(\LL(1))$ might have abelian
  image in some $\LL(k).$ We construct a forest in $\jsj(\LL(1))$ as
  follows.  Let $p$ be a vulnerable path, and let $p'$ be the longest
  proper sub-path of $p$ with endpoints which are rigid vertex groups
  of $G.$  The sub-graph of groups spanned by the image of $p'$
  doesn't have abelian image in any $\LL(k),$ by
  Lemma~\ref{lem:scottnovulnerable}.

  For $p\in RR(\LL(1))\sqcup RA(\LL(1))\sqcup RQ(\LL(1))$ choose an
  orientation of $p$ let $p'$ be the subpath which begins at $p(1)$
  and ends at the last inessential rigid vertex group crossed by $p.$
  \mnote{redundant?}

  The forest which is the union of all images of $p'$'s constructed
  gives a virtual \jsj\ decomposition $\Delta$ of $\LL(1)$ by
  Lemma~\ref{lem:formofvirtual}, and the chain $\LL$ is
  $\Delta$--admissible by the previous paragraphs.

  Let $M(K,n)=K\cdot b_n^{r_n}$ be the constant from
  Lemma~\ref{lem:admissablerefinement}.  If $\Vert\LL\Vert>K\cdot
  b_n^{r^2_n}$ then $\LL$ is long enough to apply the lemma $r_n$
  times, yielding a subchain of length $K$ which is both
  \jsj--admissible and \jsj--stable.  If $\LL'$ is
  \jsj--admissible and \jsj--stable and for some rigid vertex
  group $R$ of $\LL'(k),$ the decomposition $\jsj(\strict_{II}
  R,\E)$ isn't trivial, then the \jsj\ decomposition of
  $\LL'(k+1)$ has strictly more rigid vertex groups than $\LL'(k).$
  Since $c_a(\varphi_{n,n+1})=0$ for all $n,$ in the group
  $\strictr(R)=\strict_{II}(R)*_{A_j}B_j,$ $B_j=A_j$ and
  $\strict_{II}(R) =\strictr(R),$ hence it's safe to call
  $\strictr(R)$ $\strict(R).$  If $\LL'$ has length $K\cdot r_n$ then
  it contains a subchain of length $K$ such that all relative
  decompositions of $\strict(R)$ are trivial, as the theorem asks.
\end{proof}

In the remainder of the section we define some technical refinements
of the notion of almost-\jsj\ respecting which are used in
Section~\ref{sec:decrease-comp}.


\begin{definition}
  A \gad\ $\Delta$ of a limit group $L$ \emph{misses $A$} if $A$ is an
  abelian vertex group of $\jsj(L)$ and no splitting of $L$ over a
  subgroup of $A$ is visible in $\Delta.$

  A \gad\ which misses $A$ is obtained from the abelian \jsj\ by
  collapsing the star of the vertex which carries $A$ and possibly
  collapsing further edges. If $\mathcal{A}$ is a collection of
  abelian vertex groups of $L$ then the \gad\ of $L$ obtained by
  collapsing all stars of elements of $L$ is denoted by
  $\jsj_{\mathcal{A}}(L).$ It follows from the construction of
  $\Phi_s$ that if $\varphi\colon G\onto H$ is degenerate and
  almost-$\jsj$ respecting then the vertex groups of
  $\jsj_{\phi_{\#}(\mathcal{A})}(H)$ are obtained from the images of
  the vertex groups of $\jsj_{\mathcal{A}}(G)$ by iteratively
  adjoining roots.
\end{definition}

We leave the following lemmas as an exercise.

\begin{lemma}
  Suppose $\varphi\colon G\onto H$ is almost-\jsj\ respecting, and that
  $R$ is a nonabelian non-\qh\ vertex group of
  $\jsj_{\mathcal{A}}(G).$ Then
  $\betti(R)\geq\betti(\varphi_{\#}(R)).$ If
  $c_a(\varphi,\E(R))>0$ then the inequality is strict.
\end{lemma}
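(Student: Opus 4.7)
The plan is to analyze the internal graph-of-groups structure that $R$ inherits from being a vertex group of $\jsj_{\mathcal{A}}(G)$ and compare it, via Mayer--Vietoris, with the parallel structure of $\varphi_{\#}(R)$ inside $\jsj_{\phi_{\#}(\mathcal{A})}(H)$.

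First I would write down the internal \gad\ $\Lambda_R$ of $R$: its vertex groups are the rigid and \qh\ vertex groups of $\jsj(G)$ absorbed into $R,$ together with the abelian vertex groups $A\in\mathcal{A}$ swallowed in the star-collapse, and its edges are the edges of $\jsj(G)$ joining these pieces. By the remark immediately preceding the lemma, $\varphi_{\#}(R)$ carries a parallel \gad\ $\Lambda'$ with the same underlying graph, whose vertex groups are obtained from those of $\Lambda_R$ by iteratively adjoining roots (and passing to a limit-group quotient), and whose edge groups are similarly enlarged.

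Next I would apply Mayer--Vietoris to both \gad s and compare term by term. The key inputs are: (a) corresponding rigid vertex groups have equal $\betti$ by the definition of almost-\jsj\ respecting; (b) adjoining roots replaces an edge $E$ by a finite-index free-abelian extension $F(E),$ inducing an isomorphism on $H_1(\cdot;\mathbb{Q}),$ so $G*_{E}F(E)$ has the same $\betti$ as $G$; (c) any intervening limit-group quotient can only drop $\betti.$ Combined with the fact that $\varphi\vert_R\colon R\onto\varphi(R)$ is $H_1$--surjective, this yields the chain
\[
\betti(R) \;\geq\; \betti(\varphi(R)) \;=\; \betti\!\bigl(\varphi(R)*_{E_i}F(E_i)\bigr) \;\geq\; \betti(\varphi_{\#}(R)).
\]

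For the strict inequality when $c_a(\varphi,\E(R))>0,$ I would unpack the definition: there is a proper intermediate quotient $R\onto G'\onto\varphi(R)$ whose own \gad\ $G'\cong S'*_{A_i}B_i$ has $\sum_i\rk(B_i/A_i)>0$ with the edges of $\E(R)$ elliptic. Applying Lemma~\ref{lem:abelianranks} (through the strict relative quotient $\strict_{II}(R)$ produced inside the $\Phi_s$ construction that yields $\varphi_\#(R)$) shows that in $\Lambda'$ some abelian vertex group absorbs a positive-rank contribution that had been carried as an independent free summand in $\Lambda_R,$ so the corresponding Mayer--Vietoris term for $\varphi_\#(R)$ is strictly smaller than the one for $R.$ The main obstacle will be this last point: making precise that the $c_a>0$ hypothesis forces a genuine drop in one of the Mayer--Vietoris summands (rather than being compensated by a matching increase elsewhere), which comes down to tracking how the excess abelian rank predicted by Lemma~\ref{lem:abelianranks} is realized inside the push-forward \gad\ and identifying the element of $R$ whose $H_1$-class it kills.
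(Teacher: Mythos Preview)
The paper leaves this lemma as an exercise, so there is no proof to compare against directly; the intended argument is meant to be a routine consequence of the machinery already assembled in Sections~\ref{sec:constructstrict} and~\ref{sec:degenerations}.

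Your chain
\[
\betti(R)\ \geq\ \betti(\varphi(R))\ =\ \betti\bigl(\varphi(R)*_{E_i}F(E_i)\bigr)\ \geq\ \betti(\varphi_{\#}(R))
\]
is correct and is exactly the argument the exercise is asking for: the first inequality is surjectivity, the equality is Mayer--Vietoris over $\mathbb{Q}$ using $[F(E_i):E_i]<\infty$, and the last is that $\varphi_{\#}(R)$ is a limit quotient of the amalgam. Iterating handles the ``iteratively adjoining roots'' in the remark preceding the lemma. The parallel Mayer--Vietoris comparison you sketch for the internal \gad s $\Lambda_R$ and $\Lambda'$ is more delicate than you indicate---the ranks of the internal edge groups and of the peripheral subgroups $P(A)$ can genuinely change under $\varphi$, so a naive term-by-term comparison does not work without further bookkeeping---but you do not need it, since the chain already gives the non-strict inequality.

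For the strict inequality your appeal to Lemma~\ref{lem:abelianranks} is misplaced: that lemma concerns vertex groups with \emph{abelian} image, which is not the situation here. The argument the paper has in mind is the one already used inside the proof of Lemma~\ref{lem:admissablerefinement}: if $c_a(\varphi,\E(R))>0$ then $\varphi\vert_R$ factors through $\overline{R}=S*_{A_i}B_i$ with $\sum_i\rk(B_i/A_i)>0$, and Mayer--Vietoris on this star gives $\betti(S)=\betti(\overline{R})-\sum_i\rk(B_i/A_i)<\betti(\overline{R})\leq\betti(R)$. The type~II strict relative quotient $\strict_{II}(R)$ is a quotient of (a quotient of) $S$, so $\betti(\strict_{II}(R))<\betti(R)$. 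Since the rigid pieces of $\varphi_{\#}(R)$ are obtained from those of $\strict_{II}(R)$ by iteratively adjoining roots (Lemma~\ref{lem:relativerigid} and Theorem~\ref{ginfinityisalimitgroup}), and the abelian pieces reattached contribute no new $\betti$ beyond what was already counted in $\overline{R}$, the lost rank is never recovered. That is the missing step you should fill in, rather than trying to route through Lemma~\ref{lem:abelianranks}.
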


Suppose $\varphi\colon G \onto H$ is almost-\jsj\ respecting. If
equality of betti numbers holds in the above lemma for all vertex
groups $R$ of $\jsj_{\mathcal{A}}(G),$ as $\mathcal{A}$ varies over
all collections of abelian vertex groups of $G,$ then we call
$\varphi$ \emph{$\betti$-respecting}.

\begin{theorem}
  \label{thr:existenceofjsjrespecting}
  \label{thr:jsjpreservation}
  \label{thr:jsjrespecting}
  For all $K$ there exists $M=M(K,\betti(\LL)),$ such that if $\LL$ is a
  \jsj\ stable degenerate chain and $\Vert\LL\vert\geq M,$ then there
  is a subchain $\LL'$ of $\LL$ of length at least $K$ such that all
  maps from $\LL'$ are $\betti$--respecting.
\end{theorem}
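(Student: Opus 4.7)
The plan is to first reduce to the almost-\jsj\ respecting setting via Theorem~\ref{thr:almostjsjpreservation}, and then extract a further subchain on which the betti numbers of every vertex group of every $\jsj_{\mathcal{A}}$ are forced to be constant. Along such a subchain, the unnumbered lemma preceding the theorem (which asserts $\betti(R)\geq\betti(\varphi_{\#}(R))$ with strict inequality when $c_a(\varphi,\E(R))>0$) will force $c_a=0$ on each non-\qh\ non-abelian vertex group of each $\jsj_{\mathcal{A}}$, which is precisely the $\betti$--respecting condition.

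More concretely, given $K$, first choose $K_1$ large and apply Theorem~\ref{thr:almostjsjpreservation} to produce, from $\LL$ (assumed \jsj-stable and sufficiently long), an almost-\jsj\ respecting subchain $\LL_0\subset\LL$ of length at least $K_1$. Because $\LL_0$ is almost-\jsj\ respecting, the number of vertex groups of $\jsj(\LL_0(i))$, their types, and the combinatorics of the underlying graph are independent of $i$; in particular, the finite collection of sub-\gad s $\{\jsj_{\mathcal{A}}(\LL_0(i))\}_{\mathcal{A}}$ indexed by subsets $\mathcal{A}$ of the abelian vertex groups can be canonically identified across the chain by push-forward, and each non-\qh\ non-abelian vertex group $R$ of $\jsj_{\mathcal{A}}(\LL_0(1))$ yields a canonical sequence of images $R_i\define\varphi_{1,i,\#}(R)<\LL_0(i)$.

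The key input is the cited lemma: the sequence $\betti(R_i)$ is non-increasing in $i$, and if $c_a(\varphi_{i,i+1}\vert_{R_i},\E(R_i))>0$ then the inequality is strict. Since $\betti(R_i)\leq\betti(\LL)$ and the number of possible pairs $(\mathcal{A},R)$ is bounded by a function of $\betti(\LL)$ alone (by the bound on the complexity of the abelian \jsj\ used in Lemma~\ref{lem:rankheightbound} and Lemma~\ref{lem:boundingkappan}), the total number of indices $i$ along $\LL_0$ at which any such $\betti(R_i)$ strictly drops is bounded by some $D=D(\betti(\LL))$. Consequently, if we choose $K_1\geq K\cdot(D+1)$, then $\LL_0$ contains a sub-interval $\LL'$ of length at least $K$ on which no drop occurs, i.e.\ on which $\betti(R_i)$ is constant for every pair $(\mathcal{A},R)$.

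On $\LL'$, for every pair $(\mathcal{A},R)$ and every consecutive map $\varphi_{i,i+1}$, the lemma forces $c_a(\varphi_{i,i+1}\vert_{R_i},\E(R_i))=0$. Since almost-\jsj\ respecting maps preserve the push-forward identification of the sub-\gad s $\jsj_{\mathcal{A}}$, the same equality of betti numbers and vanishing of $c_a$ holds for every composition $\varphi_{i,j}$ with $i<j$ in $\LL'$. Therefore every map from $\LL'$ is $\betti$--respecting, as required; setting $M(K,\betti(\LL))$ to be the composition of the constant from Theorem~\ref{thr:almostjsjpreservation} applied with $K_1=K(D+1)$ completes the argument. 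The main obstacle is purely bookkeeping: confirming that the identification of $\jsj_{\mathcal{A}}$ across the almost-\jsj\ respecting chain is well-defined and that the monotonicity lemma applies uniformly as $\mathcal{A}$ varies, so that a single drop-counting argument suffices.
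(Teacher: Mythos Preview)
The paper does not actually give a proof of this theorem: it states the preceding monotonicity lemma and this theorem together under the sentence ``We leave the following lemmas as an exercise,'' and no proof environment follows. So there is nothing to compare against directly.

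That said, your proposal is the intended argument and is correct. The hypothesis ``\jsj\ stable'' already gives constant Scott complexity (via $\Delta$--admissibility in Definition~\ref{def:jsjrespecting}), so Theorem~\ref{thr:almostjsjpreservation} applies and yields an almost-\jsj\ respecting subchain $\LL_0$. On $\LL_0$ the abelian vertex groups, and hence the finitely many collections $\mathcal{A}$ and the finitely many non-\qh\ non-abelian vertex groups $R$ of each $\jsj_{\mathcal{A}}$, are identified across the chain by push-forward; for each such pair the unnumbered lemma makes $\betti(R_i)$ non-increasing and bounded by $\betti(\LL)$, so the total number of strict drops is bounded by a function of $\betti(\LL)$. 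A pigeonhole extraction of a long enough sub-interval with no drops then gives $\betti(R_i)=\betti(R_j)$ for every $(\mathcal{A},R)$ and every $i<j$, which is exactly the $\betti$--respecting condition for every $\varphi_{i,j}$. The only thing worth making explicit is that the number of collections $\mathcal{A}$ is $2^{n_A}$ with $n_A$ the number of abelian vertex groups, which is bounded in terms of $\betti(\LL)$ by the graph-complexity estimates in the proof of Lemma~\ref{lem:rankheightbound}; you gesture at this but it is the one place a reader might pause.
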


We call chain satisfying Theorem~\ref{thr:jsjrespecting}
\emph{\jsj\ respecting}.


\section{\qcjsj\ respecting}
\label{sec:qcjsjrespecting}

We start by proving a lemma about ranks of abelian subgroups of limit
groups. As a consequence, we can assume that degenerate
\jsj\ respecting chains have subsequences whose subsequences of
abelian subgroups are well behaved.

\begin{lemma}
  \label{lem:boundedabelianrank}
  Let $G$ be a limit group with $\betti(G)=n.$ Then all abelian
  subgroups of $G$ have rank at most $n.$ If $G\onto H,$ $A<G$
  abelian, if $\zee^2<\ker(A\to H)$ then $\betti(G)>\betti(H).$
\end{lemma}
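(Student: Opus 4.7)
My plan is to prove the first assertion by induction on the depth of $G$ in the cyclic analysis lattice (Definition~\ref{def:depth}). For an elementary $G$ the claim is immediate: $\free_n$ and closed surfaces of genus at least $2$ have only cyclic abelian subgroups, while for $\zee^{k}$ the bound $\rk(A)\leq k=\betti(G)$ is tautological. When $G$ is freely decomposable, an abelian subgroup is conjugate into a single Grushko factor, and the inductive hypothesis applied to that factor suffices because $\betti$ is additive over free products. When $G$ is freely indecomposable and nonelementary, I pass to the cyclic \jsj. Noncyclic abelian subgroups are elliptic, so after conjugation they lie inside some vertex group $V$, which cannot be \qh\ since \qh\ vertex groups are surfaces with boundary and their abelian subgroups are cyclic. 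Hence $V$ is abelian or rigid and is a limit group of strictly smaller depth, so the inductive hypothesis yields $\rk(A)\leq\betti(V)$. To close the induction I verify $\betti(V)\leq\betti(G)$ by a Mayer--Vietoris computation on the one-edged splitting $G=V*_{E}G'$ (or its HNN analogue) obtained by collapsing the \jsj\ away from $V$: the cyclic edge contributes dimension at most $1$, and any nontrivial limit group has $\betti\geq 1$ by residual freeness (any nontrivial element survives under some map to $\free$, which then further projects nontrivially to $\zee$).

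For the second assertion I argue by contradiction, assuming $\betti(G)=\betti(H)$. The induced surjection $G^{ab}\twoheadrightarrow H^{ab}$ between the (torsion-free) abelianizations is then a map between free abelian groups of equal rank, hence an isomorphism. Since limit groups have torsion-free integral abelianization, this forces $\ker(G\to H)\subseteq[G,G]$, and applied to the hypothesized $\zee^{2}\leq\ker(A\to H)$ it gives $\rk(A\cap[G,G])\geq 2$. I then complete the contradiction via the auxiliary claim that for any limit group $G$ and any abelian $A\leq G$, $\rk(A\cap[G,G])\leq 1$. The claim is proved by the same depth induction: it is immediate for elementary $G$ (trivial for $\zee^{k}$ since $[\zee^{k},\zee^{k}]=1$, and for cyclic $A$ the bound is automatic); it descends to Grushko factors; and for freely indecomposable nonelementary $G$, noncyclic maximal abelian subgroups coincide with abelian vertex groups of the cyclic \jsj, each arising as the extension of a cyclic centralizer $\langle c\rangle$ by a free abelian group---only the class of $c$ can vanish in $G^{ab}$, while the newly adjoined directions project to linearly independent classes.

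The hard part will be the auxiliary claim. Although the behavior under a single extension of centralizers is transparent, a complete proof requires uniformly tracking the contribution of each cyclic \jsj\ edge to $G^{ab}$ via Mayer--Vietoris, together with careful bookkeeping of peripheral subgroups of abelian vertex groups, in order to ensure that the ``kernel-rank at most one'' property persists through the inductive step from vertex groups back up to $G$ itself.
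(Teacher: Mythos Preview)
Your argument for the first assertion is correct and takes a genuinely different route from the paper's. The paper inducts along a strict resolution of $G$, peeling off summands of $A$ at the first level where $A$ escapes the peripheral subgroup of an abelian vertex group; you instead induct on depth in the cyclic analysis lattice and finish with Mayer--Vietoris. One small correction: collapsing ``the \jsj\ away from $V$'' does not in general give a one-edged splitting, since $V$ may have valence greater than one. Collapse instead all edges not incident to $V$ to obtain a star with $V$ at the centre and limit groups $W_1,\dots,W_l$ at the tips; since each $\betti(W_j)\ge 1$ and all edges are cyclic, the Mayer--Vietoris count still gives $\betti(G)\ge\betti(V)$. Your route is more elementary in that it avoids strict resolutions, at the cost of not producing the sharper homological statement the paper extracts along the way.

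That sharper statement is exactly your auxiliary claim $\rk(A\cap[G,G])\le 1$, and here your proposal has a real gap. The reduction of the second assertion to this claim is correct and matches the paper, but the depth induction you sketch for the claim itself does not close. When $A$ sits inside an abelian vertex group $V$ of the cyclic \jsj, the inductive hypothesis on $V$ is vacuous since $[V,V]=1$, and what you must control is $\ker(V\to H_1(G))$, which depends on how the incident cyclic edge groups sit in the homology of the \emph{neighbouring} vertex groups---information the depth induction does not supply. Your heuristic that $V$ ``arises as the extension of a cyclic centralizer $\langle c\rangle$'' presumes a single incident edge; in general there are several, and nothing visible at one level of the cyclic \jsj\ prevents two of them from dying simultaneously in $H_1$. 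For instance, the graph of groups $F_2\ast_{\langle[x_1,y_1]\rangle}\zee^2\ast_{\langle[x_2,y_2]\rangle}F_2$ with $\zee^2=\langle[x_1,y_1],[x_2,y_2]\rangle$ has $\ker(\zee^2\to H_1)=\zee^2$; the lemma is precisely what forces such a group \emph{not} to be a limit group, so the obstruction cannot be read off the cyclic \jsj\ alone. The paper obtains the claim as a by-product of its strict-resolution induction: at each stage a summand of $A$ of rank at most $\rk(B/P(B))$ is shown to inject into $H_1$, and one recurses on $A\cap P(B)$ inside $L_{i_0,P}$, which has strictly smaller $\betti$. This peeling genuinely uses that limit groups admit strict resolutions terminating in a free group, not merely the combinatorics of a single cyclic \jsj.
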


\begin{proof}
  Let $G=L_0\onto L_1\onto\dotsb$ be a strict resolution of $G.$ Let
  $A<G$ be an abelian subgroup. If $A$ is cyclic then the lemma
  holds. Suppose $A$ has rank greater than two. Since $A$ isn't
  infinite cyclic it acts elliptically in the abelian \jsj\
  decomposition of $G,$ and is thus contained in an abelian vertex
  group or a rigid vertex group. If $A$ is contained in rigid vertex
  groups all the way down the resolution $L_0\onto\dotsb$ then, since
  the last group in the strict resolution is free, $A$ must be
  cyclic. Let $i_0$ be the first index such that $A$ is contained in
  an abelian vertex group but not completely contained in the
  peripheral subgroup of an abelian subgroup $B$ of $L_{i_0}.$ If
  $L_{i_0}$ is freely decomposable then we are done by induction since
  $A$ is contained in a free factor of $L_{i_0}.$ By linear algebra

  \[\rk(A\cap P(B))+\rk(B/P(B))\geq\rk(A)\] 

  Let $C$ be a complementary direct summand such that $A\cong (A\cap
  P(B))\oplus C.$ The summand $C$ has rank at most $\rk(B/P(B))$ and
  every element of $B/P(B)$ represents a nontrivial element of
  $\mathrm{H}^1(L_{i_0};L_{{i_0},P}).$ Continue this process on
  $L_{{i_0},P}$ with respect to $A\cap P(B),$ peeling off direct
  summands until $A\cap P(B)$ has rank $1.$

  By induction applied to $A\cap P(B)<L_{{i_0},P},$ which has lower
  betti number than $L_{i_0},$ $\rk(A\cap P(B))\leq n-\rk(B/P(B)).$
  Combining this with the above inequality, $\rk(A)\leq n.$ At the
  last step in the induction, when $A\cap P(B)$ has rank $1$ or $0,$
  the group chosen in the resolution has nontrivial homology relative
  to $A.$

  The above argument establishes that $\Ker(A\to \mathrm{H}_1(G))$ has
  rank at most one. 
\end{proof}

Suppose that $\LL$ is \jsj\ respecting. Let $\set{A^n_s}$ be
the set of abelian vertex groups, the set of conjugacy classes of
centralizers of edge groups of $\LL(n),$ and let
$\mathcal{A}_s$ be the sequence \[(A^n_s\to A^{n+1}_s)\] The
homomorphism $\varphi_{n,m}$ may not be injective on $A^n_s,$ but
since $\varphi_{n,m}$ doesn't factor through a free product, it's
image is not trivial. Define in a similar manner $E^n_t$ and
$\E_t$ for the edge groups $E^n_t$ of $\LL(n)$ and
sequences of edge groups of $\LL.$ If
$\rk(A^m_s)<\rk(A^n_s)-2$ or $\rk(E^m_t)<\rk(E^n_t)$ then
$\betti(\LL(m))<\betti(\LL(n)).$ Thus if $\rk(E^n)>2$
and the sequence $\betti(L_i)$ is constant, then $A^m_s$ has rank at
least two, as does $A^{m'}_s$ for all $m'>m.$

\begin{definition}
  \label{def:qcjsjrespecting}
  A degenerate \jsj\ respecting chain $\LL$ is
  \qcjsj\ respecting if, for each sequence
  $\mathcal{S}\in\set{\mathcal{A}_s,\E_t},$ exactly one of
  the following holds:
  \begin{enumerate}
    \item $\rk(S^n)=\rk(S^m)>2$ and $S^n\into S^m$ for all $m$ and $n$
    \item $\rk(S^n)=\rk(S^m)>2$ and $S^n\to S^m$ has infinite
      cyclic kernel for all $n$ and $m.$
    \item $\rk(S^n)=\rk(S^m)=2$ and $S^n\into S^m$ for all $m$ and $n$
    \item $\rk(S^n)=2$ and $S^n\to S^{n+1}$ has infinite cyclic image
    \item $S^n\cong\zee$ for all $n.$
  \end{enumerate}
\end{definition}

A sequence of edge groups satisfying one of the first three bullets
is \term{big}, otherwise it is \term{small}. A sequence of edge
groups satisfying either of the second or fourth bullets is
\term{flexible}, otherwise it is \term{rigid}.

The main application of Lemma~\ref{lem:boundedabelianrank} is that
\qcjsj\ respecting sequences can be derived from \jsj\ respecting
sequences.

\begin{lemma}\mnote{This can be made much better. The bigger edges are a red herring.}
  \label{lem:almostcyclicedges}
  For all $K$ there exists $M=M(K,b)$ such that if $\LL$ is a
  degenerate \jsj\ respecting sequence and
  $\Vert\LL\Vert>M(K,\betti(\LL)),$ then $\LL$ has a
  \qcjsj\ respecting subsequence with length at least $K.$
\end{lemma}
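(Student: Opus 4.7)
The plan is to refine $\LL$ by stabilizing, for each sequence $\mathcal{S} \in \set{\mathcal{A}_s, \E_t}$, first the rank $\rk(S^n)$ and then the (in)injectivity of the maps $S^n \to S^m$. Since $\LL$ is \jsj\ respecting, the underlying graph of $\jsj(\LL(n))$ is independent of $n$, so the number of such sequences $\mathcal{A}_s$ and $\E_t$ is bounded by a function of $b = \betti(\LL)$, via the bound on \jsj\ complexity in terms of $\betti$ developed in the proof of Lemma~\ref{lem:rankheightbound}.

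By Lemma~\ref{lem:boundedabelianrank}, the rank of any abelian subgroup of $\LL(n)$ is at most $b$, so iterating pigeonhole over the boundedly many sequences $\mathcal{S}$ yields a subsequence of $\LL$ on which $\rk(S^n)$ is constant in $n$ for every $\mathcal{S}$ simultaneously. Fix such an $\mathcal{S}$ of constant rank $r$. Since $\betti(\LL)$ is preserved along \jsj\ respecting chains, a second application of Lemma~\ref{lem:boundedabelianrank} forces $\rk(\ker(S^n \to S^m)) \leq 1$; combined with the fact that $\varphi_{n,m}$ does not factor through a free product and so $S^n$ has nontrivial image in $\LL(m)$ (as remarked just before the statement of the lemma), the kernel of $S^n \to S^m$ is either trivial or infinite cyclic.

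Next I would apply the Ramsey theorem once per sequence $\mathcal{S}$: color each pair $(n, m)$ with $n < m$ black if $S^n \to S^m$ is injective and white otherwise, and extract a monochromatic subsequence. The black outcome realizes case (1), (3), or (5) of Definition~\ref{def:qcjsjrespecting} according as $r > 2$, $r = 2$, or $r = 1$; the white outcome realizes case (2) when $r > 2$ and case (4) when $r = 2$, since cyclic kernel on every pair forces cyclic image on adjacent maps when $r = 2$. Iterating Ramsey across the boundedly many sequences $\mathcal{S}$ produces the required \qcjsj\ respecting subsequence. The main obstacle is purely bookkeeping: $M(K, b)$ is defined as an iterated composition of the Ramsey function with the pigeonhole bound, where the number of iterations is the bound on the count of sequences in terms of $b$, yielding a finite (though large) function.
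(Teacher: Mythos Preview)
Your proposal is correct and follows essentially the same approach as the paper: bound the number of sequences $\mathcal{A}_s,\E_t$ in terms of $\betti$, use Lemma~\ref{lem:boundedabelianrank} to bound ranks and force kernels to be at most cyclic, then stabilize. The paper's own proof is a single sentence (``Follows immediately from the discussion prior to the lemma\ldots''), so your write-up is in fact a faithful unpacking of what the paper leaves implicit; your explicit use of Ramsey for the injective/non-injective dichotomy is slightly heavier than needed (a pigeonhole on consecutive maps suffices, since non-injectivity propagates forward under composition and injectivity is preserved on runs), but it is correct.
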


\begin{proof}
  Follows immediately from the discussion prior to the lemma, the
  bound on the number of edge groups depending only on $\betti,$ and
  Lemma~\ref{lem:boundedabelianrank}.
\end{proof}

\begin{definition}
  \label{def:quasicyclicjsj}
  Degenerate \qcjsj\ respecting chains are cooked up so that certain
  important sequences of subgroups are respected under all maps.  Let
  $\LL$ be \qcjsj\ respecting. For each \jsj\ decomposition
  $\jsj(\LL(i)),$ form a new decomposition $\jsj_B(\LL(i))$ by folding
  together all big incident edges incident to abelian vertex groups,
  as in Figure~\ref{fig:bigfolding}. For each abelian vertex group $A$
  of $\LL(k),$ let $P_B(A)$ be the subgroup of $A$ generated by big
  incident edges.

\begin{figure}[ht]
  \psfrag{A}{$A$}
  \psfrag{pba}{$P_B(A)$}
  \psfrag{big}{big}
  \psfrag{small}{small}
  \psfrag{fold big edges}{fold big edges}
  \centerline{%
    \includegraphics{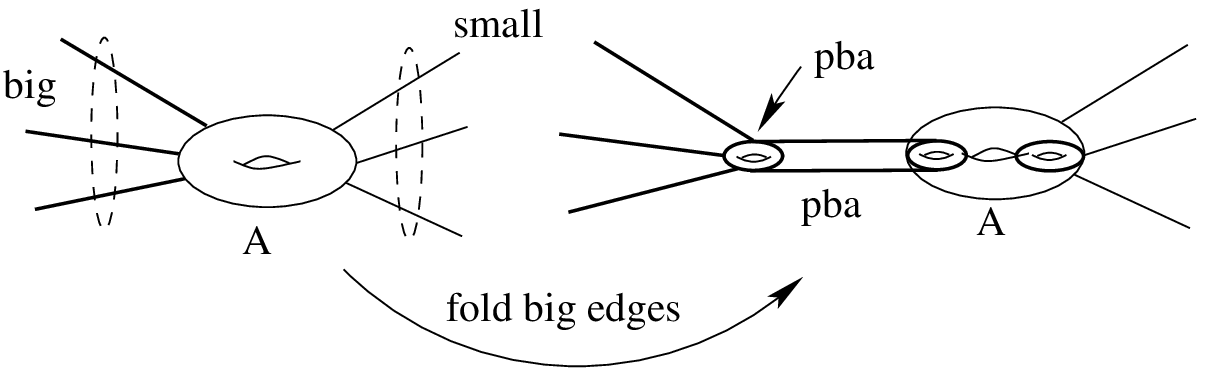}
  }
  \caption{Folding big edges together.}
  \label{fig:bigfolding}

\end{figure}

Let $\LL$ be a \qcjsj\ respecting sequence, and let
$b^k_1,\dotsb,b^k_n$ be the edges of $\jsj_B\LL(k)$ which carry big
edge groups (the edges labeled $P_B(A),$ for some abelian vertex group
$A$ from the \jsj), and let $\Gamma^k_1,\dotsb,\Gamma^k_m$ be the
connected components of the union of the $b^k_i.$ The subgroups
carried by the $\Gamma^k_j,$ are the \term{rigid vertex groups of the
  \qcjsj\ decomposition}, and the graph of groups obtained by
collapsing the subgraphs $\Gamma^k_j$ of $\jsj_B(\LL(k))$ is the
\term{quasicyclic \jsj\ decomposition}, or \qcjsj, for short.
\end{definition}

For each abelian vertex group $A^n$ of $\LL(n)$ we have isolated
$P_B(A^n),$ the subgroup generated by big incident edges. There is
another special subgroup, $P_S(A^n),$ the subgroup generated by small
incident edges.  Since big$\vert$small edges of $\LL(n)$ map to
big$\vert$small edges of $\LL(n+1),$ there is a map 
\[P_{B\vert S}(A^n)\to P_{B\vert S}(A^{n+1})\]

\par Let $\set{R^k_i}$ be the collection of vertex groups in
$\qcjsj(\LL(k)).$  Since $\varphi_{k,l}$ sets up a one to one
correspondence between vertex groups and big$\vert$small edges of of
$\jsj(\LL(k))$ and $\jsj(\LL(l)),$ and the induced maps $\LL(k)\to\LL(l)$
respect vertex groups and big$\vert$small edges and the incidence
conditions of the \jsj\ decompositions, there is a one-to-one
correspondence between vertex groups of $\qcjsj(\LL(k))$ and
$\qcjsj(\LL(l)),$ i.e., there is a unique vertex group $R^l_i$ in
$\qcjsj(\LL(l))$ such that $\varphi_{k,l}(R^k_i)<R^l_i.$

Recall our normalization that every edge group be centralized by an
abelian vertex group and that every edge group is primitive.  Since
$\LL$ is \qcjsj\ respecting, the kernels of the induced maps on
$P_B(A^n)$ and $P_S(A^n)$ are at most infinite cyclic, and if both of them
have kernel, the kernel is contained in the intersection $P_B(A^n)\cap
P_S(A^n).$

Define the following sequences of subgroups of
$\LL$:
\[\R^n_i(m)\define\varphi_{n,m}(R^n_i)\]
There are obvious inclusions $\R^n_i\into\LL.$

\begin{lemma}
  \label{lem:qcjsjvertexgroups}
  {$\mbox{}$}
  \begin{itemize}
  \item $\R_i(n+1)=\R^{n+1}_i(n+1)$ is obtained by iteratively
    adjoining roots to $\R^n_i(n+1).$
  \item $\betti(R^n_i)\leq\betti(\LL(n))$
  \item If $\betti(\R_i)=\betti(\LL)$ for some $i$ then there is only
    one nonabelian vertex in each \qcjsj\ decomposition.
  \end{itemize}
\end{lemma}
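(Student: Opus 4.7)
My plan is to address the three bullets one by one, deducing bullets~(2) and (3) via a single Mayer--Vietoris computation on the \qcjsj.

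For the first bullet I would start from Theorem~\ref{ginfinityisalimitgroup}, which applied to the indecomposable degenerate map $\LL(n) \onto \LL(n+1)$ guarantees that the rigid vertex groups of the push-forward of $\jsj(\LL(n))$ in $\LL(n+1)$ are obtained from the vertex groups of $\phias \LL(n)$ by iteratively adjoining roots. Because $\LL$ is \qcjsj\ respecting, both the \jsj\ skeleton and the partition of edges into big and small sequences (Definition~\ref{def:qcjsjrespecting}) are preserved by $\varphi_{n,n+1}$: abelian vertex groups correspond, $P_B(A^n)$ maps onto $P_B(\varphi_{n,n+1}(A^n))$, and therefore the subgraphs $\Gamma^n_j$ of big-edge components correspond bijectively to the subgraphs $\Gamma^{n+1}_j$. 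Consequently, the assembly of vertex groups and edge groups inside $\Gamma^{n+1}_j$ that defines $R^{n+1}_i$ is obtained from the corresponding assembly inside $\varphi_{n,n+1}(\Gamma^n_j)$ by iteratively adjoining roots to each of its pieces and to its edge groups, which exhibits $R^{n+1}_i = \R^{n+1}_i(n+1)$ as an iterated adjunction of roots to $\R^n_i(n+1) = \varphi_{n,n+1}(R^n_i)$.

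For the second bullet I would extract $R^n_i$ as a distinguished vertex group of $\qcjsj(\LL(n))$ and bundle the remaining vertex and edge groups into a complementary subgraph of groups carrying a subgroup $W < \LL(n)$ glued to $R^n_i$ along the incident edge groups $E_1, \dots, E_k$. A Mayer--Vietoris computation (with an additional $+1$ for each independent cycle in the underlying graph) yields
\[
\betti(\LL(n)) \;\geq\; \betti(R^n_i) + \betti(W) - \sum_{j=1}^{k} \rk(E_j).
\]
To conclude $\betti(R^n_i) \leq \betti(\LL(n))$ it suffices to show $\betti(W) \geq \sum_j \rk(E_j)$. Since each $E_j$ lies in a maximal abelian subgroup of some vertex group of $W$, Lemma~\ref{lem:boundedabelianrank} controls $\rk(E_j)$ by $\betti$ of that vertex group; combined with the fact (used in Lemma~\ref{edgeslive}) that a nonabelian limit group has strictly positive first Betti number relative to any abelian subgroup, applied inductively along the \qcjsj\ of $W$, this accumulates the desired lower bound on $\betti(W)$.

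For the third bullet, if $\betti(\R_i) = \betti(R^1_i) = \betti(\LL(1))$ for some $i$, then the inequality of bullet~(2) at level $n = 1$ must be an equality. This forces $\betti(W) = \sum_j \rk(E_j)$, the underlying graph of the complement to be a tree, and every vertex group of $W$ to be abelian, else the strict relative-Betti inequality is violated. Hence $R^1_i$ is the unique nonabelian vertex group of $\qcjsj(\LL(1))$. By bullet~(1), adjoining roots preserves nonabelianness, and the \qcjsj\ respecting hypothesis transports the abelian/nonabelian character of every vertex group to every level, so $R^n_i$ is the unique nonabelian vertex group of $\qcjsj(\LL(n))$ for all $n$. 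The main technical obstacle I anticipate lies in bullet~(2): the complementary subgraph $W$ can have cycles and multiple edges, the edge groups need not inject along $\varphi_{n,n+1}$ (flexible sequences in Definition~\ref{def:qcjsjrespecting} have infinite cyclic kernel), and the abelian vertex groups of $W$ carry nontrivial peripheral subgroups $P_B$ and $P_S$; tracking these carefully so the estimate $\betti(W) \geq \sum_j \rk(E_j)$ survives, and verifying the rigidity of the equality case needed for bullet~(3), is where the main work lies.
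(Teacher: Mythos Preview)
Your treatment of the first bullet is essentially the paper's: both appeal to the fact that rigid vertex groups of $\jsj(\LL(n+1))$ arise from those of $\jsj(\LL(n))$ by iteratively adjoining roots (Lemma~\ref{lem:relativerigid} / Theorem~\ref{ginfinityisalimitgroup}), together with the \qcjsj\ respecting hypothesis to transport the big/small partition.

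For bullets~(2) and~(3) your route diverges from the paper's and has a genuine gap. You reduce to the estimate $\betti(W)\geq\sum_j\rk(E_j)$, but your proposed induction via Lemma~\ref{lem:boundedabelianrank} and relative-$\betti$ positivity does not control the situation when several rank--$2$ small edges meet a single abelian vertex group of $W$: their rank contributions sum, while Lemma~\ref{lem:boundedabelianrank} only bounds each individually by the same $\betti$. You flag exactly this as ``where the main work lies'' without supplying the work. The paper does not attempt a general Mayer--Vietoris estimate on the complement. Instead it uses the \qcjsj--specific edge dichotomy: by Definition~\ref{def:qcjsjrespecting}, a small edge is either infinite cyclic (rigid) or rank~$2$ with infinite cyclic image under $\varphi_{n,n+1}$ (flexible). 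The paper's case split is on whether the abelian vertex group $A$ adjacent to a small flexible edge also carries a big flexible edge. When it does, both inclusions of the small flexible $E$ into the two sides of the one-edged splitting are shown to have \emph{homologically one-dimensional} image, so the Mayer--Vietoris cost of that rank--$2$ edge is only $1$; when it does not, $P_B(A)$ has strictly smaller rank than $A$, and the paper compares $\betti$ of the \qcjsj\ vertex to $\betti$ of the group obtained by reattaching $A$ along $P_B(A)$. In either case the flexibility of rank--$2$ small edges is what buys the missing dimension, and this is precisely the ingredient absent from your sketch. Bullet~(3) then follows by the same case analysis, not from a separate rigidity-of-equality argument.
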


This essentially follows from the definitions.

\begin{proof}


  The first bullet follows from the fact that the vertex groups of
  $\qcjsj(\LL(n))$ are generated by vertex groups from $\jsj(\LL(n))$
  and stable letters from $\jsj(\LL(n)),$ and that the vertex groups of
  $\jsj(\LL(n+1))$ are obtained from the images of vertex groups of
  $\jsj(\LL(n))$ by iteratively adjoining roots, by
  Lemma~\ref{lem:relativerigid}.

  Let $A$ be an abelian vertex group of $\LL(n)$ such that
  $\Ker(A\to\LL(n+1))\cong\zee.$ Then at least one edge incident to
  $A$ is flexible. The only case which needs consideration is when
  none of the big edges incident to $A$ is flexible. In this case the
  rank of $P_B(A)$ is strictly less than the rank of $A,$ and the
  image of any small flexible edge adjacent to $A$ has at most rank one
  intersection with $P_B(A).$

  The inequality of betti numbers follows from the fact that if $E$ is
  a small flexible edge, then either $E$ is incident to an abelian
  vertex group $A$ with no big flexible edges or with at least one big
  flexible edge. In the former case, the vertex group has betti number
  at least one less than that of the subgroup obtained by attaching
  $A$ to the rigid vertex group along $P_B(A),$ which has betti number
  at most one greater than the ambient group. In the latter case, both
  inclusions of $E$ into vertex groups of the one edged splitting
  induced by $E$ have, homologically, one dimensional images.

  The third bullet follows from the same argument.
\end{proof}


The following follow from Theorem~\ref{thr:jsjrespecting},
Lemma~\ref{lem:almostcyclicedges}, and
Lemma~\ref{lem:rankheightbound}.

\begin{theorem}[Alignment Theorem]
  \label{thr:alignmenttheorem}
  Let $\LL$ be an indecomposable sequence of limit groups,
  $\rk(\LL)=N.$ For all $K$ there exists $M$ such that if
  $\Vert\LL\Vert>M$ then there is a maximal \qcjsj\ respecting
  resolution $\wt{\LL}\rto\LL$ with $\Vert\wt{\LL}\Vert\geq K.$
\end{theorem}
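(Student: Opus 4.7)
The strategy is to cascade the reduction theorems of Sections~\ref{section::ai}--\ref{sec:qcjsjrespecting} in sequence. Each step takes a chain with certain structure and extracts a subchain with more structure, shrinking the length by a Ramsey-type constant that depends only on $\betti$. Since $\betti(\LL(i)) \leq \rk(\LL(i)) \leq \rk(\LL) = N$ for every $i$ (epimorphisms are non-increasing on first betti number), every constant encountered is bounded by a function of $N$ and the target length $K$.

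Starting with $\LL$ of proper length greater than $M$ (to be chosen below), Theorem~\ref{thr:reduction-to-indecomposable} produces a maximal surjective resolution $\wt{\LL}_1 \rto \LL$ with $c_{fd}$ constant and $\Vert\wt{\LL}_1\Vert_{pl}$ at least $K_1$. Because $\LL$ is indecomposable, the Grushko splitting guaranteed by that theorem reduces to a single indecomposable factor, so $\wt{\LL}_1$ is itself a degenerate chain of freely indecomposable limit groups with indecomposable transition maps: a free product factorization of a composition $\wt{\LL}_1(j) \onto \wt{\LL}_1(j')$ would push forward, via the surjections $\pi$ and $\psi$ of the resolution, to a free product factorization of some $\varphi_{i_j,i_{j'}}$ in $\LL$. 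Since $\scott$ takes at most $C_N$ values on freely indecomposable limit groups with $\betti$ at most $N$, pigeonhole yields a subchain $\wt{\LL}_2 \subset \wt{\LL}_1$ of length at least $K_1/C_N$ on which $\scott$ is constant.

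Apply in sequence: Theorem~\ref{thr:almostjsjpreservation} to extract an almost-\jsj\ respecting subchain $\wt{\LL}_3$ of length at least $K_2$; Theorem~\ref{thr:existenceofjsjrespecting} to refine this to a $\betti$--respecting (i.e.\ \jsj\ respecting) subchain $\wt{\LL}_4$ of length at least $K_3$; and Lemma~\ref{lem:almostcyclicedges} to produce the desired \qcjsj\ respecting subchain $\wt{\LL}_5$ of length at least $K$. Choose the constants in reverse order: $K_3$ to satisfy the hypothesis of Lemma~\ref{lem:almostcyclicedges} with target $K$, then $K_2$ to satisfy Theorem~\ref{thr:existenceofjsjrespecting} with target $K_3$, then $K_1 \geq C_N \cdot M_{\ref{thr:almostjsjpreservation}}(K_2,N)$, and finally $M$ to satisfy Theorem~\ref{thr:reduction-to-indecomposable} with output length $K_1$.

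Maximality of $\wt{\LL}_5 \rto \LL$ is automatic, since a subchain of a maximal resolution is maximal: the relevant compositions $\wt{\LL}_5(j) \to \wt{\LL}_5(j')$ and $\img(\wt{\LL}_5)(i_j) \to \wt{\LL}_5(j')$ are exactly compositions of maps in $\wt{\LL}_1$ that were already declared degenerate by its maximality. The main obstacle, already handled in Sections~\ref{sec:degenerations} and~\ref{sec:qcjsjrespecting}, is to verify that each successive extraction (constant $\scott$, almost-\jsj\ respecting, \jsj\ respecting, \qcjsj\ respecting) is compatible with—and can be refined from—the structure secured at the previous stage, so that the final $\wt{\LL}_5$ inherits every property accumulated along the cascade.
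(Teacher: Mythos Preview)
Your proof is correct and follows essentially the same approach as the paper, which simply cites Theorem~\ref{thr:jsjrespecting}, Lemma~\ref{lem:almostcyclicedges}, and Lemma~\ref{lem:rankheightbound} and leaves the cascade implicit. You have correctly unrolled that cascade: maximal resolution via Theorem~\ref{thr:nostrict}/\ref{thr:reduction-to-indecomposable}, then pigeonhole on the \jsj\ Scott complexity, then Theorems~\ref{thr:almostjsjpreservation} and~\ref{thr:jsjrespecting}, and finally Lemma~\ref{lem:almostcyclicedges}. Two cosmetic points: in your pushforward argument the free-product factorization of $\wt{\LL}_1(j)\onto\wt{\LL}_1(j')$ actually yields a factorization of $\varphi_{i_{j-1},i_{j'}}$ (using $\psi_{j-1}$ on the left), not $\varphi_{i_j,i_{j'}}$, so one should drop the first term of $\wt{\LL}_1$; and the paper's citation of Lemma~\ref{lem:rankheightbound} is really for the Corollary that follows, where only $(\betti,\depth)$ is given rather than rank.
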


\begin{corollary}[Alignment Corollary]
  \label{cor:alignmentcorollary}
  Let $(\iota\colon\G\into\LL)\in\seq(\LL,b,d)$ be indecomposable. For
  all $K$ there exists $M=M(\comp(\iota))$ such that if
  $\Vert\G\Vert>M$ then there is a maximal \qcjsj\ respecting
  resolution $\wt{\G}\rto\G$ with $\Vert\wt{\G}\Vert\geq K.$
\end{corollary}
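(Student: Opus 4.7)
The goal is to reduce this to the Alignment Theorem (Theorem~\ref{thr:alignmenttheorem}), which already produces maximal \qcjsj{}-respecting resolutions $\wt\LL\rto\LL$ but with a bound depending on the rank of $\LL.$ The only thing we have to do is trade a rank bound for a complexity bound, and this is precisely what Lemma~\ref{lem:rankheightbound} is designed to do.

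First I would observe that since $\G$ is a chain of epimorphisms, $\betti(\G(j))\leq\betti(\G)=b$ for every $j,$ and since $\G$ is indecomposable, each $\G(j)$ is freely indecomposable. Second, I would argue that $\depth(\G(j))\leq d(\iota)$: by definition, $\G(j)$ is (conjugate into) $\LL_\iota(i_j),$ the lowest node of the cyclic analysis lattice of $\LL(i_j)$ containing it; because the cyclic analysis lattice of a subgroup refines a sub-lattice of that of any containing limit group (one checks this by following the recursion in Definition~\ref{def:depth}, using that Grushko factors and JSJ vertex groups of a subgroup can be taken to be subgroups of Grushko factors and JSJ vertex groups of the ambient limit group), we have $\depth(\G(j))\leq\depth(\LL_\iota(i_j))=d_j\leq d(\iota)\leq d.$

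Combining these two bounds and applying Lemma~\ref{lem:rankheightbound}, the rank of every term in $\G$ is controlled: $\rk(\G(j))\leq r(b,d).$ In particular $\rk(\G)=\rk(\G(1))\leq r(b,d)=:N,$ a quantity depending only on $\comp(\iota).$ Now the chain $\G$ is indecomposable of rank at most $N,$ so Theorem~\ref{thr:alignmenttheorem} applies directly to $\G$ itself: for every $K$ there is a constant $M_0=M_0(K,N)$ such that whenever $\Vert\G\Vert>M_0,$ we can produce a maximal \qcjsj{}-respecting resolution $\wt{\G}\rto\G$ with $\Vert\wt{\G}\Vert\geq K.$ Setting $M(\comp(\iota)):=M_0(K,r(b,d))$ yields the corollary.

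The main obstacle, such as it is, is the depth-monotonicity statement $\depth(\G(j))\leq\depth(\LL_\iota(i_j))$; everything else is a packaging of existing results. This monotonicity is really a statement about how the cyclic analysis lattice behaves under inclusions of freely indecomposable limit groups and follows from the construction of the lattice via Grushko and cyclic \jsj{} decompositions, since one can always choose vertex groups of the \jsj{} of $\G(j)$ to embed into vertex groups of the \jsj{} of any containing limit group (after possibly conjugating).
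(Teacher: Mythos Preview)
Your proof matches the paper's approach exactly: the paper simply records that the corollary follows from Theorem~\ref{thr:jsjrespecting}, Lemma~\ref{lem:almostcyclicedges}, and Lemma~\ref{lem:rankheightbound}, and your argument is the natural unpacking of that citation. The depth-monotonicity step you flag is left implicit by the paper; your sketch is correct, the point being that the non-elementary vertex groups in the cyclic \jsj\ of a subgroup are rigid, hence elliptic in the cyclic splitting inherited from the ambient group, hence contained in its vertex groups.
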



\section{Lifting dimension bounds}
\label{sec:liftdimensionbound}

\par Suppose $\LL$ is a sequence of epimorphisms of
$N$--generated limit groups. There is a trivial resolution
$\LL\rto_{id}\LL$ which is simply the identity
resolution. This resolution has complexity
$\comp(\LL\rto\LL)\leq(N,6N).$


As a consequence of Theorem~\ref{thr:reduction-to-indecomposable}
we have the following important corollary.

\begin{corollary}[Reduction to indecomposable sequences]
  \label{cor:reduction-to-indecomposable}
  Let $\iota\colon\G\into\LL$ be an inclusion of sequences. For all $K$
  there exists $M=M(K,\comp(\iota))$ such that if $\Vert\G\Vert>K$
  there exists a maximal resolution $\wt{\G}\rto\G\into\LL$ of $\G$
  such that $\Vert\wt{\G}\Vert>K$ such that $c_{fd}$ is constant along
  $\wt{\G}.$

  In particular, $\wt{\G}$ splits as a graded free product of
  sequences
  \[\widetilde{\G}=\widetilde{\G}_1*\dotsb*\widetilde{\G}_p*\mathcal{F}\]
  where $\mathcal{F}$ is the constant sequence $(\free_q)$ for some
  $q.$ The sequences $\wt{\G}_i\rto\G$ are indecomposable maximal
  resolutions of their images. If $\scott(\wt{\G})>(1,0)$ then
  \[\comp(\wt{\G}_i\rto\LL)<\comp(\G\into\LL)\]
\end{corollary}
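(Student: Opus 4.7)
The plan is essentially to apply Theorem~\ref{thr:reduction-to-indecomposable} to $\G$ viewed in isolation, and then bookkeep the resulting complexity drop along $\iota\colon\G\into\LL$. First, using Lemma~\ref{lem:rankheightbound}, I would bound $\rk(\G) = \rk(\G(1))$ by a function of $\comp(\iota) = (\betti(\G), d(\iota))$. With this rank bound in hand, Theorem~\ref{thr:reduction-to-indecomposable} supplies, for any given $K$, a constant $M=M(K,\comp(\iota))$ such that whenever $\Vert\G\Vert>M$ there is a maximal resolution $\wt{\G}\rto\G$ with $\Vert\wt{\G}\Vert>K$ and $c_{fd}$ constant along $\wt{\G}$. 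Composing with $\iota$ furnishes the desired $\wt{\G}\rto\G\into\LL$, and the graded free product splitting $\wt{\G}=\wt{\G}_1*\cdots*\wt{\G}_p*\mathcal{F}$ together with the indecomposability of each $\wt{\G}_i\rto\G$ is inherited directly from that theorem.

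The substantive point is the complexity inequality under the hypothesis $\scott(\wt{\G})>(1,0)$. For the first coordinate, constancy of $c_{fd}$ along $\wt{\G}$ combined with the factorisation $\wt{\G}(1)\onto\G(i_1)\onto\wt{\G}(2)$ of the shift map forces $\betti(\G(i_1))=\betti(\wt{\G}(1))$. Since $\G$ is a chain, $\betti(\G(i_1))\leq\betti(\G(1))=\betti(\G)$, giving $\betti(\wt{\G})\leq\betti(\G)$. The assumption $\scott(\wt{\G})>(1,0)$ means that $\wt{\G}$ is a proper nontrivial free product, so additivity of the first Betti number on free products yields $\betti(\wt{\G}_i)<\betti(\wt{\G})\leq\betti(\G)$ for each $i$.

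For the depth coordinate, the key observation is that by Theorem~\ref{thr:scott::freelydecomposable} and the remark immediately following it, constancy of $c_{fd}$ implies that the Grushko factors are respected by $\wt{\G}\rto\G$: each $\wt{\G}_i(j)$ embeds into (a conjugate of) a freely indecomposable Grushko factor of $\G(i_j)$. Pushing forward along $\iota$, the image of $\wt{\G}_i(j)$ in $\LL(i_j)$ is a subgroup of the image of $\G(i_j)$, so the lowest node in the cyclic analysis lattice of $\LL(i_j)$ containing (a conjugate of) $\wt{\G}_i(j)$ lies at or below the one containing $\G(i_j)$. Hence $d(\wt{\G}_i\rto\LL)\leq d(\G\into\LL)$. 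Together with the strict inequality on the Betti coordinate, this gives $\comp(\wt{\G}_i\rto\LL)<\comp(\G\into\LL)$ in the partial order defined in Section~\ref{sec:compl-class-sequ}.

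The only genuine subtlety, which I would call the main (if modest) obstacle, is to check that the constancy of $c_{fd}$ propagates cleanly from $\wt{\G}$ down to $\G$ through the surjections $\wt{\G}(j)\onto\G(i_j)$, so that the indecomposable factors $\wt{\G}_i(j)$ really do embed into Grushko factors of $\G(i_j)$; this is what permits the depth comparison above and is already packaged in the analysis leading to Theorem~\ref{thr:reduction-to-indecomposable}. Everything else is formal rearrangement of the resolutions and depth definitions.
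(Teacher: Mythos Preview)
Your proposal is correct and follows the paper's approach essentially verbatim: the paper's proof consists of exactly your first two steps (bound $\rk(\G)$ via Lemma~\ref{lem:rankheightbound}, then invoke Theorem~\ref{thr:reduction-to-indecomposable}), and leaves the complexity inequality implicit, addressing it only in the subsequent Remark~\ref{rem:thisistheproof}. Your additional paragraphs spelling out why $\betti(\wt{\G}_i)<\betti(\G)$ and $d(\wt{\G}_i\rto\LL)\leq d(\G\into\LL)$ are a reasonable elaboration of what the paper takes for granted.
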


\begin{proof}[Proof of Corollary~\ref{cor:reduction-to-indecomposable}]
  By Lemma~\ref{lem:rankheightbound}, the rank of $\G$ is bounded
  above by some function of $\comp(\G).$ Now apply
  Theorem~\ref{thr:reduction-to-indecomposable}.
\end{proof}


\begin{remark}
\label{rem:thisistheproof}
Let $(b_0,d_0)$ be a minimal complexity for which
Theorem~\ref{thr:subseqkrull} fails, should there be one.  Let
$\wt{\G}\rto\LL$ be a maximal resolution provided by the corollary.

Suppose that $\wt{\G}$ consists of freely decomposable groups and
decomposes as a nontrivial free product of
resolutions \[(\wt{\G}_1*\dotsb*\wt{\G}_p*\mathcal{F}_q)\rto\LL\] Each
of the free factors gives rise to a maximal
resolution \[\wt{\G}_i\rto\LL,\quad\mathcal{F}_q\rto\LL\] of complexity
less than $\comp(\G\rto\LL),$ and with equality only if $(p,q)=(1,0)$

\par Suppose we are in the case $p=1,q>0$ or $p\geq 2.$ In this case
there are at most $N$ (this is an overestimation) free factors of
$\wt{\LL},$ each of which has complexity strictly less than that of
$\LL.$ Let $B$ be the maximal proper length of a sequence with
complexity less than that of $\G.$ Then if $M>3BN$ there exist three
consecutive indices $i,i+1,i+2$ such that the maps
\[\wt{\G}_j(i)\to\wt{\G}_j(i+1)\to\wt{\G}_j(i+2)\] are isomorphisms
for all $j.$ The same is true for the free part of $\wt{\G}.$ It
follows immediately that $\G(k_i)\to\G(k_{i+1})$ must be an
isomorphism.

Thus if Theorem~\ref{thr:subseqkrull} fails, by the corollary and the
above, if $\G_i\into\LL\in\seq(\LL,b,d)$ is a sequence such that
$\Vert\G_i\Vert_{pl}>i$ then the maximal resolutions $\wt{\G_i}\rto\G_i$ of
$\G_i\into\LL$ must have $\scott(\wt{\G})=(1,0).$


\par Suppose now that we are in the case $(p,q)=(1,0).$ As before, if
$\G$ has length $M=M(K,N),$ the sequence $\wt{\G}$ has length at least
$K$ and consists of indecomposable maps. Since $\wt{\G}$ is
\qcjsj\ respecting, the sequence $c_a(\wt{\G})$ is constant.  We now
show how to lift a dimension bound for sequences simpler than $\G$ in
the event that $c_a(\wt{\G})>0.$


\par Since $c_a$ is constant along $\wt{\G},$ the maps
$\wt{\G}(i)\to\wt{\G}(j)$ map $\wt{\G}(i)_P$ onto $\wt{\G}(j)_P.$ If
$\wt{\G}(i)\to\wt{\G}(j)$ is \jsj\ respecting, every automorphism in
$\Mod(\wt{\G}(i),\wt{\G}(i)_P)$ pushes forward to an element of
$\Mod(\wt{\G}(j),\wt{\G}(j)_P),$ hence if the map
$\wt{\G}(i)_P\to\wt{\G}(j)_P$ is an isomorphism then
$\wt{\G}(i)\to\wt{\G}(j)$ is
$\Mod(\wt{\G}(i),\wt{\G}(i)_P)$--strict. Since every automorphism
pushes forward, this map is an isomorphism. Now if $c_a(\wt{\G})>0$
then $\betti(\wt{\G}_P)<\betti(\wt{\G})$ and
\[\comp(\wt{\G}_P\rto\G)<\comp(\wt{\G}\rto\G)\]

The same sort of analysis can be carried out if $\wt{\G}$ has a
sequence of \qh\ subgroups: Let $L$ be a limit group and let
$\jsj_Q(L)$ be the graph of groups decomposition of $L$ obtained by
collapsing all edges not adjacent to \qh\ subgroups. Then $\wt{\G}$
respects the decompositions $\jsj_Q(L),$ and maps the vertex groups of
$\jsj_Q(\wt{G}(i))$ \emph{onto} the associated vertex groups of
$\jsj_Q(\wt{\G}(i+1)).$ Thus we derive boundedly many (in
$\betti(\wt{G})$) sequences $\mathcal{V}_i\into\wt{\mathcal{G}}$ of
vertex groups. The resolutions $\mathcal{V}_i\rto\LL$ have lower
complexity than $\wt{\G}\rto\LL.$ Thus, given $K$ there exists
$M(\betti(\mathcal{G}),K)$ such that of $\Vert\wt{\G}\Vert>M$ then it
has a subsequence of length $K$ such that all maps on vertex groups
are injective. As above, modular automorphisms from
$\Mod(\G,\jsj_Q(\G))$ push forward and we discover that some
$\G(i)\onto\G(i+1)$ must be an isomorphism. Thus we may assume that
the \qcjsj\ respecting sequences derived have no \qh\ subgroups.
\end{remark}


The above remark essentially contains a proof of
Theorem~\ref{thr:subseqkrull}.  If a \qcjsj\ $\wt{\G}\rto\G$
respecting resolution has no \qh\ subgroups and all abelian vertex
groups are equal to their peripheral subgroups we must work a little
harder and use an analysis of sequences of images of vertex groups of
$\qcjsj(\wt{\G})$ to conclude that if $\wt{\G}$ is sufficiently long
then it contains an isomorphism. In the next section we show how to
apply the construction of \qcjsj\ respecting resolutions multiple
times\footnote{Twice.} in order to find sequences of strictly lower
complexity.


\par Our application of Theorem~\ref{thr:addrootstolimitgroups} is a
way to produce this ``lift'' of a dimension bound from sequences of
vertex groups to a dimension bound for the ambient
chains. Theorem~\ref{thr:alignmenttheorem} is used to express \qcjsj\
respecting sequences as graphs of sequences of groups obtained by
passing to quotients and iteratively adjoining
roots. Theorem~\ref{thr:addrootstolimitgroups}, on the other hand, is
only stated (and possibly only true for) sequences obtained by
adjoining roots a single time along a single fixed collection of
elements. To cope with this deficiency we construct a collection of
subsequences of subgroups to which
Theorem~\ref{thr:addrootstolimitgroups} can be applied.

Let $\R_i$ be a sequence of vertex groups of $\qcjsj(\G).$ Let
$E_1,\dotsb,E_m$ be the edge groups incident to some vertex group
$\R^{n-1}_i(n-1)<\G(n-1),$ and let $F_1,\dotsb,F_m$ be the
corresponding edges of $\G(n).$ Let $C_j$ be the closure of the image
of $E_j$ in $F_j,$ the subgroup of $F_j$ consisting of all elements
which have powers lying in the image of $E_j.$ Now let
$\mathcal{S}_i(n)=\group{\R^{n-1}_i(n)*_{\img(E_j)}C_j}<\R^n_i(n)$
and $\mathcal{S}^n_i(m)=\varphi_{n,m}(\mathcal{S}_i(n)).$ We leave it
as an exercise for the reader to show that $\mathcal{S}_i(n)$ is
obtained from $\mathcal{S}^{n-1}_i(n)$ by adjoining roots to the
collection
\[
  \E^i_n=\set{\mathcal{S}^{n-1}_i(n)\cap C_j}_{j=1..m}
\] 
\latin{A priori}, it is only obtained from $\R^{n-1}_i(n)$ by adjoining
roots. Note that $\Vert\E^i_n\Vert\leq 2\betti(\G).$

\begin{theorem}[Krull assuming short sequences of vertex groups]
  \label{thr:stratified}
  Let $\LL$ be a freely indecomposable \qcjsj\ respecting $\jsj$
  respecting chain of limit groups, and
  suppose \[\LL\rto\H,\quad\comp(\LL\rto\H)=(b,d)\]
  Furthermore, assume that, for all sequences $\R^n_i<\LL$ of images of
  vertex groups of quasi-cyclic \jsj\ decompositions,
  $\Vert\R^n_i\Vert_{pl}<D.$

  Then there is a constant $D'=D'(D,b,d)$ such that
  $\Vert\LL\Vert_{pl}<D'.$
\end{theorem}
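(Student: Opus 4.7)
The proof applies Theorem~\ref{thr:addrootstolimitgroups} in combination with the hypothesis bounding vertex-group proper lengths. The mechanism is that the \qcjsj-respecting structure reduces every non-isomorphism in $\LL$ to either a non-isomorphism in some vertex-group subsequence or a non-isomorphism event in an associated sequence of adjunctions of roots.

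Because $\LL$ is \qcjsj-respecting, its \qcjsj\ decompositions share a common underlying graph whose combinatorial complexity is bounded in terms of $\betti(\LL)\leq b$; more precisely, Lemma~\ref{lem:rankheightbound} and the combinatorial analysis in its proof give a bound $V=V(b)$ on the number of vertex-group classes. I would fix the $V$ vertex-group sequences $\R_1,\dotsc,\R_V$, each of which satisfies $\Vert\R_i\Vert_{pl}<D$ by hypothesis. For each $i$ I would associate the sequence of adjunctions of roots $(\mathcal{S}_i,\H_i,\E^i)$ defined in the paragraph immediately preceding the theorem; its complexity $(\betti(\mathcal{S}_i),\depthpc(\H_i),\Vert\E^i\Vert)$ is controlled in terms of $(b,d,D)$, with $\betti(\mathcal{S}_i)\leq b$, $\Vert\E^i_n\Vert\leq 2b$ at every level, and $\depthpc(\H_i)$ bounded via the \jsj-respecting hypothesis and the depth bound on $\H$. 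Theorem~\ref{thr:addrootstolimitgroups} then yields a constant $N_0=N_0(b,d,D)$ bounding $\ninj((\mathcal{S}_i,\H_i,\E^i))$.

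Next I would argue that every proper-length index of $\LL$ witnesses a non-isomorphism in at least one of two places: a vertex-group restriction $\R^n_i(n)\onto\R^n_i(n+1)$ for some $i$, or a limit-quotient map $\H_i(n+1)\onto\mathcal{S}_i(n+1)$ for some $i$. Assuming no such failure at level $n$, the \qcjsj-respecting structure (matching underlying graphs, big/small edge-group types, and incidence data) combined with the vertex-group isomorphisms and the absence of new quotients would force $\varphi_{n,n+1}$ to reduce to an identification of Bass--Serre presentations, hence to be an isomorphism. Summing the two kinds of contributions over the $V$ sequences yields
\[
\Vert\LL\Vert_{pl} \leq V(b)\cdot\bigl(D+N_0(b,d,D)\bigr),
\]
which defines $D'(D,b,d)$.

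The main obstacle is the last implication: verifying that isomorphisms at the level of every vertex group and every root-adjunction quotient force an isomorphism of the ambient group. This requires using that modular automorphisms with respect to the \qcjsj\ push forward along the chain (the content of being \qcjsj-respecting), together with the observation that the envelope of each vertex group at level $n+1$ is reconstructed from the image of the vertex group at level $n$ and the centralizers of incident edge groups--- precisely the data tracked by $\mathcal{S}_i$ before passing to a limit quotient. Thus an isomorphism in both the vertex-group sequence and the root-adjunction sequence carries enough information to Bass--Serre-reassemble $\LL(n+1)$ from $\LL(n)$.
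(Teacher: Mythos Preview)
Your high-level plan---attributing each non-isomorphism in $\LL$ to a failure either in a vertex-group subsequence or in an associated sequence of adjunctions of roots, then invoking Theorem~\ref{thr:addrootstolimitgroups}---is the paper's strategy. The gap is in the counting and in the setup of the adjunction sequence, and both stem from a conflation of the image sequences $\R^n_i$ with a putative diagonal sequence $\R_i$.

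The hypothesis bounds $\Vert\R^n_i\Vert_{pl}<D$ for \emph{each} pair $(n,i)$: for fixed $n$ and $i$, the image sequence $\R^n_i(n)\onto\R^n_i(n+1)\onto\dotsb$ has fewer than $D$ non-isomorphisms. You write ``fix the $V$ vertex-group sequences $\R_1,\dotsc,\R_V$, each of which satisfies $\Vert\R_i\Vert_{pl}<D$,'' but the diagonal $\R_i=(R^n_i)_n$ is not a chain of epimorphisms (each $R^{n+1}_i$ is obtained from $\varphi_{n,n+1}(R^n_i)$ by adjoining roots), and the hypothesis says nothing about its proper length. Your count of at most $V\cdot D$ levels $n$ at which some $R^n_i\to\varphi_{n,n+1}(R^n_i)$ fails to be injective is therefore unjustified: that map is the \emph{first step} of the sequence $\R^n_i$, a different sequence for each $n$, and bounding the proper length of each one separately does not bound the number of first-step failures across all $n$.

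The same issue blocks your direct application of Theorem~\ref{thr:addrootstolimitgroups}. For $(\mathcal{S}_i,\H_i,\E^i)$ to be a sequence of adjunctions of roots, each $(\mathcal{S}_i(n),\H_i(n+1),\mathcal{S}_i(n+1))$ must be an adjunction triple, which requires $\mathcal{S}_i(n)$ to \emph{embed} in $\mathcal{S}_i(n+1)$. But $\mathcal{S}_i(n+1)$ is obtained by adjoining roots to $\mathcal{S}^n_i(n+1)=\varphi_{n,n+1}(\mathcal{S}_i(n))$, and $\mathcal{S}_i(n)\to\mathcal{S}^n_i(n+1)$ may have kernel. The paper handles this by first passing (via iterated pigeonhole, at multiplicative cost in $D$) to a subsequence on which every image sequence $\mathcal{S}^n_i$ is constant from its second term onward---condition $(\star)$ in the proof. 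Only then are the adjunction triples valid and Theorem~\ref{thr:addrootstolimitgroups} applicable; the resulting $D'$ is exponential in $D$, not the additive $V\cdot(D+N_0)$ you propose. Your reassembly step (isomorphisms on vertex groups together with trivial root-adjunction force $\varphi_{n,n+1}$ to be an isomorphism) is correct in spirit; the paper phrases it via strictness and pushforward of modular automorphisms.
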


Note that the presence of $\H$ is not strictly necessary. We
only include it so the statement of the theorem meshes more smoothly
with the way it is used.

\begin{proof}[Proof of Theorem~\ref{thr:stratified}.]
  Suppose there exist such $\LL$ of arbitrary proper length. First,
  suppose that the sequences $\R^n_i$ are in fact constant.  Then
  there are no flexible edges and the quasi cyclic \jsj\ decomposition
  agrees with the cyclic \jsj\ decomposition and all peripheral
  subgroups of abelian vertex groups embed in $\LL(n+1).$  By
  Lemma~\ref{lem:simpleenvelope} the envelopes of all rigid vertex
  groups of $\LL(n)$ embed in $\LL(n+1),$ therefore $\LL(n)\to\LL(n+1)$ is
  strict.


  Observe now that every element of the modular group of $\LL(n)$
  pushes forward to $\Mod(\LL(n+1)).$ It is then an easy exercise to
  show that such a strict epimorphism is an isomorphism.

  Since $\mathcal{S}^n_i$ is a sequence of subgroups of
  $\R^n_i,$ they have proper length bounded by $D$ as well.

  The number of sequences of vertex groups of $\qcjsj(\LL)$ depends
  only on $(b,d)$ by the proof of Lemma~\ref{lem:rankheightbound}
  (Or acylindrical accessibility. See~\cite{sela::acyl}
  or~\cite{weid::acyl}). Call the bound $B=B(b,d).$ Then if
  $\Vert\LL\Vert>D^{BK}$ then there is a subsequence $\LL'$ of
  length $K$ such that for each $n$ and $i$ the sequences
  \[\mathcal{S}^n_i(n+1)\onto\dotsb\eqno{(\star)}\] are constant. By
  Theorem~\ref{thr:addrootstolimitgroups} applied to the pair of sequences
  (indexed by $n$)
  \[
  ((\mathcal{S}^{n-1}_i(K)),(\mathcal{S}^n_i(n)),\E^i),\quad n<K
  \]
  for all but $C=C(b,d,\vert\E(\R^n)\vert)$ indices, the maps
  $\mathcal{S}^{n-1}_i(n-1)\onto\mathcal{S}^{n-1}_i(K)\cong\mathcal{S}^{n-1}_i(n)$
  are isomorphisms. Since the number of edge groups incident to a
  rigid vertex group only depends on $b,$ $C$ is independent of $\LL.$

  By construction,
  $\mathcal{S}^n_i(n)$ intersects each small flexible edge incident to
  $\R^n_i(n)$ in a finite index subgroup.

  For each index $n$ such that
  $\mathcal{S}^{n-1}_i(n-1)\onto \mathcal{S}^{n-1}_i(n)$ is an
  isomorphism, we then have that a small flexible edge incident to
  $\R^n_i(n)$ must embed in $\R^{n+1}_i(n+1),$
  contradicting the construction of the \qcjsj\ decompositions. We
  conclude that the \qcjsj\ decompositions coincide with the cyclic
  \jsj\ decompositions, and that $\R^n_i=\mathcal{S}^n_i.$

  Since the rank of $\R^n_i$ is bounded in terms of the rank of the
  resolution $\LL\rto\H,$ $C$ depends only on $(b,d).$ Since
  the number of rigid vertex groups is bounded by $B,$ if
  $\Vert\LL\Vert>C^B$ and $\LL$ satisfies $(\star),$ then $\LL$ contains
  an isomorphism by the first two paragraphs of the proof. For general
  $\LL\rto\H$ satisfying the hypotheses of the theorem, if
  $\Vert\LL\Vert>D'=D^{B(n)C(N)^{B(N)}}$ then $\LL$ contains an
  isomorphism.
\end{proof}



\section{Decreasing the complexity}
\label{sec:decrease-comp}


We prove a useful lemma before we begin.

\begin{lemma}
  \label{lem:smallishbettinumbers}
  Suppose $\varphi\colon G\onto H$ is degenerate and
  indecomposable. Let $\Delta$ be a cyclic decomposition of $G,$ and
  suppose that a noncyclic vertex group of $\Delta$ with first betti
  number at least two has cyclic image in $H.$ Then the vertex groups
  of the cyclic \jsj\ decomposition, and therefore of the \qcjsj\ as
  well, of $H$ satisfy $\betti(H_v)<\betti(H).$
\end{lemma}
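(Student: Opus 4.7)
The plan is to show that $\varphi(V)$ is infinite cyclic, to push the decomposition $\Delta$ forward to obtain a nontrivial cyclic splitting of $H$, and then to use a Mayer--Vietoris style homological computation, combined with the rigidity imposed by degeneracy, to obtain strict inequality of first betti numbers.

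First I would show $\varphi(V)\cong\zee$. Since $\Delta$ is a cyclic decomposition and $V$ has first betti number at least $2$ (hence is not a free factor of $G$), $V$ is incident to at least one edge of $\Delta$. By Lemma~\ref{edgeslive} and indecomposability of $\varphi$, every edge group incident to $V$ has nontrivial image in $H$; each such image lies in the cyclic group $\varphi(V)$, which is therefore nontrivial cyclic, i.e., isomorphic to $\zee$.

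Second, I would push $\Delta$ forward under $\varphi$, folding pairs of edges whose incidence maps collide, to obtain a cyclic graph of groups decomposition $\Delta'=\varphi_*(\Delta)$ of $H$ in which $\varphi(V)\cong\zee$ occurs as a cyclic vertex group. Since the incident edges of $V$ remain nontrivial after pushforward, the decomposition $\Delta'$ is nontrivial. The cyclic subgroup $\varphi(V)$ then lies in a unique maximal abelian subgroup of $H$, and by the standing normalizations on $\jsj(H)$ (edge groups centralized by abelian vertex groups, abelian subgroups in maximal abelian vertex groups, etc.) there is an abelian vertex group $A$ of $\jsj(H)$ with $\varphi(V)\leq A$; edges of $\Delta'$ incident to $\varphi(V)$ produce, after refinement to the \jsj, edges of $\jsj(H)$ incident to $A$, so $A$ has at least one incident edge.

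Third, I would compute. The graph-of-groups Mayer--Vietoris formula gives
\[
  \betti(H)=\betti(\Gamma_{\jsj(H)})+\sum_{v}\betti(H_v)-\rk\!\Bigl(\mathrm{image\ of\ edge\ subgroups\ in\ }\bigoplus_v H_v^{ab}\Bigr).
\]
For each vertex group $H_w$ of $\jsj(H)$, the complementary subgraph of groups carries a further contribution; to show $\betti(H_w)<\betti(H)$ it suffices to exhibit, for each $w$, either a loop of $\Gamma_{\jsj(H)}$ not contained in the star of $w$, or a neighboring vertex group with betti number not fully absorbed by its incident edges. Since $V$ is noncyclic with $\betti(V)\geq 2$ but $\varphi(V)\cong\zee$, at least one homological direction of $V^{ab}$ is killed by $\varphi$; degeneracy of $\varphi$ prevents this lost rank from being recovered by a proper strict quotient (in the sense of Theorem~\ref{thr:strictconditions}) $G\onto L\onto H$ with $L$ a limit group intermediate between the abelianization of $V$ and $H$. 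This forces the nontrivial splitting $\Delta'$ not to collapse to a trivial decomposition of $H$, so $\jsj(H)$ has enough vertices and edges to make the above homological sum exceed any single $\betti(H_w)$.

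The main obstacle will be the third step: making the inequality uniform across all vertex groups $H_w$ of $\jsj(H)$, including the abelian vertex group $A$ containing $\varphi(V)$ and any valence-one vertex groups. The delicate point is to translate the rank deficit $\betti(V)-\betti(\varphi(V))\geq 1$ coming from the hypothesis $\betti(V)\geq 2$ into a concrete ``extra'' cycle or vertex group visible in $\jsj(H)$, using the $\Phi_s$--construction (so that $\Phi_sG\cong H$) to control how $\Delta'$ refines to $\jsj(H)$.
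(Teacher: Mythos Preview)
Your second step contains the real gap. You cannot simply ``push $\Delta$ forward under $\varphi$'' to obtain a graph of groups decomposition of $H$. Replacing each vertex and edge group of $\Delta$ by its $\varphi$--image yields a graph of groups whose fundamental group surjects onto $H$, but there is no reason this surjection is an isomorphism; the kernel of $\varphi$ need not be normally generated by elements of vertex groups. Folding does not repair this. Consequently your $\Delta'$ is not known to be a decomposition of $H$ at all, and everything you build on it in step three (refining to $\jsj(H)$, the Mayer--Vietoris computation) is unsupported.

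This is exactly the problem the $\Phi_s$ machinery of Section~\ref{sec:constructstrict} is designed to solve, and the paper's proof uses it from the outset. Because $\varphi$ is degenerate, $\Phi_sG\cong H$, so the push-forward decomposition $\Phi_s\Delta'$ (built by iterating $\phias$, $\phia$, $\phir$) is a genuine \gad\ of $H$, and one can read off the cyclic \jsj\ of $H$ from it. The paper then argues by tracking $\sim_a$ equivalence classes through the direct limit $H=\dirlim G_n$: since every edge of $H$ centralizes the image of some edge of $\phias(\Delta')$, there must be a $\sim_a$ class consisting entirely of infinite cyclic groups (else no principal cyclic splitting would exist once the trivially-handled cases of \qh\ vertices or $A*_{\zee}G'$ splittings are excluded). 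The distinguished vertex $v$, with $\betti(G_v)\geq 2$ but $\betti(\varphi(G_v))\leq 1$, forces the complementary components of the cyclic class at stage $0$ to have smaller betti number, and this inequality is preserved under $\phia$ and $\phir$. In the limit one obtains a principal cyclic splitting of $H$ witnessing the conclusion.

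Your step three also remains a sketch rather than an argument: the Mayer--Vietoris formula alone does not explain why the rank lost in $V^{ab}\to\varphi(V)^{ab}$ shows up as an extra cycle or vertex in $\jsj(H)$ visible from \emph{every} vertex $w$. The paper's approach sidesteps this by reducing immediately to the case where $H$ has no \qh\ vertices and no noncyclic abelian vertex attached over $\zee$ (in which case the conclusion is automatic), and then producing one specific principal cyclic splitting whose single nonabelian vertex group already has $\betti<\betti(H)$.
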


\begin{proof}
  This is proven by following the construction of $\Phi_sH.$ Build
  $\phias(\varphi)\colon\phias(G)\onto H$ with respect to the
  decomposition $\Delta'$ obtained by collapsing all edges not
  adjacent to the distinguished vertex $v.$ Since the theorem is
  trivially true if $H$ has any \qh\ vertex groups or principle cyclic
  splittings of the form $A*_{\zee}G',$ we may assume that
  $\phias(\Delta'),$ the decomposition obtained by pushing forward
  $\Delta'$ and blowing up vertex groups into their relative \jsj\
  decompositions, has no \qh\ vertex groups. There is a bijection
  between nonabelian vertex groups of $\phias(\Delta')$ and nonabelian
  vertex groups of $H.$ Let $W$ be a nonabelian vertex group of
  $\Phi_{as}(G).$ The associated vertex group of $H$ is obtained from
  $W$ by iteratively adjoining roots. Since $\Phi_{as}H$ is almost
  strict, it embeds $W,$ hence if $E$ is an edge group adjacent to $W$
  and if the centralizer of $E$ in $W$ is noncyclic, the centralizer
  of the associated edge of $H$ has noncyclic centralizer. Since every
  edge of $H$ centralizes the image of some edge of $\phias(\Delta'),$
  and since every limit group has a principle cyclic splitting, there
  is some $\sim_a$ equivalence class $\mathcal{A}$ (recall that
  $\sim_a$ is the equivalence relation on abelian vertex groups and
  edge groups generated by adjacency) which contains only infinite
  cyclic edge and vertex groups, and such that if an edge $e$ of
  $\mathcal{A}$ is adjacent to a nonabelian vertex group $W$ of
  $\phias(\Delta'),$ then the centralizer of $e$ in $W$ is infinite
  cyclic. Moreover, at least two (oriented) edges from such a class
  must be adjacent to nonabelian vertex groups.
  
  Since $\varphi$ is degenerate, we may write $H$ as a direct limit \[
    H=\dirlim G_n,\quad G_n\define(\phir\circ\phia)^n\phias(G)\] Let
  $\mathcal{A}_{n,i},i=1..m_n$ be the collection of $\sim_a$
  equivalence classes for $G_n.$ By the discussion above, every
  equivalence class for $G_n$ contains the image of some equivalence
  class of $G_{n-1}$ and there is a (possibly one-to-many) map
  $\set{\mathcal{A}_{n,i}}_{i=1..m_n}\to\set{\mathcal{A}_{n-1,j}}_{j=1..m_{n-1}}.$

  Since the direct limit has finite length, $H=G_m$ for some
  $m.$ Choose $\sim_a$ equivalence classes $\mathcal{A}_{j,i(j)},$
  $j=0..m$ such that the image of $\Gamma_{\mathcal{A}_{j,i(j)}}$ is a
  vertex of $\mathcal{A}_{j+1,i(j+1)}.$ If the distinguished vertex
  $v$ is an element of $\mathcal{A}_{0,i(0)}$ then the subgraphs of
  groups carried by the complimentary components of $v$ have lower
  first betti number than $\phias(G).$  Otherwise, an easy homological
  argument shows that the complimentary components of
  $\mathcal{A}_{0,i(0)}$ have lower first betti number than
  $\phias(G).$ This state of affairs is unchanged by an application of
  $\phia,$ and remains unchanged after an application of $\phir.$ We
  see in the limit that to $\mathcal{A}_{m,j(m)}$ there is an
  associated principle cyclic splitting (there may be more than one),
  and that the vertex group of this splitting has strictly lower first
  betti number than the ambient group $H.$
\end{proof}

\par To prove Theorem~\ref{thr:subseqkrull}, of which
Theorem~\ref{thr:krulldimension} is a consequence, we mimic Sela's
construction of the cyclic analysis lattice, but for sequences of
limit groups. We don't build an entire analysis lattice, only the few
branches necessary for the inductive proof of
Theorem~\ref{thr:subseqkrull}.

\par At each level of the cyclic analysis lattice of a limit group, we
pass either to the freely indecomposable free factors of a limit
group, or, if freely indecomposable, we pass to the vertex groups of
the cyclic \jsj\ decomposition. The construction of
\qcjsj\ decompositions and maximal Grushko--respecting resolutions
give us the ability to mimic this construction for sequences of limit
groups.

\par Let $V$ be a vertex group in the cyclic \jsj\ of a limit group
$L.$ The \term{neighborhood} of $V,$ as opposed to the envelope, is
the subgroup of $L$ generated by $V$ and the centralizers of incident
edge groups. The neighborhood of such a $V$ always has the form
$V*_{E_i}A_i,$ where $E_i$ are the edge groups incident to $V$ such
that the centralizer of $E_i$ isn't contained in $V$ (it can be
contained in a conjugate of $V$), and the $A_i$ are the centralizers
of such $E_i.$

\par If $G<L$ is a subgroup of $L$ which is freely indecomposable
relative to a collection of $\jsj_C(L)$--elliptic subgroups then every
abelian vertex group of the decomposition of $G$ inherited by its
action on the minimal $G$--invariant subtree of the Bass-Serre tree
for the cyclic \jsj\ of $L$ is adjacent to a nonabelian vertex group.

\begin{lemma}
  \label{lem:decreasecomplexity-maybethisworkseasily}
  Let $\G\rto\img(\G)\into\LL$ be a maximal \qcjsj\ respecting
  resolution of a subsequence of $\LL.$ Let $\R$ be a sequence of
  vertex groups of $\qcjsj(\G),$ and suppose that $\mathcal{P}$ is an
  indecomposable maximal \qcjsj\ respecting resolution of the image
  $\img(\R)\into\LL$ and $c_a(\mathcal{P}(n))=0$ for all $i.$ Suppose
  further that $\mathcal{P}$ has no \qh\ sequences of vertex groups,
  only one sequence $\mathcal{M}$ of vertex groups in its
  \qcjsj\ decomposition, and that
  $\betti(\mathcal{M})=\betti(\mathcal{G}).$ Let $\set{1,\dotsc,m}$ be
  the index set for $\R.$ Then the vertex groups of $\mathcal{M}(n)$
  map to a neighborhood of a vertex group of $\jsj_C(\img(\G)(n))$ for
  $m-2\geq n\geq 2.$
\end{lemma}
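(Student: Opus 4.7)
The plan is to use the strong structural constraints on $\mathcal{P}$ together with Lemma~\ref{lem:smallishbettinumbers} to rule out any way the image of $\mathcal{M}(n)$ in $\img(\G)(n)$ could straddle more than one vertex group of $\jsj_C(\img(\G)(n))$. The buffer indices $2\leq n\leq m-2$ leave room to shift one index forward and one index backward when tracking how $\mathcal{M}$ interacts with the decomposition data, which is where the two-index slack on each end of the sequence is spent.

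First I would unpack the structure of $\mathcal{P}(n)$. Under the hypotheses $c_a(\mathcal{P}(n))=0$, no \qh\ sequences, and a single nonabelian vertex sequence $\mathcal{M}$ with $\betti(\mathcal{M})=\betti(\G)$, Lemma~\ref{lem:qcjsjvertexgroups} forces the underlying graph of $\qcjsj(\mathcal{P}(n))$ to be a tree with $\mathcal{M}(n)$ as the unique nonabelian vertex group, every abelian vertex group equal to its peripheral subgroup, and every incident edge maximal abelian on at least one side. So $\mathcal{P}(n)$ is generated by $\mathcal{M}(n)$ together with the centralizers of the incident edge groups, and the only visible splittings of $\mathcal{P}(n)$ come from edges adjacent to $\mathcal{M}(n)$.

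Next I would chase the composition $\mathcal{M}(n)\hookrightarrow\mathcal{P}(n)\twoheadrightarrow\img(\R)(n)\hookrightarrow\img(\G)(n)$. Since $\R(n)$ is a vertex group of $\qcjsj(\G(n))$, the normalizations on \jsj\ decompositions guarantee that $\img(\R)(n)$ is elliptic in $\jsj_C(\img(\G)(n))$, so it is conjugate into the neighborhood of a unique vertex group $V$ of that cyclic \jsj. If the image of $\mathcal{M}(n)$ is entirely contained in that neighborhood we are done; the content of the lemma is to exclude the alternative.

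The main obstacle is the contradiction argument ruling out that alternative. Suppose the image of $\mathcal{M}(n)$ is not contained in any single $\nbhd(V)$; then the action of the image on the Bass-Serre tree of $\jsj_C(\img(\G)(n))$ produces a nontrivial cyclic decomposition of $\mathcal{M}(n)$ in which some noncyclic vertex group with $\betti\geq 2$ has cyclic image in $\img(\G)(n)$. Shifting one index forward (which requires $n\leq m-2$ so that both $\mathcal{M}(n+1)$ and $\img(\G)(n+1)$ still sit inside the \qcjsj-respecting regime) and applying Lemma~\ref{lem:smallishbettinumbers} to the degenerate indecomposable map $\mathcal{M}(n)\onto\img(\mathcal{M}(n))<\img(\G)(n+1)$ through this decomposition, every vertex group of the cyclic \jsj\ of the image has first betti number strictly smaller than the image itself. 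Pushing this back through the \qcjsj-respecting map $\mathcal{P}(n)\onto\mathcal{P}(n+1)$ (which, by the tree-plus-tails structure of $\qcjsj(\mathcal{P}(n))$, factors through $\mathcal{M}(n)$ in the relevant way), we would conclude that the unique nonabelian \qcjsj\ vertex group $\mathcal{M}(n+1)$ of $\mathcal{P}(n+1)$ satisfies $\betti(\mathcal{M}(n+1))<\betti(\G)$, contradicting the standing hypothesis $\betti(\mathcal{M})=\betti(\G)$. The delicate verification is precisely that the factorization through $\mathcal{P}(n+1)$ and $\img(\R)(n+1)$ does not spoil the hypotheses of Lemma~\ref{lem:smallishbettinumbers}; this uses the buffer on the other side (the need for $n\geq 2$) so that we may also draw on the splitting inherited from index $n-1$ when identifying a concrete principal cyclic splitting with noncyclic vertex group that degenerates.
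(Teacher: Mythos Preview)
Your proposal has a genuine gap at its second step, and this gap undermines the contradiction argument.

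You assert that ``since $\R(n)$ is a vertex group of $\qcjsj(\G(n))$, the normalizations on \jsj\ decompositions guarantee that $\img(\R)(n)$ is elliptic in $\jsj_C(\img(\G)(n))$.'' This is not justified, and in fact is close to assuming the conclusion. The \qcjsj-respecting hypothesis on $\G$ controls the maps $\G(i)\to\G(j)$ \emph{within} $\G$; it says nothing about how $\G(n)$ maps to $\img(\G)(n)$ under $\pi_n$. There is no reason a vertex group of $\qcjsj(\G(n))$ should have elliptic image in $\jsj_C(\img(\G)(n))$. Establishing a statement of this flavour is precisely the work of the lemma.

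The paper's proof handles this by a two-stage argument that explains the need for buffers on \emph{both} ends. First, using the indices $n-2,n-1$ and the explicit construction of $\Phi_s$ applied to $\psi_{n-1}\circ\pi'_{n-1}\colon\R(n-1)\to\mathcal{P}(n)$, it shows that the nonabelian vertex groups $V^n_i$ of a specific decomposition $\Delta_n$ of $\mathcal{P}(n)$ have images contained in vertex groups of $\jsj(\G(n+1))$, and distinct $V^n_i$ land in distinct vertex groups separated by big abelian vertex groups. Only after this is established does one know that, in the induced decomposition $\Delta_{n,i}$ of $\pi_n(V^n_i)$ coming from $\jsj_C(\img(\G)(n))$, at most one fibre $I_e$ over the universal cover $\wt{\Gamma}_n$ can carry vertex groups with nonabelian image in $\G(n+1)$. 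This is what produces, in the second stage (indices $n+1,n+2$), vertex groups with abelian, and then cyclic, image in $\mathcal{P}(n+2)$, which is the input Lemma~\ref{lem:smallishbettinumbers} actually needs.

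Your contradiction argument skips this: from ``the image is not contained in a single neighbourhood'' you jump to ``some noncyclic vertex group with $\betti\geq 2$ has cyclic image.'' A nontrivial induced decomposition could perfectly well have several nonabelian vertex groups, none with cyclic image, absent the ellipticity-in-$\G(n+1)$ statement above. Likewise, the map to which you propose applying Lemma~\ref{lem:smallishbettinumbers} is not known to be degenerate, and you have not exhibited the required vertex group with cyclic image. The paper also needs a final step, using bigness of the edges in $\jsj_B(\mathcal{P}(n))$, to show that adjacent pieces of $\mathcal{M}(n)$ land in the \emph{same} neighbourhood; your outline does not address this either.
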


\begin{proof}
  Let $\mathcal{A}_i$ be the set of sequences of big
  (Definition~\ref{def:qcjsjrespecting}) conjugacy classes of abelian
  vertex groups of $\mathcal{P}.$ Let
  $\Delta_n=\Delta_n(V^n_i,A^n_j,E^n_k)$ be the decomposition of
  $\mathcal{P}(n)$ obtained by collapsing all edges not adjacent to
  some $\mathcal{A}^n_i(n).$ This decomposition has the property that
  every vertex group $A^n_j$ has elliptic image in
  $\img(\G)(n).$ Moreover, since $\mathcal{P}$ is \qcjsj\ respecting,
  $c_a(\varphi_{n,n+1}\vert_{V^n_i})=0.$ By definition $\mathcal{P}$
  is $\Delta$--stable.


  The first thing to show is that the image of each $V^n_i$ in
  $\G(n+1)$ is contained in a vertex group of $\jsj(\G(n+1)).$ This
  fact will be used in the second half of the proof to show that if
  the vertex groups of a special decomposition of $\mathcal{P}$ don't
  map to neighborhoods of vertex groups of $\mathcal{G},$ then there
  is an intermediate group between $\mathcal{P}(n)$ and
  $\mathcal{P}(n+2)$ which has more noncyclic abelian vertex groups
  than $\mathcal{P}(n),$ which we use to derive a contradiction to the
  assumption that $\mathcal{P}$ is \jsj\ respecting. Consider the
  commutative diagram in Figure~\ref{fig:whatwasthisfor}.

\begin{figure}[h]
  \centerline{%
    \xymatrix{%
      \mathcal{P}(n-2)\ar[r]^{\varphi_{n-2,n-1}}\ar[d]_{\pi_{n-2}} & \mathcal{P}(n-1)\ar[r]^{\varphi_{n-1,n}}\ar[d] & \mathcal{P}(n) \\
      \img(\R)(n-2)\ar[r]\ar[ur]\ar[dr]_{\psi'_{n-2}} & \img(R)(n-1)\ar[ur]_{\psi_{n-1}} \\
                    & \R(n-1)\ar[u]_{\pi'_{n-1}}<\G(n-1)
}}
\caption{Deducing that the abelian \jsj\ of $\mathcal{P}$ resembles
  the induced decomposition of $\R.$}
\label{fig:whatwasthisfor}
\end{figure}

Let $\Delta$ be the decomposition of $\R(n-1)$ induced by
$\jsj_B(\G(n-1)).$ We can write $\Delta=\Delta(W_i,B_j,F_k),$ where
$W_i$ are rigid vertices from $\jsj(\G(n-1)),$ the $B_j$ are subgroups
$P_B(A_j)$ of abelian vertex groups of $\G(n-1),$ and the $F_k$ are
big edge groups connecting them. To be included in $\Delta,$ a vertex
group $P_B(A)$ of $\jsj_B(\G(n-1))$ must have at
least two big incident edges, otherwise the associated one-edged
splitting is trivial. Since $\mathcal{P}$ is degenerate, the
composition $\psi_{n-1}\circ\pi'_{n-1}$ is degenerate as well, since
\[\varphi_{n-1,n}\circ\varphi_{n-2,n-1}=\psi_{n-1}\circ\pi'_{n-1}\circ\psi'_{n-2}\circ\pi_{n-2}\]
Now construct
$\Phi_s(\psi_{n-1}\circ\pi'_{n-1})\colon\Phi_s(\R(n-1))\onto\mathcal{P}(n),$
starting with the decomposition $\Delta.$ We start by building an
almost-strict $\phias(\R(n-1))\to\mathcal{P}(n).$ For each vertex
group $W_i$ we build first
$\strictr(W_i)=\strict_{II}(W_i)*_{E_{i,l}}B_{i,l}.$ If
$F\in\E(W_i)$ then the image of $F$ is contained in some
$B_l.$ Since the edge groups incident to $W_i$ are nonconjugate, and
remain nonconjugate in $\G(n),$ each $B_{i,l}$ contains a unique such
$F.$ Let $W_{i,1},\dotsb,W_{i,n_i}$ be the vertex groups of the
relative \jsj\ of $\strict_{II}(W_i).$ The image of $W_{i,j}$ in
$\G(n+1)$ is contained in a vertex group of the \jsj, and since the
nonabelian vertex groups of $\mathcal{P}(n)$ are obtained from the
vertex groups $W_{i,j}$ by iteratively adjoining roots, the vertex
groups of $\mathcal{P}(n)$ have images contained vertex groups of
$\G(n+1).$ Moreover, for $i\neq j,$ $W_{i,j}$ and $W_{i',j'}$ have
image contained in district vertex groups of $\G(n+1).$ In order to
check the condition that the vertex groups of $\Delta_n$ have images
contained in vertex groups of $\G(n+1)$ all we need to check is that
if some $W_{i,j}$ and $W_{i',j'}$ have incident edges which are in the
same $\sim_a$ equivalence class $\left[\mathcal{A}\right],$ then the
vertex group of $\mathcal{P}(n)$ which centralizes the image of the
subgroup carried by $\left[\mathcal{A}\right]$ isn't infinite
cyclic. Since $c_a(\mathcal{P})=0$ the topology of the underlying
graph of the decomposition of $\phias(\R(n-1))$ is identical to the
topology of the underlying graph of the abelian \jsj\ of
$\mathcal{P}(n),$ hence if there is such a pair of groups then there
is a path $p$ between $W_{i,j}$ and $W_{i',j'}$ in the graphs of
groups decomposition of $\phias(\R(n-1))$ which corresponds to the
path between their image vertex groups which crosses the centralizer
of their intersection. If $i\neq i'$ then $p$ passes through some
vertex $\strictr(B_k)$ for some $k.$ Since $B_k$ is big, the image of
$B_k$ in $\mathcal{P}(n)$ doesn't have cyclic image in any further
element of $\G.$ Furthermore, if we apply $\phia,$ altering $p$
appropriately, this state of affairs is unchanged, and a further
application of $\phir,$ also altering $p$ appropriately, also leaves
this state of affairs unchanged, thus any path from the vertex group
of $\mathcal{P}(n)$ associated to $W_{i,j}$ to the vertex group
associated to $W_{i',j'}$ must pass through a big abelian vertex
group.

Now consider the commutative diagram in
Figure~\ref{fig:analyzeblowup}. Define
  \[\mathcal{P}(n)'\define\Delta_n(\pi_n(V^n_i),\pi_n(A^n_j),\pi_n(E^n_l))\]

\begin{figure}[h]
  \centerline{%
    \xymatrix{%
    &   \mathcal{P}(n)\ar@{->>}[dl]^{\alpha}\ar[d]_{\pi_n}\ar[r]^{\varphi_{n,n+1}} & \mathcal{P}(n+1)\ar[d]_{\pi_{n+1}}\ar[r]^{\varphi_{n+1,n+2}}& \mathcal{P}(n+2) \\
 \mathcal{P}(n)'\ar[r]_{\beta}  &   \img(\R)(n)<\img(\G)(n)\ar[r]\ar[dr]_{\psi'_{n}}    &  \img(\R)(n+1)\ar[ur]_{\psi_{n+1}} & \\
      &            {}    & \R(n+1)<\G(n+1)\ar[u]^{\pi'_{n+1}} & 
}}
\caption{Some complicated diagram}
\label{fig:analyzeblowup}
\end{figure}

  Let $\overline{V}^n_i=\pi_n(V^n_i),$ and let $\Delta_{n,i}$ be the
  decomposition over cyclic groups that $\overline{V}^n_{i}$ inherits
  from its action on the Bass-Serre tree associated to the cyclic
  \jsj\ decomposition of $\img(\G)(n).$ Since $A^n_j$ is big, it has
  elliptic image in $\img(\G)(n),$ therefore the edges $E^n_i$ have
  elliptic images in $\img(\G)(n),$ and the decompositions
  $\Delta_{n,i}$ can be used to refine the decomposition $\Delta_n$ of
  $\mathcal{P}(n)'$ to a decomposition $\Delta.$

  Let $\Gamma_{n,i}$ be the underlying graph of $\Delta_{n,i},$ and
  let $\Gamma_n$ be the underlying graph of the cyclic \jsj\ of
  $\img(\G)(n).$ The natural map of graphs of groups $\Delta_{n,i}$ to
  the cyclic \jsj\ of $\img(\G)(n)$ induces a combinatorial map of
  underlying graphs $\iota_{n,i}\colon\Gamma_{n,i}\to\Gamma_n.$ Since
  $\img(G)(n)\to\G(n+1)$ is degenerate and the image of $V_i$ in
  $\G(n+1)$ is contained in a vertex group of the abelian \jsj\ of
  $\G(n+1),$ $\iota_{n,i}$ is homotopically trivial, therefore
  $\iota_{n,i}$ factors through a map
  $\wt{\iota}_{n,i}\colon\Gamma_{n,i}\to\wt{\Gamma}_n,$ the universal
  cover of $\Gamma_n.$ Furthermore, for at most one set of preimages
  $I=\wt{\iota}_{n,i}^{-1}(w),$ $w\in\wt{\Gamma}_n,$ can the vertex
  groups of $\Delta_{n,i}$ have nonabelian image in $\G(n+1).$ Let
  $I_e$ be this set of vertices, and let $e$ be the associated vertex
  of $\wt{\Gamma}_n.$ See Figure~\ref{fig:decreasecomplexityfigure}.

  \begin{figure}[ht]
    \psfrag{Gammat}{$\wt{\Gamma}$}
    \psfrag{Gammai}{$\Gamma_{n,i}$}
    \psfrag{e}{$e$}
    \psfrag{I}{$I_e$}
    \centerline{%
      \includegraphics{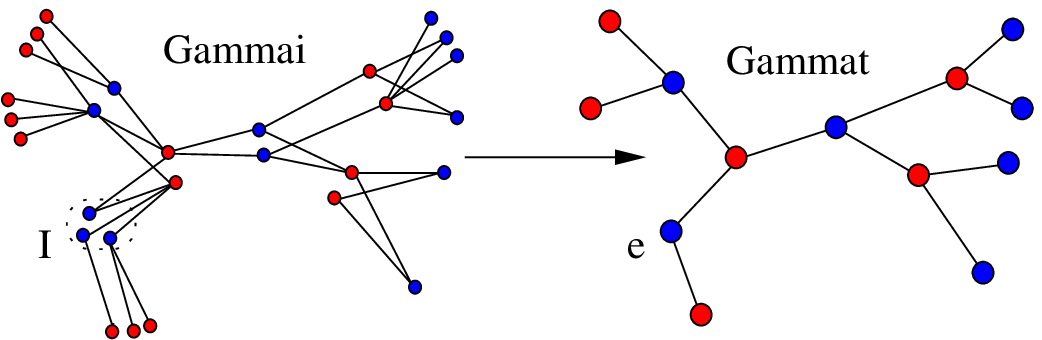}
      }
      \caption{Blowing up $\overline{V}^n_i$}
      \label{fig:decreasecomplexityfigure}
    \end{figure}

    Choose $F\in\E(V^n_i).$ Since $F$ is adjacent to some
    $A^n_j,$ and since the $A^n_j$ have elliptic images in $\G,$ it is
    elliptic in $\Delta_{n,i}.$ Since $c_a(\mathcal{P})=0,$ if a
    valence one vertex group $K$ of some $\Delta_{n,i}$ is abelian
    (and is necessarily noncyclic) or has abelian image in $\G(n+1),$
    then some $F_i$ must have image in $K,$ a contradiction to the
    fact that $\mathcal{P}$ is $\jsj$ respecting.

    Suppose an abelian vertex group of $\Delta_{n,i}$ with valence at
    least two contains the image of $F.$ If the vertex is
    nonseparating then $\Delta_{n+1}$ has strictly more edges than
    $\Delta_n$ since $c_a(\mathcal{P})=0,$ a contradiction. If it is
    separating then either $\Delta_{n+1}$ has more edges than
    $\Delta_n,$ an impossibility, or one of the complementary
    components has abelian image in $\mathcal{G}(n+1),$ then no other
    edge from $\Delta_n$ has image in the component with abelian image
    and we conclude again that $c_a(\mathcal{P})>0,$ another
    contradiction. Thus $F$ has image in a nonabelian vertex group of
    $\Delta_{n,i}$ and this vertex group doesn't have abelian image in
    $\G(n+1).$ If any vertex group $K$ (abelian or not) of
    $\Delta_{n,i}$ which \emph{does not} contain the image of any
    $F_i$ has abelian image in $\G(n+1)$ then, since $\Delta_{n+1}$
    has the same number of abelian vertex groups as $\Delta_n,$ $K$
    must have cyclic image in $\mathcal{P}(n+2).$ By
    Lemma~\ref{lem:smallishbettinumbers} applied to the composition
    $\mathcal{P}(n)'\to\mathcal{P}(n+2),$ the vertex group of
    $\qcjsj(\mathcal{P}(n+2))$ has strictly lower betti number than
    $\mathcal{P}(n+2),$ contradictory to hypothesis. We conclude that
    all vertex groups carried by vertices from
    $\Gamma^{(0)}_{n,i}\setminus I_e$ are infinite cyclic.

    We must then have that all noncyclic vertices of $\Delta_{n,i}$
    are in $I_e.$ Since all noncyclic vertices are in $I_e,$ every
    cyclic vertex must be at most distance one from $I,$ and we
    conclude that $\overline{V}^n_i$ \emph{is contained in a neighborhood of
    a vertex group of the cyclic \jsj\ of $\img(\G)(n)$}.

  Let $\mathcal{M}(n)$ be the vertex group of the \qcjsj\ of
  $\mathcal{P}(n).$ Every vertex group of $\jsj_B(\mathcal{P}(n))$ is
  contained in some $V^n_i,$ hence every vertex group of
  $\mathcal{M}(n)$ has image in a neighborhood of a vertex group of
  the cyclic \jsj\ of $\img(\G)(n).$ Suppose $M_1$ and $M_2$ are
  vertex groups of $\jsj_B(\mathcal{P}(n))$ which are adjacent in
  $\mathcal{M}(n).$ Then $M_1$ and $M_2$ are connected by a big edge
  $E.$ If $M_1*_EM_2$ didn't map to a neighborhood of a vertex group
  of the cyclic \jsj\ of $\img(\G)(n)$ then, since all edges incident
  to $M_1$ are big, $M_1$ is elliptic, and the edges of
  $\jsj_c(\img(\G)(n))$ are infinite cyclic,
  $c_a(\varphi_{n,n+1}\vert_{M_1},\E(M_1))>0,$ contradicting
  the assumption that $\mathcal{P}$ is \jsj\ respecting.
\end{proof}


\begin{lemma}
  \label{lem:neighborhoodlemma}
  Let $L$ be a limit group and $N$ be a neighborhood of a vertex group
  $V$ of the cyclic \jsj. Let $A_1,\dotsc,A_k$ be the centralizers of
  edge groups incident to $V$ which aren't contained in $V.$ A freely
  indecomposable subgroup $G$ of $N$ has the property that its
  vertex groups are contained in conjugates of $V$ or it has a
  principle cyclic splitting of the form $G=G'*_{\zee}A',$ for some
  $A'<A_j,$ for some $j.$
\end{lemma}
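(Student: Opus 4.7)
The plan is to analyze the action of the subgroup $G$ on the Bass--Serre tree $T$ of the one-edged decomposition of $N$ as $V*_{E_i}A_i$. The key feature of $T$ is that it is bipartite: every edge joins a vertex whose stabilizer is a conjugate of $V$ to one whose stabilizer is a conjugate of some $A_j$, and every edge stabilizer is a conjugate of the cyclic group $E_i$. This bipartition is preserved by the action of $N$ (and hence of $G$) on $T$.

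If $G$ acts elliptically on $T$ fixing a $V$-type vertex, then $G$ lies in a conjugate of $V$ and its (trivial) induced decomposition realizes the first alternative of the lemma. If instead $G$ fixes an $A_j$-type vertex, then $G$ is free abelian, contained in a conjugate of some $A_j$; in the applications of the lemma in the preceding proof, $G$ is a vertex group of a nontrivial \qcjsj\ decomposition and so cannot sit inside a single abelian vertex group of $T$, ruling this subcase out. In the remaining case $G$ acts minimally on a proper invariant subtree $T_G\subset T$ with no global fixed vertex, and the induced graph-of-groups decomposition of $G$ has infinite cyclic edge groups.

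To extract the principle cyclic splitting, I would pick any edge orbit in $T_G/G$ and collapse the rest. Because the bipartition of $T$ is $G$-invariant, the two endpoints of the selected edge lie in distinct $G$-orbits, so the collapse yields an amalgamated product $G=G'*_{E}A'$ rather than an HNN extension, with $E$ a subgroup of a conjugate of some $E_i$ (hence infinite cyclic) and $A'$ the stabilizer of the $A_j$-type endpoint, hence contained in a conjugate of some $A_j$. Conjugating within $N$ arranges $A'<A_j$. To see the splitting is principle, note that any noncyclic abelian subgroup $B<G<L$ sits inside a unique maximal abelian subgroup of $L$, which is elliptic in the cyclic \jsj\ of $L$ and therefore in $T$; hence $B$ fixes a vertex of $T$ and remains elliptic after the collapse. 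The main subtlety I expect is the bookkeeping for the elliptic-in-$A_j$ case and confirming that the resulting splitting is principle rather than merely cyclic; once this is in place, the remainder is a direct Bass--Serre unpacking.
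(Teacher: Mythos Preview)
The paper states this lemma without proof, so there is nothing to compare against directly; your Bass--Serre approach is the natural one and your verification that the resulting splitting is principle (using commutative transitivity/CSA to force noncyclic abelian subgroups of $G<N$ to be elliptic in $T$) is correct.

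There is, however, a real gap in the step where you extract the amalgam. You pick an edge orbit in $T_G/G$, collapse the rest, and argue that because the bipartition of $T$ is $G$--invariant the two endpoints of the chosen edge lie in distinct $G$--orbits, hence you get $G'*_{E}A'$ rather than an HNN extension. The premise is fine---the endpoints are distinct vertices of $T_G/G$, one of $V$--type and one of $A$--type---but this does \emph{not} imply the edge is separating. A bipartite graph can certainly have nonseparating edges (a $4$--cycle, for instance), and collapsing the complement of such an edge produces a loop, i.e.\ an HNN extension. So ``distinct $G$--orbits before collapsing'' does not give ``distinct vertices after collapsing.''

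The fix uses a feature of the neighbourhood you have not yet exploited: each $A_j$ is abelian, so in the Bass--Serre tree for $N=V*_{E_i}A_i$ \emph{every} edge incident to a fixed $A_j$--type vertex has the same $N$--stabilizer $E_j$, hence the same $G$--stabilizer $E'=G\cap E_j^{g}$. Thus in the induced decomposition of $G$, every edge incident to an $A$--type vertex $[a]$ carries the \emph{same} cyclic group $E'$, and the peripheral subgroup of $A'=\mathrm{Stab}_G(a)$ is the (cyclic) primitive closure of $E'$. If $A'$ is noncyclic you may therefore blow up $[a]$ as $[a']\!-\![a]$ with edge group $E'$, redirect all old incident edges to $[a']$, and obtain a decomposition in which $[a]$ has valence one; collapsing everything except the new edge yields the desired $G=G'*_{\zee}A'$ with $A'<A_j$. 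If every $A$--type stabilizer is cyclic and equals its incident edge group, then each such $A'$ already lies in the adjacent conjugate of $V$ and you are in the first alternative. (Your handling of the $G$ abelian sub-case by appeal to the application is acceptable, but note the lemma as stated is slightly loose there.)
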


In the previous section we showed how to lift a dimension bound for
sequences of vertex groups of \qcjsj\ decompositions to a dimension
bound for \qcjsj\ respecting sequences of limit groups. The next
theorem allows us to apply the construction of maximal resolutions of
sequences of images of vertex groups twice, in the event that no
``obvious'' reductions in complexity are possible (nontrivial $c_a,$
freely decomposable maximal resolutions) to arrive at resolutions of
lower complexity.

\begin{theorem}[Decrease the complexity]
  \label{thr:decreasecomplexity-final}
  Let $\LL,\G,\R,\P,\mathcal{M}$ be as in
  Lemma~\ref{lem:decreasecomplexity-maybethisworkseasily}. Let
  $\H\rto\img(\mathcal{M})$ be a \qcjsj\ respecting maximal
  resolution of the image of $\mathcal{M}.$
  
  Let\footnote{These are the same hypotheses as
    Lemma~\ref{lem:decreasecomplexity-maybethisworkseasily}.}
  $\mathcal{S}$ be a sequence of vertex groups of
  $\qcjsj(\H),$ and suppose that $\mathcal{Q}$ is an
  indecomposable maximal \qcjsj\ respecting resolution of the image
  $\img(\mathcal{S})\into\LL$ and $c_a(\mathcal{Q}(n))=0$ for all
  $i.$ Suppose further that $\mathcal{Q}$ has no \qh\ sequences of
  vertex groups, only one sequence $\mathcal{N}$ of vertex groups in
  its \qcjsj\ decomposition, and that
  $\betti(\mathcal{N})=\betti(\mathcal{Q}).$ Let $\set{1,\dotsc,m}$ be
  the index set for $\mathcal{S}.$ Then the vertex groups
  $\mathcal{N}(n)$ map to a vertex group of $\jsj_C(\img(\G)(n))$ for
  $m-4\geq n\geq 4.$

  In particular,
  \[\comp(\mathcal{N}\vert_{(4,\dotsc,m-4)}\rto\LL)<\comp(\G\rto\LL)\]
\end{theorem}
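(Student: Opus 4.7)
The plan is to apply Lemma~\ref{lem:decreasecomplexity-maybethisworkseasily} twice, and then use Lemma~\ref{lem:neighborhoodlemma} to upgrade the ``neighborhood of a vertex group'' conclusion of the lemma to a ``vertex group'' conclusion. The index loss of $2$ at each application of Lemma~\ref{lem:decreasecomplexity-maybethisworkseasily} accounts for the total shift by $4$ in the statement.

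First I would feed the hypotheses on $(\G,\R,\P,\mathcal{M})$ into Lemma~\ref{lem:decreasecomplexity-maybethisworkseasily} to conclude that $\mathcal{M}(n)$ maps into a neighborhood of some vertex group $V_n$ of $\jsj_C(\img(\G)(n))$ for $2\leq n\leq m-2$. Next, since by hypothesis $(\H,\mathcal{S},\mathcal{Q},\mathcal{N})$ satisfies the same structural conditions relative to $\img(\mathcal{M})$ that $(\G,\R,\P,\mathcal{M})$ satisfied relative to $\img(\G)$, a second application of Lemma~\ref{lem:decreasecomplexity-maybethisworkseasily} yields that $\mathcal{N}(n)$ maps into a neighborhood of some vertex group $W_n$ of $\jsj_C(\img(\mathcal{M})(n))$, for $4\leq n\leq m-4$.

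The remaining step is to promote ``neighborhood of a vertex group of $\jsj_C(\img(\G)(n))$'' to ``vertex group of $\jsj_C(\img(\G)(n))$''. Because $\mathcal{N}(n)$ is a vertex group of a $\qcjsj$ decomposition, it is freely indecomposable relative to its incident edge groups, so Lemma~\ref{lem:neighborhoodlemma} applies to the inclusion of $\mathcal{N}(n)$ into the neighborhood of $W_n$. It gives two alternatives: either the vertex groups of the induced decomposition of $\mathcal{N}(n)$ are conjugate into $W_n$, or $\mathcal{N}(n)$ admits a principle cyclic splitting of the form $\mathcal{N}(n)=G'\ast_{\zee}A'$ with $A'$ inside one of the centralizers $A_j$ of an edge group of the cyclic \jsj\ incident to $W_n$. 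The second alternative is incompatible with the standing hypothesis $c_a(\mathcal{Q})=0$ together with the assumption that $\mathcal{N}$ is the unique \qcjsj\ vertex-group sequence of $\mathcal{Q}$ of full betti number: the cyclic splitting would either produce a new big abelian vertex group in $\qcjsj(\mathcal{N})$ (contradicting $c_a(\mathcal{Q})=0$) or split off a nontrivial subgroup of full betti number (contradicting uniqueness of $\mathcal{N}$ and maximality of $\mathcal{Q}$). The same reasoning applied one level up inside the ambient neighborhood of $V_n$ then shows that $W_n$ itself is conjugate into $V_n$. Composing, $\mathcal{N}(n)$ maps into a vertex group of $\jsj_C(\img(\G)(n))$.

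For the complexity inequality, I would observe that $\betti(\mathcal{N})\leq\betti(\mathcal{Q})\leq\betti(\H)\leq\betti(\G)$ since vertex groups of \qcjsj\ decompositions cannot exceed the ambient betti number (Lemma~\ref{lem:qcjsjvertexgroups}), while the depth $d$ drops strictly: the image of $\mathcal{N}(n)$ in $\LL$ sits inside a vertex group of the cyclic \jsj\ of (a conjugate of) a group appearing at the node $\LL_{\psi}(i_{j})$ associated to $\G\into\LL$, and thus lies strictly below $\LL_{\psi}(i_j)$ in the cyclic analysis lattice. Hence $\comp(\mathcal{N}\rto\LL)<\comp(\G\rto\LL)$ in the partial order on complexities.

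The main obstacle I expect is the rigorous exclusion of the principle-cyclic-splitting alternative in Lemma~\ref{lem:neighborhoodlemma}: one must track carefully how such a splitting of $\mathcal{N}(n)$ propagates to a nontrivial feature of $\qcjsj(\mathcal{Q}(n))$ or inflates $c_a(\mathcal{Q})$, and one must handle the subcase $A'\cong\zee$ (in which the splitting would already be visible in the \qcjsj\ of $\mathcal{Q}$, contradicting the hypothesis that $\mathcal{N}$ is the \emph{only} \qcjsj\ vertex-group sequence and carries the full betti number). Everything else is essentially a bookkeeping exercise on indices and complexities.
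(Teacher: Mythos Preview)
Your overall plan is the paper's plan: apply Lemma~\ref{lem:decreasecomplexity-maybethisworkseasily} twice and use Lemma~\ref{lem:neighborhoodlemma} together with $c_a(\mathcal{Q})=0$ to bridge from ``neighborhood of a vertex group'' to ``vertex group''. The index bookkeeping and the complexity argument at the end are also as in the paper.

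The one place you diverge is in how you deploy Lemma~\ref{lem:neighborhoodlemma}. You apply it twice: first to $\mathcal{N}(n)$ inside the neighborhood of $W_n$ (a vertex group of $\jsj_C(\img(\mathcal{M})(n))$), and then again ``one level up'' to push $W_n$ into $V_n$. The paper does it in a single stroke: it argues directly that, since $c_a(\mathcal{Q})=0$, the \emph{neighborhoods} of vertex groups of $\img(\mathcal{M})(n)$ already sit inside vertex groups of $\jsj_C(\img(\G)(n))$; then the second application of Lemma~\ref{lem:decreasecomplexity-maybethisworkseasily}, which lands $\img(\mathcal{N})(n)$ in such a neighborhood, finishes immediately. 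Your intermediate step~3 (forcing $\mathcal{N}(n)$ into $W_n$ rather than just its neighborhood) is not needed for the conclusion, and your step~4 relies on ``the same reasoning'' ruling out the principal cyclic splitting of $W_n$ via $c_a(\mathcal{Q})=0$, which is less direct than the paper's route since $W_n$ is a vertex group of $\jsj_C(\img(\mathcal{M}))$ rather than of $\mathcal{Q}$. None of this is wrong in spirit, but the paper's single application of Lemma~\ref{lem:neighborhoodlemma} at the level of $\img(\mathcal{M})$ is cleaner and avoids the extra justification you flag as the ``main obstacle''.
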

\begin{figure}
  \centerline{%
    \xymatrix{%
      \G\ar[d]^{\rto} & \R\ar[d]^{\rto}\ar@{_(->}[l] & \mathcal{P}\ar[dl]^{\rto} & \mathcal{M}\ar[dl]^{\rto}\ar@{_(->}[l] \\
      \img(\G) & \img(\R)\ar@{_(->}[l] & \img(\mathcal{M})\ar@{_(->}[l] & \H\ar[d]^{\rto} & \mathcal{S}\ar[d]^{\rto}\ar@{_(->}[l] & \mathcal{Q}\ar[dl]^{\rto} & \mathcal{N}\ar@{_(->}[l]\ar[dl]^{\rto} \\
 & & & \img(\mathcal{M})\ar@{=}[ul] & \img(\mathcal{S})\ar@{_(->}[l] & \img(\mathcal{N})\ar@{_(->}[l]
}}
  \caption{Illustration for
    Theorem~\ref{thr:decreasecomplexity-final}. The vertical/slanted
    arrows labeled ``$\rto$'' are resolutions, and the horizontal
    arrows are all inclusions of subsequences.}
\label{fig:decreasecomplexity-2-figure}
\end{figure}

\begin{proof}
  Consider the diagram in
  Figure~\ref{fig:decreasecomplexity-2-figure}. Each block, separated
  by the long equals sign, represents a use of
  Lemma~\ref{lem:decreasecomplexity-maybethisworkseasily}.

  By Lemma~\ref{lem:neighborhoodlemma}, since $c_a(\mathcal{Q})=0,$
  the neighborhoods of vertex groups of the images
  $\img(\mathcal{M})(n)$ are completely contained in vertex groups of
  the cyclic \jsj\ of $\img(\G).$ By
  Lemma~\ref{lem:decreasecomplexity-maybethisworkseasily} applied to
  the tuple $(\H,\mathcal{S},\mathcal{Q},\mathcal{N}),$
  $\img(\mathcal{N})(n)$ is contained in a vertex group of the cyclic
  \jsj\ of $\img(\G)(n),$ hence
  $\comp(\mathcal{N}\vert_{(4,\dotsc,m-4)}\rto\LL)<\comp(\G\rto\LL).$
\end{proof}

Now we can continue the analysis using the sequences
$\img(\mathcal{N})\vert_{(4,\dotsc,m-4)},$ which have strictly lower
depth than $\LL.$ 

To finish the proof of Theorem~\ref{thr:subseqkrull} we combine the
work from previous sections. Corollary~\ref{cor:alignmentcorollary},
Corollary~\ref{cor:reduction-to-indecomposable},
Remark~\ref{rem:thisistheproof}, Theorem~\ref{thr:stratified},
Theorem~\ref{thr:decreasecomplexity-final}, and the uniform bound on
depths of rank $n$ limit groups provided by
Theorem~\ref{lem:depthbound} formally imply
Theorems~\ref{thr:subseqkrull} and~\ref{thr:krulldimension}.



\bibliographystyle{amsalpha} \bibliography{krull}

\end{document}